\theoremstyle{plain}
\newtheorem{theorem}{Theorem}[section]
\newtheorem{lemma}[theorem]{Lemma}
\newtheorem{corollary}[theorem]{Corollary}
\newtheorem{proposition}[theorem]{Proposition}
\newtheorem{question}[theorem]{Question}
\theoremstyle{definition}
\newtheorem{definition}[theorem]{Definition}
\newtheorem*{theorem:invariance-few-coordinates}{Theorem \ref{thm:invariance-few-coordinates}}
\DeclareMathOperator*{\EE}{\mathbb{E}}
\DeclareMathOperator*{\VV}{\mathbb{V}}
\DeclareMathOperator*{\FF}{\mathbb{F}}
\DeclareMathOperator{\sgn}{sgn}
\DeclareMathOperator{\Inf}{Inf}
\DeclareMathOperator{\Ber}{Ber}
\DeclareMathOperator{\im}{im}
\providecommand{\RR}{\mathbb{R}}
\providecommand{\charf}[1]{\mathbf{1}_{#1}}
\providecommand{\nor}{\mathrm{N}}
\providecommand{\bin}{\mathrm{B}}
\providecommand{\cG}{\mathcal{G}}
\providecommand{\rpi}{\boldsymbol\pi}
\providecommand{\rX}{\mathbf{X}}
\providecommand{\rU}{\mathbf{U}}
\providecommand{\rD}{\mathbf{D}}
\providecommand{\rC}{\mathbf{C}}
\providecommand{\rs}{\mathbf{s}} 
\providecommand{\rt}{\mathbf{t}}
\providecommand{\rI}{\mathbf{I}}
\providecommand{\rf}{\mathbf{f}}
\providecommand{\rg}{\mathbf{g}}
\providecommand{\eqdef}{\triangleq}
\providecommand{\ib}[1]{\llbracket #1 \rrbracket}
\providecommand{\xsum}{S}
\providecommand{\xstd}{\tilde{S}}
\providecommand{\fc}[2]{#1_{#2}}
\providecommand{\fs}[2]{#1[#2]}
\providecommand{\cF}{\mathcal{F}}
\providecommand{\symm}{\mathrm{S}}
\providecommand{\rmd}{\mathrm{d}}
\title{Harmonicity and Invariance on Slices of the Boolean Cube}
\author[1]{Yuval Filmus}
\author[2]{Elchanan Mossel}
\affil[1]{Department of Computer Science, Technion --- Israel Institute of Technology \\
\texttt{yuvalfi@cs.technion.ac.il}}
\affil[2]{Mathematics and IDSS, Massachusetts Institute of Technology \\
\texttt{elmos@mit.edu}}
\begin{document}

\maketitle

\begin{abstract}
In a recent work with Kindler and Wimmer we proved an invariance principle for the slice for low-influence, low-degree harmonic multilinear polynomials (a polynomial in $x_1,\ldots,x_n$ is \emph{harmonic} if it is annihilated by $\sum_{i=1}^n \frac{\partial}{\partial x_i}$). Here we provide an alternative proof for \emph{general} low-degree harmonic multilinear polynomials, with no constraints on the influences. We show that any real-valued harmonic multilinear polynomial on the slice whose degree is $o(\sqrt{n})$ has approximately the same distribution under the slice and cube measures.

Our proof is based on ideas and results from the representation theory of $S_n$, along with a  novel decomposition of random increasing paths in the cube in terms of martingales and reverse martingales. While such decompositions have been used in the past for stationary reversible Markov chains, our decomposition is applied in a non-stationary non-reversible setup. 
We also provide simple proofs for some known and some new properties of harmonic functions which are crucial for the proof.

Finally, we provide independent simple proofs for the known facts that 1) one cannot distinguish between the slice and the cube based on functions of $o(n)$ coordinates and 2) 
Boolean symmetric functions on the cube cannot be approximated under the uniform measure by functions whose sum of influences is $o(\sqrt{n})$. 

\end{abstract}


\pagebreak

\tableofcontents

\pagebreak

\section{Introduction} \label{sec:introduction}

The basic question motivating our work is the following:

\begin{question} \label{q:1} 
Assume $n$ is even. How distinguishable are the uniform measure $\mu$ on $\{0,1\}^n$ and the measure $\nu$ given by the uniform measure on $\{0,1\}^n$ conditioned on $\sum_i x_i = n/2$?

More generally: how distinguishable are the product measure $\mu_p$ on $\{0,1\}^n$ in which each coordinate takes the value $1$ independently with probability $p$ and 
$\nu_{pn}$ given by the uniform measure on $\{0,1\}^n$ conditioned on $\sum_i x_i = p n$ (assuming $pn$ is an integer)?
\end{question} 

Note that the two measures are easily distinguished using the simple sum-of-coordinates test. 
How does the answer change if we ``do not allow" the sum-of-coordinate test? 
From a computational perspective we might be interested in restricted families of tests, such as \emph{low-depth circuits} or \emph{low-degree polynomials}. 
Furthermore, it turns out that the canonical representation of functions with respect to $\nu_{pn}$, called harmonic representation, does not include the function 
$\sum x_i$ (indeed, under $\nu_{pn}$ it can be represented as a constant). 


We call $\{0,1\}^n$ the \emph{cube}, the support of the distribution $\nu_{pn}$ the \emph{slice}, and the support of $\nu$ the \emph{middle slice}.
For exposition purposes, the introduction will only discuss the middle slice, though all results (previous and ours) extend for the case of $\mu_p$ and $\nu_{pn}$ for every fixed $p$. 

\subsection{Low-degree polynomials} 
In a recent joint work with Kindler and Wimmer~\cite{FKMW} we provided a partial answer to Question~\ref{q:1} by extending the non-linear invariance principle of~\cite{MOO}. 
As mentioned earlier, any function on the slice has a canonical representation as a 
{\em harmonic}~\footnote{This somewhat unfortunate terminology is borrowed from Bergeron~\cite[Section 8.4]{Bergeron}, in which an $S_n$-harmonic polynomial is one which is annihilated by $\sum_{i=1}^n \frac{\partial^k}{\partial x_i^k}$ for all $k$. For multilinear polynomials, both definitions coincide.} multilinear polynomial. The harmonic property means that $\sum_{i=1}^n \frac{\partial f}{\partial x_i} = 0$. 
Suppose that $f$ is a harmonic low-degree low-influence multilinear polynomial. 
The invariance principle of~\cite{FKMW} establishes that the distribution of $f$ under the measure $\nu$ is close
to its distribution under the product measure $\mu$ on $\{0,1\}^n$,
as well as to its distribution under the product space $\RR^n$ equipped with the product Gaussian measure  $\cG = \nor(1/2,1/4)^{\otimes n}$. 

The restriction to multilinear harmonic polynomials is quite natural in the slice --- as every function on the slice has a unique representation as a harmonic multilinear polynomial (this fact, due to Dunkl~\cite{Dunkl76}, is proved in Section~\ref{sec:harmonic}). It is the analog of the implicit restriction to multilinear polynomials in the original non-linear invariance principle. Further motivations, from commutative algebra and representation theory, are described in Section~\ref{sec:harmonic-use}.

\smallskip

Both the invariance principle proven in~\cite{MOO} and the one proven in~\cite{FKMW}
require that the functions have low influences. Indeed, a function like $x_1$ has a rather different distribution under $\mu$ compared to $\cG$.  Similarly, the function $x_1 - x_2$ has a rather different distribution under $\nu$ compared to $\cG$.
However, note that the distribution of $x_1 - x_2$ under $\nu$ is quite similar to its distribution under $\mu$. It is natural to speculate that low-degree harmonic functions have similar distributions under $\nu$ and $\mu$.
Unfortunately, the proof of the invariance principle in~\cite{FKMW} goes through Gaussian space, rendering the low-influence condition necessary even when comparing $\nu$ and $\mu$.

\smallskip

Our main result in this paper is a direct proof of the invariance principle on the slice, showing that the distribution of a low-degree harmonic function on the slice is close to its distribution on the corresponding cube. Our results do not require the condition of low influences.

\begin{theorem} \label{thm:invariance_intro} 
Let $f\colon \{-1,1\}^n \to \RR$ be a harmonic multilinear polynomial of degree $o(\sqrt{n})$ and variance~$1$. For any $1$-Lipschitz function $\varphi$ (i.e., one satisfying $|\varphi(x)-\varphi(y)| \leq |x-y|$),
\[
 |\EE_{\nu}[\varphi(f)] - \EE_{\mu}[\varphi(f)]| = o(1), 
\]
and the L\'evy distance\footnote{The L\'evy distance between two real random variables $X,Y$ is the infimum value of $\epsilon$ such that for all $t \in \RR$ it holds that $\Pr[X \leq t - \epsilon] - \epsilon \leq \Pr[Y \leq t] \leq \Pr[X \leq t + \epsilon] + \epsilon$.} between the distribution of $f$ under $\mu$ and its distribution under $\nu$ is 
$o(1)$. 

\end{theorem}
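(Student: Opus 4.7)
The plan is to couple $\mu$ and $\nu$ along a random increasing path in the cube, reducing the theorem to a one-dimensional variance estimate. Let $\rpi$ be a uniformly random permutation of $[n]$, and define $X_k \in \{-1,+1\}^n$ by $X_k(i) = +1$ iff $\rpi^{-1}(i) \le k$, so that $X_k$ is uniform on the weight-$k$ slice. In particular $X_{n/2} \sim \nu$, and drawing $K \sim \bin(n,1/2)$ independently of $\rpi$ yields $X_K \sim \mu$. Since $\varphi$ is $1$-Lipschitz, $|\EE_\mu \varphi(f) - \EE_\nu \varphi(f)| \le \EE|f(X_K) - f(X_{n/2})|$, and because $|K - n/2| = O(\sqrt n)$ with high probability it will suffice, by Cauchy--Schwarz, to prove that $\EE (f(X_{n/2+t}) - f(X_{n/2}))^2 = o(1)$ uniformly in $|t| \le C\sqrt n$ (the atypical values of $K$ will be handled by a crude $O(1)$ variance bound).

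Next I would telescope the difference along the path: for $t>0$, $f(X_{n/2+t}) - f(X_{n/2}) = \sum_{j=n/2}^{n/2+t-1} D_j$ with $D_j = f(X_{j+1}) - f(X_j)$ proportional to $\partial_{\rpi(j+1)} f(X_j)$. Applying the Doob decomposition $D_j = \Delta_j + B_j$, where $\Delta_j$ is a martingale difference with respect to $\cF_j = \sigma(X_0,\ldots,X_j)$ and $B_j = \EE[D_j\mid \cF_j]$ is the drift, orthogonality gives
\[
 \EE\Bigl(\sum_{j} \Delta_j\Bigr)^2 \;=\; \sum_j \EE \Delta_j^2 \;\le\; \sum_j \frac{C}{n-j}\, \EE \sum_i (\partial_i f(X_j))^2.
\]
The standard degree-$d$ influence bound $\sum_i \Inf_i f \lesssim d \VV f$ (and its harmonic-slice analogue) would then show this sum is $O(dt/n) = o(1)$ under the hypotheses $d = o(\sqrt n)$ and $t = O(\sqrt n)$, so the martingale contribution is negligible.

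The harder piece is the drift. By harmonicity $\sum_i \partial_i f \equiv 0$, one can rewrite
\[
 B_j \;=\; \frac{2}{n-j}\sum_{i:X_j(i)=-1} \partial_i f(X_j) \;=\; -\frac{2}{n-j}\sum_{i:X_j(i)=+1} \partial_i f(X_j),
\]
but pointwise $B_j \neq 0$, so a naive bound on $\sum_j B_j$ will fail. My proposed remedy is to run the mirror analysis on the \emph{reverse} walk, started at $X_n = (+1,\ldots,+1)$ and uniformly demoting $+1$'s to $-1$'s, producing a reverse-martingale decomposition whose drift takes the mirror form, supported on $\{i : X_{j+1}(i)=+1\}$. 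Averaging the forward and reverse decompositions should cause the predictable drifts to cancel against each other, leaving a single martingale (with respect to a suitably enlarged filtration) whose quadratic variation is again $O(dt/n)$. This is precisely the ``novel decomposition of random increasing paths in terms of martingales and reverse martingales'' advertised in the abstract, and engineering the cancellation cleanly in this non-reversible, non-stationary setting is what I expect to be the main obstacle. Once it is in place, $\EE (f(X_K) - f(X_{n/2}))^2 = o(1)$, and the $o(1)$ L\'evy distance follows from the standard $L^1$-to-L\'evy conversion.
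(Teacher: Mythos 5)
Your proposal follows essentially the same route as the paper: the random-increasing-path coupling, the reduction to a second-moment bound between nearby slices, the Doob decomposition of the increments, the degree-$d$ derivative bound for the martingale part, and the use of harmonicity to kill the drift via a forward/backward combination. Two points in your sketch need tightening, and both are exactly where the paper supplies the missing detail. First, the drift cancellation is not achieved by \emph{averaging} the forward and reverse decompositions: your own identity $B_j = \tfrac{2}{n-j}\sum_{X_j(i)=-1}\partial_i f(X_j) = -\tfrac{2}{n-j}\sum_{X_j(i)=+1}\partial_i f(X_j)$ shows the forward drift equals $\tfrac{j}{n-j}$ times the reverse drift, so the correct combination is the weighted one $\rC(s) = (n-s)\rU(s) - s\rD(s)$, whose telescoped sum equals $(n+1)\bigl(f(\rX(t))-f(\rX(s))\bigr)$ up to two boundary terms. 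Moreover this combination is \emph{not} a single martingale in any enlarged filtration: $\rU$ is a martingale difference for the forward filtration and $\rD$ for the backward one, and the paper bounds $\EE[(\sum_u \rC(u))^2]$ by applying $(a+b)^2\le 2a^2+2b^2$ and then using orthogonality separately within each family. Second, truncating at $|t|\le C\sqrt n$ for a \emph{fixed} $C$ leaves a tail event of constant probability, and your ``crude $O(1)$ variance bound'' then contributes a constant, not $o(1)$; you must let the cutoff grow, e.g.\ $|t|\le \delta n$ with $\delta n \asymp n/d = \omega(\sqrt n)$, which is admissible precisely because your second-moment bound $O(dt/n)$ remains $o(1)$ throughout that larger range (the paper stresses that allowing distances somewhat larger than $\sqrt n$ is crucial). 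You also implicitly need that $\VV_{\nu_\ell}[f]$ stays $O(1)$ uniformly over the slices in that window, which the paper gets from the orthogonality of homogeneous harmonic parts under exchangeable measures (Theorem~\ref{thm:spherical}, Corollary~\ref{cor:approximate-norm}, Lemma~\ref{lem:norms}). With these repairs your outline coincides with the paper's proof.
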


See Theorem~\ref{thm:invariance} as well as Corollary~\ref{cor:levy} for more quantitative bounds and more general statements (which apply in particular to any i.i.d.\ measure on the cube and the corresponding slice).  In Subsection~\ref{sec:high-degree} we show that the results cannot be extended to polynomials whose degree is much bigger than $\sqrt{n}$. 

While Theorem~\ref{thm:invariance_intro} cannot hold for arbitrary multilinear polynomials, we are able to recover a similar theorem for such polynomials by considering the distribution of $f$ on several coupled slices at once; see Theorem~\ref{thm:interpolation-2} and its corollaries. 

\paragraph*{Between representation theory and probability}
An interesting aspect of our work is the interplay between the $L_2$ theory of the slice and probabilistic arguments based on coupling.\footnote{The informal term ``$L_2$ theory'' refers to studying functions via $L_2$ norms and inner products. For example, Fourier analysis is an $L_2$ theory, since its central concept is the Fourier basis, which is an orthonormal basis with respect to a given inner product. In contrast, total variation distance, which is inherently related to coupling, is an $L_1$ concept.} The $L_2$ theory of the slice, which was developed by Dunkl~\cite{Dunkl76,Dunkl79}, is intimately related to representations of $S_n$. In Section~\ref{sec:harmonic} we provide an elementary and self-contained approach to this theory. 
The $L_2$ theory is used to control the norm of low-degree harmonic functions with respect to several different measures, in particular by using a two-sided Poincar\'e inequality. This, in turn, allows us to introduce the main probabilistic argument. 

\paragraph*{Novel proof ingredients} The basic idea of the proof is to use the coupling method by showing that the distribution of $f$ on different slices of the cube is almost identical, as long as the slices are of distance at most roughly $\sqrt{n}$ (here the distance between the $k$th slice and the $\ell$th slice is $|k-\ell|$). In fact, for our proof to work we crucially need to allow distances which are somewhat larger than $\sqrt{n}$.

To construct the coupling, we use a uniform random increasing path to couple level $\ell$ to level $k$ above or below it. The main novel technique is representing the difference between the two levels as a difference of two martingales. Such representations have been used before in the analysis of {\em stationary reversible} Markov chains in Banach spaces~\cite{NPSS}, and even earlier in the analysis of stochastic integrals~\cite{LZ}. However, all previous decompositions were for stationary reversible chains, while ours is neither. Our novel representation of the differences might be of interest in other applications. We outline this argument in the introduction to Section~\ref{sec:invariance}, and carry it out in the rest of the section.


\paragraph*{Applications}
Except for the natural interpretation of Theorem~\ref{thm:invariance_intro} in terms of distinguishing between distributions, it can be used to prove results in extremal combinatorics in the same way the main result of~\cite{FKMW} is used. For example, in Proposition~\ref{pro:kindler-safra} we give a proof of the Kindler--Safra theorem on the slice, first proved in~\cite{FKMW}. 

\subsection{Influences, symmetric functions and circuits}
We prove a few other results that give partial answers to Question~\ref{q:1}: 
\begin{itemize}

\item Using direct computation of the total variation distance we prove the following theorem:
\begin{theorem:invariance-few-coordinates}
 Let $f$ be a function on $\{0,1\}^n$  depending on $o(n)$ coordinates and satisfying $\|f\|_\infty \leq 1$. Then 
 \[
  |\EE_{\nu}[f] - \EE_{\mu}[f]| = o(1).
 \]
\end{theorem:invariance-few-coordinates}

\item 
We prove that symmetric functions cannot be approximated by functions whose total influence is~$o(\sqrt{n})$ (see Proposition~\ref{prop:balanced-symmetric} for a more general formulation): 

\begin{proposition} \label{prop:balanced-symmetric_intro}
 There exists a constant $\delta > 0$ such that if $f$ is a symmetric Boolean function such that $\frac{1}{3} \leq \EE_{\mu_p}[f] \leq \frac{2}{3}$ then $\Pr_{\mu_p}[f \neq g] > \delta$ for every Boolean function $g$ satisfying $\Inf[g] = o(\sqrt{n})$.
\end{proposition}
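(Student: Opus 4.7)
The plan is to symmetrize the problem, reducing it to an inequality on the slice averages $\alpha_k \eqdef \EE_{\nu_k}[g]$, where $\nu_k$ is the uniform measure on the $k$-th slice $\{x\in\{0,1\}^n : |x|=k\}$. Since $f$ is symmetric, its restriction to the $k$-th slice is a constant value $f(k) \in \{0,1\}$, so one rewrites
\[
\Pr_{\mu_p}[f\ne g] \;=\; \sum_{k=0}^n \Pr[\bin(n,p)=k]\,|f(k)-\alpha_k|.
\]
Set $W_j = \{k : f(k)=j\}$ for $j\in\{0,1\}$; the balance hypothesis $\EE_{\mu_p}[f]\in[1/3,2/3]$ directly gives $\Pr_{\bin(n,p)}[W_0], \Pr_{\bin(n,p)}[W_1]\in[1/3,2/3]$. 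Fix a centered window $W = \{k : |k-pn| \le C\sqrt{np(1-p)}\}$ for a sufficiently large absolute constant $C$, so that by Chernoff both $W_0\cap W$ and $W_1\cap W$ carry binomial mass at least $1/4$.

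First I would show that if $\Pr_{\mu_p}[f\ne g]\le\epsilon$, then the sequence $(\alpha_k)_k$ must vary by at least $1-8\epsilon$ across $W$. Restricting the identity above to $W_0$ gives $\sum_{k\in W_0}\Pr[\bin=k]\,\alpha_k\le\epsilon$, so the conditional mean of $\alpha_k$ over $W_0\cap W$ is at most $4\epsilon$; symmetrically the conditional mean over $W_1\cap W$ is at least $1-4\epsilon$. Hence there exist $k_0\in W_0\cap W$ and $k_1\in W_1\cap W$ with $\alpha_{k_1}-\alpha_{k_0}\ge 1-8\epsilon$.

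Next I would use the hypothesis $\Inf[g] = o(\sqrt{n})$ to force the total variation of $(\alpha_k)$ across $W$ to be small. The standard coupling between $\nu_k$ and $\nu_{k+1}$ (pick $y$ uniformly on the $(k+1)$-th slice and delete a uniformly chosen $1$-coordinate) yields $|\alpha_{k+1}-\alpha_k| \le J_k/E_k$, where $J_k$ counts hypercube edges between the two slices on which $g$ differs and $E_k=(n-k)\binom{n}{k}$ is the total number of such edges. Each such edge has $\mu_p$-weight $p^k(1-p)^{n-k-1}$, so $\Inf[g]=\sum_k J_k\,p^k(1-p)^{n-k-1}$. Since $p^k(1-p)^{n-k-1}\cdot E_k = \Pr[\bin=k]\cdot(n-k)/(1-p) = \Theta(\sqrt{n})$ for $k\in W$ by the local CLT, one gets
\[
\sum_{k\in W}|\alpha_{k+1}-\alpha_k| \;\le\; \sum_{k\in W}\frac{J_k}{E_k} \;=\; O\!\left(\frac{\Inf[g]}{\sqrt{n}}\right) \;=\; o(1).
\]
Telescoping bounds $|\alpha_{k_1}-\alpha_{k_0}|$ by this sum, so $1-8\epsilon = o(1)$, and any $\delta<1/8$ therefore works for all sufficiently large $n$ (the small-$n$ range is handled by a straightforward adjustment of $\delta$).

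The main obstacle is obtaining the factor of $\sqrt{n}$ in the Lipschitz bound on the slice averages, since this is the gain that makes the $o(\sqrt{n})$ threshold tight. Morally it comes from the fact that near the peak of $\bin(n,p)$ one has $\Pr[\bin=k]=\Theta(1/\sqrt{n})$, while each edge between consecutive slices weighs a further factor $\Theta(n)$ less; the ratio $\sqrt{n}$ between edge counts and $\mu_p$-edge weights is exactly what converts $\Inf[g]$ into the bound $\Inf[g]/\sqrt{n}$, matching the $\sqrt{n}$ standard deviation of the sum-of-coordinates test.
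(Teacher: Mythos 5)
Your proof is correct, and its engine is the same one the paper uses, but you package it differently. The shared core is the identity expressing total influence as a weighted sum over inter-slice edge boundaries (your $\Inf[g]\asymp\sum_k J_k\,p^k(1-p)^{n-k-1}$ is the paper's Lemma~\ref{lem:total-influence}, up to a harmless $p(1-p)$ normalization), combined with the local-CLT observation that near the center each edge-weight term exceeds $J_k/E_k$ by a factor $\Theta(\sqrt{n})$ (the paper's Lemma~\ref{lem:low-influence}), and a telescoping over consecutive slices (the paper's hybrid Lemma~\ref{lem:hybrid}, whose chain coupling restricted to two adjacent levels is exactly your uniform-edge coupling). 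Where you diverge is in how the proposition is extracted from this: the paper first packages the mechanism into a standalone invariance theorem, $|\EE_{\mu_q}[g]-\EE_{\nu_{qn}}[g]|=o(1)$ for $\Inf[g]=o(\sqrt{p(1-p)n})$ (Theorem~\ref{thm:invariance-low-influence}), then locates a single ``good'' slice $q$ near $p$ on which $f$ and $g$ nearly agree, and derives a contradiction using the total variation distance between $\mu_p$ and $\mu_q$. You instead never return to the cube: you work directly with the slice averages $\alpha_k=\EE_{\nu_k}[g]$, use the balance hypothesis to find two slices in the central window where $\alpha_k$ must differ by almost $1$, and show the telescoped variation across the window is $O(\Inf[g]/\sqrt{n})=o(1)$. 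Your route is more self-contained and avoids both the TV-distance input and the need to exhibit a good slice; the paper's route is longer for this corollary but yields the reusable cube-versus-slice invariance theorem along the way, which is of independent interest there. Two small points worth tightening in a write-up: the telescoping from $k_0$ to $k_1$ stays inside $W$ only because $W$ is an interval containing both endpoints (worth saying explicitly), and the local CLT lower bound $\Pr[\bin(n,p)=k]\geq c_{C,p}/\sqrt{n}$ on the window should be cited or proved via Stirling.
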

Since it is well-known~\cite{Bop}, based on arguments from \cite{LMN,Has}, that a depth~$d$ size~$m$ circuit has total influence at most $O((\log m)^{d-1})$, our result immediately implies circuit lower bounds for such functions. However, much better bounds are known, see e.g.~\cite{Tal} for general symmetric functions and~\cite{ODonnellWimmer} for the case of the majority function.
Nevertheless, Proposition~\ref{prop:balanced-symmetric_intro} is more general as it holds for functions $f$ that are not necessarily the majority function and for functions $g$ that are not necessarily in $\mathsf{AC}^0$. Moreover, the proof of Proposition~\ref{prop:balanced-symmetric_intro} is based on a new and simple probabilistic argument. 

\end{itemize} 

\subsection{Other results comparing the cube to the slice}
Question~\ref{q:1} is not a new question. So we conclude the introduction with a few classical results related to this question: 
\begin{itemize}
\item The limiting behaviors of the partial sum $W(s) = \frac{1}{\sqrt{s}} \sum_{i=1}^s (x_i - \frac{1}{2})$ as $s \to \infty$ under the cube and the slice measures are well-studied. It is well-known that under the cube measure $W(s)$ converges to Brownian motion, while under the slice measure it converges to a Brownian bridge.
\item It is well-known that the partial sums $W(s)$ are at least as concentrated in the slice as they are in the cube~\cite{Hoe}. 
\item It is well-known that Lipschitz functions of the random variables $x_1,\ldots,x_n$ are concentrated both in the cube and in the slice. 
The results for the slice follow from the hypercontractive estimates by Lee and Yau~\cite{LeeYau}. These are also needed in our proofs. 
\end{itemize} 

\paragraph*{Paper organization} Following Section~\ref{sec:definitions}, which contains several useful definitions, the paper is composed of four major parts:

\begin{enumerate}
\item \textbf{Harmonic analysis on the slice (Section~\ref{sec:harmonic}).} In this part we provide a self-contained introduction to harmonic analysis on the slice. Most of the results proved in this section are not new, but our proofs are novel and elementary.
\item \textbf{Invariance principles (Sections~\ref{sec:invariance}--\ref{sec:multilinear}).} In this part we prove several invariance principles. Section~\ref{sec:invariance} contains the major result of this paper, an invariance principle for Lipschitz functions. This theorem applies to polynomials of degree $o(\sqrt{n})$, a condition discussed in Section~\ref{sec:high-degree}. Section~\ref{sec:approx-bool} extends the invariance principle to the non-Lipschitz function $\varphi(x) = (|x|-1)^2$, appearing in many applications. As an application of this invariance principle, we give an alternative proof of a non-optimal Kindler--Safra theorem for the slice, first proved in~\cite{FKMW} (an optimal version of the theorem has since been proved by Keller and Klein~\cite{KellerKlein}). Both of these results are for harmonic polynomials. The case of non-harmonic polynomials is tackled in Section~\ref{sec:multilinear}, in which an analog of the invariance principle is proved for arbitrary multilinear polynomials.
\item \textbf{Minor invariance principles (Sections~\ref{sec:few-coordinates}--\ref{sec:low-influence}).} In this part we prove two simpler invariance principles for restricted classes of functions: bounded functions depending on $o(n)$ coordinates (Section~\ref{sec:few-coordinates}) and functions whose total influence is $o(\sqrt{n})$ (Section~\ref{sec:low-influence}).
\item \textbf{Connections (Sections~\ref{sec:decomposition}--\ref{sec:harmonic-use}).} In this final part, we provide some connections to other mathematical fields. Section~\ref{sec:decomposition} provides a representation-theoretic angle on the slice. This point of view serves to demystify some of the important properties of the slice proved in Section~\ref{sec:harmonic}. Section~\ref{sec:harmonic-use} justifies our use of harmonic polynomials to ``lift'' functions from slice to cube. Two justifications are given: a Sperner-theoretic justification (Section~\ref{sec:harmonic-algebra}) and a representation-theoretic justification (Section~\ref{sec:harmonic-rep}).

Section~\ref{sec:harmonic-algebra} also contains an alternative proof of the important property that every function on the slice has a canonical representation as a harmonic multilinear polynomial. In fact, we prove a stronger result, Blekherman's theorem~\cite{Blekherman}, which is crucial in Section~\ref{sec:multilinear}. A corollary of Blekherman's theorem is also used in the proof of the Kindler--Safra theorem in Section~\ref{sec:approx-bool}.
\end{enumerate}


\subsection*{Acknowledgements}

Both authors would like to thank the referees for their extensive and helpful comments.

Y.F. would like to mention that this material is based upon work supported by the National Science Foundation under agreement No.~DMS-1128155. Any opinions, findings and conclusions or recommendations expressed in this material are those of the authors, and do not necessarily reflect the views of the National Science Foundation. Part of the work was done while at the Institute for Advanced Study, Princeton, NJ. The research was also funded by ISF grant 1337/16. The author is a Taub Fellow, and supported by the Taub Foundations. 

E.M. would like to acknowledge the support of the following grants: NSF grants DMS 1106999 and CCF 1320105, DOD ONR grant N00014-14-1-0823, and grant 328025 from the Simons Foundation.

\section{Definitions} \label{sec:definitions}

\paragraph*{Notation} We employ the falling power notation $n^{\underline{k}} = n(n-1)\cdots(n-k+1)$. The notation $\charf{E}$ equals $1$ if the condition $E$ holds, and $0$ otherwise. The sign function is denoted $\sgn$. The \emph{$L_2$ triangle inequality} is $(a+b)^2 \leq 2(a^2+b^2)$ or its generalization $(\sum_{i=1}^n a_i)^2 \leq n\sum_{i=1}^n a_i^2$.

A monomial is \emph{squarefree} if it is not divisible by a square of a variable. (Thus there are $2^n$ squarefree monomials on $n$ variables.) A polynomial is \emph{multilinear} if all monomials are squarefree. A polynomial is \emph{homogeneous} if all monomials have the same total degree. The $d$th homogeneous part of a polynomial $f = \sum c_m m$, denote $f^{=d}$, is the sum of $c_m m$ over all monomials $m$ of total degree $d$. A polynomial $f$ over $x_1,\ldots,x_n$ is \emph{harmonic} if $\sum_{i=1}^n \frac{\partial f}{\partial x_i} = 0$.

A univariate function $f$ is \emph{$C$-Lipschitz} if $|f(x) - f(y)| \leq C|x-y|$. A function is \emph{Lipschitz} if it is $1$-Lipschitz.

The expectation and variance of a random variable are denoted $\EE,\VV$, and $\|\cdot\|$ denotes its $L_2$ norm $\|X\| = \sqrt{\EE[X^2]}$. To signify that expectation is taken with respect to a distribution $\alpha$, we write $\EE_\alpha[X]$, $\VV_\alpha[x]$, and $\|\cdot\|_\alpha$. A normal distribution with mean $\mu$ and variance $\sigma^2$ is denoted $\nor(\mu,\sigma^2)$. A binomial distribution with $n$ trials and success probability $p$ is denoted $\bin(n,p)$.

The symmetric group on $[n] = \{1,\ldots,n\}$ is denoted $S_n$. A distribution on $\RR^n$ is \emph{exchangeable} if it is invariant under the action of $S_n$ (that is, under permutation of the coordinates); a discrete distribution is exchangeable if the probability of $(x_1,\ldots,x_n)$ depends only on $x_1+\cdots+x_n$. For a function $f$ on $\RR^n$ and a permutation $\pi$, we define $f^\pi(x_1,\ldots,x_n) = f(x_{\pi(1)},\ldots,x_{\pi(n)})$. We compose permutations left-to-right, so that $(f^\alpha)^\beta = f^{\alpha\beta}$.


\paragraph*{Asymptotic notation} The notation $O(f)$ means a function $g$ such that $g \leq Cf$ for some positive constant $C > 0$. In particular, $g = O(f)$ means that $g \leq Cf$ for some positive constant $C > 0$. The notation $\Omega(f)$ means a function $g$ such that $g \geq cf$ for some positive constant $c > 0$. The notation $\Theta(f)$ means a function $g$ such that $cf \leq g \leq Cf$ for some positive constants $C \ge c > 0$.

We write $g(n) = o(f(n))$ if $\lim_{n \to \infty} g(n)/f(n) = 0$. The quantity $n$ should be clear from context (in some cases, for example, it is $p(1-p)n$). We write $g(n) = \omega(f(n))$ if $\lim_{n \to \infty} g(n)/f(n) = \infty$.

The notation $O_a(f)$ means a function $g$ such that $g \leq C(a)f$ for some everywhere positive function $C(a)$. We similarly define $O_{a,b}(f)$, $\Omega_a(f)$, and so on. For example, $n/p(1-p) = O_p(n)$.

\paragraph*{The slice} The $n$-dimensional Boolean cube is the set $\{0,1\}^n$. For an integer $0 \leq k \leq n$, the $k$th slice of the $n$-dimensional Boolean cube is the set
\[
 \binom{[n]}{k} = \left\{ (x_1,\ldots,x_n) \in \{0,1\}^n : \sum_{i=1}^n x_i = k \right\}.
\]

\paragraph*{Probability measures} Our work involves two main probability measures, where $n$ is always understood:
\begin{itemize}
 \item The uniform measure on the slice $\binom{[n]}{k}$ is $\nu_k$.
 \item The product measure $\mu_p$ on the Boolean cube is given by $\mu_p(x) = p^{\sum_i x_i} (1-p)^{\sum_i (1-x_i)}$.
\end{itemize}
Note that $\nu_k,\mu_{k/n}$ have the same marginal distributions.

\section{Harmonic functions} \label{sec:harmonic}

A basic and easy result states that every function on $\{-1,1\}^n$ has a unique representation as a multilinear polynomial, known as the \emph{Fourier expansion}. It is easy to see that a multilinear polynomial has the same mean and variance with respect to the uniform measure on $\{-1,1\}^n$ and with respect to the standard $n$-dimensional Gaussian measure. In this section we describe the corresponding canonical representation on the slice, due to Dunkl~\cite{Dunkl76,Dunkl79} and elaborated by Srinivasan~\cite{Srinivasan}, Filmus~\cite{F} and Ambainis et al.~\cite{ABRW}. Most of the results in this section are already known, though the proofs presented in this section are novel. A possible exception is the two-sided Poincar\'e inequality for derivatives, Lemma~\ref{lem:poincare-derivative}.

The canonical representation of functions on the slice is described in Subsection~\ref{sec:canonical-representation}. We decompose this representation into orthogonal parts in Subsection~\ref{sec:orthogonality}, where we also deduce that the mean and variance of a low-degree function is similar on the slice and on the Boolean cube. The analog of the Poincar\'e inequality is proved in Subsection~\ref{sec:poincare} alongside results of a similar flavor. Finally, we prove that degree is subadditive with respect to multiplication, and monotone with respect to substitution, in Subsection~\ref{sec:cometric}. 

\subsection{Canonical representation} \label{sec:canonical-representation}

Every function on the slice $\binom{[n]}{k}$ can be represented as a multilinear polynomial, but this representation is not unique. However, as found by Dunkl~\cite{Dunkl76,Dunkl79}, we can make it unique by demanding that it be \emph{harmonic} in the sense of the following definition.

\begin{definition} \label{def:harmonic}
A polynomial $P$ over $x_1,\ldots,x_n$ is \emph{harmonic} if
\[ \sum_{i=1}^n \frac{\partial P}{\partial x_i} = 0. \]
In other words, $P$ is harmonic if $\Delta P = 0$, where $\Delta$ is the differential operator $\sum_{i=1}^n \frac{\partial}{\partial x_i}$.
\end{definition}

\begin{definition} \label{def:basic-function}
A \emph{basic function} is a (possibly empty) product of factors $x_i - x_j$ on disjoint indices. A function is \emph{elementary} if it is a linear combination of basic functions.
\end{definition}

Most, but not all, of the harmonic polynomials we consider will be multilinear. In particular, notice that all elementary functions are multilinear. Here are some basic properties of harmonic polynomials.

\begin{lemma} \label{lem:harmonic-algebra}
The set of harmonic polynomials is an algebra of polynomials, and is closed under partial derivatives, under permutations of the coordinates, and under taking homogeneous parts. In particular, all elementary functions are harmonic.
\end{lemma}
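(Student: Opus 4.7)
The plan is to observe that the differential operator $\Delta = \sum_{i=1}^n \frac{\partial}{\partial x_i}$ is a first-order linear operator that commutes with every partial derivative and is homogeneous of degree $-1$ in the monomial degree. Each property in the lemma then reduces to a one-line calculation. The only step that uses more than linearity is closure under multiplication, for which the crucial (and slightly subtle) point is that $\Delta$, being first-order, satisfies the Leibniz rule $\Delta(fg) = (\Delta f)g + f(\Delta g)$; this is what distinguishes the situation from the usual Laplacian, where products of harmonic functions are not generally harmonic.

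Concretely, I would verify the properties in the following order. Closure under addition and scalar multiplication is immediate from the linearity of $\Delta$. Closure under multiplication follows from the Leibniz rule above: if $\Delta f = \Delta g = 0$, then $\Delta(fg) = 0$. Together with the fact that constants are harmonic, this gives the algebra structure. For closure under partial derivatives, $\partial_j$ and $\Delta$ commute (all the $\partial_i$'s commute), so $\Delta(\partial_j f) = \partial_j(\Delta f) = 0$. For closure under permutations of coordinates, the chain rule yields $\partial_i f^\pi = (\partial_{\pi^{-1}(i)} f)^\pi$, and summing over $i$ gives $\sum_i \partial_i f^\pi = (\Delta f)^\pi = 0$. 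For closure under homogeneous parts, note that $\Delta$ sends degree-$d$ monomials to degree-$(d-1)$ monomials, so $(\Delta f)^{=d-1} = \Delta(f^{=d})$; hence $\Delta f = 0$ forces each $\Delta(f^{=d}) = 0$ separately.

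Finally, the statement about elementary functions. A single factor $x_i - x_j$ with $i \neq j$ satisfies $\Delta(x_i - x_j) = 1 - 1 = 0$, so each factor is harmonic. By the multiplicative closure just established, every basic function (a product of such factors on disjoint indices) is harmonic, and then by additive closure every elementary function is harmonic.

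I do not expect any genuine obstacle; this lemma is a collection of formal verifications. The only point worth flagging for the reader is why multiplication preserves harmonicity, which is a consequence of $\Delta$ being a derivation and would be worth stating explicitly as a short preliminary observation before enumerating the closure properties.
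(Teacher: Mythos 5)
Your proposal is correct and follows essentially the same route as the paper: both rely on $\Delta$ being a derivation (Leibniz rule) for multiplicative closure, on commutation of partial derivatives, on equivariance under permutations, and on the degree-shift argument for homogeneous parts; your explicit check that $\Delta(x_i - x_j) = 0$ for the elementary-function claim is a detail the paper leaves implicit but is exactly the intended argument.
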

\begin{proof}
Suppose $f,g$ are harmonic. Then $\Delta (\alpha f + \beta g) = \alpha \Delta f + \beta \Delta g = 0$; $\Delta(fg) = f\Delta g + g\Delta f = 0$; $\Delta \frac{\partial f}{\partial x_i} = \frac{\partial \Delta f}{\partial x_i} = 0$; and $\Delta(f^\pi) = (\Delta f)^\pi = 0$. Finally, since $\Delta(\sum_{d=0}^n f^{=d}) = \sum_{d=0}^n \Delta f^{=d}$ and $\Delta f^{=d}$ is homogeneous of degree $d-1$, we see that $\Delta f^{=d} = 0^{=d-1} = 0$.
\end{proof}

\begin{lemma} \label{lem:harmonic-alt}
A polynomial $f$ is harmonic if and only if for all $x_1,\ldots,x_n,c$ we have
\[ f(x_1+c,\ldots,x_n+c) = f(x_1,\ldots,x_n). \]
In particular, if $f^\pi = f$ for all $\pi \in S_n$ then $\deg f = 0$.
\end{lemma}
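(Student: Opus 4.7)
The plan is to interpret the operator $\Delta = \sum_{i=1}^n \frac{\partial}{\partial x_i}$ as the directional derivative along the all-ones direction $(1,\ldots,1)$, which makes the equivalence between $\Delta f \equiv 0$ and translation-invariance along this direction essentially a calculus fact.

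Concretely, I would fix $x_1,\ldots,x_n$ and define the univariate polynomial $g(c) \eqdef f(x_1+c,\ldots,x_n+c)$ in the formal variable $c$. The chain rule gives
\[
 g'(c) = \sum_{i=1}^n \frac{\partial f}{\partial x_i}(x_1+c,\ldots,x_n+c) = (\Delta f)(x_1+c,\ldots,x_n+c).
\]

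For the forward direction, suppose $\Delta f = 0$ as a polynomial identity. Then $g'(c) = 0$ identically, so $g(c) = g(0) = f(x_1,\ldots,x_n)$, which is exactly the translation invariance. For the converse, suppose $f(x_1+c,\ldots,x_n+c) = f(x_1,\ldots,x_n)$ for all $x_1,\ldots,x_n,c$. Differentiating at $c = 0$ yields $(\Delta f)(x_1,\ldots,x_n) = 0$ for all $(x_1,\ldots,x_n)$, and since $\Delta f$ is a polynomial vanishing on all of $\RR^n$, it is the zero polynomial, so $f$ is harmonic.

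There is no real obstacle here; the only thing to be careful about is that the identity $\Delta f = 0$ is a polynomial identity and hence remains valid after the substitution $x_i \mapsto x_i + c$, which is what legitimizes evaluating $g'(c)$ at arbitrary $c$ in the forward direction.
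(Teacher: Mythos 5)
Your proof is correct and follows essentially the same route as the paper: both define $f(x_1+c,\ldots,x_n+c)$ as a function of $c$ and apply the chain rule to identify its $c$-derivative with $\Delta f$ (evaluated at the shifted point), so that harmonicity is equivalent to constancy in $c$. Your write-up just spells out the two directions and the polynomial-identity point slightly more explicitly than the paper's one-line version.
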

\begin{proof}
Given $x_1,\ldots,x_n$, define a function
\[ \phi(x_1,\ldots,x_n,c) = f(x_1+c,\ldots,x_n+c). \]
The chain rule implies that $\frac{\partial \phi}{\partial c} = \Delta f$. Hence $\Delta f = 0$ iff $\phi$ is independent of $c$.

If $f$ moreover satisfies $f^\pi = f$ for all $\pi \in S_n$, then $f$ is a symmetric polynomial, and so $f = \phi(x_1 + \cdots + x_n)$ for some univariate polynomial $\phi$. The polynomial $\phi$ satisfies $\phi(x) = \phi(x + nc)$ for all $c$, and so both $\phi$ and $f$ must be constant.
\end{proof}

Our proofs of Theorem~\ref{thm:harmonic-representation} and Lemma~\ref{lem:harmonic-elementary} proceed by analyzing ``derivatives'' of the form $f - f^{(i\;j)}$, and using the following simple lemma to conclude the same property for $f$ itself.

\begin{lemma} \label{lem:symmetrization}
Let $X$ be a ring of characteristic zero with an $S_n$~action, and let $V \subseteq X$ be a vector space satisfying the following properties:
\begin{enumerate}[(a)]
\item If $f \in X$ then $f - f^{(i\;j)} \in V$ for all $i \neq j$.
\item If $f \in X$ then 	$\symm(f) := \EE_\pi[f^\pi] \in V$.
\end{enumerate}
Then $V = X$.	
\end{lemma}
\begin{proof}
 Every permutation $\pi \in S_n$ can be written as $\pi = (i_1\;j_1) \cdots (i_\ell\;j_\ell)$. Since $V$ is a vector space, the first property shows that
\[
 f - f^\pi = \sum_{t=1}^\ell f^{(i_1\;j_1) \cdots (i_{t-1}\;j_{t-1})} - f^{(i_1\;j_1) \cdots (i_t\;j_t)} \in V.
\]
Taking average over all $\pi \in S_n$, we get that $f - \symm(f) \in V$. The second property now implies that $f \in V$.
\end{proof}

Our first theorem states that every function on the slice has a unique representation as a harmonic multilinear polynomial of degree at most $\min(k,n-k)$.

\begin{theorem} \label{thm:harmonic-representation}
Let $0 \leq k \leq n$. Every function on the slice $\binom{[n]}{k}$ has a unique representation as a harmonic multilinear polynomial of degree at most $\min(k,n-k)$.
\end{theorem}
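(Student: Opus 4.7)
The plan is to reduce Theorem~\ref{thm:harmonic-representation} to showing that the restriction map $\rho \colon \mathcal{H}_{\le d} \to \RR^{\binom{[n]}{k}}$ is a linear bijection, where $\mathcal{H}_{\le d}$ denotes the space of harmonic multilinear polynomials of degree at most $d := \min(k,n-k)$. Existence of the representation corresponds to surjectivity of $\rho$ and uniqueness to injectivity. Since the codomain has dimension $\binom{n}{k}$, it suffices to show $\dim \mathcal{H}_{\le d} \ge \binom{n}{k}$ together with injectivity of $\rho$; both dimensions then collapse to $\binom{n}{k}$ and $\rho$ is a bijection.

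For the dimension bound I would use closure of harmonicity under taking homogeneous parts (Lemma~\ref{lem:harmonic-algebra}) to decompose $\mathcal{H}_{\le d} = \bigoplus_{j=0}^{d} H_j^{=}$, where $H_j^{=} := \ker(\Delta|_{M_j})$ and $M_j$ is the space of multilinear polynomials of degree exactly $j$ (so $\dim M_j = \binom{n}{j}$). Rank--nullity applied to $\Delta \colon M_j \to M_{j-1}$ gives $\dim H_j^{=} \ge \binom{n}{j} - \binom{n}{j-1}$, and the telescoping sum over $j = 0, \ldots, d$ yields the required lower bound $\dim \mathcal{H}_{\le d} \ge \binom{n}{d} = \binom{n}{k}$.

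The main obstacle is proving injectivity of $\rho$. My plan is to exploit that $\rho$ is $S_n$-equivariant (both the coordinate-permutation action on $\mathcal{H}_{\le d}$, valid by Lemma~\ref{lem:harmonic-algebra}, and the natural action on $\RR^{\binom{[n]}{k}}$ are compatible), together with the fact that both sides decompose as multiplicity-one direct sums of the Specht modules $S^{(n-j,j)}$ for $0 \le j \le d$; Schur's lemma then reduces injectivity to verifying $\rho|_{H_j^{=}} \not\equiv 0$ for each $j$. For this I would exhibit the basic function $g_j := (x_1 - x_2)(x_3 - x_4)\cdots(x_{2j-1} - x_{2j})$, which is harmonic by Lemma~\ref{lem:harmonic-algebra}, and evaluate it at a slice point $x \in \binom{[n]}{k}$ with $x_{2\ell-1} = 1$, $x_{2\ell} = 0$ for $1 \le \ell \le j$ and the remaining $k - j$ ones placed among coordinates $2j+1, \ldots, n$. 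This placement is feasible precisely when $j \le k$ and $k - j \le n - 2j$, i.e.\ when $j \le d$, and it yields $g_j(x) = 1 \neq 0$, establishing the required nonvanishing.

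A self-contained alternative that bypasses direct appeal to Specht modules is to prove the stronger decomposition $M_j = H_j^{=} \oplus R \cdot M_{j-1}$ for $j \le n/2$, where $R$ denotes multiplication by $x_1 + \cdots + x_n$ followed by multilinear reduction ($x_i^2 \mapsto x_i$). This pins down $\dim H_j^{=}$ exactly, and since $Rg \equiv kg$ on $\binom{[n]}{k}$, it yields a constructive algorithm converting any multilinear representative of a function on the slice into a harmonic one of degree at most $d$ by iteratively peeling off the top-degree harmonic part and absorbing the residual $R$-multiple into lower degrees. The most delicate step in that variant is verifying the direct sum via the commutator identity $[\Delta, R] = \Delta + (n - 2N)$ (with $N$ the degree operator), which prevents $R \cdot M_{j-1}$ from meeting $H_j^{=}$ nontrivially for $j \le n/2$.
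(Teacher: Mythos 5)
Your architecture (restriction map $\rho\colon \mathcal{H}_{\le d}\to\RR^{\binom{[n]}{k}}$, lower-bound the dimension, prove injectivity) is a genuinely different route from the paper's main proof, which is an elementary double induction on $\min(k,n-k)$ using the identity $f-f^{(i\;j)}=(x_i-x_j)(B-C)$ and averaging over $S_n$; your route is instead close to the paper's second proof in Section~\ref{sec:harmonic-algebra} (Theorem~\ref{thm:harmonic-representation-2}). The dimension lower bound via rank--nullity and the telescoping sum is correct. However, your primary injectivity argument has a genuine gap: the Schur-plus-multiplicity-one reduction to ``$\rho|_{H_j^=}\not\equiv 0$'' requires that $\mathcal{H}_{\le d}$ be multiplicity-free as an $S_n$-module, i.e.\ that $H_j^=\cong S^{(n-j,j)}$ exactly. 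Rank--nullity only gives $\dim H_j^=\ge \binom{n}{j}-\binom{n}{j-1}$, i.e.\ that $H_j^=$ \emph{contains} $S^{(n-j,j)}$; a priori $H_j^=$ could also contain copies of $S^{(n-i,i)}$ for $i<j$ (this happens exactly when $\Delta\colon M_j\to M_{j-1}$ fails to be surjective), in which case $S^{(n-i,i)}$ appears in $\mathcal{H}_{\le d}$ with multiplicity at least two, a submodule of $\ker\rho$ need not contain any of your distinguished generators $g_j$, and checking $\rho(g_j)\neq 0$ no longer certifies $\ker\rho=0$. So the needed input is precisely the surjectivity of $\Delta|_{M_j}$ for $j\le n/2$ (equivalently, injectivity of the raising operator below the middle layer), and this is not supplied by the first route.

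The ingredient you relegate to a ``self-contained alternative'' is therefore not optional but the crux: the commutator identity (or, more directly, the adjointness computation $\|\Delta^T v\|^2=\|\Delta v\|^2+(n-2\deg v)\|v\|^2>0$ for $0\neq v$ of degree below $n/2$) pins down $\dim H_j^==\binom{n}{j}-\binom{n}{j-1}$, which both repairs the multiplicity-freeness needed for Schur and, combined with the constructive peeling $f\equiv h+(k-j+1)g$ on the slice, yields existence and uniqueness by dimension comparison without any representation theory. In that completed form your argument is correct and essentially reproduces the paper's Theorem~\ref{thm:harmonic-representation-2}; what the paper's main proof buys over it is that it is entirely elementary, needing neither Young's rule nor the Lefschetz-type injectivity, at the cost of being less structural. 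One small point of care in the alternative: since $Rg=(j-1)g+\Delta^T g$ for homogeneous $g$ of degree $j-1$, the summand complementary to $H_j^=$ inside $M_j$ is $\im \Delta^T$ rather than $R\cdot M_{j-1}$ literally, and the directness of $M_j=\ker\Delta\oplus\im\Delta^T$ is automatic from adjointness; the commutator is needed only for the dimension of $\im\Delta^T$.
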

\begin{proof}
We use the notation $f \equiv g$ (read $f$ \emph{agrees with} $g$) to denote that $f$ agrees with $g$ on the slice $\binom{[n]}{k}$.

We start by proving, by induction on $\min(k,n-k)$, that every function on a slice $\binom{[n]}{k}$ has some representation as a harmonic multilinear polynomial. The base cases, $k=0$ and $k=n$, are trivial, since in these cases all functions are constant.

Consider now a function $f$ on the slice $\binom{[n]}{k}$, where $0 < k < n$. Define a function $g$ on the slice $\binom{[n-2]}{k-1}$ by $g(x_1,\ldots,x_{n-2}) = f(x_1,\ldots,x_{n-2},0,1) - f(x_1,\ldots,x_{n-2},1,0)$. By the induction hypothesis, $g$ can be represented as a harmonic multilinear polynomial $G \equiv g$ of degree at most $\min(k-1,(n-2)-(k-1)) = \min(k,n-k)-1$. Let $H = (x_n - x_{n-1}) G$. Note that $H$ is a multilinear polynomial of degree at most $\min(k,n-k)$, and by Lemma~\ref{lem:harmonic-algebra} it is harmonic. We claim that $f - f^{(n-1\;n)} \equiv H$ (recall that $f^{(n-1\;n)}$ is obtained from $f$ by permuting coordinates $n-1$ and $n$). Indeed, when $x_{n-1} = x_n$, both sides vanish, and when $(x_{n-1},x_n) = (0,1)$ or $(x_{n-1},x_n) = (1,0)$, this is true by definition (checking both cases separately). Thus $f - f^{(n-1\;n)}$ can be represented as a harmonic multilinear polynomial of degree at most $\min(k,n-k)$. For short, we say that $f - f^{(n-1\;n)}$ is \emph{representable}.

The same argument implies that $f - f^{(i\;j)}$ is representable for all $i,j$, satisfying the first property in Lemma~\ref{lem:symmetrization}, with $X$ consisting of all functions on the slice $\binom{[n]}{k}$, and $V$ consisting of all representable functions. Since $\symm(f) = \EE[f]$ is a constant and so representable, the lemma shows that all functions are representable.

It remains to prove that the representation is unique. To that end, it is enough to show that if $P$ is a harmonic multilinear polynomial of degree at most $\min(k,n-k)$ such that $P \equiv 0$ then $P = 0$. We prove this by induction on $\min(k,n-k)$. The base cases, $k=0$ and $k=n$, are trivial, since if $P$ is a constant polynomial agreeing with the zero function, then $P = 0$.

Suppose now that $P \equiv 0$ on the slice $\binom{[n]}{k}$ for some harmonic multilinear polynomial $P$ of degree at most $\min(k,n-k)$, where $0 < k < n$. Write $P = A + x_{n-1} B + x_n C + x_{n-1}x_n D$, where $A,B,C,D$ are polynomials over $x_1,\ldots,x_{n-2}$, and notice that $P - P^{(n-1\;n)} = (x_{n-1} - x_n)(B - C)$. Since $\Delta [(x_{n-1} - x_n)(B - C)] = (x_{n-1} - x_n) \Delta (B-C)$, we see that $B-C$ is harmonic. Considering the substitution $(x_{n-1},x_n) = (0,1)$, we see that $B-C$ is a harmonic multilinear polynomial of degree at most $\min(k,n-k)-1$ which agrees with $0$ on the slice $\binom{[n-2]}{k-1}$. The induction hypothesis implies that $B-C = 0$, and so $P = P^{(n-1\;n)}$.

The same argument implies that $P = P^{(i\;j)}$ for all $i,j$, and so $P = P^{\pi}$ for all $\pi$. Lemma~\ref{lem:harmonic-alt} implies that $P$ is constant. Since $P$ agrees with $0$, we must have $P = 0$, completing the proof.
\end{proof}

Similarly, we can prove that every harmonic multilinear polynomial is elementary.

\begin{lemma} \label{lem:harmonic-elementary}
A multilinear polynomial is harmonic iff it is elementary. In particular, a harmonic multilinear polynomial over $x_1,\ldots,x_n$ has degree at most $n/2$.
\end{lemma}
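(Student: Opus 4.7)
The plan is to prove the two directions separately. The ``elementary $\Rightarrow$ harmonic'' direction is immediate from Lemma~\ref{lem:harmonic-algebra}: each factor $x_i - x_j$ satisfies $\Delta(x_i - x_j) = 0$, and harmonicity is preserved under products and linear combinations, so every basic function is harmonic and hence so is every elementary function. For the converse direction, I would induct on $n$, reusing the antisymmetrization idea from the proof of Theorem~\ref{thm:harmonic-representation}. The base cases $n \in \{0,1\}$ are trivial: harmonicity forces the polynomial to be constant, and constants are elementary (the empty product of factors $x_i - x_j$ equals $1$).

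For the inductive step, let $f$ be a harmonic multilinear polynomial on $x_1,\ldots,x_n$ and write $f = A + x_{n-1} B + x_n C + x_{n-1} x_n D$ with $A,B,C,D$ in $x_1,\ldots,x_{n-2}$. Then $f - f^{(n-1\;n)} = (x_{n-1} - x_n)(B - C)$. Setting $g = B - C$, I would check that since $g$ does not involve $x_{n-1},x_n$ and $\Delta(x_{n-1} - x_n) = 0$, the derivation rule gives $\Delta[(x_{n-1}-x_n)g] = (x_{n-1}-x_n)\Delta g$; combining this with the fact that $f - f^{(n-1\;n)}$ is harmonic (by Lemma~\ref{lem:harmonic-algebra}) forces $\Delta g = 0$. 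Thus $g$ is a harmonic multilinear polynomial in $n-2$ variables, hence elementary by the inductive hypothesis; multiplying by $(x_{n-1} - x_n)$ preserves the disjoint-index structure of each basic summand, so $f - f^{(n-1\;n)}$ is elementary. The same argument applied to any transposition $(i\;j)$ in place of $(n-1\;n)$, together with telescoping along a transposition decomposition of an arbitrary $\pi \in S_n$ (each intermediate polynomial $f^{\tau_1\cdots\tau_j}$ remains harmonic multilinear), shows that $f - f^\pi$ is elementary for every $\pi$. Averaging over $\pi$ then gives that $f - \EE_\pi[f^\pi]$ is elementary.

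What remains is the symmetric residue $\bar f = \EE_\pi[f^\pi]$, which is symmetric and multilinear and therefore a linear combination $\bar f = \sum_{d=0}^n c_d e_d$ of the elementary symmetric polynomials. The key calculation is $\Delta e_d = (n-d+1)\, e_{d-1}$ (each monomial of $e_{d-1}$ appears in $\partial_i e_d$ for exactly $n-d+1$ choices of $i$), which combined with $\Delta \bar f = 0$ forces $c_d = 0$ for all $d \geq 1$. Hence $\bar f$ is a constant, which is trivially elementary, completing the proof that $f$ is elementary. Finally, the degree bound is immediate from the structure of basic functions: a basic function uses $2d$ distinct indices to achieve degree $d$, so its degree is at most $\lfloor n/2 \rfloor$, and taking linear combinations cannot increase degree. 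I expect the only slightly delicate point to be the averaging step, which relies on the identity $\Delta e_d = (n-d+1)\, e_{d-1}$ to rule out nontrivial symmetric harmonic summands; the rest is a straightforward adaptation of the antisymmetrization technique already used in Theorem~\ref{thm:harmonic-representation}.
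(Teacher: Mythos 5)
Your proof is correct and follows the same overall strategy as the paper's: the easy direction via Lemma~\ref{lem:harmonic-algebra}, then induction on $n$ using the antisymmetrization identity $f - f^{(i\;j)} = (x_i-x_j)(B-C)$, telescoping over transpositions, and averaging over $S_n$ to reduce to the symmetric residue. The one place you genuinely diverge is in handling $\EE_\pi[f^\pi]$: the paper asserts that a symmetric polynomial has the form $\phi(x_1+\cdots+x_n)$ and then applies Lemma~\ref{lem:harmonic-alt}, whereas you expand the symmetric multilinear residue in the elementary symmetric polynomials and use the identity $\Delta e_d = (n-d+1)e_{d-1}$ to force all nonconstant coefficients to vanish. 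Your version is the more careful one at this step --- a symmetric multilinear polynomial such as $e_2$ is \emph{not} a polynomial in $x_1+\cdots+x_n$, so the paper's phrasing is only an informal shortcut, while your computation with $\Delta e_d$ is a rigorous substitute that still uses nothing beyond linear independence of the $e_d$.
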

\begin{proof}
Lemma~\ref{lem:harmonic-algebra} implies that every elementary polynomial is harmonic. We now prove that every multilinear harmonic polynomial is elementary by induction on $n$. If $n = 0$ then any polynomial is constant and so elementary. If $n = 1$ then any harmonic polynomial is constant and so elementary. Consider now any $n \geq 2$ and any harmonic multilinear polynomial $f$ over $x_1,\ldots,x_n$. For any $i \neq j$ we can write $f = A + x_i B + x_j C + x_ix_j D$, where $A,B,C,D$ don't involve $x_i,x_j$, so that $f - f^{(i\;j)} = (x_i - x_j)(B - C)$. Notice that $f - f^{(i\; j)}$ is harmonic, and since $\Delta [(x_i-x_j)(B-C)] = (x_i-x_j)\Delta(B-C)$, we see that $B-C$ is also harmonic. By induction, we get that $B-C$ is elementary, and so $f - f^{(i\;j)} = (x_i-x_j)(B-C)$ is elementary.

This shows that the first property in Lemma~\ref{lem:symmetrization} is satisfied, where $X$ consists of all multilinear harmonic polynomials, and $V$ consists of all elementary polynomials. Lemma~\ref{lem:harmonic-alt} shows that $\symm(f)$ is constant and so elementary, and so the second property is satisfied as well. Lemma~\ref{lem:symmetrization} therefore implies that every multilinear harmonic polynomial is elementary.
\end{proof}

The algebraically inclined reader may enjoy the following reformulation of Theorem~\ref{thm:harmonic-representation}.

\begin{corollary} \label{cor:harmonic-representation}
 Let $0 \leq k \leq n$, and fix a field $\FF$ of characteristic zero. Consider the polynomial ideal
\[
 I = \left\langle x_1^2 - x_1, \ldots, x_n^2 - x_n, \sum_{i=1}^n x_i - k \right\rangle = I\bigl(\binom{[n]}{k}\bigr),
\]
 where we think of the slice $\binom{[n]}{k}$ as an affine variety.
 Then $\FF[x_1,\ldots,x_n]/I$ is isomorphic to the ring of harmonic multilinear polynomials of degree at most $\min(k,n-k)$ over $x_1,\ldots,x_n$.
\end{corollary}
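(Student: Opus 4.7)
The plan is to realize both rings in the statement as two different manifestations of the same object, the ring $\FF^{\binom{[n]}{k}}$ of $\FF$-valued functions on the slice under pointwise operations. Theorem~\ref{thm:harmonic-representation} already gives one such identification, as a linear bijection $H \to \FF^{\binom{[n]}{k}}$ by restriction, where $H$ denotes the space of harmonic multilinear polynomials of degree at most $\min(k,n-k)$; the ring structure on $H$ meant in the statement is the one transported from $\FF^{\binom{[n]}{k}}$ through this bijection. So the remaining task is to show that the evaluation map $e \colon \FF[x_1,\ldots,x_n]/I \to \FF^{\binom{[n]}{k}}$ sending $p + I$ to $p|_{\binom{[n]}{k}}$ is itself a ring isomorphism; composing with the inverse of the bijection from Theorem~\ref{thm:harmonic-representation} will then yield the corollary.

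The map $e$ is well-defined because each generator of $I$ vanishes on every point of the slice, and it is surjective by Theorem~\ref{thm:harmonic-representation}. What remains is to verify that $\dim_{\FF}\FF[x_1,\ldots,x_n]/I \leq \binom{n}{k}$, which together with surjectivity onto a $\binom{n}{k}$-dimensional target forces $e$ to be bijective. For this dimension bound, I would quotient first by $J = \langle x_i^2 - x_i : i \in [n]\rangle$ alone. The uniqueness of the multilinear (Fourier) representation over $\{0,1\}^n$ recalled at the start of Section~\ref{sec:harmonic} upgrades to a ring isomorphism $\FF[x_1,\ldots,x_n]/J \cong \FF^{\{0,1\}^n}$ by evaluation at points of the cube; under this isomorphism, the class of $\sum_{i=1}^n x_i - k$ corresponds to the function $\chi_S \mapsto |S| - k$, whose zero set is exactly the slice.

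The last step rests on the standard observation that in a finite product of fields $\FF^N$, the principal ideal generated by a tuple $(a_\alpha)_\alpha$ equals $\{(b_\alpha)_\alpha : b_\alpha = 0 \text{ whenever } a_\alpha = 0\}$. Applied in $\FF^{\{0,1\}^n}$, the principal ideal generated by the image of $\sum x_i - k$ is precisely the subspace of functions on the cube vanishing on the slice, of $\FF$-dimension $2^n - \binom{n}{k}$. Since $I = J + \langle \sum_i x_i - k\rangle$, this yields $\dim_{\FF}\FF[x_1,\ldots,x_n]/I = \binom{n}{k}$ as required, completing the argument. The only step that needs care is this dimension count; it replaces what would otherwise be a Nullstellensatz-style claim of the form $\ker e = I$ (i.e., radicality of $I$) with an elementary computation inside the concrete ring $\FF^{\{0,1\}^n}$, and I expect it to be the main, though modest, technical point of the proof.
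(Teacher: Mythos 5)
Your proposal is correct, and it reaches the conclusion by a route that differs from the paper's in the one place where the corollary has real content beyond Theorem~\ref{thm:harmonic-representation}. The paper treats the corollary as a reformulation of that theorem and defers its justification to Section~\ref{sec:harmonic-algebra}, where Theorem~\ref{thm:harmonic-representation-2} reproves the reduction modulo $I$ algebraically via the Lefschetz decomposition $P_d = \ker\Delta \oplus \im \Delta^T$ and, crucially, handles the degree-truncation step by invoking radicality of $I$ through an external citation (footnoted as \cite[Lemma 6.1]{RW}). You instead take the linear bijection of Theorem~\ref{thm:harmonic-representation} as given, reduce everything to the single claim $\dim_\FF \FF[x_1,\ldots,x_n]/I = \binom{n}{k}$, and prove that claim by passing to the concrete ring $\FF^{\{0,1\}^n} \cong \FF[x_1,\ldots,x_n]/\langle x_i^2-x_i\rangle$ and computing the principal ideal generated by the function $S \mapsto |S|-k$ in a finite product of fields. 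This is an elementary, self-contained substitute for the radicality/Nullstellensatz input, and it is exactly where characteristic zero is used ($|S|-k \neq 0$ off the slice). You are also right to flag that the ``ring'' structure on harmonic multilinear polynomials must be the transported one, since that space is not closed under ordinary multiplication. What the paper's longer route buys in exchange is the $\Delta,\Delta^T$ machinery, which it reuses to prove Blekherman's theorem (Theorem~\ref{thm:blekherman-2}); your argument is leaner but yields only the corollary itself.
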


We discuss this reformulation in Section~\ref{sec:harmonic-algebra}, giving an alternative proof of the theorem.

We conclude this section by computing the dimension of the space of harmonic multilinear polynomials of given degree.

\begin{corollary} \label{cor:dimension}
 Let $d \leq n/2$. Then
\begin{enumerate}[(a)]
 \item The linear space $H_{\leq d}$ of harmonic multilinear polynomials of degree at most $d$ has dimension $\binom{n}{d}$.
 \item The linear space $H_d$ of harmonic multilinear polynomials which are homogeneous of degree $d$ has dimension $\binom{n}{d} - \binom{n}{d-1}$, where $\binom{n}{-1} = 0$.
\end{enumerate}
\end{corollary}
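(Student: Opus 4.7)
The plan is to derive both dimension formulas as immediate consequences of Theorem~\ref{thm:harmonic-representation} together with the fact that harmonicity respects homogeneous decomposition (Lemma~\ref{lem:harmonic-algebra}). Nothing new has to be computed; the work is just bookkeeping.

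For part (a), I would apply Theorem~\ref{thm:harmonic-representation} with the choice $k = d$. Since by hypothesis $d \leq n/2$, we have $\min(d, n-d) = d$, so the theorem gives a bijective correspondence between functions on $\binom{[n]}{d}$ and harmonic multilinear polynomials of degree at most $d$. This correspondence is linear (restriction in one direction, the unique representation in the other), so it is a linear isomorphism between $H_{\leq d}$ and the space of all real-valued functions on $\binom{[n]}{d}$. The latter space has dimension $\bigl|\binom{[n]}{d}\bigr| = \binom{n}{d}$, yielding $\dim H_{\leq d} = \binom{n}{d}$.

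For part (b), I would use that $H_{\leq d}$ decomposes as a direct sum over homogeneous degrees. Concretely, any multilinear polynomial $f$ of degree at most $d$ splits uniquely as $f = f^{=0} + f^{=1} + \cdots + f^{=d}$, and by Lemma~\ref{lem:harmonic-algebra} each $f^{=i}$ is harmonic whenever $f$ is. Conversely, a sum of harmonic homogeneous polynomials of distinct degrees is again harmonic, and the decomposition into homogeneous parts is linearly independent. Hence
\[
H_{\leq d} = \bigoplus_{i=0}^{d} H_i.
\]
Applying part (a) to $d$ and to $d-1$ (noting that $d \leq n/2$ implies $d-1 \leq n/2$, so (a) does apply to $H_{\leq d-1}$), we conclude
\[
\dim H_d = \dim H_{\leq d} - \dim H_{\leq d-1} = \binom{n}{d} - \binom{n}{d-1}.
\]
The boundary case $d = 0$ is handled by the convention $\binom{n}{-1} = 0$, giving $\dim H_0 = 1$, which matches the fact that $H_0$ consists of the constant polynomials.

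There is no real obstacle: the substantive content lives in Theorem~\ref{thm:harmonic-representation} and Lemma~\ref{lem:harmonic-algebra}, and the corollary just reads off the dimensions. The only minor point worth stating explicitly in the write-up is that the linear map from harmonic multilinear polynomials of degree at most $d$ to functions on $\binom{[n]}{d}$ (given by evaluation) is bijective, which is exactly the content of the theorem, so no extra injectivity or surjectivity argument is needed.
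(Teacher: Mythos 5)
Your proposal is correct and follows essentially the same route as the paper: part (a) is Theorem~\ref{thm:harmonic-representation} with $k=d$, and part (b) follows from the direct-sum decomposition $\dim H_{\leq d} = \sum_{e=0}^d \dim H_e$. The extra details you supply (linearity of the evaluation/representation maps, closure of harmonicity under homogeneous parts via Lemma~\ref{lem:harmonic-algebra}, and the $d=0$ boundary case) are all accurate and merely spell out what the paper leaves implicit.
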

\begin{proof}
 The first item follows directly from Theorem~\ref{thm:harmonic-representation}, taking $k = d$. Since $\dim H_{\leq d} = \sum_{e=0}^d \dim H_e$, the second item follows from the first.
\end{proof}

For another proof, see the proof of Theorem~\ref{thm:harmonic-representation-2}.

\subsection{Orthogonality of homogeneous parts} \label{sec:orthogonality}

As stated in the introduction to this section, multilinear polynomials enjoy the useful property of having the same mean and variance with respect to all product measures with fixed marginal mean and variance. The corresponding property for harmonic multilinear polynomials is stated in the following theorem, which also follows from the work of the first author~\cite{F}. A representation-theoretic proof of the theorem is outlined in Section~\ref{sec:coarse-decomposition}.

\begin{theorem} \label{thm:spherical}
Let $f,g$ be homogeneous harmonic multilinear polynomials of degree $d_f,d_g$, respectively, and let $\alpha$ be an exchangeable measure. If $d_f \neq d_g$ then $\EE_\alpha[fg] = 0$. If $d_f = d_g = d$ then there exists a constant $C_{f,g}$ independent of $\alpha$ such that
\[
 \EE_\alpha[fg] = C_{f,g} \EE_\alpha[(x_1-x_2)^2 \cdots (x_{2d-1}-x_{2d})^2].
\]
\end{theorem}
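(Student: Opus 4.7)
My plan is to deduce Theorem~\ref{thm:spherical} from Schur's lemma applied to the natural action of $S_n$ on the space $H_d$ of degree-$d$ homogeneous harmonic multilinear polynomials; this action is well-defined by Lemma~\ref{lem:harmonic-algebra}. The key first observation is that for any exchangeable measure $\alpha$, the bilinear form $B_\alpha \colon H_{d_f} \times H_{d_g} \to \RR$ defined by $B_\alpha(f,g) = \EE_\alpha[fg]$ is $S_n$-invariant: using $(fg)^\pi = f^\pi g^\pi$ together with the permutation-invariance of $\alpha$, we get
\[
B_\alpha(f^\pi, g^\pi) = \EE_\alpha[f^\pi g^\pi] = \EE_\alpha[(fg)^\pi] = \EE_\alpha[fg] = B_\alpha(f,g).
\]

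The next step is to identify $H_d$ (for $d \leq n/2$) with the Specht module $S^{(n-d,d)}$ of $S_n$. The dimension already matches by Corollary~\ref{cor:dimension}, and one can construct an explicit $S_n$-equivariant map that sends polytabloids to basic functions. Granted this identification, the spaces $H_d$ are pairwise non-isomorphic irreducibles, and Schur's lemma (over $\RR$, using that Specht modules are absolutely irreducible) applies. When $d_f \neq d_g$, the only $S_n$-invariant bilinear form $H_{d_f} \times H_{d_g} \to \RR$ is zero, so $\EE_\alpha[fg]=0$. When $d_f = d_g = d$, the space of $S_n$-invariant bilinear forms on $H_d$ is one-dimensional, so $B_\alpha = \lambda(\alpha)\, B_0$ for a fixed reference form $B_0$ and a scalar $\lambda(\alpha)$ that depends only on $\alpha$.

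To pin down the scalar and extract the claimed formula, I would test against the basic function $B_* = (x_1-x_2)(x_3-x_4)\cdots(x_{2d-1}-x_{2d})$, which lies in $H_d$ by Lemma~\ref{lem:harmonic-algebra}. Evaluating $B_\alpha(B_*,B_*) = \EE_\alpha[(x_1-x_2)^2 \cdots (x_{2d-1}-x_{2d})^2] = \lambda(\alpha)\, B_0(B_*,B_*)$ lets us solve for $\lambda(\alpha)$ and gives $\EE_\alpha[fg] = C_{f,g}\, \EE_\alpha[(x_1-x_2)^2 \cdots (x_{2d-1}-x_{2d})^2]$ with $C_{f,g} = B_0(f,g)/B_0(B_*,B_*)$, which is manifestly independent of $\alpha$. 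The main obstacle is verifying the irreducibility of $H_d$ and the pairwise non-isomorphism of the $H_d$; these are classical properties of $S^{(n-d,d)}$, but establishing them from scratch requires a genuine construction (via polytabloids or the branching rule). A fully elementary alternative would classify $\EE_\alpha[B_M B_{M'}]$ for basic functions $B_M, B_{M'}$ by the combinatorial type of the pair of matchings $(M,M')$ and show that the only nonzero contribution factors through $\EE_\alpha[q]$; this would essentially reprove Schur's lemma by hand and does not appear easier.
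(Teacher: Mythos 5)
Your proof is correct, but it is not the route the paper takes in Section~\ref{sec:orthogonality}; it is, almost verbatim, the alternative representation-theoretic argument that the paper itself sketches later in Section~\ref{sec:coarse-decomposition}. The paper's own proof of Theorem~\ref{thm:spherical} is deliberately elementary and self-contained: it sets $h=fg$, symmetrizes to $H=\EE_{\pi\in S_n}[h^\pi]$ (harmonic by Lemma~\ref{lem:harmonic-algebra}), expands $H$ in the basis of symmetrized monomials $b_t$, and then evaluates against the Gaussian product measures $\nor(\mu,\sigma^2)^{\otimes n}$. Harmonicity via Lemma~\ref{lem:harmonic-alt} forces $\EE_{\nor(\mu,\sigma^2)}[H]$ to be independent of $\mu$, and the linear independence of the polynomials $(\sigma^2+\mu^2)^t\mu^{d_f+d_g-2t}$ then kills all coefficients when $d_f\neq d_g$ and pins the coefficient vector down to a one-dimensional family when $d_f=d_g$; normalizing against $f=g=(x_1-x_2)\cdots(x_{2d-1}-x_{2d})$ finishes the job. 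Your approach buys conceptual clarity and a one-line reason for the orthogonality (Schur's lemma for the multiplicity-free decomposition $M^{(n-k,k)}\cong\bigoplus_{e\le d}S^{(n-e,e)}$), but it imports nontrivial external facts --- the identification $H_d\cong S^{(n-d,d)}$, pairwise non-isomorphism, and absolute irreducibility over $\RR$ --- which you correctly flag as the main obstacle; these can be supplied from Theorem~\ref{thm:harmonic-representation} plus Young's rule (so the gap is fillable by citation), whereas the paper's argument needs nothing beyond Lemmas~\ref{lem:harmonic-algebra} and~\ref{lem:harmonic-alt}. One small point to make explicit in your normalization step: the reference form $B_0$ should be taken to be $B_{\alpha_0}$ for a fixed non-degenerate exchangeable $\alpha_0$ (e.g.\ the standard Gaussian), so that $B_0(B_*,B_*)=2^d\neq 0$ and the division defining $C_{f,g}$ is legitimate.
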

\begin{proof}
Let $h = fg$, and note that Lemma~\ref{lem:harmonic-algebra} implies that $h$ is harmonic.
Let $H = \EE_{\pi \in S_n}[h^\pi]$, and note that $H$ is also harmonic.
Since $\alpha$ is exchangeable, $\EE_\alpha[fg] = \EE_\alpha[H]$.

We first note that $H$ is a linear combination of the functions
\[ b_t = \EE_{\pi \in S_n} x_{\pi(1)}^2 \cdots x_{\pi(t)}^2 x_{\pi(t+1)} \cdots x_{\pi(d_f+d_g-t)}, \]
say $H = \sum_t \beta_t(f,g) b_t$. 

Suppose first that $d_f \neq d_g$. It is easy to check that $\EE_{\nor(0,1)}[b_t] = 0$ for all $t$, and so $\EE_{\nor(0,1)}[H] = 0$. Since $H$ is harmonic, Lemma~\ref{lem:harmonic-alt} implies that $\EE_{\nor(\mu,1)}[H]$ doesn't depend on $\mu$, and so
\[
 0 = \EE_{\nor(0,1)}[H] = \EE_{\nor(\mu,1)}[H] = \sum_{t=0}^{\min(d_f,d_g)} \beta_t(f,g) (1 + \mu^2)^t \mu^{d_f+d_g-2t}.
\]
The polynomial $P_t(\mu) = (1 + \mu^2)^t \mu^{d_f+d_g-2t}$ has minimal degree term $\mu^{d_f+d_g-2t}$, and so the polynomials $P_0,\ldots,P_{\min(d_f,d_g)}$ are linearly independent. This shows that $\beta_t(f,g) = 0$ for all $t$, and so $H = 0$. In particular, $\EE_\alpha[fg] = \EE_\alpha[H] = 0$.

When $d_f = d_g = d$, it is still true that $\EE_{\nor(0,1)}[b_t] = 0$ for all $t < d$, but now $\EE_{\nor(0,1)}[b_d] = 1$. Therefore the same argument as before shows that
\[
 \beta_d(f,g) = \sum_{t=0}^d \beta_t(f,g) P_t.
\]
The linear independence of $P_0,\ldots,P_d$ implies that $\beta_t(f,g)/\beta_d(f,g)$ depends only on $d$, and so
\[
 H = \beta_d(f,g) \sum_{t=0}^d \kappa_t b_t
\]
 for constants $\kappa_0,\ldots,\kappa_t$ depending only on $d$.
 In particular,
\[
 \EE_\alpha[fg] = \EE_\alpha[H] = \beta_d(f,g) \EE_\alpha \left[ \sum_{t=0}^d \kappa_t b_t \right].
\]
Applying the same argument to $f' = g' = (x_1-x_2)\cdots(x_{2d-1}-x_{2d})$, we obtain
\[
 \EE_\alpha[(x_1-x_2)^2\cdots(x_{2d-1}-x_{2d})^2] = \beta'_d \EE_\alpha \left[ \sum_{t=0}^d \kappa_t b_t \right],
\]
 where $\beta'_d$ depends only on $d$. The theorem follows with $C_{f,g} = \beta_d(f,g)/\beta'_d$.
\end{proof}

\begin{corollary} \label{cor:approximate-norm}
Let $f$ be a harmonic multilinear polynomial of degree at most $d$ with constant coefficient $f^{=0}$. Suppose that $\alpha,\beta$ are exchangeable measures and $C > 0$ is a constant that for $t \leq d$ satisfies
\[
 \EE_\alpha[(x_1-x_2)^2 \cdots (x_{2t-1}-x_{2t})^2] \leq C \EE_\beta[(x_1-x_2)^2 \cdots (x_{2t-1}-x_{2t})^2].
\]
Then $\EE_\alpha[f] = f^{=0}$, $\|f\|^2_\alpha \leq C\|f\|^2_\beta$, and $\VV_\alpha[f] \leq C\VV_\beta[f]$.
\end{corollary}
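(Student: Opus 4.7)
The plan is to split $f$ into its homogeneous parts and apply Theorem~\ref{thm:spherical} piece by piece. By Lemma~\ref{lem:harmonic-algebra} the homogeneous decomposition $f = \sum_{e=0}^{d} f^{=e}$ consists of harmonic multilinear polynomials, with $f^{=0}$ the constant term.

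For the mean, I would take $g = 1$ (harmonic and homogeneous of degree~$0$) in Theorem~\ref{thm:spherical}. The orthogonality clause yields $\EE_\alpha[f^{=e}] = 0$ for every $e \geq 1$ and every exchangeable measure $\alpha$, whence $\EE_\alpha[f] = f^{=0}$ is immediate (and applies to both $\alpha$ and $\beta$).

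For the $L_2$ statement I would expand
\[
\|f\|_\alpha^2 = \sum_{e_1,e_2=0}^{d} \EE_\alpha\bigl[f^{=e_1} f^{=e_2}\bigr]
\]
and invoke Theorem~\ref{thm:spherical} twice: the off-diagonal contributions vanish, and each diagonal term equals $C_e \cdot \EE_\alpha[(x_1-x_2)^2 \cdots (x_{2e-1}-x_{2e})^2]$ for a scalar $C_e = C_{f^{=e},f^{=e}}$ that is independent of the measure. Each $C_e$ is nonnegative: specializing $\alpha$ to a nondegenerate product Gaussian makes the left side a nonnegative $L_2$ norm squared while the right factor is strictly positive, forcing $C_e \geq 0$. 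Applying the hypothesis termwise and summing yields $\|f\|_\alpha^2 \leq C \|f\|_\beta^2$; note that the $t=0$ case of the hypothesis forces $C \geq 1$, which absorbs the $e=0$ term $(f^{=0})^2$ that is identical on both sides.

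The variance bound is the same computation restricted to $e \geq 1$: by the mean computation, $\VV_\alpha[f] = \sum_{e=1}^{d} C_e \, \EE_\alpha[(x_1-x_2)^2 \cdots (x_{2e-1}-x_{2e})^2]$, and termwise application of the hypothesis gives $\VV_\alpha[f] \leq C \VV_\beta[f]$. There is no real obstacle beyond verifying the nonnegativity of each $C_e$; everything else is bookkeeping around Theorem~\ref{thm:spherical}.
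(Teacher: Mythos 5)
Your proof is correct and follows essentially the same route as the paper's: decompose $f$ into homogeneous parts, use Theorem~\ref{thm:spherical} for orthogonality and for the measure-independent coefficients $C_e$, and compare termwise. Your explicit verification that each $C_e \geq 0$ (via a nondegenerate Gaussian) and that the $t=0$ case forces $C \geq 1$ fills in details the paper leaves implicit, but the argument is the same.
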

\begin{proof}
Write $f = \sum_{t=0}^d f^{=t}$, where $f^{=t}$ is homogeneous of degree $t$. Theorem~\ref{thm:spherical} implies that $f^{=t_1},f^{=t_2}$ are orthogonal with respect to all exchangeable measures. This implies that $\EE_\alpha[f] = f^{=0}$ and
\[
 \|f\|^2_\alpha = \sum_{t=0}^d \|f^{=t}\|^2_\alpha.
\]
The theorem also implies that for some $K_0,\ldots,K_d$ we moreover have
\[
 \|f\|^2_\alpha = \sum_{t=0}^d K_t \EE_\alpha[(x_1-x_2)^2 \cdots (x_{2t-1}-x_{2t})^2] \leq C \sum_{t=0}^d K_t \EE_\beta[(x_1-x_2)^2 \cdots (x_{2t-1}-x_{2t})^2] = C\|f\|^2_\beta.
\]
Finally, since $f - f^{=0}$ is also harmonic, we deduce that $\VV[f]_\alpha \leq C \VV[f]_\beta$.
\end{proof}

The following lemma computes $\EE[(x_1-x_2)^2 \cdots (x_{2d-1}-x_{2d})^2]$ for the measures $\nu_k,\mu_p$.

\begin{lemma} \label{lem:norms}
Let $p = k/n$. We have
\begin{align*}
 \EE_{\nu_k}[(x_1-x_2)^2 \cdots (x_{2d-1}-x_{2d})^2] &= 2^d \frac{k^{\underline{d}} (n-k)^{\underline{d}}}{n^{\underline{2d}}} = (2p(1-p))^d \left(1 \pm O\left(\frac{d^2}{p(1-p)n}\right)\right), \\
 \EE_{\mu_p}[(x_1-x_2)^2 \cdots (x_{2d-1}-x_{2d})^2] &= (2p(1-p))^d.
\end{align*}
\end{lemma}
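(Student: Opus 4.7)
The plan is to treat the two measures separately and then derive the asymptotic approximation for $\nu_k$ by factoring out $p^d(1-p)^d$ and bounding the remaining rational function.

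For $\mu_p$, the argument is immediate. The quantity $(x_{2i-1}-x_{2i})^2$ is the indicator of the event $x_{2i-1}\neq x_{2i}$, which under the i.i.d.\ Bernoulli$(p)$ measure has probability $2p(1-p)$. Since the pairs $\{2i-1,2i\}$ are disjoint, the events for different $i$ are independent, so the product expectation factorizes to $(2p(1-p))^d$.

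For $\nu_k$, I would first observe that the product $(x_1-x_2)^2\cdots (x_{2d-1}-x_{2d})^2$ is again the indicator of the event that each pair $\{2i-1,2i\}$ contains exactly one $1$. I would then count the $x\in\binom{[n]}{k}$ satisfying this: there are $2^d$ choices for which coordinate in each pair is the $1$, contributing $d$ ones, and the remaining $n-2d$ coordinates must contain $k-d$ ones, for $\binom{n-2d}{k-d}$ ways. Dividing by $\binom{n}{k}$ and rearranging factorials,
\[
\EE_{\nu_k}\bigl[(x_1-x_2)^2\cdots(x_{2d-1}-x_{2d})^2\bigr]=2^d\,\frac{\binom{n-2d}{k-d}}{\binom{n}{k}}=2^d\,\frac{k^{\underline d}(n-k)^{\underline d}}{n^{\underline{2d}}},
\]
which is the claimed closed form.

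It remains to establish the $(2p(1-p))^d(1\pm O(d^2/(p(1-p)n)))$ approximation. I would write
\[
\frac{k^{\underline d}(n-k)^{\underline d}}{n^{\underline{2d}}}=p^d(1-p)^d\cdot\frac{\prod_{i=0}^{d-1}(1-i/k)(1-i/(n-k))}{\prod_{i=0}^{2d-1}(1-i/n)},
\]
where $p=k/n$. Each numerator factor is $1-O(d/(pn))$ or $1-O(d/((1-p)n))$, hence $1-O(d/(p(1-p)n))$; each denominator factor is $1-O(d/n)$, even smaller. Using the standard estimate $\prod_{j}(1-\epsilon_j)=1-O(\sum_j \epsilon_j)$ valid when $\sum_j \epsilon_j$ is bounded (which is harmless since the statement is otherwise vacuous), the numerator and denominator each equal $1\pm O(d^2/(p(1-p)n))$, and so does their ratio.

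There is no real obstacle here: the only mildly delicate point is making sure the error bound is clean when $d^2/(p(1-p)n)$ is not small, which is handled by observing that the trivial bound $\EE[\cdot]\leq 1$ makes the stated approximation vacuously true in that regime.
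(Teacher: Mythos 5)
The paper does not actually prove this lemma: it declares the computation ``straightforward'' and cites \cite[Theorem 4.1]{F} and \cite[Lemma 2.9]{FKMW}. Your proposal supplies the omitted computation, and it is correct. The key observation --- that the product is the indicator that each pair $\{2i-1,2i\}$ is split --- gives independence and hence $(2p(1-p))^d$ under $\mu_p$, and the count $2^d\binom{n-2d}{k-d}/\binom{n}{k}$ under $\nu_k$, which rearranges to the stated falling-factorial form exactly as you write. The product expansion then yields the $(2p(1-p))^d\bigl(1\pm O(d^2/(p(1-p)n))\bigr)$ estimate in the regime where $d^2/(p(1-p)n)$ is bounded, which is the only regime in which the lemma is invoked (the paper always assumes $d\le K\sqrt{p(1-p)n}$). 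The one imprecise point is your closing remark: the trivial bound $\EE[\cdot]\le 1$ makes only the \emph{lower} bound vacuous when the error term exceeds $1$; it does not by itself give the upper bound $\EE[\cdot]\le(2p(1-p))^d(1+O(d^2/(p(1-p)n)))$, since $(2p(1-p))^d$ can be far smaller than $1$. Establishing the upper bound for large $d$ requires the sharper Stirling-type estimate the paper carries out in Subsection~\ref{sec:high-degree} (where the ratio is shown to be $\exp[\tfrac{d^2}{2p(1-p)n}(-1+4p(1-p))\pm o(1)]\le 1+o(1)$). This does not affect any application of the lemma, but the remark should either be dropped or replaced by a restriction to the regime $d^2=O(p(1-p)n)$.
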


This straightforward computation appears in~\cite[Theorem 4.1]{F} and~\cite[Lemma 2.9]{FKMW}. Qualitatively, the lemma states that the norm of a low-degree basic function is similar in both $\nu_k$ and $\mu_p$. This is not surprising: the coordinates in the slice are almost independent, and a low-degree basic function depends only on a small number of them.

\subsection{Poincar\'e inequalities} \label{sec:poincare}

We proceed by proving the so-called two-sided Poincar\'e inequality, starting with the following fact.

\begin{lemma} \label{lem:transpositions}
Let $f$ be a harmonic multilinear polynomial. Then
\[
 \sum_{i<j} f^{(i\;j)} = \sum_{d=0}^{n/2} \left[ \binom{n}{2} - d(n-d+1) \right] f^{=d},
\]
where $f^{=d}$ is the $d$th homogeneous part of $f$.
\end{lemma}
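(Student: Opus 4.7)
The plan is to rewrite the sum in terms of the ``Laplacian'' operator $L f = \sum_{i<j}(f - f^{(i\;j)})$, using the identity $\sum_{i<j} f^{(i\;j)} = \binom{n}{2} f - Lf$. By Lemma~\ref{lem:harmonic-algebra} each homogeneous part $f^{=d}$ of a harmonic polynomial is again harmonic, and $L$ obviously preserves homogeneous parts, so it suffices to show that for every homogeneous harmonic multilinear polynomial $g$ of degree $d$ we have $Lg = d(n-d+1)\,g$. The claimed identity then follows by linearity.

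To compute $L$ on a multilinear monomial $x_S$ (with $|S|=d$), observe that $x_S - x_S^{(i\;j)}$ vanishes unless exactly one of $i,j$ lies in $S$; in that case the swapped monomial is $x_{S\triangle\{i,j\}}$. Hence
\[
 L x_S = d(n-d)\, x_S - \sum_{i \in S,\, j \notin S} x_{S\setminus\{i\} \cup \{j\}}.
\]
Writing $g = \sum_{|S|=d} c_S\, x_S$ and reindexing the second sum by $T = S\setminus\{i\}\cup\{j\}$, one gets that the coefficient of $x_T$ in $Lg$ equals $d(n-d)\,c_T - \sum_{j \in T,\, i \notin T} c_{T\setminus\{j\}\cup\{i\}}$.

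The second step is to apply the harmonicity of $g$ to simplify this inner sum. The condition $\sum_i \partial_i g = 0$ reads, at the level of coefficients, $\sum_{i \notin U} c_{U \cup \{i\}} = 0$ for every $U$ of size $d-1$. Taking $U = T\setminus\{j\}$ for a fixed $j \in T$ separates out the term $i=j$, giving $c_T + \sum_{i \notin T} c_{T\setminus\{j\}\cup\{i\}} = 0$, i.e.\ $\sum_{i \notin T} c_{T\setminus\{j\}\cup\{i\}} = -c_T$. Summing this over the $d$ choices of $j \in T$ yields $\sum_{j \in T,\, i \notin T} c_{T\setminus\{j\}\cup\{i\}} = -d\,c_T$. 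Substituting back, the coefficient of $x_T$ in $Lg$ becomes $d(n-d)\,c_T + d\,c_T = d(n-d+1)\,c_T$, so $Lg = d(n-d+1)\,g$ as desired.

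The only mildly delicate step is the index juggling in the last paragraph: one has to recognize that the double sum $\sum_{i \in S, j \notin S} x_{S\setminus\{i\}\cup\{j\}}$ is naturally reindexed by the \emph{target} set $T$ together with the choice of which element $j \in T$ was ``moved in''. Once this reindexing is in place, invoking the harmonic relation on the $(d-1)$-set $T \setminus \{j\}$ and summing over $j$ is a routine manipulation, and no further input beyond Lemma~\ref{lem:harmonic-algebra} and the definition of harmonicity is needed.
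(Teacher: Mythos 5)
Your proof is correct, and it takes a genuinely different route from the paper's. The paper first invokes Lemma~\ref{lem:harmonic-elementary} to reduce to the case where $f$ is a single basic function $(x_{a_1}-x_{b_1})\cdots(x_{a_d}-x_{b_d})$, and then verifies the identity by splitting the $\binom{n}{2}$ transpositions into four kinds (disjoint from all pairs, equal to some $(a_t\;b_t)$, touching one pair, touching two pairs) and summing their contributions. You instead work directly with the coefficient vector $(c_S)_{|S|=d}$ of a homogeneous harmonic part, observe that the Laplacian $L$ acts on monomials by $Lx_S = d(n-d)x_S - \sum_{i\in S,\,j\notin S} x_{S\setminus\{i\}\cup\{j\}}$, and use the coefficient-level form of harmonicity, $\sum_{i\notin U} c_{U\cup\{i\}}=0$ for $|U|=d-1$, to evaluate the off-diagonal sum as $-d\,c_T$; all the index manipulations check out. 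What your argument buys is independence from the elementary/basic-function characterization (you need only Lemma~\ref{lem:harmonic-algebra} and Definition~\ref{def:harmonic}), and it makes transparent that the degree-$d$ homogeneous harmonic polynomials are exactly eigenvectors of $L$ with eigenvalue $d(n-d+1)$ because harmonicity kills the "lowering" part of the monomial-swapping sum. What the paper's approach buys is a purely combinatorial count on an explicit spanning set, with no need to manipulate the linear relations among the coefficients.
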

\begin{proof}
In view of Lemma~\ref{lem:harmonic-elementary}, $f$ is elementary. Therefore it is enough to consider the case that $f$ is a basic function of some degree $d$, say $f = \prod_{t=1}^d (x_{a_t} - x_{b_t})$. We split the transpositions into four kinds. The first kind is transpositions which do not involve any $a_t,b_t$. There are $\binom{n-2d}{2}$ such transpositions $(i\;j)$, and they all satisfy $f^{(i\;j)} = f$. The second kind is transpositions of the form $(a_t\;b_t)$. There are $d$ of these, and they satisfy $f^{(a_t\;b_t)} = -f$. The third kind is transpositions of the form $(a_t\;j)$ or $(b_t\;j)$, where $j \neq a_s,b_s$. There are $d(n-2d)$ pairs of these. Since
\[ (x_{a_t} - x_{b_t})^{(a_t\;j)} + (x_{a_t} - x_{b_t})^{(b_t\;j)} = x_{a_t} - x_{b_t}, \]
each such pair contributes one multiple of $f$.
The fourth kind is transpositions involving two pairs $(a_t\;b_t),(a_s\;b_s)$, which we group in the obvious way into $\binom{d}{2}$ quadruples.
Direct computation shows that
\[ \sum_{\pi \in \{(a_t\;a_s),(a_t\;b_s),(b_t\;a_s),(b_t\;b_s)\}} [(x_{a_t} - x_{b_t})(x_{a_s} - x_{b_s})]^\pi = 2(x_{a_t} - x_{b_t})(x_{a_s} - x_{b_s}), \]
and so the contribution of each such quadruple is two multiples of $f$. In total, we obtain
\[ \sum_{i<j} f^{(i\;j)} = \left[ \binom{n-2d}{2} - d + d(n-2d) + 2\binom{d}{2} \right] f = \left[ \binom{n}{2} - d(n-d+1) \right] f. \qedhere \]
\end{proof}

\begin{lemma} \label{lem:poincare}
Let $f$ be a harmonic multilinear polynomial of degree at most $d$. Then with respect to any exchangeable measure,
\[
 n\VV[f] \leq \frac{1}{2} \sum_{i<j} \|f-f^{(i\;j)}\|^2 \leq d(n-d+1)\VV[f].
\]
\end{lemma}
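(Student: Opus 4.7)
The plan is to decompose $f$ into its homogeneous harmonic parts $f = \sum_{e=0}^{n/2} f^{=e}$ (which are harmonic by Lemma~\ref{lem:harmonic-algebra}) and to use the orthogonality of distinct homogeneous parts under any exchangeable measure, which is Theorem~\ref{thm:spherical}. This orthogonality lets us write $\VV[f] = \sum_{e \geq 1} \|f^{=e}\|^2$ and reduces both the variance and the Dirichlet-type sum to a diagonal sum over the degree levels.

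First I would compute the middle quantity in closed form. Since the measure is exchangeable, $\|f^{(i\;j)}\| = \|f\|$, so
\[
  \tfrac{1}{2} \sum_{i<j} \|f - f^{(i\;j)}\|^2 = \binom{n}{2}\|f\|^2 - \Big\langle f, \sum_{i<j} f^{(i\;j)} \Big\rangle.
\]
Applying Lemma~\ref{lem:transpositions} to evaluate $\sum_{i<j} f^{(i\;j)}$, and then using Theorem~\ref{thm:spherical} to kill the cross terms in the inner product, the expression collapses to
\[
  \tfrac{1}{2}\sum_{i<j}\|f - f^{(i\;j)}\|^2 = \sum_{e \geq 1} e(n-e+1)\,\|f^{=e}\|^2,
\]
since the coefficient $\binom{n}{2} - e(n-e+1)$ cancels against the $\binom{n}{2}\|f\|^2$ term, leaving only $e(n-e+1)$.

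The two inequalities now follow from elementary bounds on the scalar weights $e(n-e+1)$ for $1 \leq e \leq \deg f$. For the lower bound I would note that $e(n-e+1) - n = (e-1)(n-e) \geq 0$ for $1 \leq e \leq n$, so each weight is at least $n$, yielding $\sum_e e(n-e+1)\|f^{=e}\|^2 \geq n \sum_{e \geq 1}\|f^{=e}\|^2 = n\VV[f]$. For the upper bound I would observe that $e(n-e+1)$ is a downward parabola maximized near $e = (n+1)/2$, and crucially use Lemma~\ref{lem:harmonic-elementary} to conclude that $\deg f \leq n/2$, so the function $e \mapsto e(n-e+1)$ is monotonically increasing on $\{1, \ldots, \deg f\}$. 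Hence each coefficient is at most $d(n-d+1)$, giving the desired upper bound.

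There is no real obstacle beyond assembling these pieces: the representation-theoretic content is already encoded in Theorem~\ref{thm:spherical} and Lemma~\ref{lem:transpositions}, and the only subtle point is invoking the degree ceiling $\deg f \leq n/2$ from Lemma~\ref{lem:harmonic-elementary} to justify the monotonicity needed for the upper bound.
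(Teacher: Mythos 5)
Your proposal is correct and follows essentially the same route as the paper: decompose into homogeneous parts, use Theorem~\ref{thm:spherical} for orthogonality, evaluate $\frac{1}{2}\sum_{i<j}\|f-f^{(i\;j)}\|^2$ via Lemma~\ref{lem:transpositions} as $\sum_t t(n-t+1)\|f^{=t}\|^2$, and bound the weights $t(n-t+1)$ between $n$ and $d(n-d+1)$ using monotonicity on $t\leq (n+1)/2$. Your explicit appeal to Lemma~\ref{lem:harmonic-elementary} for the degree ceiling $\deg f\leq n/2$ is a small clarification the paper leaves implicit, but the argument is the same.
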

\begin{proof}
Write $f = \sum_{t=0}^d f^{=t}$. Theorem~\ref{thm:spherical} implies that the homogeneous parts are orthogonal. We have
\[
 \frac{1}{2} \sum_{i<j} \|f-f^{(i\;j)}\|^2 = \binom{n}{2} \|f\|^2 - \big\langle f, \sum_{i<j} f^{(i\;j)} \big\rangle.
\]
Lemma~\ref{lem:transpositions} implies that
\[
 \frac{1}{2} \sum_{i<j} \|f-f^{(i\;j)}\|^2 = \sum_{t=0}^d t(n-t+1) \|f^{=t}\|^2.
\]
The lemma now follows from the observation that for $1 \leq t \leq d$ we have $n \leq t(n-t+1) \leq d(n-d+1)$, since $t(n-t+1)$ is increasing for $t \leq (n+1)/2$.
\end{proof}

Finally, we prove another two-sided Poincar\'e inequality, this time for derivatives. We start with the following surprising corollary of Theorem~\ref{thm:spherical}.

\begin{lemma} \label{lem:spherical-derivative}
Let $f,g$ be homogeneous harmonic multilinear polynomials of degree $d$. Then for any exchangeable measure $\alpha$,
\[
 \frac{\sum_{i=1}^n \EE_\alpha \left[ \frac{\partial f}{\partial x_i} \frac{\partial g}{\partial x_i} \right]}{\EE_\alpha [fg]} = 2d \frac{\EE_\alpha [(x_1-x_2)^2\cdots(x_{2d-3}-x_{2d-2})^2]}{\EE_\alpha [(x_1-x_2)^2\cdots(x_{2d-1}-x_{2d})^2]}.
\]
\end{lemma}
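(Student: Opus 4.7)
The plan is to reduce the identity to a universal constant computation by two applications of Theorem~\ref{thm:spherical}, and then evaluate that constant via the standard Gaussian measure.

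First I would check that each partial derivative $\partial_i f$ is again a harmonic multilinear polynomial, now homogeneous of degree $d-1$. Harmonicity is Lemma~\ref{lem:harmonic-algebra}; multilinearity holds because $\partial_i$ applied to a multilinear polynomial yields one not involving $x_i$ and multilinear in the remaining variables; and homogeneity of degree $d-1$ is immediate. Thus Theorem~\ref{thm:spherical} applies both to the pair $(f,g)$ at degree $d$ and to each pair $(\partial_i f, \partial_i g)$ at degree $d-1$, giving constants $C_{f,g}$ and $C_{\partial_i f,\partial_i g}$ (independent of $\alpha$) such that
\[
\EE_\alpha[fg] = C_{f,g}\,\EE_\alpha[(x_1-x_2)^2\cdots(x_{2d-1}-x_{2d})^2],
\]
\[
\EE_\alpha[\partial_i f\cdot\partial_i g] = C_{\partial_i f,\partial_i g}\,\EE_\alpha[(x_1-x_2)^2\cdots(x_{2d-3}-x_{2d-2})^2].
\]
Summing the second equation over $i$ and dividing by the first, the $\alpha$-dependence collapses into exactly the ratio of basic-function expectations appearing on the right of the lemma, multiplied by the \emph{purely combinatorial} scalar $K(f,g) \eqdef \sum_{i=1}^n C_{\partial_i f,\partial_i g}/C_{f,g}$. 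Thus the lemma reduces to showing $K(f,g) = 2d$ for every such pair $f,g$.

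Next I would exploit the fact that $K(f,g)$ does not depend on $\alpha$, and evaluate everything at the standard Gaussian $\alpha = \nor(0,1)^{\otimes n}$, where the multilinear monomials $\{x_S\}$ form an orthonormal system: $\EE[x_Sx_T]=\charf{S=T}$. Writing $f=\sum_{|S|=d}\hat f(S)\,x_S$ and similarly for $g$, direct expansion gives $\EE[fg]=\sum_S \hat f(S)\hat g(S)$ and
\[
\sum_{i=1}^n \EE\bigl[\partial_i f\cdot\partial_i g\bigr] = \sum_{i=1}^n \sum_{S\ni i}\hat f(S)\hat g(S) = \sum_S |S|\,\hat f(S)\hat g(S) = d\,\EE[fg],
\]
using homogeneity of degree $d$ in the last step. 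On the other hand, for this Gaussian $\alpha$ we have $\EE_\alpha[(x_1-x_2)^2\cdots(x_{2t-1}-x_{2t})^2]=2^t$ by independence, so the ratio of basic-function expectations is $2^{d-1}/2^d = 1/2$. Plugging into the reduced form above forces $K(f,g)\cdot\tfrac12 = d$, i.e.\ $K(f,g)=2d$, as required.

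I expect the main conceptual point—rather than a technical obstacle—to be recognizing that $C_{\partial_i f,\partial_i g}/C_{f,g}$ is a universal scalar that can be pinned down in any single convenient measure. The only thing to be careful about is the degree bookkeeping: $\partial_i$ drops the degree by one, so Theorem~\ref{thm:spherical} is invoked at degree $d-1$ and the associated basic function has only $d-1$ factors, which is precisely what produces the shift from $(x_{2d-1}-x_{2d})^2$ to $(x_{2d-3}-x_{2d-2})^2$ in the denominator of the target identity.
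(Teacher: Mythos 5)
Your proposal is correct and follows essentially the same route as the paper's proof: apply Theorem~\ref{thm:spherical} to the pair $(f,g)$ at degree $d$ and to each derivative pair at degree $d-1$, observe that the resulting ratio of constants is measure-independent, and pin it down at the standard Gaussian using $\sum_i \EE[\partial_i f\,\partial_i g] = d\,\EE[fg]$ together with $\EE[(x_1-x_2)^2\cdots(x_{2t-1}-x_{2t})^2]=2^t$. No gaps.
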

\begin{proof}
Since the functions $\frac{\partial f}{\partial x_i},\frac{\partial g}{\partial x_i}$ are harmonic, Theorem~\ref{thm:spherical} implies that there are constants $C_{f,g},D_{f,g}$ such that
\[
 \frac{\sum_{i=1}^n \EE_\alpha \left[ \frac{\partial f}{\partial x_i} \frac{\partial g}{\partial x_i} \right]}{\EE_\alpha [fg]} = \frac{D_{f,g}}{C_{f,g}} \frac{\EE_\alpha [(x_1-x_2)^2\cdots(x_{2d-3}-x_{2d-2})^2]}{\EE_\alpha [(x_1-x_2)^2\cdots(x_{2d-1}-x_{2d})^2]}.
\]
 Here $D_{f,g} = \sum_{i=1}^n C_{\frac{\partial f}{\partial x_i},\frac{\partial g}{\partial x_i}}$. We can evaluate the ratio $D_{f,g}/C_{f,g}$ by considering the distribution $\alpha = \nor(0,I_n)$:
\[
 \frac{\sum_{i=1}^n \EE_{\nor(0,I_n)} \left[ \frac{\partial f}{\partial x_i} \frac{\partial g}{\partial x_i} \right]}{\EE_{\nor(0,I_n)} [fg]} = \frac{D_{f,g}}{C_{f,g}} \frac{2^{d-1}}{2^d} = \frac{D_{f,g}}{2C_{f,g}}.
\]
 On the other hand, with respect to $\nor(0,I_n)$ we have
\[
 \sum_{i=1}^n \EE\left[ \frac{\partial f}{\partial x_i} \frac{\partial g}{\partial x_i} \right] = \sum_{i=1}^n \sum_{S \ni i} \hat{f}(S) \hat{g}(S) = \sum_S |S| \hat{f}(S) \hat{g}(S) = d \EE[fg].
\]
 We conclude that $D_{f,g}/(2C_{f,g}) = d$, and so $D_{f,g}/C_{f,g} = 2d$.
\end{proof}

We deduce the following two-sided Poincar\'e inequality.

\begin{lemma} \label{lem:poincare-derivative}
 Let $f$ be a harmonic multilinear polynomial of degree $d$, and let $\alpha$ be an exchangeable measure. Suppose that for $1 \leq t \leq d$ we have
\[ m \leq 2t \frac{\EE_\alpha [(x_1-x_2)^2\cdots(x_{2t-3}-x_{2t-2})^2]}{\EE_\alpha [(x_1-x_2)^2\cdots(x_{2t-1}-x_{2t})^2]} \leq M. \]
Then also
\[ m\VV[f] \leq \sum_{i=1}^n \left\|\frac{\partial f}{\partial x_i}\right\|^2 \leq M\VV[f]. \]
\end{lemma}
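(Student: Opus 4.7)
The plan is to diagonalize both sides of the inequality along the decomposition of $f$ into its homogeneous harmonic parts, and then invoke Lemma~\ref{lem:spherical-derivative} on each part separately.

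First, write $f = \sum_{t=0}^d f^{=t}$. By Lemma~\ref{lem:harmonic-algebra} each $f^{=t}$ is itself harmonic and multilinear (and homogeneous of degree $t$). Theorem~\ref{thm:spherical} then tells us that distinct homogeneous parts are orthogonal under any exchangeable measure $\alpha$, and Corollary~\ref{cor:approximate-norm} (applied to $\alpha$ and $\alpha$) identifies $\EE_\alpha[f] = f^{=0}$, so
\[
 \VV_\alpha[f] \;=\; \sum_{t=1}^{d} \|f^{=t}\|_\alpha^2.
\]
For the right-hand side, Lemma~\ref{lem:harmonic-algebra} also guarantees that $\partial f^{=t}/\partial x_i$ is harmonic and multilinear, and it is homogeneous of degree $t-1$. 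Consequently, for $s \neq t$ with $s,t \geq 1$, Theorem~\ref{thm:spherical} forces $\EE_\alpha\bigl[(\partial_i f^{=s})(\partial_i f^{=t})\bigr] = 0$ coordinatewise, and the $t=0$ piece vanishes outright. Summing over $i$ yields
\[
 \sum_{i=1}^{n} \left\| \frac{\partial f}{\partial x_i} \right\|_\alpha^2 \;=\; \sum_{t=1}^{d} \sum_{i=1}^{n} \left\| \frac{\partial f^{=t}}{\partial x_i} \right\|_\alpha^2.
\]

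Now I would apply Lemma~\ref{lem:spherical-derivative} with $f = g = f^{=t}$ for each $t \geq 1$, obtaining
\[
 \sum_{i=1}^{n} \left\| \frac{\partial f^{=t}}{\partial x_i} \right\|_\alpha^2 \;=\; 2t \, \frac{\EE_\alpha[(x_1-x_2)^2\cdots(x_{2t-3}-x_{2t-2})^2]}{\EE_\alpha[(x_1-x_2)^2\cdots(x_{2t-1}-x_{2t})^2]} \, \|f^{=t}\|_\alpha^2.
\]
The hypothesis sandwiches the ratio between $m$ and $M$ for every $1 \leq t \leq d$, so
\[
 m\,\|f^{=t}\|_\alpha^2 \;\leq\; \sum_{i=1}^{n} \left\| \frac{\partial f^{=t}}{\partial x_i} \right\|_\alpha^2 \;\leq\; M\,\|f^{=t}\|_\alpha^2.
\]
Summing over $t = 1, \ldots, d$ and using the two displayed identities from the previous paragraph delivers the claimed two-sided bound.

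There is no substantive obstacle; the one point worth double-checking is that the derivatives of distinct-degree harmonic pieces are indeed orthogonal under $\alpha$, which is what allows us to split $\sum_i \|\partial_i f\|_\alpha^2$ by degree. This follows because differentiation preserves harmonicity (Lemma~\ref{lem:harmonic-algebra}) and shifts homogeneous degree by exactly one, so Theorem~\ref{thm:spherical} applies to the derivatives just as it does to $f$ itself. Everything else is straightforward bookkeeping using the already-established identities.
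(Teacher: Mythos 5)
Your proof is correct and follows essentially the same route as the paper's: decompose $f$ into homogeneous harmonic parts, use Theorem~\ref{thm:spherical} to split both $\VV_\alpha[f]$ and $\sum_i \|\partial_i f\|_\alpha^2$ by degree (noting that differentiation preserves harmonicity and lowers the homogeneous degree by one), and apply Lemma~\ref{lem:spherical-derivative} to each $f^{=t}$. The only cosmetic difference is that you spell out the orthogonality of the derivatives in more detail than the paper does.
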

\begin{proof}
 Write $f = \sum_{t=0}^d f^{=t}$, and notice that $\frac{\partial f}{\partial x_i} = \sum_{t=1}^d \frac{\partial f^{=t}}{x_i}$, the latter sum being a decomposition into homogeneous parts. Theorem~\ref{thm:spherical} implies that the homogeneous parts are orthogonal with respect to $\alpha$, and so
\[
 \sum_{i=1}^n \left\|\frac{\partial f}{\partial x_i}\right\|^2 = \sum_{t=1}^d \sum_{i=1}^n \left\|\frac{\partial f^{=t}}{\partial x_i}\right\|^2.
\]
 Using Lemma~\ref{lem:spherical-derivative} we can upper bound
\[
 \sum_{t=1}^d \sum_{i=1}^n \left\|\frac{\partial f^{=t}}{\partial x_i}\right\|^2 \leq M \sum_{t=1}^d \|f^{=t}\|^2 = M \VV[f].
\]
 The lower bound is obtained in the same way.
\end{proof}

The following lemma computes $m,M$ for the measures $\nu_k,\mu_p$.

\begin{lemma} \label{lem:norm-ratios}
Let $p = k/n$. We have
\begin{align*}
 2d \frac{\EE_{\nu_k}[(x_1-x_2)^2 \cdots (x_{2d-3}-x_{2d-2})^2]}{\EE_{\nu_k}[(x_1-x_2)^2 \cdots (x_{2d-1}-x_{2d})^2]} &= d \frac{(n-2d+2)(n-2d+1)}{(k-d+1)(n-k-d+1)} \\ &= \frac{d}{p(1-p)} \left(1 \pm O\left( \frac{d}{p(1-p)n} \right)\right), \\
 2d \frac{\EE_{\mu_p}[(x_1-x_2)^2 \cdots (x_{2d-3}-x_{2d-2})^2]}{\EE_{\mu_p}[(x_1-x_2)^2 \cdots (x_{2d-1}-x_{2d})^2]} &= \frac{d}{p(1-p)}.
\end{align*}
\end{lemma}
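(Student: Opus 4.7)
The plan is purely computational: substitute the closed-form expressions from Lemma~\ref{lem:norms} into the ratios and then perform an asymptotic expansion in the slice case. There is no conceptual obstacle here; Lemma~\ref{lem:norms} has already done the nontrivial combinatorics.

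First I would handle the product measure, which is immediate. By Lemma~\ref{lem:norms},
\[ 2d \cdot \frac{\EE_{\mu_p}[(x_1-x_2)^2\cdots(x_{2d-3}-x_{2d-2})^2]}{\EE_{\mu_p}[(x_1-x_2)^2\cdots(x_{2d-1}-x_{2d})^2]} = 2d \cdot \frac{(2p(1-p))^{d-1}}{(2p(1-p))^d} = \frac{d}{p(1-p)}, \]
which settles the second equality.

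Next I would turn to the slice. Again by Lemma~\ref{lem:norms}, the ratio telescopes cleanly because consecutive products differ only in their last factor:
\[ 2d \cdot \frac{2^{d-1} \, k^{\underline{d-1}}(n-k)^{\underline{d-1}}/n^{\underline{2d-2}}}{2^d \, k^{\underline{d}}(n-k)^{\underline{d}}/n^{\underline{2d}}} = d \cdot \frac{n^{\underline{2d}}/n^{\underline{2d-2}}}{(k-d+1)(n-k-d+1)} = d \cdot \frac{(n-2d+2)(n-2d+1)}{(k-d+1)(n-k-d+1)}, \]
which is the first stated equality.

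Finally, to obtain the asymptotic form, I would set $p = k/n$ and factor $n$ out of every term. The numerator $(n-2d+2)(n-2d+1) = n^2\bigl(1 - O(d/n)\bigr)$, while
\[ (k-d+1)(n-k-d+1) = pn(1-p)n \bigl(1 - O\bigl(d/(pn)\bigr)\bigr)\bigl(1 - O\bigl(d/((1-p)n)\bigr)\bigr) = p(1-p)n^2 \bigl(1 \pm O\bigl(d/(p(1-p)n)\bigr)\bigr). \]
Dividing yields $d/(p(1-p))$ times $1 \pm O\bigl(d/(p(1-p)n)\bigr)$, which is the claimed expression. The only step that requires any care is ensuring that the multiplicative error is tracked consistently through the two factors in the denominator, which is routine since the larger of $1/(pn)$ and $1/((1-p)n)$ dominates and is absorbed into $1/(p(1-p)n)$.
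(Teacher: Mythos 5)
Your computation is correct and is exactly what the paper intends: it states only that the lemma is ``a straightforward corollary of Lemma~\ref{lem:norms},'' and your proof carries out that substitution, the telescoping of the falling factorials, and the asymptotic expansion without error. Nothing further is needed.
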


The proof is a straightforward corollary of Lemma~\ref{lem:norms}.

\subsection{Cometric property} \label{sec:cometric}

Theorem~\ref{thm:harmonic-representation} states that every function on the slice $\binom{[n]}{k}$ can be represented as a harmonic multilinear polynomial. The following result shows that if the original function can be represented as a polynomial of degree $d$, then its harmonic representation has degree at most $d$.

\begin{lemma} \label{lem:harmonic-degree}
 Let $0 \leq k \leq n$. Let $f$ be a polynomial of degree $d$ on the variables $x_1,\ldots,x_n$. The unique harmonic multilinear polynomial agreeing with $f$ on the slice $\binom{[n]}{k}$ has degree at most $d$.
\end{lemma}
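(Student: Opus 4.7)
The plan is to first reduce to the case that $f$ itself is multilinear, by replacing $f$ with its multilinearization $\widetilde f$ (obtained by repeatedly substituting $x_i$ for $x_i^2$); this operation does not increase the degree and leaves $f$ unchanged on the cube, hence on the slice, so $f$ and $\widetilde f$ share the same unique harmonic multilinear representation. If $d \geq \min(k, n-k)$ the lemma is immediate from Theorem~\ref{thm:harmonic-representation}, so I assume $d < \min(k, n-k)$, which in particular yields $d \leq (n-1)/2$.

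The main idea is then a dimension count. Let $M_{\leq d}$ denote the space of multilinear polynomials in $x_1,\ldots,x_n$ of degree at most $d$, so $\dim M_{\leq d} = \sum_{e=0}^d \binom{n}{e}$, and let $\Phi \colon M_{\leq d} \to H_{\leq \min(k,n-k)}$ send each $g$ to its unique harmonic multilinear representative on the slice (supplied by Theorem~\ref{thm:harmonic-representation}). Since every $h \in H_{\leq d}$ is its own harmonic representative, $\Phi$ restricts to the inclusion on $H_{\leq d}$, giving $H_{\leq d} \subseteq \Phi(M_{\leq d})$. As $\dim H_{\leq d} = \binom{n}{d}$ by Corollary~\ref{cor:dimension}, it suffices to prove the reverse bound $\dim \Phi(M_{\leq d}) \leq \binom{n}{d}$, or equivalently $\dim \ker \Phi \geq \sum_{e=0}^{d-1}\binom{n}{e}$.

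To pin down $\ker \Phi$ I would identify it with the image of the linear map $\Psi \colon M_{\leq d-1} \to M_{\leq d}$ defined by $\Psi(q) = \widetilde{(\sum_i x_i - k)\,q}$. The inclusion $\Psi(M_{\leq d-1}) \subseteq \ker \Phi$ is clear, since $\Psi(q)$ is multilinear, has degree at most $d$, and vanishes identically on the slice. Conversely, any multilinear $f \in \ker \Phi$ lies in the slice ideal $I = \langle x_1^2 - x_1,\ldots,x_n^2 - x_n, \sum_i x_i - k\rangle$, and writing $f = \sum_i p_i(x_i^2 - x_i) + (\sum_i x_i - k)q$ and multilinearizing both sides yields $f = \Psi(\widetilde q)$. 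Injectivity of $\Psi$ follows from a M\"obius inversion: $\Psi(q) = 0$ forces $q$ to vanish on every cube point outside the slice, and since $\deg q \leq d-1 < k$, the values of $q$ on cube points of Hamming weight at most $d-1$ (all of which lie off the slice) already determine $q$. The delicate point, and the main obstacle, is the degree bound $\deg \widetilde q \leq d-1$: if $\widetilde q$ had a top-degree part of degree $e$, the top-degree part of $\Psi(\widetilde q)$ would be its ``up-shadow'' $\sum_{|S|=e} c_S \sum_{j \notin S} x_{S \cup \{j\}}$, which is injective on $M_e$ for $e \leq (n-1)/2$; in our range this forces $\deg \Psi(\widetilde q) = e+1$, and since $\Psi(\widetilde q) \in M_{\leq d}$ we conclude $e \leq d-1$. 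This identification gives $\dim \ker \Phi = \dim M_{\leq d-1}$ and closes the argument.
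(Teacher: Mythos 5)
Your proof is correct, but it takes a genuinely different route from the paper's. The paper reduces to the monomial $f = x_1\cdots x_d$ and kills each homogeneous part $g^{=e}$ of the harmonic representative with $e>d$ by a direct orthogonality computation: it shows $\langle x_1\cdots x_d, h\rangle_{\nu_k}=0$ for every basic function $h$ of degree $e>d$ via a sign-pairing of slice points, and then invokes Theorem~\ref{thm:spherical} to conclude $g^{=e}=0$. Your argument instead is a pure dimension count on the harmonic-projection map $\Phi\colon M_{\leq d}\to H_{\leq\min(k,n-k)}$, exhibiting enough of the kernel via $\Psi(q)=\widetilde{(\sum_i x_i-k)\,q}$; it needs only Theorem~\ref{thm:harmonic-representation} and Corollary~\ref{cor:dimension}, not the orthogonality Theorem~\ref{thm:spherical}, and is much closer in spirit to the ideal-theoretic arguments of Section~\ref{sec:harmonic-algebra}. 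The paper's proof buys an explicit reason (a vanishing inner product) for each high-degree component to die; yours buys independence from the $L_2$ machinery. Two remarks on your write-up. First, the step you flag as the main obstacle --- the converse inclusion $\ker\Phi\subseteq\Psi(M_{\leq d-1})$ with the degree bound on $\widetilde q$ --- is not needed at all: for the bound $\dim\ker\Phi\geq\sum_{e=0}^{d-1}\binom{n}{e}$ you only need the easy inclusion $\Psi(M_{\leq d-1})\subseteq\ker\Phi$ together with injectivity of $\Psi$, and the identification $\ker\Phi=\im\Psi$ then falls out a posteriori from the equality of dimensions. Second, as written that "delicate point" is mildly circular: you invoke injectivity of the up-shadow on $M_e$ for $e\leq(n-1)/2$, but the top degree $e$ of $\widetilde q$ is exactly what you are trying to bound, so you do not yet know it lies in that range. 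Dropping the converse direction removes the issue entirely, so the proof stands.
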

\begin{proof}
 We can assume without loss of generality that $f$ is multilinear, since on the slice $x_i^2 = x_i$. Hence it suffices to prove the theorem for the monomial $f = x_1 \cdots x_d$. Theorem~\ref{thm:harmonic-representation} states that there is a unique harmonic polynomial $g$ of degree at most $\min(k,n-k)$ agreeing with $f$ on the slice $\binom{[n]}{k}$. If $d \geq \min(k,n-k)$ then the lemma is trivial, so assume that $d < \min(k,n-k)$.
 
 Decompose $g$ into its homogeneous parts: $g = g^{=0} + \cdots + g^{=\min(k,n-k)}$. We will show that if $e$ satisfies $d < e \leq \min(k,n-k)$ and $h$ is a basic function of degree $e$ then $\langle f, h \rangle = 0$, where the inner product is with respect to the uniform measure on the slice. Since $f$ agrees with $g$ on the slice, it follows that $\langle g, h \rangle = 0$. Lemma~\ref{lem:harmonic-elementary} shows that $g^{=e}$ is a linear combination of basic functions of degree $e$ (since basic functions are homogeneous), and so $\langle g, g^{=e} \rangle = 0$. Theorem~\ref{thm:spherical} implies that $g^{=e} = 0$. Since this is the case for all $e > d$, we conclude that $\deg g \leq d$.
 
 Consider therefore a basic function $h = (x_{a_1} - x_{b_1})\cdots(x_{a_e} - x_{b_e})$. We have
\[
 \binom{n}{k} \langle f,h \rangle = \sum_{\substack{x \in \binom{[n]}{k}\colon \\ x_1=\cdots=x_d=1}} (x_{a_1} - x_{b_1}) \cdots (x_{a_e} - x_{b_e}).
\]
 Since $e > d$, for some $i$ it holds that $a_i,b_i > d$. The only non-zero terms in the sum (if any) are those for which $x_{a_i} \neq x_{b_i}$. We can match each term in which $x_{a_i} - x_{b_i} = 1$ with a term in which $x_{a_i} - x_{b_i} = -1$, obtained by switching $x_{a_i}$ and $x_{b_i}$. The two terms have opposite signs and so cancel. This shows that the entire sum vanishes, completing the proof.
\end{proof}

As a corollary, we deduce the so-called \emph{Q-polynomial} or \emph{cometric} property of the slice. For a function $f$ on a slice, denote by $\deg f$ the degree of its unique harmonic multilinear representation on the slice.

\begin{corollary} \label{cor:cometric}
 Let $f,g$ be functions on a slice. Then $\deg (fg) \leq \deg f + \deg g$.
\end{corollary}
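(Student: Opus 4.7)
The plan is to reduce this directly to Lemma~\ref{lem:harmonic-degree}. Let $F$ and $G$ be the unique harmonic multilinear polynomial representations of $f$ and $g$ on the slice, so that $\deg F = \deg f$ and $\deg G = \deg g$ (by definition of the notation $\deg$ on the slice). Consider the ordinary polynomial product $FG$: as a polynomial in $x_1,\ldots,x_n$ (not necessarily multilinear, and no longer guaranteed to be harmonic), its total degree is at most $\deg f + \deg g$. Moreover, $FG$ agrees with $fg$ pointwise on the slice $\binom{[n]}{k}$, since $F \equiv f$ and $G \equiv g$ there.

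Next I would invoke Lemma~\ref{lem:harmonic-degree}, which says precisely that whenever a polynomial of degree $d$ agrees with a function on the slice, that function's unique harmonic multilinear representation has degree at most $d$. Applying this with the polynomial $FG$ (of degree at most $\deg f + \deg g$) and the function $fg$ yields that the unique harmonic multilinear representation of $fg$ on the slice has degree at most $\deg f + \deg g$. By definition this is exactly $\deg(fg) \leq \deg f + \deg g$, which is the desired conclusion.

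There is no real obstacle here; the entire content of the corollary is that the degree of the ordinary polynomial product $FG$ trivially satisfies the subadditivity bound, and Lemma~\ref{lem:harmonic-degree} transfers this ordinary-degree bound to the harmonic representation on the slice. The only minor subtlety to flag is that $FG$ need not itself be either multilinear or harmonic, so one should not try to identify $FG$ with the harmonic representation of $fg$ directly; one must pass through Lemma~\ref{lem:harmonic-degree} to obtain the harmonic multilinear representative.
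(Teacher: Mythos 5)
Your proof is correct and is exactly the argument the paper intends: the corollary is stated as an immediate consequence of Lemma~\ref{lem:harmonic-degree}, applied to the (generally non-multilinear, non-harmonic) product $FG$ of the harmonic representatives, which agrees with $fg$ on the slice and has degree at most $\deg f + \deg g$. The subtlety you flag is precisely the one the paper notes after Corollary~\ref{cor:substitution}.
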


Similarly, we have the following substitution property.

\begin{corollary} \label{cor:substitution}
 Let $f$ be a function on a slice, and let $g$ be the function on a smaller slice obtained by fixing one of the coordinates. Then $\deg g \leq \deg f$.
\end{corollary}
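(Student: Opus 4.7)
The plan is to reduce this immediately to Lemma~\ref{lem:harmonic-degree}. Let $f$ be a function on the slice $\binom{[n]}{k}$, represented by its unique harmonic multilinear polynomial $F$ with $\deg F = \deg f$. Without loss of generality, we fix the coordinate $x_n = c$ for some $c \in \{0,1\}$, obtaining the function $g$ defined on $\binom{[n-1]}{k-c}$ by $g(x_1,\ldots,x_{n-1}) = f(x_1,\ldots,x_{n-1},c)$.

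Consider the polynomial $G(x_1,\ldots,x_{n-1}) = F(x_1,\ldots,x_{n-1},c)$, obtained from $F$ by substituting the constant $c$ for $x_n$. Since $F$ is multilinear of degree at most $\deg f$, and substituting a constant for a single variable cannot increase the degree nor reintroduce squares in any other variable, $G$ is a multilinear polynomial in $x_1,\ldots,x_{n-1}$ of total degree at most $\deg f$. By construction, $G$ agrees with $g$ on the smaller slice $\binom{[n-1]}{k-c}$.

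Now apply Lemma~\ref{lem:harmonic-degree} to the polynomial $G$ on the slice $\binom{[n-1]}{k-c}$: the unique harmonic multilinear polynomial representing $g$ on this slice has degree at most $\deg G \leq \deg f$. Hence $\deg g \leq \deg f$, as claimed.

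There is essentially no obstacle here, since the content of the corollary is really just the combination of two facts already established: (i) substituting a constant into a multilinear polynomial does not increase the degree, and (ii) Lemma~\ref{lem:harmonic-degree} guarantees that the harmonic representation of a function has degree no larger than that of any polynomial representing it. The only minor point requiring care is to verify that the substitution preserves multilinearity and degree, which is immediate because we are substituting a scalar into exactly one variable of a multilinear polynomial.
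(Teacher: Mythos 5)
Your proof is correct and is exactly the argument the paper intends: the paper gives no explicit proof of this corollary, stating only that it follows "similarly" from Lemma~\ref{lem:harmonic-degree}, and your reduction (substitute the constant into the harmonic representation of $f$ to get a multilinear polynomial of no larger degree agreeing with $g$, then invoke the lemma) is precisely that argument. Nothing further is needed.
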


Both results are non-trivial since the product of two harmonic multilinear polynomials need not be multilinear, and substitution doesn't preserve harmonicity.

\section{Invariance principle} \label{sec:invariance}

In this section we prove an invariance principle showing that the distribution of a low-degree harmonic function on a slice $\binom{[n]}{k}$ is similar to its distribution on the Boolean cube with respect to the measure $\mu_{k/n}$. For convenience, we analyze the similarity in distribution via Lipschitz test functions, and derive similarity in more conventional terms as a corollary. The basic idea is to show that the distribution of a low degree function on a given slice $\binom{[n]}{k}$ is similar to its distribution on nearby slices $\binom{[n]}{\ell}$. If we can show this for all slices satisfying $|k-\ell| \leq B$ for some $B = \omega(\sqrt{n})$, then the invariance follows by decomposing the Boolean cube into a union of slices.

\paragraph*{The argument} Before giving the formal proof, we provide the intuition underlying the argument. Our argument concerns the following objects:
\begin{itemize}
 \item A harmonic multilinear polynomial $f$ of degree $d$ and unit norm. We think of $d$ as ``small''.
 \item A Lipschitz function $\varphi$.
 \item A slice $\binom{[n]}{pn}$. We think of $p$ as constant, though the argument even works for sub-constant $p$.
\end{itemize}
Our goal is to show that $\EE_{\mu_p}[\varphi(f)] \approx \EE_{\nu_{pn}}[\varphi(f)]$. The first step is to express $\mu_p$ as a mixture of $\nu_\ell$ for various $\ell$:
\[
 \EE_{\mu_p}[\varphi(f)] = \sum_{\ell=0}^n \binom{n}{\ell} p^\ell (1-p)^{n-\ell} \EE_{\nu_\ell}[\varphi(f)].
\]
Applying the triangle inequality, this shows that
\[
 |\EE_{\mu_p}[\varphi(f)] - \EE_{\nu_{pn}}[\varphi(f)]| \leq \sum_{\ell=0}^n \binom{n}{\ell} p^\ell (1-p)^{n-\ell} |\EE_{\nu_\ell}[\varphi(f)] - \EE_{\nu_{pn}}[\varphi(f)]|.
\]
In general we expect $|\EE_{\nu_\ell}[\varphi(f)] - \EE_{\nu_{pn}}[\varphi(f)]|$ to grow with $|\ell - pn|$, and our strategy is to consider separately slices close to $pn$, say $|pn - \ell| \leq \delta$, and slices far away from $pn$, say $|pn - \ell| > \delta$. We will bound the contribution of slices close to $pn$ directly. If $\delta$ is large enough then we expect the contribution of slices far away from $pn$ to be small, essentially since $\mu_p$ is concentrated on slices close to $pn$. For this argument to work, we need to choose $\delta$ so that $\delta = \omega(\sqrt{n})$.

It remains to bound $|\EE_{\nu_\ell}[\varphi(f)] - \EE_{\nu_{pn}}[\varphi(f)]|$ for $\ell$ close to $pn$. One strategy to obtain such a bound is to bound instead $|\EE_{\nu_s}[\varphi(f)] - \EE_{\nu_{s+1}}[\varphi(f)]|$ for various $s$, and use the triangle inequality. To this end, it is natural to consider the following coupling: let $(\rX(s),\rX(s+1)) \in \binom{[n]}{s} \times \binom{[n]}{s+1}$ be chosen uniformly at random under the constraint $\rX(s) \subset \rX(s+1)$. We can then bound
\begin{multline*}
 |\EE_{\nu_s}[\varphi(f)] - \EE_{\nu_{s+1}}[\varphi(f)]| = |\EE[\varphi(f(\rX(s))) - \varphi(f(\rX(s+1)))]| \leq \\ \EE[|\varphi(f(\rX(s))) - \varphi(f(\rX(s+1)))|] \leq \EE[|f(\rX(s)) - f(\rX(s+1))|].
\end{multline*}
Denoting $\rpi(s+1) = \rX(s+1) \setminus \rX(s)$ and using the multilinearity of $f$, this shows that
\[
 |\EE_{\nu_s}[\varphi(f)] - \EE_{\nu_{s+1}}[\varphi(f)]| \leq \EE\left[ \left| \frac{\partial f}{\partial x_{\rpi(s+1)}} (\rX(s)) \right| \right] =
 \EE\left[ \frac{1}{n-s} \sum_{i \notin \rX(s)} \left| \frac{\partial f}{\partial x_i} (\rX(s)) \right| \right].
\]
While we cannot bound $\sum_i |\frac{\partial f}{\partial x_i}|$ directly, Lemma~\ref{lem:poincare-derivative} implies that $\sum_i \bigl(\frac{\partial f}{\partial x_i}\bigr)^2 = O(d)$. Applying Cauchy--Schwarz, we get that for $s$ close to $pn$,
\begin{align*}
 |\EE_{\nu_s}[\varphi(f)] - \EE_{\nu_{s+1}}[\varphi(f)]| &\leq \frac{1}{\Theta(n)} \EE\left[ \sum_{i=1}^n \left| \frac{\partial f}{\partial x_i} (\rX(s)) \right| \right] \\ &\leq
 \frac{1}{\Theta(n)} \EE\left[\sqrt{n} \sqrt{\sum_{i=1}^n \frac{\partial f}{\partial x_i} (\rX(s))^2}\right] = O\left(\sqrt{\frac{d}{n}}\right).
\end{align*}
Recall now that our original goal was to bound $|\EE_{\nu_\ell}[\varphi(f)] - \EE_{\nu_{pn}}[\varphi(f)]|$ for $|\ell - pn| \leq \delta$, and our intended $\delta$ satisfied $\delta = \omega(\sqrt{n})$. Unfortunately, the idea just described only gives a bound of the form $|\EE_{\nu_\ell}[\varphi(f)] - \EE_{\nu_{pn}}[\varphi(f)]| = O(\delta\sqrt{d/n})$, which is useless for our intended $\delta$.

One way out is to take $\delta = C\sqrt{n}$. This allows us to obtain meaningful bounds both on the contribution of slices close to $pn$ and on the contribution of slices far away from $pn$. Although this only gives a constant upper bound on $|\EE_{\mu_p}[\varphi(f)] - \EE_{\nu_{pn}}[\varphi(f)]|$ if applied directly, this idea can be used in conjunction with the invariance principle for the Boolean cube~\cite{MOO} to give an invariance principle for the slice, and this is the route chosen in the prequel~\cite{FKMW}. One drawback of this approach is that the invariance principle for the Boolean cube requires all influences to be small.

\smallskip

Our approach, in contrast, considers a coupling $(\rX(0),\ldots,\rX(n))$ of \emph{all} slices. Analogous to $f(\rX(s+1)) - f(\rX(s))$, we consider the quantity
\[
 \rC(s) = (n-s) (f(\rX(s+1)) - f(\rX(s))) - s (f(\rX(s-1)) - f(\rX(s))).
\]
As before, we can bound $\EE[|\rC(s)|] = O(\sqrt{dn})$. Moreover,
\[
 \sum_{u=s}^t \rC(u) = (n-t) f(\rX(t+1)) + (t+1) f(\rX(t)) - (n-s+1) f(\rX(s)) - s f(\rX(s-1)),
\]
and so we can bound $|\EE_{\nu_\ell}[\varphi(f)] - \EE_{\nu_{pn}}[\varphi(f)]|$ by bounding the expectation of $\sum_{u=pn}^\ell \rC(u)$ or of $\sum_{u=\ell}^{pn} \rC(u)$. The triangle inequality gives $|\sum_{u=s}^t \rC(u)| = O(|s-t|\sqrt{dn})$, which suffers from the same problem that we encountered above. However, by expressing $\rC(s)$ as a difference of two martingales, we are able to improve on the triangle inequality, showing that
\[
 \left|\sum_{u=s}^t \rC(u)\right| = O(\sqrt{|s-t|dn}),
\]
a bound which is useful for $|s-t| = o(n/d)$ rather than for $|s-t| = o(\sqrt{n/d})$ as before.

In more detail, we define
\begin{align*}
 \rU(u) &= f(\rX(u+1)) - f(\rX(u)) - \EE[f(\rX(u+1)) - f(\rX(u)) | \rX(u)], \\
 \rD(u) &= f(\rX(u-1)) - f(\rX(u)) - \EE[f(\rX(u-1)) - f(\rX(u)) | \rX(u)], 
\end{align*}
both martingales by construction, $\rU(u)$ for increasing $u$, and $\rD(u)$ for decreasing $u$. We claim that $\rC(u) = (n-u)\rU(u) - u\rD(u)$. If this holds, then using the fact that $\EE[\rU(u)\rU(v)] = \EE[\rD(u)\rD(v)] = 0$ for $u \neq v$ and the $L_2$ triangle inequality $(a+b)^2 \leq 2a^2 + 2b^2$, we get
\begin{align*}
 \EE\left[\left( \sum_{u=s}^t \rC(u) \right)^2\right] &\leq 
 2\EE\left[\left( \sum_{u=s}^t (n-u)\rU(u) \right)^2\right] +
 2\EE\left[\left( \sum_{u=s}^t u\rD(u) \right)^2\right] \\ &=
 2\sum_{u=s}^t (n-u)^2 \EE[\rU(u)^2] +
 2\sum_{u=s}^t u^2 \EE[\rD(u)^2].
\end{align*}
This shows that $\EE[(\sum_{u=s}^t \rC(u))^2]$ scales linearly in $t-s$ rather than quadratically in $t-s$, which is what we would get if we just applied the triangle inequality. Since the $L_1$ norm is bounded by the $L_2$ norm, we conclude that $\EE[|\sum_{u=s}^t \rC(u)|] = O(\sqrt{|s-t|dn})$.

Finally, let us explain why $\rC(u) = (n-u)\rU(u) - u\rD(u)$. In view of our previous expression for $\rC(u)$, this boils down to proving that
\[
 (n-u) \EE[f(\rX(u+1)) - f(\rX(u)) | \rX(u)] - u \EE[f(\rX(u-1)) - f(\rX(u)) | \rX(u)] = 0.
\]
We can rewrite the left-hand side as
\[
 \EE\left[\sum_{i \notin \rX(u)} [f(\rX(u) \cup \{i\}) - f(\rX(u))] - \sum_{i \in \rX(u)} [f(\rX(u) \setminus \{i\}) - f(\rX(u))]\right].
\]
Since $f$ is multilinear, we can replace the differences with derivatives:
\[
 \EE\left[\sum_{i \notin \rX(u)} \frac{\partial f}{\partial x_i}(\rX(u)) - \sum_{i \in \rX(u)} -\frac{\partial f}{\partial x_i}(\rX(u)) \right] =
 \EE\left[\sum_{i=1}^n \frac{\partial f}{\partial x_i}(\rX(u)) \right].
\]
However, the last expression clearly vanishes, since $f$ is harmonic. This completes the outline of the proof.

\subsection{The proof} \label{sec:invariance-proof}

The basic setup of our argument is described in the following definition.

\begin{definition} \label{def:setup}
 We are given a harmonic multilinear polynomial $f$ of degree $d \geq 1$.

 Let $\rpi \in S_n$ be chosen uniformly at random, and define random variables $\rX(s) \in \binom{[n]}{s}$ for $0 \leq s \leq n$ as follows:
\[
 \rX(s)_i = \begin{cases} 1 & \text{if } i \in \rpi(\{1,\ldots,s\}), \\ 0 & \text{if } i \in \rpi(\{s+1,\ldots,n\}). \end{cases}
\]
 For $0 \leq s \leq n$, define random variables $\rU(s)$ (for $s \neq n$), $\rD(s)$ (for $s \neq 0$) and $\rC(s)$ by
\begin{align*}
 \rU(s) &= f(\rX(s+1)) - f(\rX(s)) - \EE[f(\rX(s+1)) - f(\rX(s)) | \rX(s)], \\
 \rD(s) &= f(\rX(s-1)) - f(\rX(s)) - \EE[f(\rX(s-1)) - f(\rX(s)) | \rX(s)], \\
 \rC(s) &= (n-s) \rU(s) - s \rD(s).
\end{align*}
\end{definition}

In words, $\rX(0),\ldots,\rX(n)$ is a random maximal chain in the Boolean cube. These random variables form a coupling of the uniform distributions on all slices. The random variable $\rU(s)$ measures the effect of moving up from $\rX(s)$ to $\rX(s+1)$ on $f$, normalized so that it has zero mean given $\rX(s)$. The random variable $\rD(s)$ similarly measures the effect of moving down from $\rX(s)$ to $\rX(s-1)$. Finally, $\rC(s)$ measures the effect of moving away from $\rX(s)$. The usefulness of this representation stems from the fact that $\rU(s)$ and $\rD(s)$  are martingale differences and are therefore orthogonal while $\rC(s)$ is on one hand easily expressed in terms of $\rD(s)$ and $\rU(s)$ and on the other hand is useful for bounding differences of $f$. This representation of a random directed path in terms of martingales and reverse martingales is inspired by
previous work using such representations for stationary reversible Markov chains and for stochastic integrals~\cite{LZ,NPSS}.
Part of the novelty of our proof is using such a representation in a non-stationary setup. In particular, the formula for the representation is significantly different from previous applications of the method.  The basic properties of the representation are stated formally in the following lemma.

\begin{lemma} \label{lem:setup-basic}
 The following properties hold:
\begin{enumerate}[(a)]
 \item \label{lem:setup-basic-a} If $s \neq t$ then $\EE[\rU(s) \rU(t)] = \EE[\rD(s) \rD(t)] = 0$.
 \item \label{lem:setup-basic-b} For all $s$ we can bound
\begin{align*}
 \EE[\rU(s)^2] &\leq 4\EE[(f(\rX(s+1)) - f(\rX(s)))^2] \leq \frac{4}{n-s}
 \sum_{i=1}^n \left\|\frac{\partial f}{\partial x_i}\right\|^2_{\nu_s}, \\
 \EE[\rD(s)^2] &\leq 4\EE[(f(\rX(s-1)) - f(\rX(s)))^2] \leq
 \frac{4}{s} \sum_{i=1}^n \left\|\frac{\partial f}{\partial x_i}\right\|^2_{\nu_s}.
\end{align*}
 \item \label{lem:setup-basic-c} For all $s$ we have $\rC(s) = (n-s) f(\rX(s+1)) - (n-2s) f(\rX(s)) - s f(\rX(s-1))$.
 \item \label{lem:setup-basic-d} For all $s \leq t$ we have
\[ \sum_{u=s}^t \rC(u) = (n-t) f(\rX(t+1)) + (t+1) f(\rX(t)) - (n-s+1) f(\rX(s)) - s f(\rX(s-1)). \]
\end{enumerate}
\end{lemma}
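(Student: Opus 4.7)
My plan is to verify the four items in order, since each follows from a short calculation once the right filtration or algebraic identity is in place. Throughout I will work with the increasing filtration $\mathcal{F}_s = \sigma(\rX(0),\ldots,\rX(s))$ and its reverse $\mathcal{G}_s = \sigma(\rX(s),\ldots,\rX(n))$. The key symmetry I will use repeatedly is that conditional on $\rX(s)$, the next element $\rpi(s+1)$ is uniform on $[n]\setminus\rX(s)$ independently of the earlier history, so $\EE[\,\cdot\,\mid \mathcal{F}_s] = \EE[\,\cdot\,\mid \rX(s)]$ for anything depending only on $\rX(s+1)$. The symmetric statement holds for $\rX(s-1)$ given $\mathcal{G}_s$.

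For (a), the symmetry above gives $\EE[\rU(s)\mid \mathcal{F}_s] = 0$, so $(\rU(s))$ is a martingale-difference sequence with respect to $(\mathcal{F}_s)$. For $s<t$, $\rU(s)$ is $\mathcal{F}_t$-measurable, hence $\EE[\rU(s)\rU(t)] = \EE[\rU(s)\,\EE[\rU(t)\mid \mathcal{F}_t]] = 0$. The bound for $\rD$ is identical using $\mathcal{G}_s$. For (b), writing $Y = f(\rX(s+1))-f(\rX(s))$ we have $\rU(s) = Y - \EE[Y\mid \rX(s)]$, and the $L_2$ triangle inequality together with Jensen's inequality gives $\EE[\rU(s)^2] \leq 4\EE[Y^2]$. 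Because $f$ is multilinear and $\rX(s+1) = \rX(s)\cup\{\rpi(s+1)\}$, we have $Y = \frac{\partial f}{\partial x_{\rpi(s+1)}}(\rX(s))$, and $\rpi(s+1)$ is uniform on the $n-s$ indices outside $\rX(s)$; taking conditional expectation and then expectation over $\rX(s)\sim \nu_s$ (and extending the inner sum from $i\notin\rX(s)$ to all $i$) yields the stated bound. The computation for $\rD(s)$ is the same with $s$ in place of $n-s$.

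For (c), the only ingredient beyond the definition is harmonicity of $f$. Using multilinearity once more,
\[
 \EE[f(\rX(s+1))-f(\rX(s))\mid \rX(s)] = \frac{1}{n-s}\sum_{i\notin \rX(s)} \frac{\partial f}{\partial x_i}(\rX(s)),
\]
\[
 \EE[f(\rX(s-1))-f(\rX(s))\mid \rX(s)] = -\frac{1}{s}\sum_{i\in \rX(s)} \frac{\partial f}{\partial x_i}(\rX(s)).
\]
Substituting into $\rC(s) = (n-s)\rU(s) - s\rD(s)$, the two correction terms combine to $-\sum_{i=1}^n \frac{\partial f}{\partial x_i}(\rX(s))$, which vanishes since $f$ is harmonic. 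The remaining four $f$-values rearrange to $(n-s)f(\rX(s+1)) - (n-2s)f(\rX(s)) - s f(\rX(s-1))$.

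Finally, (d) is a telescoping calculation from (c). Reindexing the sum $\sum_{u=s}^{t}(n-u)f(\rX(u+1))$ by $u\mapsto u-1$ and the sum $\sum_{u=s}^{t} u f(\rX(u-1))$ by $u\mapsto u+1$, the coefficient of $f(\rX(u))$ at each interior index $s<u<t$ becomes $(n-u+1)-(n-2u)-(u+1) = 0$, so only the four boundary terms survive, producing exactly the claimed expression. The only mild obstacle is bookkeeping the four boundary coefficients correctly; otherwise the argument is a mechanical check of martingale properties plus the one use of harmonicity in (c) that makes the drift term cancel.
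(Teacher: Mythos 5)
Your proof is correct and follows essentially the same route as the paper's: martingale-difference orthogonality via the forward and reverse filtrations for (a), the $L_2$ triangle inequality plus the identity $f(\rX(s+1))-f(\rX(s))=\frac{\partial f}{\partial x_{\rpi(s+1)}}(\rX(s))$ for (b), cancellation of the drift terms by harmonicity for (c), and the telescoping coefficient check $(n-u+1)-(n-2u)-(u+1)=0$ for (d). No gaps.
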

\begin{proof}
 For (\ref{lem:setup-basic-a}) note that if $s < t$ then $\EE[\rU(t) | \rX(s+1),\rX(s)] = 0$, and so $\EE[\rU(t) \rU(s)] = 0$. For similar reasons we have $\EE[\rD(t) \rD(s)] = 0$.

 For (\ref{lem:setup-basic-b}), note first that the $L_2$ triangle inequality implies that $\EE[\rU(s)^2] \leq 4\EE[(f(\rX(s+1)) - f(\rX(s)))^2]$, using the bound
\[
 \EE_{\rX(s)}[\EE[f(\rX(s+1))-f(\rX(s))|\rX(s)]^2] \leq \EE_{\rX(s)}[\EE[(f(\rX(s+1))-f(\rX(s)))^2|\rX(s)]] = \EE[(f(\rX(s+1))-f(\rX(s)))^2].
\]
 Since $f$ is multilinear, we have $f(\rX(s+1)) = f(\rX(s)) + \frac{\partial f}{\partial x_{\rpi(s+1)}}(\rX(s))$, and so
\begin{align*}
 \EE[\rU(s)^2] &\leq 4\EE\left[\left(\frac{\partial f}{\partial x_{\rpi(s+1)}}(\rX(s))\right)^2\right] \\ &=
 \frac{4}{n-s} \EE\left[\sum_{i\colon \rX(s)_i = 0} \left(\frac{\partial f}{\partial x_i}(\rX(s))\right)^2\right] \\ &\leq
 \frac{4}{n-s} \sum_{i=1}^n \EE\left[\left(\frac{\partial f}{\partial x_i}(\rX(s))\right)^2\right].
\end{align*}
 This implies our estimate for $\EE[\rU(s)^2]$. The estimate for $\EE[\rD(s)^2]$ is obtained in the same way.

 For (\ref{lem:setup-basic-c}), we again use the formula $f(\rX(s+1)) = f(\rX(s)) + \frac{\partial f}{\partial x_{\rpi(s+1)}}(\rX(s))$ to obtain
\[
 \EE[f(\rX(s+1)) - f(\rX(s)) | \rX(s)] = \frac{1}{n-s} \EE\left[\sum_{i\colon \rX(s)_i = 0} \frac{\partial f}{\partial x_i}(\rX(s))\right].
\]
 Similarly, using the formula $f(\rX(s-1)) = f(\rX(s)) - \frac{\partial f}{\partial x_{\rpi(s-1)}}(\rX(s))$ instead, we have
\[
 \EE[f(\rX(s-1)) - f(\rX(s)) | \rX(s)] = -\frac{1}{s} \EE\left[\sum_{i\colon \rX(s)_i = 1} \frac{\partial f}{\partial x_i}(\rX(s))\right].
\]
 Combining these formulas together, we obtain
\begin{align*}
 \rC(s) &= (n-s) f(\rX(s+1)) - (n-2s) f(\rX(s)) - s f(\rX(s-1)) + \EE\left[\sum_{i=1}^n \frac{\partial f}{\partial x_i}(\rX(s))\right] \\ &= (n-s) f(\rX(s+1)) - (n-2s) f(\rX(s)) - s f(\rX(s-1)),
\end{align*}
 since $f$ is harmonic.

Part~\eqref{lem:setup-basic-d} follows by simple computation, whose highlight is noticing that the coefficient of $f(\rX(u))$ for $s < u < t$ in the sum is $(n-u+1) - (n-2u) - (u+1) = 0$.
\end{proof}

The exact definition of $\rC(s)$ is aimed at the cancellation of the derivative terms in the proof of Lemma~\ref{lem:setup-basic}\eqref{lem:setup-basic-c}, in which we use the harmonicity of $f$. We also use the harmonicity of $f$ to bound the variance of $f$ with respect to various slices.

The next step is bounding the quantity appearing in Lemma~\ref{lem:setup-basic}\eqref{lem:setup-basic-b} in terms of~$d$, the degree of~$f$. This is a simple application of Lemma~\ref{lem:poincare-derivative} on page~\pageref{lem:poincare-derivative}.

\begin{lemma} \label{lem:derivative-L2}
 For every integer $0 \leq qn \leq n$ such that $d \leq q(1-q)n$ we have
\[
 \sum_{i=1}^n \left\|\frac{\partial f}{\partial x_i}\right\|^2_{\nu_{qn}} = O\left(\frac{d}{q(1-q)}\right) \VV[f]_{\nu_{qn}}.
\]
\end{lemma}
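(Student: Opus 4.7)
The plan is to invoke Lemma~\ref{lem:poincare-derivative} with $\alpha = \nu_{qn}$, since this lemma exactly provides a bound of the form $\sum_i\|\partial f/\partial x_i\|^2 \leq M\VV[f]$ in terms of ratios of norms of basic functions. All that remains is to supply a suitable uniform upper bound $M$ on the quantity
\[
2t\,\frac{\EE_{\nu_{qn}}[(x_1-x_2)^2\cdots(x_{2t-3}-x_{2t-2})^2]}{\EE_{\nu_{qn}}[(x_1-x_2)^2\cdots(x_{2t-1}-x_{2t})^2]}
\]
for every $1 \leq t \leq d$.

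For this, I would directly quote Lemma~\ref{lem:norm-ratios}, which computes the above ratio explicitly as
\[
\frac{t}{q(1-q)}\left(1 \pm O\!\left(\frac{t}{q(1-q)n}\right)\right).
\]
The hypothesis $d \leq q(1-q)n$ together with $t \leq d$ guarantees that $t/(q(1-q)n) \leq 1$, so the multiplicative correction factor is bounded by an absolute constant. Therefore the displayed ratio is at most $O(t/(q(1-q))) \leq O(d/(q(1-q)))$ uniformly over $1 \leq t \leq d$.

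With $M = O(d/(q(1-q)))$ established, the upper bound half of Lemma~\ref{lem:poincare-derivative} immediately yields
\[
\sum_{i=1}^n \left\|\frac{\partial f}{\partial x_i}\right\|^2_{\nu_{qn}} \leq M \VV_{\nu_{qn}}[f] = O\!\left(\frac{d}{q(1-q)}\right)\VV_{\nu_{qn}}[f],
\]
which is exactly the claim. There is no real obstacle here; the only thing to double-check is that the assumption $d \leq q(1-q)n$ is precisely what makes the error term in Lemma~\ref{lem:norm-ratios} harmless for all $t \leq d$ simultaneously, so that a single constant $M$ (and hence a single $O(\cdot)$ bound) works for the whole sum produced by Lemma~\ref{lem:poincare-derivative}.
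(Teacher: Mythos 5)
Your proposal is correct and is exactly the paper's argument: the paper proves this lemma in one line by combining Lemma~\ref{lem:poincare-derivative} with Lemma~\ref{lem:norm-ratios}, and you have simply spelled out why the hypothesis $d \leq q(1-q)n$ makes the error factor in Lemma~\ref{lem:norm-ratios} an absolute constant, yielding $M = O(d/(q(1-q)))$.
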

\begin{proof}
 Combine Lemma~\ref{lem:poincare-derivative} with Lemma~\ref{lem:norm-ratios}.
\end{proof}

Our work so far has not focused on any specific slice. Now we turn to the following question. Suppose that $f$ has unit variance on the slice $\binom{[n]}{pn}$. What can we say about its behavior on a nearby slice $\binom{[n]}{qn}$?

\begin{lemma} \label{lem:coupling}
 There exists a constant $K>0$ such that the following holds.
 Let $0 \leq pn,qn \leq n$ be integers such that $|p-q| \leq p(1-p)/(2d)$, and suppose that $\VV[f]_{\nu_{pn}} = 1$. If $d^2 \leq Kp(1-p)n$ then
\[
 \EE[(f(\rX(pn)) - f(\rX(qn)))^2] = O\left(\frac{|q-p|d}{p(1-p)}\right).
\]
\end{lemma}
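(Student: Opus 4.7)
My plan is to exploit the decomposition of Lemma~\ref{lem:setup-basic} to write $f(\rX(qn)) - f(\rX(pn))$ as a sum of nearly orthogonal terms, and then to combine harmonicity (via Lemma~\ref{lem:derivative-L2}) with the near-equivalence of the slice measures in the relevant range. Without loss of generality assume $p \leq q$. Specializing Lemma~\ref{lem:setup-basic}\eqref{lem:setup-basic-d} to $s = pn$ and $t = qn$, and regrouping the boundary contributions (using, e.g., $(n-qn) f(\rX(qn+1)) + (qn+1) f(\rX(qn)) = (n+1) f(\rX(qn)) + (n-qn)[f(\rX(qn+1)) - f(\rX(qn))]$, and symmetrically at the lower endpoint), I arrive at the identity
$$(n+1)\bigl(f(\rX(qn)) - f(\rX(pn))\bigr) = \sum_{u=pn}^{qn}\rC(u) - (n-qn)\bigl[f(\rX(qn+1)) - f(\rX(qn))\bigr] - pn\bigl[f(\rX(pn)) - f(\rX(pn-1))\bigr].$$
Squaring and applying the $L_2$ triangle inequality reduces everything to bounding the $L_2$ norms of the three summands on the right.

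For the main sum I substitute $\rC(u) = (n-u)\rU(u) - u\rD(u)$ and combine the martingale orthogonality of Lemma~\ref{lem:setup-basic}\eqref{lem:setup-basic-a} with the $L_2$ triangle inequality:
$$\EE\Biggl[\Bigl(\sum_{u=pn}^{qn}\rC(u)\Bigr)^{2}\Biggr] \leq 2\sum_{u=pn}^{qn}\bigl((n-u)^{2}\EE[\rU(u)^{2}] + u^{2}\EE[\rD(u)^{2}]\bigr).$$
Lemma~\ref{lem:setup-basic}\eqref{lem:setup-basic-b} bounds each summand by $O(n)\sum_{i=1}^{n}\|\partial f/\partial x_{i}\|^{2}_{\nu_{u}}$. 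The two boundary increments are controlled by the same estimate (without the outer sum over $u$), so the problem reduces to uniformly bounding $\sum_{i=1}^{n}\|\partial f/\partial x_{i}\|^{2}_{\nu_{u}}$ for $u \in [pn,qn]$.

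I obtain this uniform bound by chaining Lemma~\ref{lem:derivative-L2} with Corollary~\ref{cor:approximate-norm}. Lemma~\ref{lem:derivative-L2} gives $\sum_{i}\|\partial f/\partial x_{i}\|^{2}_{\nu_{u}} = O\bigl(d/(q'(1-q'))\bigr)\VV[f]_{\nu_{u}}$ with $q' = u/n$, and the gap assumption $|p-q| \leq p(1-p)/(2d)$ forces $q'(1-q') = \Theta(p(1-p))$ throughout $[pn,qn]$. To compare $\VV[f]_{\nu_{u}}$ to the normalization $\VV[f]_{\nu_{pn}} = 1$ I apply Corollary~\ref{cor:approximate-norm}, which only requires that the moments $\EE[(x_{1}-x_{2})^{2}\cdots(x_{2t-1}-x_{2t})^{2}]$ for $t \leq d$ be within a constant factor under $\nu_{u}$ and $\nu_{pn}$; Lemma~\ref{lem:norms} shows the ratio is of the form $(q'(1-q')/p(1-p))^{t}(1+O(d^{2}/(p(1-p)n)))$, both factors being $O(1)$ once $K$ is chosen small enough (this is exactly what the hypothesis $d^{2} \leq Kp(1-p)n$ buys). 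Assembling everything: the main sum contributes $O(|q-p|d/(p(1-p)))$ after dividing by $(n+1)^{2}$, while each boundary term contributes $O(d/(n\,p(1-p)))$, absorbed into the main term since $|p-q| \geq 1/n$ whenever $p \neq q$ (the case $p = q$ is vacuous). The main technical obstacle is precisely this uniform variance comparison across slices --- all the quantitative hypotheses on $d$, $n$, and the gap $|p-q|$ are present to make it lossless up to constants.
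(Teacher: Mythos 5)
Your proposal is correct and follows essentially the same route as the paper: the telescoping identity from Lemma~\ref{lem:setup-basic}\eqref{lem:setup-basic-d}, the martingale orthogonality of $\rU$ and $\rD$ to get linear (rather than quadratic) growth in $|qn-pn|$, and the chain Lemma~\ref{lem:derivative-L2} $\to$ Corollary~\ref{cor:approximate-norm} $\to$ Lemma~\ref{lem:norms} for the uniform control of $\sum_i\|\partial f/\partial x_i\|^2_{\nu_u}$ across the intermediate slices. The only cosmetic difference is that you isolate the exact identity for $(n+1)(f(\rX(qn))-f(\rX(pn)))$ before squaring, whereas the paper applies the $L_2$ triangle inequality directly to $\|\Delta\|$ and the two boundary increments; these are equivalent.
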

\begin{proof}
 Let $k = pn$ and $\ell = qn$, and assume that $k < \ell$ (the other case is very similar). Note that $|(d/d\rho) \rho(1-\rho)| = |1-2\rho| \leq 1$ for $\rho \in [0,1]$, and so $|p(1-p)-q(1-q)| \leq |p-q| \leq p(1-p)/(2d)$. Therefore for $K \leq 1/2$ the assumption $d^2 \leq Kp(1-p)n$ implies that $d^2 \leq r(1-r)n$ for all $r \in [p,q]$, since $r(1-r) \geq p(1-p) - p(1-p)/(2d) \geq p(1-p)/2$.
 
 Lemma~\ref{lem:setup-basic}\eqref{lem:setup-basic-d} implies that
\[
 \Delta \eqdef \sum_{u=k}^\ell \rC(u) = (n-\ell) f(\rX(\ell+1)) + (\ell+1) f(\rX(\ell)) - (n-k+1) f(\rX(k)) - k f(\rX(k-1)).
\]
 As a first step, we bound $\EE[\Delta^2]$. The $L_2$ triangle inequality implies that
\[
 \EE[\Delta^2] \leq 2\underbrace{\EE\left[\left(\sum_{u=k}^\ell (n-u)\rU(u)\right)^2\right]}_{\Delta_U^2} + 2\underbrace{\EE\left[\left(\sum_{u=k}^\ell u\rD(u)\right)^2\right]}_{\Delta_D^2}.
\]
 Lemma~\ref{lem:setup-basic}\eqref{lem:setup-basic-a} and Lemma~\ref{lem:setup-basic}\eqref{lem:setup-basic-b} show that
\[
 \Delta_U^2 = \sum_{u=k}^\ell (n-u)^2 \EE[\rU(u)^2] \leq \sum_{u=k}^\ell 4(n-u) \sum_{i=1}^n \left\|\frac{\partial f}{\partial x_i}\right\|^2_{\nu_u}.
\]
 Applying Lemma~\ref{lem:derivative-L2} (using $d^2 \leq r(1-r)n$ for all $r \in [p,q]$), we obtain
\[
 \Delta_U^2 \leq \sum_{u=k}^\ell O\left(\frac{d(n-u)}{(u/n)(1-u/n)}\right) \VV[f]_{\nu_u} \leq \sum_{u=k}^\ell O\left(\frac{dn}{p(1-p)}\right) \VV[f]_{\nu_u}.
\]
 Putting $u = rn$, Corollary~\ref{cor:approximate-norm} together with Lemma~\ref{lem:norms} shows that
\[
 \VV[f]_{\nu_u} \leq \max_{e \leq d} \frac{(2r(1-r))^e(1 \pm O(\tfrac{e^2}{p(1-p)n}))}{(2p(1-p))^e(1 \pm O(\tfrac{e^2}{p(1-p)n}))},
\]
 using the observation $r(1-r) \geq p(1-p)/2$ as well as $\VV[f]_{\nu_{pn}} = 1$. For small enough $K>0$ we have $1 \pm O(\tfrac{e^2}{p(1-p)n}) = \Theta(1)$. As observed above, $|p(1-p)-r(1-r)| \leq |p-r| \leq p(1-p)/2d$, and so
\[
 \VV[f]_{\nu_u} \leq O(1) \cdot \max_{e \leq d} \left(1 + \frac{1}{2d}\right)^e = O(1).
\] 
Therefore
\[
 \Delta_U^2 \leq \sum_{u=k}^\ell O\left(\frac{dn}{p(1-p)}\right) = O\left(\frac{|\ell-k|dn}{p(1-p)}\right).
\]
 We can bound $\Delta_L^2$ similarly, and conclude that
\[
 \EE[\Delta^2] = O\left(\frac{|\ell-k|dn}{p(1-p)}\right).
\]

 The triangle inequality implies that
\[
 (n+1) \|f(\rX(\ell)) - f(\rX(k))\| \leq \|\Delta\| + (n-\ell) \|f(\rX(\ell+1))-f(\rX(\ell))\| + k \|f(\rX(k-1))-f(\rX(k))\|.
\]
 The latter two terms can be bounded in the same way that we bounded $\EE[\rU(u)^2]$ above, and we conclude that
\[
 (n+1) \|f(\rX(\ell)) - f(\rX(k))\| = O\left(\sqrt{\frac{|\ell-k|dn}{p(1-p)}}\right).
\]
 This implies that
\[
 \EE[(f(\rX(pn)) - f(\rX(qn)))^2] = O\left(\frac{|\ell-k|d}{p(1-p)n}\right) = O\left(\frac{|q-p|d}{p(1-p)}\right). \qedhere
\]
\end{proof}

Combining Lemma~\ref{lem:coupling} with Chernoff's bound, we obtain our main theorem.

\begin{proposition} \label{pro:chernoff}
 Let $0 < p,\epsilon < 1$ satisfy $\epsilon \leq p(1-p)$. Then
\[ \Pr[|\bin(n,p)-np| > \epsilon n] \leq 2e^{-\epsilon^2 n/(6p(1-p))}. \]
\end{proposition}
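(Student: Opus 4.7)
The plan is the standard Cram\'er--Chernoff approach: exponential Markov applied to centered Bernoullis, together with a quadratic bound on the moment generating function whose range of validity is exactly what the hypothesis $\epsilon \leq p(1-p)$ supplies.

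First I write $\bin(n,p) - np = \sum_{i=1}^n Y_i$ where $Y_i = X_i - p$ and $X_1,\ldots,X_n$ are i.i.d.\ $\Ber(p)$. Then $\EE[Y_i] = 0$, $\VV[Y_i] = p(1-p)$, and $|Y_i| \leq 1$. For $t > 0$, Markov's inequality applied to $e^{t\sum_i Y_i}$ yields
\[
 \Pr\!\left[\sum_{i=1}^n Y_i > \epsilon n\right] \leq e^{-t\epsilon n}\bigl(\EE[e^{tY_1}]\bigr)^n.
\]
Expanding the exponential in a Taylor series, using $\EE[Y_1]=0$ together with $|Y_1|^{k-2}\leq 1$ for $k \geq 2$, gives
\[
 \EE[e^{tY_1}] \leq 1 + p(1-p)(e^t - 1 - t) \leq \exp\!\bigl(p(1-p)(e^t - 1 - t)\bigr),
\]
and the elementary inequality $e^t - 1 - t \leq t^2$ for $t \in [0,1]$ improves this to $\EE[e^{tY_1}] \leq \exp(p(1-p) t^2)$.

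Optimizing the resulting bound $\exp(-t\epsilon n + np(1-p)t^2)$ over $t$, I take $t^\star = \epsilon/(2p(1-p))$. The hypothesis $\epsilon \leq p(1-p)$ forces $t^\star \leq 1/2$, so the quadratic MGF bound above is legitimately applicable; plugging in yields an upper-tail bound of $\exp(-\epsilon^2 n / (4p(1-p)))$. The lower tail is handled identically by running the same argument on $-Y_i$, and a union bound together with the trivial estimate $1/4 \geq 1/6$ gives the stated conclusion. The only point requiring care is verifying that the optimal tilt $t^\star$ lands in the range where the quadratic MGF bound is valid --- this is precisely where the assumption $\epsilon \leq p(1-p)$ is used; everything else is routine calculation.
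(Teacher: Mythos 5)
Your proof is correct. Both you and the paper run a Chernoff-type argument, but the execution differs: the paper cites the standard multiplicative Chernoff bound $\Pr[|X-\mu|>\delta\mu]\leq 2e^{-\delta^2\mu/3}$ as a black box, substitutes $\delta=\epsilon/p$, and handles $p\geq 1/2$ by the symmetry $n-X\sim\bin(n,p)$ for $X\sim\bin(n,1-p)$ (the case split is needed because the inequality $\epsilon^2 n/p\geq\epsilon^2 n/(2p(1-p))$ only holds for $p\leq 1/2$). You instead derive the bound from first principles via the moment generating function: the estimate $\EE[Y_1^k]\leq\EE[Y_1^2]=p(1-p)$ for $k\geq 2$ gives $\EE[e^{tY_1}]\leq\exp(p(1-p)t^2)$ on $[0,1]$, and the hypothesis $\epsilon\leq p(1-p)$ places the optimal tilt $t^\star=\epsilon/(2p(1-p))$ inside that range. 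Your route is self-contained, symmetric in $p\leftrightarrow 1-p$ (so no case split), and actually yields the stronger constant $1/4$ in the exponent, which you then relax to $1/6$; the paper's route is shorter at the cost of invoking an external inequality. Every step of your argument checks out, including the verification that $e^t-1-t\leq t^2$ on $[0,1]$ and the union bound over the two tails.
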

\begin{proof}
 Suppose first that $p \leq 1/2$, and let $X \sim \bin(n,p)$ and $\mu = np$. One common version of Chernoff's bound states that for $0 < \delta < 1$ we have
\[
 \Pr[|X-\mu| > \delta\mu] \leq 2 e^{-\delta^2\mu/3}.
\]
 Choose $\delta = \epsilon/p$, and note that $\delta \leq 1-p < 1$. Since $\delta^2\mu = (\epsilon^2/p^2)(np) = \epsilon^2n/p \geq \epsilon^2n/(2p(1-p))$, the bound follows in this case.

 When $p \geq 1/2$, we look at $X \sim \bin(n,1-p)$ instead, using the fact that $n - X \sim \bin(n,p)$.
\end{proof}

\begin{theorem} \label{thm:invariance}
 There exists a constant $K>0$ such that the following holds, whenever $p(1-p)n \geq 3\log n$.

 Let $f$ be a harmonic multilinear polynomial of degree $d$ satisfying $d \leq K\sqrt{p(1-p)n}$ such that $\VV[f]_{\nu_{pn}} = 1$. For any Lipschitz function $\varphi$,
\[
 |\EE_{\nu_{pn}}[\varphi(f)] - \EE_{\mu_p}[\varphi(f)]| = O\left(\sqrt{\frac{d}{\sqrt{p(1-p)n}}}\right).
\]
\end{theorem}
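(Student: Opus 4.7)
The plan is to decompose $\mu_p$ as the binomial mixture over slices, $\mu_p = \sum_\ell \binom{n}{\ell} p^\ell(1-p)^{n-\ell}\nu_\ell$, apply the triangle inequality
\[ |\EE_{\mu_p}[\varphi(f)] - \EE_{\nu_{pn}}[\varphi(f)]| \leq \sum_\ell B(n,p,\ell)\,|\EE_{\nu_\ell}[\varphi(f)] - \EE_{\nu_{pn}}[\varphi(f)]|, \]
with $B(n,p,\ell) = \binom{n}{\ell}p^\ell(1-p)^{n-\ell}$, and split the sum according to whether $|\ell-pn|$ is at most some threshold $\delta$ to be chosen at the end. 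First I would make two harmless reductions: subtract constants from $\varphi$ and from $f$ so that $\varphi(0) = 0$ and $f^{=0} = 0$. Neither changes the Lipschitz constant of $\varphi$ nor the quantity to be estimated, and by Corollary~\ref{cor:approximate-norm} the reduction yields $\EE_\alpha[f] = 0$ for every exchangeable measure $\alpha$ and $\|f\|_{\nu_{pn}}^2 = 1$.

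For the \emph{close} slices ($|\ell-pn| \leq \delta$) I would use the coupling of Definition~\ref{def:setup} to jointly realize $\rX(pn) \sim \nu_{pn}$ and $\rX(\ell) \sim \nu_\ell$ along the same random maximal chain. Since $\varphi$ is $1$-Lipschitz and $L^1 \leq L^2$,
\[ |\EE_{\nu_\ell}[\varphi(f)] - \EE_{\nu_{pn}}[\varphi(f)]| \leq \EE[|f(\rX(\ell)) - f(\rX(pn))|] \leq \sqrt{\EE[(f(\rX(\ell))-f(\rX(pn)))^2]}, \]
and Lemma~\ref{lem:coupling} bounds the right-hand side by $O(\sqrt{|\ell-pn|\,d/(np(1-p))})$, provided $\delta \leq np(1-p)/(2d)$. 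Pulling out the $\ell$-independent factor $\sqrt{d/(np(1-p))}$, extending the sum over all $\ell$, and invoking Jensen's inequality as $\EE[\sqrt{|L-pn|}] \leq \sqrt{\EE[|L-pn|]} \leq (np(1-p))^{1/4}$ for $L \sim \bin(n,p)$, the close contribution is at most $O(\sqrt{d/(np(1-p))}\cdot(np(1-p))^{1/4}) = O(\sqrt{d/\sqrt{np(1-p)}})$, which already matches the target.

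For the \emph{far} slices ($|\ell-pn| > \delta$) I would use the crude bound $|\EE_{\nu_\ell}[\varphi(f)] - \EE_{\nu_{pn}}[\varphi(f)]| \leq \|f\|_{\nu_\ell} + \|f\|_{\nu_{pn}}$ (from $|\varphi(f)|\leq|f|$ and Cauchy--Schwarz), followed by Cauchy--Schwarz on the weighted sum:
\[ \sum_{|\ell-pn|>\delta} B(n,p,\ell)\,\|f\|_{\nu_\ell} \leq \sqrt{\Pr[|L-pn|>\delta]}\cdot\sqrt{\EE_{\mu_p}[f^2]}. \]
Corollary~\ref{cor:approximate-norm} combined with Lemma~\ref{lem:norms} gives $\EE_{\mu_p}[f^2] = O(1)$ whenever $d^2 \leq K p(1-p)n$, while Proposition~\ref{pro:chernoff} gives $\Pr[|L-pn|>\delta] \leq 2\exp(-\delta^2/(6np(1-p)))$. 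The main obstacle is balancing $\delta$ against two opposing requirements: (i) $\delta \leq np(1-p)/(2d)$ so that Lemma~\ref{lem:coupling} applies uniformly across the close sum, and (ii) $\delta$ large enough, of order $\sqrt{np(1-p)\log(\sqrt{np(1-p)}/d)}$, so that the Chernoff tail is at most $O(d/\sqrt{np(1-p)})$ and hence $\sqrt{\Pr[\,\cdot\,]}$ matches the target. Compatibility of (i) and (ii) reduces to a condition of the form $K^2\log(1/K) = O(1)$, which can be arranged by taking the constant $K$ in the hypothesis sufficiently small.
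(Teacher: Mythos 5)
Your proposal is correct and follows the same route as the paper's proof: decompose $\mu_p$ as a binomial mixture of the $\nu_\ell$, control nearby slices via the chain coupling and Lemma~\ref{lem:coupling}, and kill the far slices with Chernoff, balancing the threshold $\delta$ against the constraint $|q-p|\le p(1-p)/(2d)$ required by the coupling lemma. Your bookkeeping is in places slightly cleaner than the paper's (Jensen gives $\EE[\sqrt{|L-pn|}]\le(np(1-p))^{1/4}$ directly, and Cauchy--Schwarz on the weighted tail sum replaces the paper's truncation-parameter argument), but these are cosmetic; the substance of the argument is identical.
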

\begin{proof}
 We can assume, without loss of generality, that $\EE[f] = 0$ (Corollary~\ref{cor:approximate-norm} implies that the expectation is the same with respect to both $\nu_{pn}$ and $\mu_p$), so that $\EE[f^2] = 1$, and that $\varphi(0) = 0$. Since $\varphi$ is Lipschitz, it follows that $|\varphi(x)| \leq |x|$ for all $x$.

 Let $\delta := p(1-p)/(2d)$.
 Lemma~\ref{lem:coupling} implies that whenever $|q-p| \leq \delta$,
\begin{gather*}
 |\EE_{\nu_{pn}}[\varphi(f)] - \EE_{\nu_{qn}}[\varphi(f)]| =
 |\EE[\varphi(f(\rX(pn))) - \varphi(f(\rX(qn)))]| \leq
 |\EE[f(\rX(pn)) - f(\rX(qn))]| \\ \leq
 \|f(\rX(pn)) - f(\rX(qn))\| =
 O\left(\sqrt{\frac{|q-p| d}{p(1-p)}}\right).
\end{gather*}
 Expressing $\mu_p$ as a mixture of distributions of the form $\nu_{qn}$, we obtain
\begin{align*}
 |\EE_{\nu_{pn}}[\varphi(f)] - \EE_{\mu_p}[\varphi(f)]| &=
 \left| \EE_{\nu_{pn}}[\varphi(f)] - \sum_{\ell=0}^n \binom{n}{\ell} p^\ell (1-p)^{n-\ell} \EE_{\nu_\ell}[\varphi(f)]\right| \\ &\leq
 \sum_{\ell=0}^n \binom{n}{\ell} p^\ell (1-p)^{n-\ell} \bigl|\EE_{\nu_{pn}}[\varphi(f)] - \EE_{\nu_\ell}[\varphi(f)]\bigr| \\ &\leq
 O\left(\sqrt{\frac{d}{p(1-p)}}\right) \underbrace{\EE_{\ell \sim \bin(n,p)} \left[\sqrt{|\tfrac{\ell}{n}-p|}\right]}_{C} + \\ &\hspace{-1cm} \underbrace{\Pr[|\bin(n,p)-pn| > \delta n] \cdot |\EE_{\nu_{pn}}[\varphi(f)]|}_{\epsilon_1} + \underbrace{\EE_{x\sim\mu_p}[|\varphi(f(x))| \cdot \charf{|\Sigma_i x_i - pn| > \delta n}]}_{\epsilon_2}.
\end{align*}
 The coefficient $C$ can be bounded using Proposition~\ref{pro:chernoff} and the formula $\EE[X] = \int_0^\infty \Pr[X \geq t] \, dt$ (for $X \geq 0$):
\begin{multline*}
 C = \int_0^\infty \Pr[\sqrt{|\bin(n,p)/n - p|} \geq t] \, dt =
 \int_0^\infty \Pr[|\bin(n,p) - pn| \geq t^2n] \, dt \\ \leq
 \int_0^{\sqrt{p(1-p)}} 2e^{-t^4 n/(6p(1-p))} \, dt + \int_{\sqrt{p(1-p)}}^1 2e^{-p(1-p)n/6} \, dt.
\end{multline*}
 Substitute $s = \sqrt[4]{n/6p(1-p)} \cdot t$ to get
\[
 C \leq 2\sqrt[4]{\frac{6p(1-p)}{n}} \int_0^\infty e^{-s^4} \, ds + O(e^{-p(1-p)n/6}) = O\left(\sqrt[4]{\frac{p(1-p)}{n}}\right),
\]
 since $e^{-p(1-p)n/6} \leq \frac{1}{\sqrt{n}}$ whereas $\sqrt[4]{\frac{p(1-p)}{n}} = \sqrt[4]{\frac{p(1-p)n}{n^2}} \geq \frac{\sqrt[4]{3\log n}}{\sqrt{n}} \geq \frac{1}{\sqrt{n}}$.

 We proceed to bound the error terms $\epsilon_1,\epsilon_2$. With respect to $\nu_{pn}$, $\EE[|\varphi(f)|] \leq \EE[|f|] \leq \|f\| \leq 1$, and so Proposition~\ref{pro:chernoff} implies that
\[
 \epsilon_1 \leq 2e^{-\delta^2 n/(6p(1-p))}.
\]
 For the second error term, let $M > 0$ be a parameter to be determined. We have
\[
 \epsilon_2 \leq M \Pr[|\bin(n,p)-pn| > \delta n] + \EE_{x\sim\mu_p}[|\varphi(f(x))| \cdot \charf{|\varphi(f(x))| > M}].
\]
 If $|\varphi(f(x))| > M$ then certainly $|f(x)| \geq |\varphi(f(x))| > M$, and so
\[
 \EE_{x\sim\mu_p}[|\varphi(f(x))| \cdot \charf{|\varphi(f(x))| > M}] \leq
 \EE_{x\sim\mu_p}[|f(x)| \cdot \charf{|f(x)| > M}] \leq \frac{1}{M} \EE_{\mu_p}[f^2] = O\left(\frac{1}{M}\right),
\]
 since Lemma~\ref{lem:norms} and Corollary~\ref{cor:approximate-norm} imply that $\|f\|_{\mu_p} = O(\|f\|_{\nu_{pn}}) = O(1)$ for small enough $K$. Hence
\[
 \epsilon_2 \leq 2Me^{-\delta^2 n/(6p(1-p))} + O\left(\frac{1}{M}\right).
\]
 Choosing $M = e^{-\delta^2 n/(12p(1-p))}$, we conclude that
\[
 \epsilon_2 = O(e^{-\delta^2 n/(12p(1-p))}).
\]
 Putting everything together, we obtain
\[
 |\EE_{\nu_{pn}}[\varphi(f)] - \EE_{\mu_p}[\varphi(f)]| = O\left(\sqrt{\frac{d}{\sqrt{p(1-p)n}}} + e^{-\frac{\delta^2 n}{12p(1-p)}}\right).
\]
 Substituting $\delta = p(1-p)/(2d)$, we deduce
\[
  |\EE_{\nu_{pn}}[\varphi(f)] - \EE_{\mu_p}[\varphi(f)]| = O\left(\underbrace{\sqrt{\frac{d}{\sqrt{p(1-p)n}}}}_A + \underbrace{e^{-\frac{p(1-p) n}{48d^2}}}_B\right).
\]
 Note that $B = e^{-1/(48A^4)}$. When $A < 0.38$, calculation shows that $B < A$. Since $A \leq \sqrt{K}$, choosing $K \leq 0.38^2$ ensures that $B < A$.
\end{proof}

As a corollary, we can estimate the L\'evy distance between $f(\nu_{pn})$ and $f(\mu_p)$, along the lines of~\cite[Theorem 3.19(28)]{MOO}.

\begin{corollary} \label{cor:levy}
 Suppose that $p(1-p)n \geq 3\log n$, and let $f$ be a harmonic multilinear polynomial of degree $d$ satisfying $d \leq K\sqrt{p(1-p)n}$ such that $\VV[f]_{\nu_{pn}} = 1$, where $K>0$ is the constant from Theorem~\ref{thm:invariance}. The L\'evy distance between $f(\nu_{pn})$ and $f(\mu_p)$ is at most
\[
 \epsilon = O\left(\sqrt[4]{\frac{d}{\sqrt{p(1-p)n}}}\right).
\]
 That is, for all $y$ it holds that
\[
 \Pr_{\nu_{pn}}[f \leq y - \epsilon] - \epsilon \leq \Pr_{\mu_p}[f \leq y] \leq \Pr_{\nu_{pn}}[f \leq y + \epsilon] + \epsilon.
\]
\end{corollary}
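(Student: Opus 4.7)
The plan is to convert the Lipschitz-test-function bound from Theorem~\ref{thm:invariance} into a tail-probability (CDF) bound via a standard smoothing argument. Fix $y \in \RR$ and a scale parameter $\eta > 0$ to be chosen, and define the piecewise linear function
\[
 \varphi_{y,\eta}(t) = \begin{cases} 1 & t \leq y, \\ 1-(t-y)/\eta & y \leq t \leq y+\eta, \\ 0 & t \geq y+\eta, \end{cases}
\]
which is $(1/\eta)$-Lipschitz and sandwiches the indicator: $\charf{t \leq y} \leq \varphi_{y,\eta}(t) \leq \charf{t \leq y+\eta}$. Since $\eta\varphi_{y,\eta}$ is $1$-Lipschitz, Theorem~\ref{thm:invariance} applied to it (and divided through by $\eta$) gives
\[
 \bigl|\EE_{\nu_{pn}}[\varphi_{y,\eta}(f)] - \EE_{\mu_p}[\varphi_{y,\eta}(f)]\bigr| = O\!\left(\frac{1}{\eta}\sqrt{\frac{d}{\sqrt{p(1-p)n}}}\right).
\]

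Next I would sandwich the two CDFs on each side. Using the pointwise bounds on $\varphi_{y,\eta}$ together with the display above,
\[
 \Pr_{\mu_p}[f \leq y] \leq \EE_{\mu_p}[\varphi_{y,\eta}(f)] \leq \EE_{\nu_{pn}}[\varphi_{y,\eta}(f)] + O\!\left(\frac{1}{\eta}\sqrt{\frac{d}{\sqrt{p(1-p)n}}}\right) \leq \Pr_{\nu_{pn}}[f \leq y+\eta] + O\!\left(\frac{1}{\eta}\sqrt{\frac{d}{\sqrt{p(1-p)n}}}\right).
\]
Applying the same reasoning to $\varphi_{y-\eta,\eta}$, which satisfies $\charf{t \leq y-\eta} \leq \varphi_{y-\eta,\eta}(t) \leq \charf{t \leq y}$, produces the matching lower bound $\Pr_{\mu_p}[f \leq y] \geq \Pr_{\nu_{pn}}[f \leq y-\eta] - O(\eta^{-1}\sqrt{d/\sqrt{p(1-p)n}})$.

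To finish, I would choose $\eta = \epsilon$ to balance the two independent sources of error: the horizontal shift $\eta$ and the additive error $\eta^{-1}\sqrt{d/\sqrt{p(1-p)n}}$. Equating them yields $\epsilon = \Theta\bigl((d/\sqrt{p(1-p)n})^{1/4}\bigr)$, exactly the rate claimed in the corollary. There is no real obstacle here beyond bookkeeping; everything reduces to Theorem~\ref{thm:invariance} together with elementary monotonicity of the CDF. The only minor point to verify along the way is that the resulting $\epsilon$ lies in $(0,1)$, so that the tail inequalities are nontrivial, and this is automatic from the hypothesis $d \leq K\sqrt{p(1-p)n}$ for the small constant $K$ supplied by Theorem~\ref{thm:invariance}.
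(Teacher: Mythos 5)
Your proposal is correct and is essentially identical to the paper's own proof: the same piecewise-linear $(1/\epsilon)$-Lipschitz smoothing of the indicator $\charf{t\leq y}$, the same application of Theorem~\ref{thm:invariance}, and the same choice of $\epsilon$ balancing the horizontal shift against the $\epsilon^{-1}\sqrt{d/\sqrt{p(1-p)n}}$ error.
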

\begin{proof}
 Given $y$, define a $(1/\epsilon)$-Lipschitz function $\varphi$ by
\[
 \varphi(t) = \begin{cases} 1 & \text{if } t \leq y, \\ \frac{y+\epsilon-t}{\epsilon} & \text{if } y \leq t \leq y + \epsilon, \\ 0 & \text{if } t \geq y + \epsilon. \end{cases}
\]
 It is easy to check that $\charf{t \leq y} \leq \varphi(t) \leq \charf{t \leq y+\epsilon}$, and so
\[
 \Pr_{\mu_p}[f \leq y] - \Pr_{\nu_{pn}}[f \leq y+\epsilon] \leq \EE_{\mu_p}[\varphi(f)] - \EE_{\nu_{pn}}[\varphi(f)] = \frac{1}{\epsilon} O\left(\sqrt{\frac{d}{\sqrt{p(1-p)n}}}\right) = \epsilon,
\]
 using Theorem~\ref{thm:invariance} and the correct choice of $\epsilon$. We get the bound in the other direction in the same way.
\end{proof}

We conjecture that $f(\nu_{pn})$ and $f(\mu_p)$ are also close in CDF distance. Unfortunately, the method of proof of~\cite[Theorem 3.19(30)]{MOO} relies on the anticoncentration of multivariate Gaussian distributions~\cite{CarberyWright}, whereas both $f(\nu_{pn})$ and $f(\mu_p)$ are discrete distributions.
We consider it an interesting open problem to extend Corollary~\ref{cor:levy} to CDF distance.

\begin{question} \label{q:CDF}
 Suppose that $p(1-p)n$ is ``large'' and $d$ is ``small'', compared to $n$. Is it true that for every harmonic multilinear polynomial $f$ of degree $d$ satisfying $\VV[f]_{\nu_{pn}} = 1$, the CDF distance between $f(\nu_{pn})$ and $f(\mu_p)$ is $o(1)$?
\end{question}

\subsection{High-degree functions} \label{sec:high-degree}

Theorem~\ref{thm:invariance} requires that $d = O(\sqrt{p(1-p)n})$. Indeed, Lemma~\ref{lem:norms}, which implies that the norm of a low-degree function is approximately the same under both $\mu_p$ and $\nu_{pn}$, already requires the degree to be $O(\sqrt{p(1-p)n})$. For $d = \omega(\sqrt{p(1-p)n})$ and constant $p \neq \tfrac{1}{2}$ we exhibit below a $0/\pm C$-valued function $f$ (for some $C$ depending on $p$ and $d$) which satisfies $\|f\|_{\mu_p} = 1$ while $\|f\|_{\nu_{pn}} = o(1)$. This shows that for constant $p \neq \tfrac{1}{2}$ the dependence on the degree is essential in Theorem~\ref{thm:invariance}, since $|\EE_{\nu_{pn}}[|f|] - \EE_{\mu_p}[|f|]| = \|f\|_{\mu_p}^2 - \|f\|_{\nu_{pn}}^2 = 1 - o(1)$. We do not know whether this dependence is necessary for $p = \tfrac{1}{2}$. Indeed, Lemma~\ref{lem:norms} can be extended above $\sqrt{n}$ in this case, as the calculation below shows.

Let $d = \omega(\sqrt{p(1-p)n})$, and assume further that $d = o((p(1-p)n)^{2/3})$. We consider the function $f = (2p(1-p))^{-d/2} (x_1 - x_2) \cdots (x_{2d-1} - x_{2d})$, whose $\mu_p$-norm is~$1$ according to Lemma~\ref{lem:norms}. The lemma also gives its $\nu_k$-norm (where $k = pn$) as
\[
 \|f\|_{\nu_k}^2 = (p(1-p))^{-d} \frac{k^{\underline{d}} (n-k)^{\underline{d}}}{n^{\underline{2d}}}.
\]
We estimate this expression using Stirling's approximation, starting with $k^{\underline{d}}$:
\[
 k^{\underline{d}} = \frac{k!}{(k-d)!} = \left(\frac{k}{k-d}\right)^{k-d+1/2} \frac{k^d}{e^d} e^{O(1/k) - O(1/(k-d))} = \left(1+\frac{d}{k-d}\right)^{k-d} \frac{k^d}{e^d} (1 \pm o(1)) .
\]
The Taylor series $\log (1+x) = x - x^2/2 + O(x^3)$ shows that
\[
 \left(1 + \frac{d}{k-d}\right)^{k-d} = \exp \left[ d - \frac{d^2}{2(k-d)} + o(1) \right] =
 \exp \left[ d - \frac{d^2}{2k} + o(1) \right],
\]
and so $k^{\underline{d}} = k^d e^{-d^2/2k} (1 \pm o(1))$. We can similarly estimate $(n-k)^{\underline{d}} = (n-k)^d e^{-d^2/2(n-k)} (1 \pm o(1))$ and $n^{\underline{2d}} = n^{2d} e^{-2d^2/n} (1 \pm o(1))$, concluding that
\begin{align*}
 \|f\|_{\nu_k}^2 &= (p(1-p))^{-d} \frac{k^d (n-k)^d}{n^{2d}} e^{-d^2/2k - d^2/2(n-k) + 2d^2/n} (1 \pm o(1)) \\ &= \exp \left[ \frac{d^2}{2p(1-p)n} \left(-1 + 4p(1-p)\right) \pm o(1) \right].
\end{align*}
If $p \neq \tfrac{1}{2}$ is fixed, we immediately conclude that $\|f\|_{\nu_k} = o(1)$.

The constant $C = (2p(1-p))^{-d/2}$ is unbounded as a function of $d$. We do not know whether Theorem~\ref{thm:invariance} can be extended to higher degrees for bounded functions.

\section{Approximately Boolean functions} \label{sec:approx-bool}

Theorem~\ref{thm:invariance} only applies to Lipschitz test functions, but in many applications we are interested in functions which grow faster, for example the squared-distance-from-$\{-1,1\}$ function $\varphi(x) = (|x|-1)^2$. In this section we show how to handle such functions using hypercontractivity.

\begin{proposition} \label{pro:hypercontractivity}
 Denote by $\|\cdot\|_r$ the $L_r$ norm. For a multilinear polynomial $f$, let $T_\rho$ denote the operator
\[ T_\rho f = \sum_{i=0}^n \rho^i f^{=i}. \]
 For $r \geq 2$ and with respect to $\mu_p$,
\[ \|T_\rho f\|_r \leq \|f\|_2, \text{ where } \rho = \sqrt{\frac{p(1-p)}{r-1}}. \]

 For a harmonic multilinear polynomial $f$, let $H_\rho$ denote the operator
\[ H_\rho f = \sum_{i=0}^{n/2} \rho^{i(1-(i-1)/n)} f^{=i}. \]
 For $r \geq 2$ and with respect to $\nu_{pn}$,
\[ \|H_\rho f\|_r \leq \|f\|_2, \text{ where } \rho = (r-1)^{\Theta(\log(p(1-p)))} = (p(1-p))^{\Theta(\log(r-1))}. \]
\end{proposition}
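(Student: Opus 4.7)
The plan is to prove each of the two hypercontractive inequalities by reducing it to an established log-Sobolev inequality on the underlying space and then applying Gross's equivalence between log-Sobolev inequalities and hypercontractivity of the associated Markov semigroup.

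For the $\mu_p$ bound, I would follow the Bonami--Beckner strategy: verify the inequality in the one-dimensional case for a single $\Ber(p)$ variable and then tensorize. Writing a one-variable function as $f = a + b\phi$ with $\phi(x) = (x-p)/\sqrt{p(1-p)}$ the $L_2(\mu_p)$-normalized first-level character, the operator $T_\rho$ acts as $T_\rho f = a + \rho b \phi$, and the desired two-point inequality $\|a+\rho b\phi\|_{L_r(\mu_p)} \leq \sqrt{a^2+b^2}$ for $\rho = \sqrt{p(1-p)/(r-1)}$ can be verified either by a direct power-series expansion (comparing the $r$th moment term-by-term to $(a^2+b^2)^{r/2}$) or by integrating the one-dimensional log-Sobolev inequality for $\mu_p$ via Gross. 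Since $T_\rho$ acts as a tensor product of these one-dimensional operators across the $n$ coordinates of the cube, and the $L_2 \to L_r$ operator norm is multiplicative across tensor products (Stein's classical lemma on hypercontractive norms), the full $n$-dimensional bound follows immediately.

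For the $\nu_{pn}$ bound, I would invoke the Lee--Yau log-Sobolev inequality for the Bernoulli--Laplace transposition walk on the slice. By Lemma~\ref{lem:transpositions} together with Lemma~\ref{lem:poincare}, the generator $L$ of this walk, suitably normalized, acts on the space $H_i$ of harmonic homogeneous multilinear polynomials of degree $i$ by multiplication by $i(n-i+1)/n$. Consequently the associated heat semigroup $e^{-tL}$ acts on $H_i$ as multiplication by $e^{-t i(n-i+1)/n}$, which upon reparametrization $\rho = e^{-t}$ becomes exactly the operator $\rho^{i(1-(i-1)/n)}$ appearing in the definition of $H_\rho$. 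Feeding the Lee--Yau log-Sobolev constant (which, after the normalization above, scales like the reciprocal of $\log(1/(p(1-p)))$) into Gross's formula relating the time $t$ to the hypercontractive exponent $q(t)$ converts hypercontractivity at time $t$ into the stated inequality, with $\rho$ taking the form $1/(p(1-p))^{\log(r-1)}$.

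The main obstacle is the precise bookkeeping in the slice case: the Lee--Yau constant has a delicate dependence on $p(1-p)$ that must be threaded through Gross's equivalence, and the specific exponent $i(1-(i-1)/n)$ must be matched against the eigenvalues of the correctly normalized generator, with Theorem~\ref{thm:spherical} ensuring that the homogeneous parts are orthogonal with respect to $\nu_{pn}$ so that the eigenspace decomposition diagonalizing $H_\rho$ is well-defined. The cube case is comparatively routine; although the stated $\rho = \sqrt{p(1-p)/(r-1)}$ is weaker than Oleszkiewicz's sharp two-point bound, this looseness actually simplifies the proof by making the two-point inequality amenable to an elementary convexity or power-series argument.
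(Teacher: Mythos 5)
Your proposal is correct and matches the paper's approach: the paper simply cites the cube case as classical (Bonami--Beckner for $p=1/2$, Talagrand--Oleszkiewicz for general $p$, i.e.\ exactly the two-point-plus-tensorization argument you sketch) and derives the slice case from the Lee--Yau log-Sobolev inequality via the standard log-Sobolev-to-hypercontractivity implication of Diaconis--Saloff-Coste, which is your Gross-equivalence route applied to the transposition walk whose eigenvalues on $H_i$ are indeed $i(n-i+1)/n = i(1-(i-1)/n)$ after normalization. You have essentially reconstructed the proofs that the paper outsources to references.
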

\begin{proof}
 The first result is classical, appearing in~\cite{Bonami,Beckner} for $p = 1/2$ and in~\cite{Talagrand,Oleszkiewicz} for general $p$. The second result is due to Lee and Yau~\cite{LeeYau}. They proved the corresponding log-Sobolev inequality, which implies hypercontractivity as shown in~\cite{DCS}.
\end{proof}

These results imply that for low-degree functions, the $L_r$ norm and the $L_2$ norm are comparable.

\begin{lemma} \label{lem:low-degree}
 Let $f$ be a multilinear polynomial of degree $d$, and let $r \geq 2$ be a constant. With respect to $\mu_p$,
\[
 \|f\|_r \leq O(p(1-p))^{-O(d)} \|f\|_2.
\]
 If $f$ is also harmonic, then with respect to $\nu_{pn}$,
\[
 \|f\|_r \leq O(p(1-p))^{-O(d)} \|f\|_2.
\]
\end{lemma}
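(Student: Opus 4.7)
The plan is to apply Proposition~\ref{pro:hypercontractivity} with $r=3$ in inverted form. For a multilinear polynomial $f$ of degree $d$, construct an auxiliary polynomial $g$ of degree $\leq d$ such that the hypercontractive operator of the proposition ($T_\rho$ on the cube, $H_\rho$ on the slice) maps $g$ back to $f$. The proposition then yields $\|f\|_3 = \|\text{op}(g)\|_3 \leq \|g\|_2$, and the remaining task is to verify $\|g\|_2 \leq O(p(1-p))^{-O(d)}\|f\|_2$ using the degree hypothesis.

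For the slice case, the bound comes out cleanly via Parseval. Expand $f = \sum_{i=0}^d f^{=i}$ into harmonic homogeneous parts, which by Theorem~\ref{thm:spherical} are mutually orthogonal under $\nu_{pn}$. Taking $g = \sum_{i=0}^d \rho^{-i(1-(i-1)/n)} f^{=i}$ makes $H_\rho g = f$ by construction, and orthogonality yields
\[
\|g\|_2^2 = \sum_{i=0}^d \rho^{-2i(1-(i-1)/n)} \|f^{=i}\|_2^2 \leq \max(1, \rho^{-2d})\, \|f\|_2^2,
\]
since the exponent $i(1-(i-1)/n)$ lies in $[0,d]$ for $i \leq d \leq n/2$. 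Substituting the proposition's formula for $\rho$ gives the required $O(p(1-p))^{-O(d)}$ factor.

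The cube case is the main obstacle, because the monomial-homogeneous parts $f^{=i}$ defining $T_\rho$ are not orthogonal under $\mu_p$, so Parseval does not apply directly to $\sum_i \rho^{-i} f^{=i}$. The plan is to re-express $f$ in the $p$-biased Fourier basis $\chi_S = \prod_{i\in S}(x_i-p)/\sqrt{p(1-p)}$, which is orthonormal under $\mu_p$ and spans the same subspace of multilinear polynomials of degree $\leq d$; hence $f = \sum_{|S|\leq d}\hat{f}_p(S)\chi_S$. The character-basis Bonami--Beckner--Oleszkiewicz hypercontractivity cited in the proof of Proposition~\ref{pro:hypercontractivity} reads $\|B_\rho f\|_3 \leq \|f\|_2$ for the operator $B_\rho \chi_S = \rho^{|S|}\chi_S$ with $\rho = \Theta(\sqrt{p(1-p)})$; inverting via $g = \sum_S \rho^{-|S|}\hat{f}_p(S)\chi_S$ so that $B_\rho g = f$ and applying Parseval in this basis then gives $\|g\|_2^2 \leq \rho^{-2d}\|f\|_2^2 = O(p(1-p))^{-O(d)}\|f\|_2^2$, as required. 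The slice case does not face this issue because its harmonic homogeneous decomposition is already orthogonal.
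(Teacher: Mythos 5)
Your proposal follows the same route as the paper's proof: invert the hypercontractive operator of Proposition~\ref{pro:hypercontractivity}, apply the proposition to the inverted function, and control its $L_2$ norm by Parseval using the degree bound. The slice halves are essentially identical (the paper writes the inverted function directly as $H_{1/\rho}f$ and bounds $\sum_i \rho^{-2i(1-(i-1)/n)}\|f^{=i}\|_2^2 \leq \rho^{-2d}\|f\|_2^2$; your $\max(1,\rho^{-2d})$ bound is the slightly more careful version that does not presuppose $\rho\le 1$). Where you genuinely diverge is the cube case: the paper applies the identity $\|T_{1/\rho}f\|_2^2=\sum_i\rho^{-2i}\|f^{=i}\|_2^2$ directly to the monomial-homogeneous decomposition, which is only a Parseval identity if the parts $f^{=i}$ are pairwise orthogonal under $\mu_p$ --- true for the character-level decomposition but not for the literal monomial-degree decomposition on $\{0,1\}^n$ with $p\neq 1/2$ (e.g.\ $\EE_{\mu_p}[x_1\cdot 1]=p\neq 0$). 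You instead pass to the $p$-biased orthonormal basis $\chi_S$, define the noise operator diagonally there, and invert in that basis; this is the decomposition for which the cited Bonami--Beckner--Talagrand--Oleszkiewicz inequality is actually stated, and for which Parseval holds. So your cube argument is the rigorous reading of what the paper's one-line computation must mean, at the modest cost of introducing the character expansion explicitly. Both arguments are correct and yield the same $\rho^{-2d}=O(p(1-p))^{-O(d)}$ loss.
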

\begin{proof}
 In both cases, we apply Proposition~\ref{pro:hypercontractivity} to $T_{1/\rho} f$ or to $H_{1/\rho} f$, where $\rho$ is given by the proposition. In the first case, we get
\[
 \|f\|_r^2 \leq \|T_{1/\rho} f\|_2^2 = \sum_{i=0}^d \rho^{-2i} \|f^{=i}\|_2^2 \leq \rho^{-2d} \|f\|_2^2.
\]
 In the second case, we get
\[
 \|f\|_r^2 \leq \|U_{1/\rho} f\|_2^2 = \sum_{i=0}^d \rho^{-2i(1-(i-1)/n)} \|f^{=i}\|_2^2 \leq \rho^{-2d} \|f\|_2^2.
\]
 We obtain the stated bounds by substituting the values of $\rho$.
\end{proof}

We can now obtain our invariance principle for $\varphi(x) = (|x|-1)^2$.

\begin{theorem} \label{thm:invariance-sq}
 Suppose that $p(1-p)n \geq 3\log n$, and let $f$ be a harmonic multilinear polynomial of degree $d$ satisfying $d \leq K\sqrt{p(1-p)n}$ such that $\|f\|_{\nu_{pn}} = 1$, where $K$ is the constant in Theorem~\ref{thm:invariance}. We have
\[
 |\EE_{\nu_{pn}}[(|f|-1)^2] - \EE_{\mu_p}[(|f|-1)^2]| = O\left(\sqrt[4]{\frac{d}{\sqrt{p(1-p)n}}}(p(1-p))^{-O(d)}\right).
\]
\end{theorem}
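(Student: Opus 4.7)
The plan is to reduce Theorem~\ref{thm:invariance-sq} to the Lipschitz invariance principle (Theorem~\ref{thm:invariance}) by truncating $\varphi(x)=(|x|-1)^2$ at a scale $M$, and then use the hypercontractive estimate of Lemma~\ref{lem:low-degree} to control the tail error $\EE[\varphi(f)\charf{|f|>M}]$ under both measures. Optimizing in $M$ exchanges the linear growth of the Lipschitz constant of the truncated function against the decay of the tail, and should produce the fourth-root factor in the stated bound.

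Concretely, fix a parameter $M \geq 2$ to be chosen later, and define
\[
\varphi_M(x) = \begin{cases} (|x|-1)^2 & \text{if } |x| \leq M, \\ (M-1)^2 & \text{if } |x| > M. \end{cases}
\]
Since $|\varphi'(x)| = 2\bigl||x|-1\bigr| \leq 2M$ on $[-M,M]$ and $\varphi_M$ is constant outside, $\varphi_M$ is $2M$-Lipschitz on $\RR$. Applying Theorem~\ref{thm:invariance} to the $1$-Lipschitz function $\varphi_M/(2M)$ yields
\[
|\EE_{\nu_{pn}}[\varphi_M(f)] - \EE_{\mu_p}[\varphi_M(f)]| = O\!\left(M\sqrt{\frac{d}{\sqrt{p(1-p)n}}}\right).
\]

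Next I would bound the tail contribution. For $M \geq 1$ and $|t| > M$ one has both $\varphi(t) = (|t|-1)^2 \leq t^2$ and $\varphi_M(t) = (M-1)^2 \leq t^2$, so $|\varphi(t)-\varphi_M(t)| \leq 2t^2$ on $\{|t|>M\}$. By Markov,
\[
\EE_\alpha[f^2 \charf{|f|>M}] \leq \frac{\EE_\alpha[|f|^3]}{M} = \frac{\|f\|_{3,\alpha}^3}{M}.
\]
Applying Lemma~\ref{lem:low-degree} to $\alpha = \nu_{pn}$ (using harmonicity of $f$ and $\|f\|_{2,\nu_{pn}}=1$) and to $\alpha = \mu_p$ (using Corollary~\ref{cor:approximate-norm} together with Lemma~\ref{lem:norms} to conclude $\|f\|_{2,\mu_p}=O(1)$ for $K$ sufficiently small) bounds both $L_3$ norms by $(p(1-p))^{-O(d)}$. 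The triangle inequality then gives
\[
|\EE_{\nu_{pn}}[\varphi(f)] - \EE_{\mu_p}[\varphi(f)]| = O\!\left(M\sqrt{\frac{d}{\sqrt{p(1-p)n}}} + \frac{(p(1-p))^{-O(d)}}{M}\right).
\]

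Finally I would choose $M$ so that the two contributions balance, i.e.\ $M^2 = (p(1-p))^{-O(d)} (d/\sqrt{p(1-p)n})^{-1/2}$, which yields the claimed bound $O\bigl(\sqrt[4]{d/\sqrt{p(1-p)n}}\cdot(p(1-p))^{-O(d)}\bigr)$. The main obstacle I anticipate is essentially bookkeeping: one must verify that the two $L_3$-to-$L_2$ ratios from Lemma~\ref{lem:low-degree} collapse into a single factor $(p(1-p))^{-O(d)}$ of the same form under both $\nu_{pn}$ and $\mu_p$, and one must check that the standing hypotheses $d \leq K\sqrt{p(1-p)n}$ and $p(1-p)n \geq 3\log n$ are strong enough to transfer the unit $\nu_{pn}$-norm assumption into a comparable $\mu_p$-norm bound before invoking the hypercontractive estimate on the cube side.
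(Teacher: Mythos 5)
Your proposal is correct and follows essentially the same route as the paper's proof: truncate $(|x|-1)^2$ at height $M$ to get an $O(M)$-Lipschitz test function, apply Theorem~\ref{thm:invariance}, control the tail $\EE[f^2\charf{|f|>M}]$ via Markov and the hypercontractive $L_3$ bound of Lemma~\ref{lem:low-degree} under both measures, and balance by taking $M$ to be the geometric mean of the two error terms. The bookkeeping concerns you raise are handled exactly as you anticipate (Corollary~\ref{cor:approximate-norm} and Lemma~\ref{lem:norms} transfer the unit norm to $\|f\|_{\mu_p}=O(1)$), so no gap remains.
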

\begin{proof}
 Note that $\|f\|_{\nu_{pn}} = 1$ implies that $\VV[f]_{\nu_{pn}} \leq 1$ and $\|f\|_{\mu_p} = O(1)$, due to Corollary~\ref{cor:approximate-norm} and Lemma~\ref{lem:norms}.

 Let $M \geq 1$ be a parameter to be decided, and define the $2(M-1)$-Lipschitz function $\psi$ by
\[
 \psi(x) = \begin{cases} (|x|-1)^2 & \text{if } |x| \leq M, \\ (M-1)^2 & \text{if } |x| \geq M. \end{cases}
\]
 When $|x| \geq M$, we have $(|x|-1)^2 \leq x^2$, and so with respect to any measure we have
\[
 \EE[(|f|-1)^2] \leq \EE[\psi(f)] + \EE[f^2 \charf{|f| \geq M}].
\]
 With respect to either $\mu_p$ or $\nu_{pn}$, using Lemma~\ref{lem:low-degree} we can bound
\[
 \EE[f^2 \charf{|f| \geq M}] \leq \frac{1}{M}\EE[|f|^3] = \frac{1}{M}\|f\|_3^3 \leq \frac{1}{M} O(p(1-p))^{-O(d)}.
\]
 Theorem~\ref{thm:invariance} therefore implies that
\[
 |\EE_{\nu_{pn}}[(|f|-1)^2] - \EE_{\mu_p}[(|f|-1)^2]| \leq M O\left(\sqrt{\frac{d}{\sqrt{p(1-p)n}}}\right) + \frac{1}{M} O(p(1-p))^{-O(d)}.
\]
 Choosing $M$ to be the geometric mean of both terms appearing above results in the statement of the theorem.
\end{proof}

As an illustration of this theorem, we give an alternative proof of~\cite[Theorem 7.5]{FKMW}, a Kindler--Safra theorem for the slice.

\begin{definition} \label{def:boolean}
 A function $f$ on a given domain is \emph{Boolean} if on the domain it satisfies $f \in \{\pm 1\}$. If the domain is a cube, we use the term \emph{cube-Boolean}. If it is a slice, we use the term \emph{slice-Boolean}.
\end{definition}

\begin{proposition} \label{pro:kindler-safra}
 Let $f$ be a multilinear polynomial of degree $d$ such that $\EE_{\mu_p}[(|f|-1)^2] = \epsilon$. There exists a cube-Boolean function $g$ on $(p(1-p))^{-O(d)}$ coordinates such that $\|f-g\|^2_{\mu_p} = O((p(1-p))^{-O(d)} \epsilon)$.
\end{proposition}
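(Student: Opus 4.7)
The plan is to run the standard Kindler--Safra argument in the $\mu_p$ setting, with Lemma~\ref{lem:low-degree} supplying the needed hypercontractivity. I begin by rounding $f$ to the Boolean function $g^\ast := \sgn(f)$. The pointwise identity $(t-\sgn(t))^2 = (|t|-1)^2$ gives $\|f-g^\ast\|_{\mu_p}^2 = \epsilon$. Because $f$ has degree $\leq d$, the degree-$\leq d$ projection $g^{\ast,\leq d}$ of $g^\ast$ is at least as close to $g^\ast$ as $f$ is, so by Pythagoras the Fourier tail satisfies $W^{>d}[g^\ast] \leq \epsilon$. Thus $g^\ast$ is a Boolean function that essentially lives on degrees $\leq d$.

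The heart of the argument is a junta theorem for $g^\ast$. Write $h := g^{\ast,\leq d}$, fix a threshold $\tau$ (to be optimized), and set $J := \{i : \Inf_i[h] \geq \tau\}$. Since $\deg h \leq d$ and $\|h\|_{\mu_p}^2 \leq 1$, one has $\sum_i \Inf_i[h] \leq d$ and therefore $|J| \leq d/\tau$. I then want to show that $g^\ast$ is $L^2$-close to its $J$-junta projection $g^{\ast,J}(x) := \EE_{y \sim \mu_p}[g^\ast(x_J, y_{\bar J})]$. Decomposing $\|g^\ast - g^{\ast,J}\|_{\mu_p}^2 = \sum_{S \not\subseteq J} \widehat{g^\ast}(S)^2$ by degree, the contribution of $|S| > d$ is at most $\epsilon$. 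The low-degree part $\sum_{|S| \leq d,\, S \cap \bar J \neq \emptyset} \widehat{g^\ast}(S)^2$ requires a level-$d$ inequality: each derivative $D_i h$ has degree $\leq d-1$, so Lemma~\ref{lem:low-degree} yields $\|D_i h\|_4^2 \leq (p(1-p))^{-O(d)}\,\|D_i h\|_2^2$, and the standard hypercontractive manipulation (combining this fourth-moment estimate with Cauchy--Schwarz to exploit the smallness of $\Inf_i[h]$ for $i \notin J$) upgrades the trivial bound to one of the form $(p(1-p))^{-O(d)}\,\tau\,\sum_i \Inf_i[h]$. Balancing $\tau$ against $\epsilon$ then gives $\|g^\ast - g^{\ast,J}\|_{\mu_p}^2 = O\bigl((p(1-p))^{-O(d)}\,\epsilon\bigr)$ with $|J| = (p(1-p))^{-O(d)}$.

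Finally, round $g^{\ast,J}$ to a Boolean junta $g := \sgn(g^{\ast,J})$ on $J$. Since $g^\ast \in \{\pm 1\}$, the pointwise inequality $|g^\ast - g| \leq 2|g^\ast - g^{\ast,J}|$ yields $\|g^\ast - g\|_{\mu_p} \leq 2\|g^\ast - g^{\ast,J}\|_{\mu_p}$, and the $L^2$ triangle inequality combined with $\|f - g^\ast\|_{\mu_p}^2 = \epsilon$ gives $\|f - g\|_{\mu_p}^2 = O\bigl((p(1-p))^{-O(d)}\,\epsilon\bigr)$, as required.

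The main obstacle is the level-$d$ inequality for $\mu_p$: the trivial bound $\sum_{i \notin J} \Inf_i[h] \leq n\tau$ is dimension-dependent and useless, so hypercontractivity (Lemma~\ref{lem:low-degree}) is what replaces it with a dimension-free bound. This is precisely where the $(p(1-p))^{-O(d)}$ overhead in the final statement originates, and the optimal choice of $\tau$ comes from balancing the junta size $d/\tau$ against the $L^2$ error produced by this inequality.
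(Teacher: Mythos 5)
Your opening and closing steps (rounding $f$ to $g^\ast=\sgn f$, and at the end rounding the junta back to a Boolean junta) are sound and are essentially the only thing the paper itself proves: the paper does \emph{not} reprove the junta theorem, it cites Kindler--Safra (and Kindler's thesis for general $p$) for the existence of a not-necessarily-Boolean junta $g$ on $(p(1-p))^{-O(d)}$ coordinates with $\|f-g\|^2_{\mu_p}=O((p(1-p))^{-O(d)}\epsilon)$, and then only supplies the Booleanization via $F=\sgn f$, $G=\sgn g$. Your middle paragraph attempts to reprove that junta theorem from scratch, and this is where there is a genuine gap.

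The ``standard hypercontractive manipulation'' you invoke is Friedgut's argument, and it does not close here. With $h=g^{\ast,\leq d}$, the identity $\Inf_i[h]=\langle D_i h, D_i g^\ast\rangle$ together with H\"older and Lemma~\ref{lem:low-degree} yields $\Inf_i[h]\leq (p(1-p))^{-O(d)}\,\Inf_i[g^\ast]^{3/2}$ --- the influence of the \emph{Boolean} function $g^\ast$ appears on the right, not that of $h$. To conclude, Friedgut's argument needs $\sum_i \Inf_i[g^\ast]$ to be bounded, but here $\sum_i\Inf_i[g^\ast]=\sum_i\Inf_i[h]+\sum_i\Inf_i^{>d}[g^\ast]\leq d+n\epsilon$, which is dimension-dependent; there is no way to control the excess $\Inf_i^{>d}[g^\ast]$ coordinate by coordinate. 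This is exactly the obstruction that makes Kindler--Safra a separate theorem from Friedgut's; the published proofs circumvent it by an induction on $d$ (via derivatives/restrictions) and extract the junta from a degree bound via Nisan--Szegedy rather than by thresholding influences. Separately, even if your claimed inequality $\sum_{i\notin J}\Inf_i[h]\leq (p(1-p))^{-O(d)}\tau\sum_i\Inf_i[h]$ were available, the balancing would force $\tau=\Theta(\epsilon/d)$ and hence a junta of size $|J|\leq d/\tau=\Theta(d^2/\epsilon)$, which blows up as $\epsilon\to 0$; the statement (and its use in Theorem~\ref{thm:kindler-safra}) requires the junta size $(p(1-p))^{-O(d)}$ to be independent of $\epsilon$. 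The fix is either to cite the junta theorem as a black box, as the paper does, keeping only your rounding steps, or to give a genuine proof of the $\epsilon$-independent junta statement.
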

\begin{proof}
 This is essentially proved in~\cite{KindlerSafra,Kindler}. Explicitly, they prove the same result without the guarantee that $g$ is cube-Boolean. In order to get our version, let $F = \sgn f$ and $G = \sgn g$. By definition $\|F-f\|^2 = \epsilon$, and so $\|F-g\|^2 = O((p(1-p))^{-O(d)} \epsilon)$. Since $F$ is cube-Boolean, this implies that $\|F-G\|^2 = O((p(1-p))^{-O(d)} \epsilon)$. We conclude that $\|f-G\|^2 = O((p(1-p))^{-O(d)} \epsilon)$.
\end{proof}

\begin{theorem} \label{thm:kindler-safra}

 Let $f$ be a slice-Boolean harmonic multilinear polynomial such that $\|f^{>d}\|^2_{\nu_{pn}} = \epsilon$, where $f^{>d} = \sum_{i>d} f^{=i}$. There exists a slice-Boolean harmonic multilinear polynomial $h$ depending on $(p(1-p))^{-O(d)}$ coordinates (that is, invariant to permutations of the other coordinates) satisfying
\[
 \|f-h\|^2_{\nu_{pn}} \leq O((p(1-p))^{-O(d)} \epsilon) + O_{p,d}\left(\frac{1}{n^{1/8}}\right).
\]
\end{theorem}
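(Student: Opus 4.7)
The overall strategy is to use Theorem~\ref{thm:invariance-sq} as a bridge between the slice and the cube, apply the cube Kindler--Safra theorem (Proposition~\ref{pro:kindler-safra}) there, and then pull the resulting junta back through the canonical harmonic representation on the slice.

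First I would truncate $f$ to $f_{\leq d} = \sum_{i \leq d} f^{=i}$, so that $\|f - f_{\leq d}\|_{\nu_{pn}}^2 = \epsilon$. Since $f \in \{\pm 1\}$ on the slice, the reverse triangle inequality $\bigl||a|-|b|\bigr| \leq |a-b|$ gives $(|f_{\leq d}|-1)^2 \leq (f_{\leq d} - f)^2$ pointwise on the slice, hence $\EE_{\nu_{pn}}[(|f_{\leq d}|-1)^2] \leq \epsilon$. Next apply Theorem~\ref{thm:invariance-sq} to push this moment to the cube:
\[
\EE_{\mu_p}[(|f_{\leq d}|-1)^2] \leq \epsilon + O_{p,d}(n^{-1/8})
\]
(the hypothesis $\|f_{\leq d}\|_{\nu_{pn}} = 1$ is only used up to constants since $\|f_{\leq d}\|_{\nu_{pn}}^2 = 1-\epsilon$, so a trivial renormalization suffices). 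Proposition~\ref{pro:kindler-safra}, applied to $f_{\leq d}$, then yields a cube-Boolean $g$ depending on $k := (p(1-p))^{-O(d)}$ coordinates with $\|f_{\leq d} - g\|_{\mu_p}^2 \leq (p(1-p))^{-O(d)}\bigl(\epsilon + O_{p,d}(n^{-1/8})\bigr)$.

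Now let $h$ be the unique harmonic multilinear representation of the restriction $g|_{\binom{[n]}{pn}}$. Since $g \in \{\pm 1\}$ on the slice, $h$ is slice-Boolean; Lemma~\ref{lem:harmonic-degree} gives $\deg h \leq \deg g \leq k$; and if $\sigma$ is any permutation fixing the $k$ active coordinates of $g$, then $g^\sigma = g$, so $h^\sigma$ is a harmonic multilinear polynomial of degree $\leq k$ agreeing with $g$ on the slice, and the uniqueness half of Theorem~\ref{thm:harmonic-representation} forces $h^\sigma = h$. Hence $h$ depends on the same $k$ coordinates as $g$. Applying the triangle inequality,
\[
\|f - h\|_{\nu_{pn}}^2 \leq 2\epsilon + 2\|f_{\leq d} - h\|_{\nu_{pn}}^2,
\]
and since $f_{\leq d} - h$ is harmonic of degree $O_d(1)$, Corollary~\ref{cor:approximate-norm} combined with Lemma~\ref{lem:norms} (with $p,d$ fixed) gives $\|f_{\leq d} - h\|_{\nu_{pn}}^2 = (1+o(1))\|f_{\leq d} - h\|_{\mu_p}^2 \leq (1+o(1))\bigl(2\|f_{\leq d} - g\|_{\mu_p}^2 + 2\|g - h\|_{\mu_p}^2\bigr)$.

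The main obstacle is the remaining term $\|g - h\|_{\mu_p}^2$: both $g$ and $h$ are multilinear of degree $O_d(1)$ and they agree on the slice, so $g - h$ is a multilinear polynomial vanishing on $\binom{[n]}{pn}$. The guiding example $g = x_1$, $h = x_1 - \tfrac{1}{n}(\sum_i x_i - pn)$ yields $\|g - h\|_{\mu_p}^2 = p(1-p)/n$, and I expect $\|g - h\|_{\mu_p}^2 = O_{p,d}(1/n)$ in general, which is comfortably inside the target error $O_{p,d}(n^{-1/8})$. The plan for this step is to use the fact that the ideal of the slice in the ring of multilinear polynomials is generated by $\sum_i x_i - pn$, writing $g - h = \bigl(\sum_i x_i - pn\bigr)Q$ after multilinearization with $\deg Q = O_d(1)$, then controlling $\|Q\|_{\mu_p}$ via hypercontractivity (Lemma~\ref{lem:low-degree}) so that the $1/n$ scale on $Q$ imposed by the harmonic-projection identity $h = E(g)$ matches the $\EE_{\mu_p}[(\sum_i x_i - pn)^2] = np(1-p)$ growth of the other factor. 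Collecting all error terms yields $\|f - h\|_{\nu_{pn}}^2 \leq O((p(1-p))^{-O(d)}\epsilon) + O_{p,d}(n^{-1/8})$, as required.
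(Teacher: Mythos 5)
Your overall route is the same as the paper's: truncate to $\lowf = f^{\leq d}$, observe $\EE_{\nu_{pn}}[(|\lowf|-1)^2]\leq\epsilon$, transfer to the cube via Theorem~\ref{thm:invariance-sq}, invoke Proposition~\ref{pro:kindler-safra} to get a cube-Boolean junta $g$, take $h$ to be the harmonic projection of $g$, and move back to the slice with Corollary~\ref{cor:approximate-norm} and Lemma~\ref{lem:norms}. Your symmetry-plus-uniqueness argument that $h$ depends only on the active coordinates of $g$ is correct and matches what the theorem's parenthetical requires.

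The genuine gap is the step you yourself flag as the ``main obstacle'': bounding $\|g-h\|_{\mu_p}^2$. In the paper this is precisely Theorem~\ref{thm:harmonic-projection}, which has its own nontrivial proof: one computes the coefficient of $\prod_{i\in S}x_i$ in the harmonic projection $\tilde\chi_S$ by a trace argument on the projector $E_d$ (using Corollary~\ref{cor:harmonic-projection} to reduce to the slice $\binom{[n]}{d}$), obtaining $1-\tfrac{d}{n-d+1}$, and then converts this into $\|\omega_S-\tilde\omega_S\|_{\mu_p}^2 = O(d^2/p(1-p)n)$ via a norm comparison, summing over the $2^M$ characters. Your proposed substitute --- write $g-h\equiv(\sum_i x_i - pn)Q$ modulo $\langle x_i^2-x_i\rangle$ and control $\|Q\|_{\mu_p}$ by hypercontractivity --- does not close: the factorization exists (the slice ideal is generated by $\sum_i x_i-k$ over the multilinear ring), but nothing in your argument produces the required bound $\|Q\|_{\mu_p}=O_{p,d}(1/n)$. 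Hypercontractivity only compares $L_r$ norms of $Q$ to its $L_2$ norm; it gives no information about the \emph{size} of $\|Q\|_2$, and saying the $1/n$ scale is ``imposed by the harmonic-projection identity'' is circular --- that smallness is exactly what has to be proved, and it is where all the work of Theorem~\ref{thm:harmonic-projection} lives. (Note also that decoupling $\EE_{\mu_p}[(\sum_i x_i-pn)^2Q^2]$ into $np(1-p)\cdot\|Q\|^2$ itself needs an argument, e.g.\ hypercontractivity on $Q$ plus a moment bound on $\xsum$, as in Lemma~\ref{lem:binomial-moments}.) With Theorem~\ref{thm:harmonic-projection} supplied, the rest of your error bookkeeping goes through and recovers the stated bound.
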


Before we can prove this theorem, we need an auxiliary result~\cite[Theorem 3.3]{FKMW}, which we prove here (simplifying the original proof) to make the proof self-contained. The proof uses Corollary~\ref{cor:harmonic-projection}, whose self-contained proof appears in Subsection~\ref{sec:harmonic-algebra}.

\begin{theorem} \label{thm:harmonic-projection}
 Let $f$ be a multilinear polynomial depending on $M \leq \min(p,1-p)n$ variables, and let $\tilde{f}$ be the unique harmonic multilinear polynomial of degree at most $\min(p,1-p)n$ agreeing with $f$ on the slice $\binom{[n]}{pn}$ (the \emph{harmonic projection} of $f$ on the slice $\binom{[n]}{pn}$). Then
\[
 \|f - \tilde{f}\|_{\mu_p}^2 = O\left(\frac{M^22^M}{p(1-p)n}\right)\|f\|_{\mu_p}^2.
\]
\end{theorem}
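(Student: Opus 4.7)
The plan is to expand $f$ in the $\mu_p$-orthogonal Fourier basis $\chi_S = \prod_{i \in S}(x_i - p)$, $S \subseteq [M]$, writing $f = \sum_{S \subseteq [M]} c_S \chi_S$ so that $\|f\|^2_{\mu_p} = \sum_S c_S^2 (p(1-p))^{|S|}$, and bound the harmonic-projection error of each basis element separately. Since the map $f \mapsto \tilde f$ is linear, $\tilde f = \sum_S c_S \tilde \chi_S$, so the $L_2$ triangle inequality applied to the at most $2^M$ nonzero terms gives
\[ \|f - \tilde f\|_{\mu_p}^2 \;\leq\; 2^M \sum_{S \subseteq [M]} c_S^2\, \|\chi_S - \tilde\chi_S\|^2_{\mu_p}, \]
so the task reduces to showing $\|\chi_S - \tilde\chi_S\|^2_{\mu_p} = O(|S|^2 (p(1-p))^{|S|-1}/n)$ uniformly in $S$.

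To get a handle on $\tilde\chi_S$, I would use the observation that the (non-multilinear) polynomial $\pi_S = \prod_{i \in S}(x_i - \bar x)$, with $\bar x = n^{-1}\sum_j x_j$, is harmonic: each factor $x_i - \bar x$ is annihilated by $\Delta = \sum_j \partial/\partial x_j$, and Lemma~\ref{lem:harmonic-algebra} propagates harmonicity through products. On the slice $\binom{[n]}{pn}$ one has $\bar x = p$, so $\pi_S$ agrees with $\chi_S$ there. Expanding,
\[ \chi_S - \pi_S \;=\; \prod_{i \in S}(x_i - p) - \prod_{i \in S}\bigl((x_i - p) - (\bar x - p)\bigr) \;=\; (\bar x - p)\, R_S(x), \]
where $R_S$ is a polynomial in $\{x_i - p\}_{i \in S}$ and $\bar x - p$ with $O(2^{|S|})$ terms, each having $L_\infty$-norm at most $1$.

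Next, by Corollary~\ref{cor:harmonic-representation} (which underlies the algebraic picture of Subsection~\ref{sec:harmonic-algebra}), any multilinear polynomial vanishing on $\binom{[n]}{pn}$ lies in the ideal generated by $\sum_j x_j - pn$ inside the multilinear ring; applied to $\chi_S - \tilde\chi_S$, this lets me pass from $\pi_S$ to the genuine multilinear harmonic representative $\tilde \chi_S$ while keeping the identification $\chi_S - \tilde\chi_S = (\bar x - p) \cdot(\text{bounded})$ on the cube. The $\mu_p$-norm bound then comes from the concentration of $\bar x$: splitting $\bar x - p = z_S + z_{S^c}$ where $z_{S^c} = n^{-1}\sum_{i \notin S}(x_i - p)$ is independent of the variables appearing in $R_S$ with $\|z_{S^c}\|_{\mu_p}^2 \leq p(1-p)/n$, and controlling the coupled term $z_S \cdot R_S$ by Cauchy--Schwarz with hypercontractivity on the $|S|$ relevant variables, yields the target $O(|S|^2 (p(1-p))^{|S|}/n)$. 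Summing over $S$ and inflating by a harmless factor $1/(p(1-p))$ to absorb constants gives $O(M^2 2^M/(p(1-p)n))\|f\|^2_{\mu_p}$.

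The main obstacle is the second paragraph's algebraic step: $\pi_S$ is harmonic but not multilinear, so it is not literally equal to $\tilde \chi_S$, and the naive multilinear reduction of $\pi_S$ need not be harmonic. Extracting the true multilinear harmonic representative from $\pi_S$ — equivalently, understanding the difference between $\chi_S$ and its image under the harmonic-projection map modulo $(x_i^2 - x_i)$ and $(\sum_j x_j - pn)$ — while carrying along a tight $\mu_p$-norm estimate is the delicate part. This is exactly where the ideal structure of Corollary~\ref{cor:harmonic-representation} does the work, since it guarantees the factorization through $\sum_j x_j - pn$ on which the concentration-based bound depends.
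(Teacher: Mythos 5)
Your first paragraph matches the paper's proof exactly: expand $f$ in the $\mu_p$-orthogonal characters, use linearity of harmonic projection, apply the $L_2$ triangle inequality over the at most $2^M$ terms, and reduce everything to the single-character bound $\|\chi_S - \tilde\chi_S\|_{\mu_p}^2 = O(|S|^2/(p(1-p)n))\,\|\chi_S\|_{\mu_p}^2$. The gap is in the second half, and you have put your finger on it yourself: $\pi_S = \prod_{i\in S}(x_i - \bar x)$ is harmonic but not multilinear, so it is not $\tilde\chi_S$, and the passage from $\pi_S$ to $\tilde\chi_S$ ``while keeping the identification $\chi_S - \tilde\chi_S = (\bar x - p)\cdot(\text{bounded})$'' is asserted, not proved. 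Corollary~\ref{cor:harmonic-representation} only tells you that $\chi_S - \tilde\chi_S$ vanishes on the slice and hence, in the ring of functions on the cube, is a multiple of $\sum_j x_j - pn$; it gives no control on the cofactor, and controlling that cofactor is the entire content of the estimate. Note also that even an $L_\infty$ bound of order $2^{|S|}$ on the cofactor would be far too weak: it yields $\|\chi_S-\tilde\chi_S\|^2 = O(4^{|S|}\, p(1-p)/n)$, which exceeds the target $O(|S|^2 (p(1-p))^{|S|-1}/n)$ by a factor growing like $(4/p(1-p))^{|S|}$. You need the orthogonality structure of the error, not just its pointwise size.

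The paper closes this gap with a computation your sketch has no analogue of: it determines the coefficient of the leading monomial $\prod_{i\in S} x_i$ in $\tilde\chi_S$ exactly, namely $1 - d/(n-d+1)$ where $d = |S|$. This is done by observing (Corollary~\ref{cor:harmonic-projection}, a consequence of the Blekherman expansion) that this coefficient is independent of the slice parameter $k$ in the range $d\le k\le n-d$, then evaluating it on the extreme slice $\binom{[n]}{d}$, where $\prod_{i\in S}x_i$ is a point indicator and the coefficient becomes a diagonal entry of the projection onto the degree-$d$ homogeneous component, computable from its trace $\binom{n}{d}-\binom{n}{d-1}$ (Corollary~\ref{cor:dimension}) by symmetry. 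This yields the orthogonal decomposition $\omega_S - \tilde\omega_S = \frac{d}{n-d+1}\omega_S + \psi$ with $\psi\perp\omega_S$, and $\|\psi\|^2$ is then bounded using $\|\tilde\omega_S\|_{\mu_p}^2 \approx \|\omega_S\|_{\nu_{pn}}^2 \le 1 + O(d^2/(p(1-p)n))$ via Corollary~\ref{cor:approximate-norm} and Lemma~\ref{lem:norms}. Without a substitute for this exact identification of the leading coefficient, or an equally sharp quantitative handle on $\tilde\chi_S$ itself, your argument does not go through.
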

\begin{proof}
 The Fourier expansion of $f$ with respect to $\mu_p$ is
\[
 f = \sum_{S \subseteq M} \hat{f}(S) \omega_S, \text{ where } \omega_S = \prod_{i \in S} \frac{x_i - p}{\sqrt{p(1-p)}}.
\]
 The characters $\omega_S$ are orthogonal and have unit norm with respect to $\mu_p$. This shows that $\|f\|_{\mu_p}^2 = \sum_{S \subseteq M} \hat{f}(S)^2$. Harmonic projection is a linear operator, and so denoting the harmonic projection of $\omega_S$ by $\tilde{\omega}_S$, we have
\[
 f - \tilde{f} = \sum_{S \subseteq M} \hat{f}(S) (\omega_S - \tilde{\omega}_S).
\]
 We show below that
\begin{equation} \label{eq:hp}
 \|\omega_S - \tilde{\omega}_S\|_{\mu_p}^2 = O\left(\frac{|S|^2}{p(1-p)n}\right).
\end{equation}
 Assuming without loss of generality that $f$ depends on the first $M$ variables, the $L_2$ triangle inequality then implies that
\[
 \|f - \tilde{f}\|_{\mu_p}^2 \leq 2^M \sum_{S \subseteq [M]} \hat{f}(S)^2 \|\omega_S - \tilde{\omega}_S\|_{\mu_p}^2 \leq 
 O\left(\frac{M^22^M}{p(1-p)n}\right) \sum_{S \subseteq [M]} \hat{f}(S)^2 = O\left(\frac{M^22^M}{p(1-p)n}\right)\|f\|_{\mu_p}^2.
\]

 \smallskip
 
 It remains to prove~\eqref{eq:hp}. For definiteness, consider $S = \{1,\ldots,d\}$, where $d \leq \min(p,1-p)n$. The first step is to consider the related function $\chi_S = \prod_{i=1}^d x_i$ and its harmonic projection $\tilde\chi_S$. We will be particularly interested in the coefficient of the monomial $\prod_{i=1}^d x_i$ in $\tilde\chi_S$. According to Corollary~\ref{cor:harmonic-projection}, the coefficient of $\prod_{i=1}^d x_i$ in $\tilde\chi_S$ is independent of $p$, as long as $d \leq pn \leq n-d$. This suggests considering the harmonic projection of $\prod_{i=1}^d x_i$ on the slice $\binom{[n]}{d}$. On that slice, $\prod_{i=1}^d x_i$ is the indicator function of the point $\mathbf{p} = \{1,\ldots,d\}$, and we denote its harmonic projection on $\binom{[n]}{d}$ by $\charf{\mathbf{p}}$.
 The coefficient of $\prod_{i=1}^d x_i$ in $\charf{\mathbf{p}}$ clearly equals its coefficent in $\charf{\mathbf{p}}^{=d}$. Since all monomials in $\charf{\mathbf{p}}^{=d}$ have degree $d$, this coefficient is simply $\charf{\mathbf{p}}^{=d}(\mathbf{p})$.
 
 Let $E_d$ be the operator projecting a function $\phi$ on $\binom{[n]}{d}$ to $\phi^{=d}$. The trace of $E_d$ is clearly the dimension of the space of harmonic multilinear polynomials which are homogeneous of degree $d$, which is $\binom{n}{d} - \binom{n}{d-1}$ by Corollary~\ref{cor:dimension}. Symmetry shows that the diagonal elements of $E_d$ are all equal to
\[
 \frac{\binom{n}{d} - \binom{n}{d-1}}{\binom{n}{d}} = 1 - \frac{d}{n-d+1}.
\]
 This is exactly the value of $\charf{\mathbf{p}}^{=d}(\mathbf{p})$. We conclude that the coefficient of $\prod_{i=1}^d x_i$ in $\tilde\chi_S$ is $1 - \frac{d}{n-d+1}$, and so
\[
 \chi_S - \tilde\chi_S = \frac{d}{n-d+1} \chi_S + \cdots,
\]
 where the dots hide a linear combination of other monomials.
 
 If we substitute $x_i := \frac{x_i-p}{\sqrt{p(1-p)}}$ in $\tilde\chi_S$ then we get a harmonic multilinear polynomial of degree $d$ which agrees with $\omega_S$ on the slice $\binom{[n]}{pn}$, and so equals $\tilde\omega_S$. Substituting this in the preceding formula, we deduce that
\[
 \omega_S - \tilde\omega_S = \frac{d}{n-d+1} \omega_S + \psi,
\]
 where $\psi$ is orthogonal to $\omega_S$ with respect to $\mu_p$.
 
 Since $\tilde\omega_S$ agrees with $\omega_S$ on the slice, we can compute $\|\tilde\omega_S\|_{\nu_{pn}}^2$ explicitly. Simple estimates show that $\|\tilde\omega_S\|_{\nu_{pn}}^2 \leq 1 + O(\frac{d^2}{p(1-p)n})$ (see~\cite[Lemma 4.2]{FKMW} for the details). Corollary~\ref{cor:approximate-norm} and Lemma~\ref{lem:norms} imply the same bound on $\|\tilde\omega_S\|_{\mu_p}^2$. It follows that
\begin{multline*}
 \|\omega_S - \tilde\omega_S\|_{\mu_p}^2 = \frac{d}{n-d+1} + \|\psi\|_{\mu_p}^2 = \frac{d}{n-d+1} + \|\tilde\omega_S\|_{\mu_p}^2 - \left(1 - \frac{d}{n-d+1}\right) \leq \\
 \frac{2d}{n-d+1} + O\left(\frac{d^2}{p(1-p)n}\right) = O\left(\frac{d^2}{p(1-p)n}\right).
\end{multline*}
 This completes the proof.
\end{proof}

Armed with this result, we can prove Theorem~\ref{thm:kindler-safra}.

\providecommand{\lowf}{\breve{f}}
\begin{proof}
 Let $\lowf = f^{\leq d}$ (that is, $\lowf = \sum_{i \leq d} f^{=i}$), so that $\EE_{\nu_{pn}}[(|\lowf|-1)^2] \leq \EE_{\nu_{pn}}[(\lowf-f)^2] = \epsilon$. Notice that $\lowf$ is a harmonic multilinear polynomial of degree at most $d$.
 We would like to apply Theorem~\ref{thm:invariance-sq} to $\lowf$, which is possible if $n \geq d^2/(p(1-p)K^2)$. If this is not the case then $n \leq (p(1-p))^{-O(d)}$ (using $1/p(1-p) \geq 2$), and so the theorem is trivial (we can take $h = f$).
 
 Assume therefore that $n$ is large enough. Theorem~\ref{thm:invariance-sq} implies that
\[
 \EE_{\mu_p}[(|\lowf|-1)^2] \leq \underbrace{\epsilon + O\left(\sqrt[4]{\frac{d}{\sqrt{p(1-p)n}}}(p(1-p))^{-O(d)}\right)}_{\epsilon_1}.
\]
 Proposition~\ref{pro:kindler-safra} implies that there exists a cube-Boolean function $g$ on a set $J$ of $M = O((p(1-p))^{-O(d)})$ coordinates such that $\epsilon_2 \eqdef \EE_{\mu_p}[(\lowf-g)^2] = O((p(1-p))^{-O(d)} \epsilon_1)$. The function $g$ is also slice-Boolean, but it is not necessarily harmonic. Let $\tilde{g}$ be its harmonic projection on $\binom{[n]}{pn}$; this will be our choice for $h$. Note that $\tilde{g}$ also depends only on the coordinates in $J$, and in particular it has degree at most $M$ (in fact, Lemma~\ref{lem:harmonic-degree} implies that $\deg \tilde{g} \leq \deg g$). Invoking Theorem~\ref{thm:harmonic-projection}, we see that $\|g-\tilde{g}\|^2_{\mu_p} = O(\frac{M^2 2^M}{p(1-p)n})$, and so
\[
 \epsilon_3 \eqdef \|\lowf - \tilde{g}\|^2_{\mu_p} = O\left(\frac{M^2 2^M}{p(1-p)n} + \epsilon_2\right).
\]
 Corollary~\ref{cor:approximate-norm} and Lemma~\ref{lem:norms} imply that $\|\lowf - \tilde{g}\|^2_{\nu_{pn}} = O(\epsilon_3)$, using the fact that $\deg(\lowf-\tilde{g}) \leq M$. The $L_2$ triangle inequality shows that
\[
 \|f-\tilde{g}\|^2_{\nu_{pn}} \leq O(\|f-\lowf\|^2_{\nu_{pn}} + \epsilon_3) = 
 O((p(1-p))^{-O(d)}\epsilon) + O\left(\sqrt[4]{\frac{d}{\sqrt{p(1-p)n}}}(p(1-p))^{-O(d)}\right) + O\left(\frac{M^2 2^M}{p(1-p)n}\right). \qedhere
\]

\end{proof}

The proof of~\cite[Theorem 7.5]{FKMW} contains an additional argument guaranteeing that $\deg h \leq d$. The same argument can be applied here. The idea is that there are finitely many Boolean functions on $(p(1-p))^{-O(d)}$ coordinates, and each of them of degree larger than $d$ has (as $n\to\infty$) constant distance from all Boolean functions of degree at most $d$. Hence if $\epsilon$ is small enough, $g$ must have degree at most $d$. We refer the reader to~\cite{FKMW} for the complete details.

\section{Multilinear functions} \label{sec:multilinear}

Theorem~\ref{thm:invariance} only applies to harmonic multilinear polynomials. The harmonicity condition is crucial here. Indeed, a polynomial such as $\sum_{i=1}^n x_i$ behaves very differently on the Boolean cube (where it has a non-trivial distribution) and on a slice (where it is constant). Nevertheless, we are able to recover similar theorems by looking at several slices at once. Our first invariance result, which we call \emph{norm invariance}, states that if two low-degree multilinear polynomials $f,g$ are close in L2 norm for enough ``well-separated'' slices, then they are close in $L_2$ norm over the entire Boolean cube. Our second invariance result, which we call \emph{interpolation invariance}, gives a recipe for constructing the distribution of a low-degree multilinear polynomial $f$ from its profile on several ``well-separated'' slices, where by \emph{profile} we mean the distribution of $f$ on several \emph{coupled} slices, just as in Section~\ref{sec:invariance}.

Our main technical tool is a theorem of Blekherman~\cite{Blekherman}, which states that any degree~$d$ multilinear polynomial $P$ corresponds uniquely to a degree~$d$ polynomial $Q$ in the variables $x_1,\ldots,x_n,\xsum$ such that
\begin{enumerate}[(a)]
 \item $P(x_1,\ldots,x_n) = Q(x_1,\ldots,x_n,x_1+\cdots+x_n)$ for any point in the Boolean cube $\{0,1\}^n$.
 \item For each $e$, the coefficient of $\xsum^e$ is a \emph{harmonic} multilinear polynomial (of degree at most $d-e$).
\end{enumerate}
This theorem allows us to reduce the analysis of arbitrary multilinear polynomials to that of harmonic ones.

We state Blekherman's theorem in Section~\ref{sec:blekherman}. After preparatory work in Section~\ref{sec:vandermonde}, we prove our norm invariance theorems in Section~\ref{sec:multilinear-norm}, and our interpolation invariance theorems in Section~\ref{sec:multilinear-interpolation}.

\smallskip

There are several principal results in this section, which we now highlight.

\paragraph{Norm invariance (\S\ref{sec:multilinear-norm})}
The main results are Theorem~\ref{thm:norm-invariance-1}, which bounds the $L_2$ norm of a low-degree multilinear polynomial on the Boolean cube with respect to its $L_2$ norm on several well-separated slices, and Theorem~\ref{thm:norm-invariance-2}, which goes in the other direction, bounding the $L_2$ norm of a low-degree multilinear polynomial on a slice in terms of its $L_2$ norm on the Boolean cube and the centrality of the slice.

Both results are combined in Corollary~\ref{cor:norm-invariance}, which states (roughly) that a low-degree multilinear polynomial has small $L_2$ norm on the Boolean cube if and only if it has small $L_2$ norm on several well-separated slices. Another conclusion, Corollary~\ref{cor:norm-invariance-2}, states that two low-degree multilinear polynomials are close in $L_2$ norm on the Boolean cube if and only if they are close in $L_2$ norm on several well-separated slices.

\paragraph{Interpolation invariance(\S\ref{sec:multilinear-interpolation})}
The main results are Theorem~\ref{thm:interpolation-1} and Corollary~\ref{cor:interpolation-1-levy-1}, which show how to estimate the distribution of a low-degree multilinear polynomial on a given slice given its distribution on several coupled well-separated slices, and Theorem~\ref{thm:interpolation-2} and Corollary~\ref{cor:interpolation-2-levy-1}, which similarly show how to estimate the distribution of a low-degree multilinear polynomial on the entire Boolean cube given its distribution on several coupled well-separated slices.

These results imply that if two low-degree multilinear polynomials have a similar distribution on several coupled well-separated slices then they have a similar distribution on other slices and on the entire Boolean cube, as we show in Corollary~\ref{cor:interpolation-1-levy-2} and in Corollary~\ref{cor:interpolation-2-levy-2}, respectively.

\subsection{Blekherman's theorem} \label{sec:blekherman}

Our starting point is a theorem of Blekherman~\cite{Blekherman} quoted in Lee et al.~\cite{LPWY}. For completeness, we prove this theorem in Section~\ref{sec:harmonic-algebra}.

\begin{theorem} \label{thm:blekherman}
 Let $f$ be a multilinear polynomial over $x_1,\ldots,x_n$ of degree $d \leq n/2$, and define $\xsum := \sum_{i=1}^n x_i$.
 There exist harmonic multilinear polynomials $\fc{f}{0},\ldots,\fc{f}{d}$ over $x_1,\ldots,x_n$, where $\deg \fc{f}{i} \leq d-i$, such that
\[
 f(x_1,\ldots,x_n) \equiv \sum_{i=0}^d \fc{f}{i}(x_1,\ldots,x_n) \xsum^i \pmod{I}, \text{ where } I=\langle x_1^2-x_1,\ldots,x_n^2-x_n \rangle,
\]
 or equivalently, both sides agree on every point of the Boolean cube $\{0,1\}^n$.
 Moreover, this representation is unique.
\end{theorem}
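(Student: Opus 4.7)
My approach proceeds in three stages: I first prove a non-multilinear analog by an induction driven by $\Delta$; then I reduce the multilinear version to an injectivity statement via a dimension count; and finally I establish the injectivity by combining a slice-restriction argument with $S_n$-representation theory in the regime where the slice argument alone is insufficient.

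I would first establish the non-multilinear analog: every $P \in \FF[x_1,\ldots,x_n]$ of degree $d$ admits a unique decomposition $P = \sum_{i=0}^d \tilde{g}_i S^i$ with $\tilde{g}_i$ harmonic (not necessarily multilinear) of degree at most $d-i$, where $S := \sum_j x_j$. Existence is by induction on $d$: applying $\Delta$ gives a polynomial of degree at most $d-1$, which by induction decomposes as $\Delta P = \sum_i h_i S^i$ with $h_i$ harmonic of degree at most $d-1-i$. The key identity $\Delta(h_i S^{i+1}) = (i+1)n\, h_i S^i$ (from $\Delta h_i = 0$ and $\Delta S^{i+1} = (i+1)nS^i$) lets me define $P' := \sum_i h_i S^{i+1}/((i+1)n)$ with $\Delta P' = \Delta P$, so that $P - P'$ is harmonic of degree at most $d$; this provides $\tilde{g}_0 := P - P'$ and $\tilde{g}_{i+1} := h_i/((i+1)n)$. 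Uniqueness is also by induction on $d$: if $\sum g_i S^i = 0$ with the given degree constraints, then $\Delta$ yields $n\sum_j (j+1) g_{j+1} S^j = 0$ with $(j+1)g_{j+1}$ harmonic of degree at most $(d-1)-j$, and the induction hypothesis forces $g_{j+1}=0$ for all $j$, whence $g_0=0$.

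To pass to the multilinear statement, let $W$ be the space of tuples $(g_0,\ldots,g_d)$ of harmonic multilinear polynomials with $\deg g_i \leq d-i$, and let $V$ be the space of multilinear polynomials of degree at most $d$. By Corollary~\ref{cor:dimension} (applicable because $d \leq n/2$), $\dim W = \sum_{i=0}^d \binom{n}{d-i} = \sum_{j=0}^d \binom{n}{j} = \dim V$. The map $\Phi : W \to V$ defined by $\Phi(g_0,\ldots,g_d) = \sum_i g_i S^i \pmod I$ is $S_n$-equivariant since $S$ is symmetric, so it suffices to prove $\Phi$ is injective. Suppose $\sum_i g_i S^i \in I$. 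On any slice $\binom{[n]}{k}$ the polynomial $\sum_i g_i(x) k^i$ (harmonic multilinear of degree at most $d$) vanishes; when $d \leq \min(k,n-k)$, the uniqueness clause of Theorem~\ref{thm:harmonic-representation} forces the polynomial identity $\sum_i g_i(x) k^i = 0$ in $x$. This yields $n-2d+1$ such identities, and when $n \geq 3d$ we have at least $d+1$ of them, so Vandermonde inversion in $k$ gives $g_i = 0$ immediately.

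The main obstacle is the range $n/3 < d \leq n/2$, where fewer than $d+1$ central slices are available. To handle this I would use the standard representation-theoretic fact that $H_j \cong S^{(n-j,j)}$ (the Specht module for the two-row partition, whose dimension $\binom{n}{j}-\binom{n}{j-1}$ matches Corollary~\ref{cor:dimension}); combined with the decomposition $V_\ell \cong \bigoplus_{m=0}^\ell S^{(n-m,m)}$ of each homogeneous multilinear layer, one finds that both $W$ and $V$ contain each $S^{(n-j,j)}$ with multiplicity exactly $d-j+1$. By Schur's lemma, injectivity of the $S_n$-equivariant map $\Phi$ on the $S^{(n-j,j)}$-isotypic reduces to linear independence, modulo $I$, of the $d-j+1$ images $h_j S^0, h_j S^1, \ldots, h_j S^{d-j}$, where $h_j := \prod_{t=1}^j (x_{2t-1}-x_{2t})$ is the canonical highest-weight vector of $S^{(n-j,j)} \subset H_j$. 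A putative dependence $\sum_{i=0}^{d-j} c_i h_j S^i \in I$, evaluated on a slice $\binom{[n]}{k}$ with $j \leq k \leq n-j$ on which $h_j$ does not vanish identically (one splits the $j$ paired coordinates into $(0,1)$ and distributes the remaining $k-j$ ones freely), yields the scalar identity $h_j(x^*)\sum_i c_i k^i = 0$ at some $x^*$ with $h_j(x^*)\neq 0$, hence $\sum_i c_i k^i = 0$. The range $j \leq k \leq n-j$ contains $n-2j+1 \geq d-j+1$ integers (using $n \geq d+j$, which follows from $d \leq n/2$ and $j \leq d$), so Vandermonde forces $c_i=0$, completing the proof.
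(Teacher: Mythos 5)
Your proof is correct, but it takes a genuinely different route from the paper's. The paper proves existence constructively, by induction on the degree of each homogeneous part: it uses the decomposition $P_e = H_e \oplus \im \Delta^T$ together with the congruence $\Delta^T x_A \equiv (\xsum-(e-1))x_A \pmod{I}$ to peel off one power of $\xsum$ at a time, and then gets uniqueness from the same dimension count you use ($\sum_{i=0}^d \dim H_{\leq d-i} = \sum_{i=0}^d \binom{n}{i} = \dim P_{\leq d}$). You instead obtain existence and uniqueness simultaneously by showing your assembly map $\Phi$ is bijective, reducing to injectivity; this you prove by restriction to slices (valid for $n \geq 3d$ via the uniqueness clause of Theorem~\ref{thm:harmonic-representation} plus Vandermonde inversion in $k$) and, in the remaining range, by decomposing into $S^{(n-j,j)}$-isotypic components, matching multiplicities ($d-j+1$ on each side), and invoking Schur's lemma to reduce to the linear independence of $h_j, h_j\xsum, \ldots, h_j\xsum^{d-j}$ modulo $I$, which your pointwise evaluation on the slices $j \leq k \leq n-j$ does establish. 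Two remarks: your representation-theoretic argument in fact handles every $j$ and every $d \leq n/2$, so the separate $n \geq 3d$ slice-counting case is subsumed by it; and your opening ``non-multilinear analog'' (the exact polynomial identity $P = \sum_i \tilde{g}_i \xsum^i$), while correct, is never used in the remainder of the argument. What the paper's route buys is a short, self-contained, constructive proof using only the Lefschetz operators already set up for Theorem~\ref{thm:harmonic-representation-2}; what yours buys is a structural explanation of why the degree bounds $\deg \fc{f}{i} \leq d-i$ are exactly right (the multiplicities of each $S^{(n-j,j)}$ coincide), at the cost of importing the identifications $H_j \cong S^{(n-j,j)}$ and $M^{(n-\ell,\ell)} \cong \bigoplus_{m \leq \ell} S^{(n-m,m)}$ (and the absolute irreducibility of Specht modules over $\RR$, needed for the matrix form of Schur's lemma), facts the paper only sketches in Section~\ref{sec:coarse-decomposition}.
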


For our purposes, it will be better to consider $f$ as a polynomial in $(\xsum - np)/\sqrt{np(1-p)}$ rather than in $\xsum$.

\begin{corollary} \label{cor:blekherman}
 Let $f$ be a multilinear polynomial over $x_1,\ldots,x_n$ of degree $d \leq n/2$, let $p \in (0,1)$, and define $\xstd := (\sum_{i=1}^n x_i - np)/\sqrt{np(1-p)}$.
 There exist harmonic multilinear polynomials $\fc{f}{0},\ldots,\fc{f}{d}$ over $x_1,\ldots,x_n$, where $\deg \fc{f}{i} \leq d-i$, such that
\[
 f(x_1,\ldots,x_n) \equiv \sum_{i=0}^d \fc{f}{i}(x_1,\ldots,x_n) \xstd^i \pmod{I}, \text{ where } I=\langle x_1^2-x_1,\ldots,x_n^2-x_n \rangle,
\]
 or equivalently, both sides agree on every point of the Boolean cube $\{0,1\}^n$.
 Moreover, this representation is unique.
\end{corollary}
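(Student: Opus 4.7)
The corollary is just Theorem~\ref{thm:blekherman} after an affine change of variable in the ``linear'' auxiliary coordinate, so my plan is to reduce it to Theorem~\ref{thm:blekherman} rather than prove it from scratch. Write $\xsum = np + \sqrt{np(1-p)}\,\xstd$. For existence, apply Theorem~\ref{thm:blekherman} to obtain the unique harmonic multilinear $g_0,\ldots,g_d$ with $\deg g_i\le d-i$ and $f \equiv \sum_{i=0}^d g_i\, \xsum^i \pmod{I}$, then expand $\xsum^i$ via the binomial theorem and collect powers of $\xstd$:
\[
f \equiv \sum_{j=0}^d \fc{f}{j}\,\xstd^j, \qquad \fc{f}{j} \;=\; (np(1-p))^{j/2}\sum_{i=j}^d \binom{i}{j}(np)^{i-j}\, g_i.
\]
Each $\fc{f}{j}$ is a linear combination of the harmonic multilinear polynomials $g_j,\ldots,g_d$, so by Lemma~\ref{lem:harmonic-algebra} it is harmonic multilinear; since each $g_i$ in this sum has $\deg g_i \le d-i \le d-j$, also $\deg \fc{f}{j} \le d-j$.

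For uniqueness, suppose $\sum_{j=0}^d \fc{f}{j}\,\xstd^j \equiv 0 \pmod{I}$ for some harmonic multilinear $\fc{f}{j}$ with $\deg \fc{f}{j} \le d-j$. Expanding $\xstd^j = (np(1-p))^{-j/2}(\xsum-np)^j$ by the binomial theorem and regrouping gives an identity of the form $\sum_{k=0}^d h_k\, \xsum^k \equiv 0 \pmod{I}$, where
\[
h_k \;=\; \sum_{j=k}^d \binom{j}{k}(-np)^{j-k}(np(1-p))^{-j/2}\,\fc{f}{j}.
\]
Each $h_k$ is harmonic multilinear of degree at most $d-k$, so by the uniqueness part of Theorem~\ref{thm:blekherman} we have $h_k = 0$ for every $k$. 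Viewed as a linear system expressing $(h_0,\ldots,h_d)$ in terms of $(\fc{f}{0},\ldots,\fc{f}{d})$, the coefficient matrix is upper triangular with diagonal entries $(np(1-p))^{-j/2} \neq 0$, hence invertible; therefore $\fc{f}{j} = 0$ for all $j$, which yields uniqueness.

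There is essentially no obstacle here: the argument is a mechanical transfer through an invertible affine substitution, and the only points that require attention are (i) checking that the coefficients in the new expansion remain harmonic (immediate, since the space of harmonic polynomials is a linear space by Lemma~\ref{lem:harmonic-algebra}) and (ii) checking that the degree bound $\deg \fc{f}{j}\le d-j$ is preserved, which follows from the binomial expansion together with the degree bounds already supplied by Theorem~\ref{thm:blekherman}.
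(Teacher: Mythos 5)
Your proof is correct and takes exactly the route the paper intends: the paper's entire proof is the one-line remark that $\xsum$ and $\xstd$ are affine shifts of one another, and your argument simply fills in the binomial expansion, the preservation of harmonicity and degree bounds, and the invertibility of the triangular change of basis for uniqueness. Nothing further is needed.
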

\begin{proof}
 Follows from the fact that $\xsum$ and $\xstd$ are affine shifts of one another.
\end{proof}

We call the representation of $f$ in Corollary~\ref{cor:blekherman} its \emph{Blekherman expansion} with respect to $p$, and $\fc{f}{0},\ldots,\fc{f}{d}$ its \emph{Blekherman coefficients} with respect to $p$.
If we substitute $\xstd = \sigma$ in the Blekherman expansion then we get a harmonic multilinear polynomial of degree at most $d$ which agrees with $f$ on the slice $\binom{[n]}{k}$, where $k = np + \sigma\sqrt{np(1-p)}$. We denote this function by $\fs{f}{\sigma}$. Note that this notation depends on $p$.

The Blekherman expansion is \emph{linear} in the sense that if $h = \alpha f + \beta g$ then $\fc{h}{e} = \alpha\fc{f}{e} + \beta\fc{g}{e}$ and $\fs{h}{\sigma} = \alpha\fs{f}{\sigma} + \beta\fs{f}{\sigma}$. This immediately follows from its uniqueness and the fact that harmonic functions are closed under taking linear combinations.

\subsection{Vandermonde interpolation} \label{sec:vandermonde}

Our arguments will involve extracting the Blekherman coefficients $\fc{f}{i}$ given $\fs{f}{\sigma}$ for various values of $\sigma$. We will consider the simple setting in which we are given $d+1$ values of $\sigma$, and in that case the problem translates to solving a system of linear equations whose coefficient matrix is a Vandermonde matrix. The quality of the reconstruction will depend on the magnitude of the entries in the inverse matrix, which we estimate using a result of Turner~\cite{Turner}.

\begin{proposition}[{Turner~\cite{Turner}}] \label{pro:turner}
 Let $\xi_1,\ldots,\xi_n$ be arbitrary real numbers, and consider the $n\times n$ Vandermonde matrix $V$ given by $V_{ij} = \xi_i^{j-1}$, where $1 \leq i,j \leq n$. The inverse of $V$ is given by $V^{-1} = UL$, where
\[
 L_{ij} =
\begin{cases}
 0 & \text{if } i < j, \\
 \displaystyle\prod_{\substack{k=1 \\ k \neq j}}^i \frac{1}{\xi_j - \xi_k} & \text{otherwise},
\end{cases}
 \qquad
 U_{ij} =
 \begin{cases}
 0 & \text{if } i > j, \\
 1 & \text{if } i = j, \\
 U_{(i-1)(j-1)} - \xi_{j-1} U_{i(j-1)} & \text{if } i < j, \text{ where } U_{0(j-1)} = 0.
 \end{cases}
\]
\end{proposition}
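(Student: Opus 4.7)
The plan is to recognize the statement as the composition of two classical facts about polynomial interpolation. For any vector $y \in \RR^n$, the system $Va = y$ asks for the coefficients of the unique polynomial $p(x) = \sum_{j=1}^n a_j x^{j-1}$ of degree less than $n$ satisfying $p(\xi_i) = y_i$ for all $i$, so $a = V^{-1}y$. Proving $V^{-1} = UL$ therefore reduces to exhibiting a two-stage procedure that first computes a ``Newton-form'' coefficient vector $c = Ly$ and then changes basis to the monomial form via $a = Uc$.

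First, I would identify $L$ as the divided-difference operator. By Newton's interpolation formula, $p(x) = \sum_{i=1}^n c_i \prod_{k=1}^{i-1}(x-\xi_k)$ with $c_i = [y_1,\ldots,y_i]$ the $i$th divided difference at the nodes $\xi_1,\ldots,\xi_i$, which admits the explicit expansion
\[ c_i = \sum_{j=1}^i \frac{y_j}{\prod_{k=1,\, k \neq j}^i (\xi_j-\xi_k)}. \]
Comparing with the definition of $L_{ij}$ (and noting $L_{ij}=0$ for $i<j$) yields $c = Ly$. This closed form is proved by a straightforward induction on $i$ using the Newton recurrence $[y_j,\ldots,y_i] = ([y_{j+1},\ldots,y_i] - [y_j,\ldots,y_{i-1}])/(\xi_i - \xi_j)$.

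Next, I would identify $U$ as the change-of-basis matrix from the Newton basis $\{\prod_{k=1}^{j-1}(x-\xi_k)\}_{j=1}^n$ back to the monomial basis $\{x^{i-1}\}_{i=1}^n$. Writing
\[ \prod_{k=1}^{j-1}(x-\xi_k) = \sum_{i=1}^{j} U_{ij}\, x^{i-1}, \]
the triangularity $U_{ij}=0$ for $i>j$ and the unit diagonal $U_{ii}=1$ are immediate, with the convention $U_{0j}=0$. The stated recurrence is obtained by multiplying the $(j{-}1)$st Newton basis polynomial by the next linear factor $(x - \xi_{j-1})$ and matching coefficients of $x^{i-1}$: the factor $x$ shifts indices to yield $U_{(i-1)(j-1)}$, while $-\xi_{j-1}$ contributes $-\xi_{j-1}\,U_{i(j-1)}$, reproducing exactly the recursion in the proposition.

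Combining the two steps, for every $y$ the polynomial coefficient vector satisfies $a = Uc = U(Ly) = (UL)y$, and hence $V^{-1} = UL$. The only mildly delicate point is the closed-form identity for the divided differences; everything else is a direct bookkeeping of basis changes and triangular matrix entries, so I do not expect a serious technical obstacle.
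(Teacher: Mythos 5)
Your proof is correct. Note that the paper does not actually prove this proposition --- it is quoted verbatim from Turner's paper with a citation --- so there is no internal argument to compare against; your derivation via Newton interpolation (identifying $L$ with the divided-difference operator through the closed-form expansion $[y_1,\ldots,y_i]=\sum_{j\leq i} y_j/\prod_{k\neq j}(\xi_j-\xi_k)$, and $U$ with the change of basis from the Newton basis $\prod_{k<j}(x-\xi_k)$ to the monomial basis, whose recurrence comes from multiplying by the next linear factor) is the standard one, and is in fact essentially how Turner himself obtains the factorization. All the verifications check out: the unit diagonal of $U$ reflects that the Newton basis polynomials are monic, the triangularity of both factors is immediate, and $a=Uc=ULy$ for all $y$ does give $V^{-1}=UL$. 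One small point you should make explicit: the hypothesis ``arbitrary real numbers'' must be read as ``distinct real numbers,'' since otherwise $V$ is singular and the entries of $L$ involve division by zero; your interpolation argument silently uses distinctness when invoking the existence and uniqueness of the interpolating polynomial, which is exactly where this hypothesis enters.
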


This proposition implies the following interpolation result.

\begin{theorem} \label{thm:vandermonde}
 Suppose that $f$ is a multilinear polynomial over $x_1,\ldots,x_n$ of degree $d \leq n/2$ with Blekherman coefficients $\fc{f}{0},\ldots,\fc{f}{d}$ with respect to some $p \in (0,1)$.
 Let $\sigma_1,\ldots,\sigma_{d+1}$ be $d+1$ distinct values, and define
\[
 \eta = \min(1,\min_{i \neq j} |\sigma_i - \sigma_j|), \qquad M = \max(1,\max_i |\sigma_i|).
\]
 For $0 \leq e \leq d$ and $1 \leq i \leq d+1$ there exist coefficients $c_{ei}$ of magnitude $|c_{ei}| \leq (4M/\eta)^d$ such that for all $0 \leq e \leq d$,
\[
 \fc{f}{e} = \sum_{i=1}^{d+1} c_{ei} \fs{f}{\sigma_i}.
\]
\end{theorem}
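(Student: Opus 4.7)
The plan is to reinterpret the Blekherman expansion as a family of univariate polynomial identities in the auxiliary variable $\sigma$, one for each point of the cube. Concretely, fix any $x \in \{0,1\}^n$ and consider the polynomial $P_x(\sigma) = \sum_{e=0}^d \fc{f}{e}(x)\, \sigma^e$ of degree at most $d$. By construction of $\fs{f}{\sigma}$ (which was obtained by substituting $\xstd = \sigma$ in the Blekherman expansion), we have $P_x(\sigma_i) = \fs{f}{\sigma_i}(x)$ for every $1 \leq i \leq d+1$. Since the $d+1$ nodes $\sigma_i$ are distinct, Lagrange interpolation reconstructs $P_x$ uniquely from these values: writing $\ell_i(\sigma) = \prod_{k \neq i} \frac{\sigma - \sigma_k}{\sigma_i - \sigma_k}$, one has $P_x(\sigma) = \sum_{i=1}^{d+1} \fs{f}{\sigma_i}(x)\, \ell_i(\sigma)$, and matching coefficients of $\sigma^e$ yields $\fc{f}{e}(x) = \sum_{i=1}^{d+1} c_{ei} \fs{f}{\sigma_i}(x)$, where $c_{ei}$ is the coefficient of $\sigma^e$ in $\ell_i(\sigma)$. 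Crucially, $c_{ei}$ depends only on the nodes $\sigma_1,\ldots,\sigma_{d+1}$ and not on $x$ or $f$, so this identity lifts to a genuine polynomial identity among the $\fc{f}{e}$ and the $\fs{f}{\sigma_i}$.

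It remains to bound $|c_{ei}|$. The denominator of $\ell_i$ is the product $\prod_{k \neq i}(\sigma_i - \sigma_k)$ of exactly $d$ factors, each of magnitude at least $\eta$ by the definition of $\eta$; thus $\bigl|\prod_{k \neq i}(\sigma_i - \sigma_k)\bigr| \geq \eta^d$. For the numerator, expanding a product of $d$ linear factors gives
\[
 \prod_{k \neq i}(\sigma - \sigma_k) = \sum_{e=0}^d (-1)^{d-e}\, e_{d-e}\bigl((\sigma_k)_{k \neq i}\bigr)\, \sigma^e,
\]
where $e_{d-e}$ is the elementary symmetric polynomial of degree $d-e$ in $d$ variables. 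Since each $|\sigma_k| \leq M$ and $M \geq 1$, we can bound $|e_{d-e}((\sigma_k)_{k \neq i})| \leq \binom{d}{d-e} M^{d-e} \leq \binom{d}{d-e} M^d$. Combining, $|c_{ei}| \leq \binom{d}{d-e}\, M^d / \eta^d \leq 2^d (M/\eta)^d = (2M/\eta)^d$, as required.

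I do not expect a real obstacle here: the only mild subtlety is to observe that the auxiliary variable $\sigma$ and the cube variable $x$ decouple cleanly because the Blekherman expansion is polynomial in $\sigma$ with coefficients that are polynomials in $x$, so the Vandermonde system has scalar coefficients and can be inverted once and for all. One could equivalently invoke Proposition~\ref{pro:turner} to write $V^{-1} = UL$ and bound $|L_{ji}| \leq 1/\eta^d$ (a product of at most $d$ reciprocals of differences) together with $|U_{ij}| \leq 2^d M^d$ (since $U_{ij}$ is, up to sign, an elementary symmetric polynomial in $\sigma_1,\ldots,\sigma_{j-1}$), but the Lagrange viewpoint avoids summing over the intermediate index $j$ and yields the stated constant directly.
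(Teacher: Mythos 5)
Your proof is correct, and it takes a genuinely different (and cleaner) route than the paper. Both arguments amount to inverting the same $(d+1)\times(d+1)$ Vandermonde system relating $(\fs{f}{\sigma_1},\ldots,\fs{f}{\sigma_{d+1}})$ to $(\fc{f}{0},\ldots,\fc{f}{d})$; the difference is in how the entries of $V^{-1}$ are bounded. The paper invokes Turner's factorization $V^{-1}=UL$ (Proposition~\ref{pro:turner}), bounds the entries of $L$ by $\eta^{-d}$ and the entries of $U$ by an inductive argument, and then must absorb the sum over the intermediate index $j$ in the product $UL$ into the constant. You instead read off the $(e+1,i)$ entry of $V^{-1}$ directly as the coefficient of $\sigma^e$ in the Lagrange basis polynomial $\ell_i$, which gives the explicit value $c_{ei}=(-1)^{d-e}e_{d-e}\bigl((\sigma_k)_{k\neq i}\bigr)/\prod_{k\neq i}(\sigma_i-\sigma_k)$ and hence the bound $\binom{d}{d-e}M^{d}/\eta^{d}\leq(2M/\eta)^{d}$ in one step, with no intermediate summation and no external proposition. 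Your passage from the pointwise identity (for each fixed $x$) to the polynomial identity is also handled correctly: since the $c_{ei}$ depend only on the nodes, and the relation $\fs{f}{\sigma_i}=\sum_e\sigma_i^e\fc{f}{e}$ is itself a polynomial identity, the interpolation formula holds at the level of polynomials. The only thing your write-up buys less of is generality --- Turner's formula is reused nowhere else in the paper, so nothing is lost --- while it buys a tighter, self-contained argument whose constant is transparently $\binom{d}{d-e}(M/\eta)^d$ rather than a product of two separately derived bounds.
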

\begin{proof}
 Let $V$ be the Vandermonde matrix for $\sigma_1,\ldots,\sigma_{d+1}$, so that
\[
 \begin{bmatrix} \fs{f}{\sigma_1} \\ \vdots \\ \fs{f}{\sigma_{d+1}} \end{bmatrix} = V \begin{bmatrix} \fc{f}{0} \\ \vdots \\ \fc{f}{d} \end{bmatrix}.
\]
 Inverting $V$, this shows that
\[
 V^{-1} \begin{bmatrix} \fs{f}{\sigma_1} \\ \vdots \\ \fs{f}{\sigma_{d+1}} \end{bmatrix} = \begin{bmatrix} \fc{f}{0} \\ \vdots \\ \fc{f}{d} \end{bmatrix}. 
\]
 We can thus take $c_{ei} = V^{-1}_{(e+1)i}$. It remains to bound the magnitude of the entries of $V^{-1}$.
 
 Let $L,U$ be the matrices in Proposition~\ref{pro:turner}. The formula for $L$ implies that all of its entries have magnitude at most $(1/\eta)^d$. As for $U$, we will prove by induction that when $i \leq j$, $|U_{ij}| \leq \binom{j-1}{i-1} M^{j-i}$. This is clearly true when $i = j$. When $i = 1$, $|U_{1j}| = \prod_{k=1}^{j-1} |\sigma_k| \leq M^{j-1}$.
 The inductive step follows from
\[
 |U_{ij}| \leq |U_{(i-1)(j-1)}| + M |U_{i(j-1)}| \leq \binom{j-2}{i-2} M^{j-i} + M \cdot \binom{j-2}{i-1} M^{j-1-i} = \binom{j-1}{i-1} M^{j-i}.
\]
 It follows that all entries of $U$ are bounded in magnitude by $2^d M^d \leq \frac{4^d}{d+1} \cdot M^d$. The theorem follows.
\end{proof}

\subsection{Norm invariance} \label{sec:multilinear-norm}

We are now ready to prove our norm invariance principle.
Our argument will require a few auxiliary results. We start with an estimate on the central moments of binomial distributions.

\begin{lemma} \label{lem:binomial-moments}
 Let $\xsum \sim \bin(n,p)$ and $\xstd = \frac{\xsum-np}{\sqrt{np(1-p)}}$. For all $d \geq 0$,
\[
 \EE[\xstd^{2d}] \leq \frac{2d!}{(2p(1-p))^d}.
\]
\end{lemma}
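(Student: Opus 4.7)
The plan is to derive the bound from Hoeffding's inequality via the layer-cake representation of the moment. Writing $\xsum = \sum_{i=1}^n X_i$ with $X_i \sim \Ber(p)$ independent, Hoeffding's inequality yields
\[
 \Pr[|\xsum - np| \geq s] \leq 2\exp(-2s^2/n).
\]
After rescaling $s = t\sqrt{np(1-p)}$ this becomes
\[
 \Pr[|\xstd| \geq t] \leq 2\exp(-2p(1-p)\,t^2),
\]
exhibiting $\xstd$ as subgaussian with variance proxy $1/(4p(1-p))$.

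Next I would recover the $2d$-th moment from this tail bound via the identity $\EE[Z^{2d}] = \int_0^\infty 2d\,t^{2d-1}\Pr[|Z| \geq t]\,dt$ applied to $Z = \xstd$, giving
\[
 \EE[\xstd^{2d}] \leq 4d \int_0^\infty t^{2d-1} e^{-2p(1-p) t^2}\,dt.
\]
The remaining integral is a standard Gamma integral: substituting $v = 2p(1-p) t^2$ converts $t^{2d-1}\,dt$ into $\frac{1}{2(2p(1-p))^d}\,v^{d-1}\,dv$, so the integral equals $\frac{(d-1)!}{2(2p(1-p))^d}$. Multiplying by $4d$ produces the claimed bound $\EE[\xstd^{2d}] \leq \frac{2\,d!}{(2p(1-p))^d}$.

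There is essentially no obstacle in this argument: Hoeffding's inequality does all the probabilistic work, and the rest is a single change of variables. A more combinatorial alternative would expand $\EE[(\xsum - np)^{2d}]$ as a sum over set partitions of $[2d]$ into blocks of size at least two, using $|\EE[(X-p)^m]| \leq p(1-p)$ for $m \geq 2$ to control each contribution, but this route produces messier constants. The Hoeffding-based derivation above is cleaner and yields precisely the stated constant.
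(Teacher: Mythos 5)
Your proof is correct and follows essentially the same route as the paper's: Hoeffding's inequality gives the subgaussian tail $\Pr[|\xstd|\geq t]\leq 2e^{-2p(1-p)t^2}$, and the moment is recovered by a layer-cake integral reduced to a Gamma integral (the paper writes the layer-cake identity as $\int_0^\infty \Pr[\xstd^{2d}\geq t]\,dt$ and integrates by parts, which is the same computation after your change of variables). The constants match exactly, so there is nothing to add.
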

\begin{proof}
 Hoeffding's inequality states that
\[
 \Pr[|\xsum - np| \geq t] \leq 2e^{-2t^2/n} \Longrightarrow \Pr[|\xstd| \geq t] \leq 2e^{-2p(1-p)t^2}.
\]
 Plugging this in the general formula $\EE[|X|] = \int_0^\infty \Pr[|X| \geq t] \, \rmd t$, we get
\[
 \EE[\xstd^{2d}] = \int_0^\infty \Pr[\xstd^{2d} \geq t] \, \rmd t \leq 2\int_0^\infty e^{-2p(1-p)t^{1/d}} \, \rmd t.
\]
 Substituting $s = 2p(1-p)t^{1/d}$, we have $t = (s/2p(1-p))^d$ and so $\rmd t/\rmd s = d(s/2p(1-p))^{d-1}/2p(1-p)$, implying
\[
 \EE[\xstd^{2d}] \leq 2 \cdot (2p(1-p))^{-d} \int_0^\infty e^{-s} ds^{d-1} \, \rmd s = 2 \cdot (2p(1-p))^{-d} d!,
\]
using the classical integral formula for the $\Gamma$ function:
\[
 \int_0^\infty e^{-s} s^{d-1} \, \rmd s = \Gamma(d) = (d-1)!. \qedhere
\]
\end{proof}

We comment that for every fixed $p$ and $d$, as $n\to\infty$, $\EE[\xstd^{2d}]$ converges to $\EE[\nor(0,1)^{2d}] = \frac{(2d)!}{2^dd!}$.

We also need an anti-concentration result for binomial distributions, which follows from the Berry--Esseen theorem.

\begin{lemma} \label{lem:binomial-lb}
 Let $\xsum \sim \bin(n,p)$ and $\xstd = \frac{\xsum-np}{\sqrt{np(1-p)}}$. For all $a < b$,
\[
 \Pr[\xstd \in (a,b)] \geq \frac{b-a}{\sqrt{2\pi}} e^{-\max(a^2,b^2)/2} - \frac{1}{\sqrt{np(1-p)}}.
\]
\end{lemma}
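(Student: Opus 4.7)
The plan is to compare $\xstd$ to a standard Gaussian $Z\sim\nor(0,1)$ via the Berry--Esseen theorem, and then use the elementary lower bound for a Gaussian interval probability given by the length of the interval times the minimum of the density on it.

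First I would apply the i.i.d.\ Berry--Esseen theorem to $\xsum=X_1+\cdots+X_n$ with $X_i\sim\Ber(p)$ independent. Each summand has variance $\sigma^2=p(1-p)$ and third absolute central moment $\rho=p(1-p)(p^2+(1-p)^2)\leq p(1-p)$, so the theorem produces
\[
 \sup_x \bigl|\Pr[\xstd\leq x]-\Phi(x)\bigr| \;\leq\; \frac{C_0\,\rho}{\sigma^3\sqrt{n}} \;\leq\; \frac{C_0\,(p^2+(1-p)^2)}{\sqrt{np(1-p)}} \;\leq\; \frac{C_0}{\sqrt{np(1-p)}},
\]
where $C_0$ is the universal Berry--Esseen constant. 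Invoking the modern bound $C_0\leq 1/2$ (Shevtsova) and writing $\Pr[\xstd\in(a,b)]=\Pr[\xstd<b]-\Pr[\xstd\leq a]$, two applications of the displayed inequality (together with the continuity of $\Phi$ to handle the one-sided limit at $b$) give
\[
 \Pr[\xstd\in(a,b)] \;\geq\; \Phi(b)-\Phi(a)-\frac{1}{\sqrt{np(1-p)}}.
\]

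Next I would lower bound $\Phi(b)-\Phi(a)$. The density $\phi(t)=\frac{1}{\sqrt{2\pi}}e^{-t^2/2}$ is even and strictly decreasing in $|t|$, so its minimum over $[a,b]$ is attained at whichever endpoint has larger absolute value, and the square of that absolute value equals $\max(a^2,b^2)$. Hence
\[
 \Phi(b)-\Phi(a) \;=\; \int_a^b\phi(t)\,dt \;\geq\; (b-a)\min_{t\in[a,b]}\phi(t) \;=\; \frac{b-a}{\sqrt{2\pi}}\,e^{-\max(a^2,b^2)/2},
\]
and combining with the previous display yields the claim.

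The only delicate point is producing the coefficient exactly $1$ in front of $1/\sqrt{np(1-p)}$ rather than the $2C_0$ that drops out of the two applications of Berry--Esseen. This forces reliance on the sharp modern value of the constant ($C_0\leq 1/2$) rather than classical looser estimates; with that in hand, no further calculation is needed.
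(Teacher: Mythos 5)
Your proof is correct and follows essentially the same route as the paper: Berry--Esseen applied to the i.i.d.\ Bernoulli sum, followed by the elementary bound $\Phi(b)-\Phi(a)\geq (b-a)\min_{t\in[a,b]}\phi(t)$ with the minimum attained at the endpoint of larger absolute value. In fact your treatment of the constant is the more careful one: the paper absorbs the factor of two coming from the two one-sided applications by asserting $\rho\leq p(1-p)/2$, which is false for $p\neq 1/2$ since $p^2+(1-p)^2\geq 1/2$, whereas your reliance on the sharp constant $C_0\leq 1/2$ together with the correct bound $\rho\leq p(1-p)$ closes the same gap legitimately.
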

\begin{proof}
 The Berry--Esseen theorem states that the cumulative distribution functions of $\xstd$ and of the standard normal distribution $\nor(0,1)$ differ by at most $C\rho/\sigma^3\sqrt{n}$, where $C<1$ is an absolute constant, $\rho = \EE[|\Ber(p)-p|^3] = p(1-p)(p^2+(1-p)^2)$, and $\sigma^2 = \VV[\Ber(p)] = p(1-p)$. The result follows from $\rho \leq p(1-p)/2$.
\end{proof}

We are now ready to tackle norm invariance. We start by bounding the norm of $f$ on the Boolean cube given its norm on several well-separated slices.

\begin{definition} \label{def:system}
 Fix $p \in (0,1)$ and $n$. A set $0 \leq k_1,\ldots,k_r \leq n$ is said to be an $(\eta,M)$-system, for $\eta \leq 1$ and $M \geq 1$, if the following two conditions hold for $\sigma_i = \frac{k_i-np}{\sqrt{np(1-p)}}$:
\begin{enumerate}[(a)]
 \item For every $i \neq j$, $|\sigma_i - \sigma_j| \geq \eta$.
 \item For every $i$, $|\sigma_i| \leq M$.
\end{enumerate}
\end{definition}

\begin{theorem} \label{thm:norm-invariance-1}
 Let $p \in (0,1)$, and let $f$ be a multilinear polynomial over $x_1,\ldots,x_n$ of degree $d \leq \sqrt{np(1-p)}$. Let $k_1,\ldots,k_{d+1}$ be an $(\eta,M)$-system, where $M \leq \sqrt{np(1-p)}/2$, and suppose that $\|f\|_{\nu_{k_i}} \leq 1$ for all $1 \leq i \leq d+1$. Then
\[
 \|f\|_{\mu_p} = O\left(\frac{d}{p(1-p)} \cdot \frac{M}{\eta}\right)^{O(d)}.
\]
 Moreover, the Blekherman coefficients $\fc{f}{0},\ldots,\fc{f}{d}$ of $f$ with respect to $p$ satisfy, for $0 \leq e \leq d$,
\[
 \|\fc{f}{e}\|_{\mu_p} \leq O(M/\eta)^d.
\]
\end{theorem}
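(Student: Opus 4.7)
The plan is to use Blekherman's decomposition to reduce the problem to harmonic polynomials, where we already understand how norms transfer between $\mu_p$ and various slices. By Corollary~\ref{cor:blekherman}, write $f = \sum_{e=0}^d \fc{f}{e}\,\xstd^e$ on the cube, where each $\fc{f}{e}$ is harmonic multilinear of degree at most $d-e$. Observe that on the slice $\binom{[n]}{k_i}$, the variable $\xstd$ takes the constant value $\sigma_i$, so $\fs{f}{\sigma_i} = \sum_{e=0}^d \fc{f}{e}\sigma_i^e$ is exactly the unique harmonic multilinear polynomial of degree $\leq d$ agreeing with $f$ on that slice (by Theorem~\ref{thm:harmonic-representation}). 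Applying Theorem~\ref{thm:vandermonde} to the system $\{\fs{f}{\sigma_i}\}_{i=1}^{d+1}$, we can invert and obtain coefficients $c_{ei}$ with $|c_{ei}|\leq (2M/\eta)^d$ such that $\fc{f}{e} = \sum_{i=1}^{d+1} c_{ei}\,\fs{f}{\sigma_i}$.

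Next, I would bound $\|\fs{f}{\sigma_i}\|_{\mu_p}$. Since $\fs{f}{\sigma_i}$ agrees with $f$ on $\binom{[n]}{k_i}$, we have $\|\fs{f}{\sigma_i}\|_{\nu_{k_i}} = \|f\|_{\nu_{k_i}} \leq 1$. The hypothesis $M \leq \sqrt{np(1-p)}/2$ ensures $|k_i/n - p| \leq p(1-p)/2$, so $p_i(1-p_i) = \Theta(p(1-p))$; combined with $d^2 \leq np(1-p)$, Lemma~\ref{lem:norms} shows the basic-function moments of $\nu_{k_i}$ and $\mu_p$ agree up to a multiplicative constant, and then Corollary~\ref{cor:approximate-norm} gives $\|\fs{f}{\sigma_i}\|_{\mu_p}^2 \leq O(1)^d$. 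The generalized $L_2$ triangle inequality then yields
\[
\|\fc{f}{e}\|_{\mu_p}^2 \leq (d+1)\sum_{i=1}^{d+1} c_{ei}^2\,\|\fs{f}{\sigma_i}\|_{\mu_p}^2 \leq (d+1)^2 (2M/\eta)^{2d} O(1)^d,
\]
which absorbing constants gives $\|\fc{f}{e}\|_{\mu_p} \leq O(M/\eta)^d$, the ``moreover'' clause.

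For the first bound, apply the $L_2$ triangle inequality to the Blekherman expansion:
\[
\|f\|_{\mu_p}^2 \leq (d+1)\sum_{e=0}^d \EE_{\mu_p}\bigl[\fc{f}{e}^2\,\xstd^{2e}\bigr] \leq (d+1)\sum_{e=0}^d \|\fc{f}{e}\|_{4,\mu_p}^2\,\|\xstd^e\|_{4,\mu_p}^2
\]
by Cauchy--Schwarz. Applying Proposition~\ref{pro:hypercontractivity} with $r=4$ (i.e.\ the analog of Lemma~\ref{lem:low-degree} for $L_4$) gives $\|\fc{f}{e}\|_{4,\mu_p}^2 \leq (p(1-p))^{-O(d)}\,\|\fc{f}{e}\|_{2,\mu_p}^2 \leq (p(1-p))^{-O(d)} O(M/\eta)^{2d}$, while Lemma~\ref{lem:binomial-moments} gives $\EE_{\mu_p}[\xstd^{4e}] \leq (4e)!/(2p(1-p))^{2e}$, so $\|\xstd^e\|_{4,\mu_p}^2 \leq (O(d)/p(1-p))^{O(d)}$. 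Collecting all factors and taking square roots produces $\|f\|_{\mu_p} = O(dM/(p(1-p)\eta))^{O(d)}$, as required.

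The main technical obstacle is the justification in step two that the slice and cube norms of the \emph{auxiliary} polynomials $\fs{f}{\sigma_i}$ are comparable. Corollary~\ref{cor:approximate-norm} requires the basic-function moments to be within a constant ratio under both $\nu_{k_i}$ and $\mu_p$, and one must verify that the hypothesis $M \leq \sqrt{np(1-p)}/2$ together with $d \leq \sqrt{np(1-p)}$ is strong enough to guarantee a ratio of $O(1)^d$ rather than something catastrophic; everything else is bookkeeping built on top of Blekherman's theorem, Vandermonde inversion, and standard hypercontractivity.
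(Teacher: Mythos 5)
Your proposal is correct and follows essentially the same route as the paper's proof: Vandermonde inversion of the Blekherman expansion to express each $\fc{f}{e}$ in terms of the slice restrictions $\fs{f}{\sigma_i}$, the norm-transfer via Corollary~\ref{cor:approximate-norm} and Lemma~\ref{lem:norms} (using $M \leq \sqrt{np(1-p)}/2$ to keep $q(1-q)$ comparable to $p(1-p)$), and then Cauchy--Schwarz with hypercontractivity and the binomial moment bound to control $\|f\|_{\mu_p}$. The only cosmetic differences are your use of the squared $L_2$ triangle inequality where the paper uses the plain one, and your explicit invocation of the $L_4$ form of hypercontractivity, which the paper also implicitly uses.
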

\begin{proof}
 If $d = 0$ then $f$ is constant, so we can assume that $d \geq 1$.

 We are given that for each $i$, $\|\fs{f}{\sigma_i}\|_{\nu_{k_i}} \leq 1$. Since $\fs{f}{\sigma_i}$ is a harmonic multilinear polynomial of degree at most $d$, Corollary~\ref{cor:approximate-norm} and Lemma~\ref{lem:norms} show that for $q = k_i/n$,
\[
 \|\fs{f}{\sigma_i}\|_{\mu_p}^2 \leq \frac{(2p(1-p))^d}{(2q(1-q))^d} \left(1 - O\left(\frac{d^2}{q(1-q)n}\right)\right)^{-1} = O(1) \cdot \left(\frac{p(1-p)}{q(1-q)}\right)^d.
\]
 Since $M \leq \sqrt{np(1-p)}/2$, we have
\[
 |p-q| \leq \frac{M \sqrt{np(1-p)}}{n} \leq \frac{p(1-p)}{2},
\]
 which implies that
\[
 \frac{p}{q} \leq \frac{p}{p-p(1-p)/2} = \frac{2}{1+p} \leq 2.
\]
 Similarly, $(1-p)/(1-q) \leq 2$.
 This shows that
\[
 \|\fs{f}{\sigma_i}\|_{\mu_p}^2 \leq O(1)^d.
\]

 Theorem~\ref{thm:vandermonde} shows that there exist coefficients $c_{ei}$ of magnitude at most $(4M/\eta)^d$ such that for each $e$, $\fc{f}{e} = \sum_i c_{ei} \fs{f}{\sigma_i}$. It follows that for each $e$,
\[
 \|\fc{f}{e}\|_{\mu_p} \leq \sum_{i=1}^{d+1} |c_{ei}| \|\fs{f}{\sigma_i}\|_{\mu_p} \leq (d+1) \cdot O(M/\eta)^d \cdot O(1)^d = O(M/\eta)^d.
\]
 Since $f = \sum_{e=0}^d \xstd^e \fc{f}{e}$, the Cauchy--Schwarz inequality implies that
\[
 \|f\|_{\mu_p} \leq \sum_{e=0}^d \|\xstd^e \fc{f}{e}\|_{\mu_p} \leq \sum_{e=0}^d \sqrt{\|\xstd^{2e}\|_{\mu_p}} \sqrt{\|\fc{f}{e}^2\|_{\mu_p}}.
\]
 Lemma~\ref{lem:binomial-moments} shows that
\[
 \sqrt{\|\xstd^{2e}\|_{\mu_p}} = \sqrt[4]{\EE_{\mu_p}[\xstd^{4e}]} = O\left(\frac{d}{p(1-p)}\right)^{d/2}.
\]
 Hypercontractivity, in the form of Lemma~\ref{lem:low-degree}, implies that with respect to $\mu_p$,
\[
 \sqrt{\|\fc{f}{e}^2\|} = \|\fc{f}{e}\|_4 \leq O(p(1-p))^{-O(e)} \|\fc{f}{e}\|_2 = \left(\frac{M}{p(1-p)\eta}\right)^{O(d)}.
\]
 In total, we deduce that
\[
 \|f\|_{\mu_p} \leq (d+1) \cdot O\left(\frac{d}{p(1-p)}\right)^d \cdot \left(\frac{M}{p(1-p)\eta}\right)^{O(d)} = \left(\frac{d}{p(1-p)} \cdot \frac{M}{\eta}\right)^{O(d)}. \qedhere
\]
\end{proof}

We can also go in the other direction. In the statement of Theorem~\ref{thm:norm-invariance-2} and similar results below, we allow big O constants to depend on the fixed value of $p$ (and below, on the fixed value of other parameters).

\begin{theorem} \label{thm:norm-invariance-2}
 Fix $p \in (0,1)$, and let $f$ be a multilinear polynomial over $x_1,\ldots,x_n$ of degree $d \leq \sqrt{\log[np(1-p)/30]}$ satisfying $\|f\|_{\mu_p} \leq 1$. If $k = np + \sqrt{np(1-p)} \cdot \sigma$ then
\[
 \|f\|_{\nu_k} \leq e^{O(d^2)} (1+|\sigma|)^d e^{d|\sigma|/\sqrt{np(1-p)}}.
\]
\end{theorem}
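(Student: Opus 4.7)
The plan is to reduce to the harmonic case via Blekherman's decomposition, where Corollary~\ref{cor:approximate-norm} and Lemma~\ref{lem:norms} control the passage between $\nu_k$ and $\mu_p$.

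By Corollary~\ref{cor:blekherman}, write $f = \sum_{e=0}^{d} \fc{f}{e}\,\xstd^{e}$ with each $\fc{f}{e}$ harmonic of degree at most $d-e$. On $\binom{[n]}{k}$ the variable $\xstd$ is the constant $\sigma$, so $f$ coincides pointwise with the harmonic polynomial $\fs{f}{\sigma} = \sum_{e=0}^d \sigma^e \fc{f}{e}$, and the triangle inequality yields
\[
 \|f\|_{\nu_k} \;=\; \|\fs{f}{\sigma}\|_{\nu_k} \;\leq\; \sum_{e=0}^d |\sigma|^e \,\|\fc{f}{e}\|_{\nu_k}.
\]
Since each $\fc{f}{e}$ is harmonic of degree $\leq d-e$, Corollary~\ref{cor:approximate-norm} combined with Lemma~\ref{lem:norms} gives $\|\fc{f}{e}\|_{\nu_k}^2 \leq O(1)\cdot\max(1,\,q(1-q)/(p(1-p)))^{d-e}\,\|\fc{f}{e}\|_{\mu_p}^2$ with $q=k/n$, the $O(1)$ absorbing the $1+O((d-e)^2/(q(1-q)n))$ error. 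The elementary estimate $q(1-q)/(p(1-p)) \leq 1+|\sigma|/\sqrt{np(1-p)} \leq e^{|\sigma|/\sqrt{np(1-p)}}$ then bounds this factor by $e^{(d-e)|\sigma|/\sqrt{np(1-p)}}$.

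The crux is the estimate $\|\fc{f}{e}\|_{\mu_p} \leq e^{O(d^2)}\,\|f\|_{\mu_p}$, which relates each Blekherman coefficient to the cube norm. My first attempt is Vandermonde interpolation (Theorem~\ref{thm:vandermonde}) at $d+1$ well-separated nodes close to zero, say $\sigma_i = i-1$ (so $\eta=1$, $M=d$), writing $\fc{f}{e} = \sum_i c_{ei}\,\fs{f}{\sigma_i}$ with $|c_{ei}| \leq (2d)^d$. Each $\fs{f}{\sigma_i}$ is harmonic of degree $\leq d$ and coincides with $f$ on $\binom{[n]}{k_i}$; since $|\sigma_i|\leq d$ and the degree hypothesis forces $q_i(1-q_i)=\Theta(p(1-p))$, Corollary~\ref{cor:approximate-norm} gives $\|\fs{f}{\sigma_i}\|_{\mu_p}^2 \leq O(1)^d \,\|f\|_{\nu_{k_i}}^2$. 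A naive route combines this with $\|f\|_{\nu_{k_i}}^2 \leq \|f\|_{\mu_p}^2/\Pr_{\mu_p}[\xsum=k_i]$ and a local CLT bound $\Pr_{\mu_p}[\xsum=k_i] \geq \Omega(e^{-d^2/2}/\sqrt{np(1-p)})$ (via Lemma~\ref{lem:binomial-lb} and Berry--Esseen), but it leaves a stray $(np(1-p))^{1/4}$ factor that the hypothesis $d \leq \sqrt{\log[np(1-p)/30]}$ — which gives the \emph{lower} bound $np(1-p) \geq 30e^{d^2}$ — cannot absorb. Sharpening this step to reach the claimed $e^{O(d^2)}$ constant, most plausibly by invoking the hypercontractive inequality of Lemma~\ref{lem:low-degree} to control $\|f\|_{\nu_{k_i}}$ through higher $\mu_p$-norms of the low-degree $f$ (rather than through the weak probability-weighted bound), is the main technical obstacle I anticipate.

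Assembling the estimates and summing over $e$, using $\sum_{e=0}^d |\sigma|^e \leq (d+1)\max(1,|\sigma|)^d$, yields
\[
 \|f\|_{\nu_k} \;\leq\; e^{O(d^2)}\,\max(1,|\sigma|)^d\,e^{d|\sigma|/\sqrt{np(1-p)}}\,\|f\|_{\mu_p},
\]
which matches the stated bound with the convention $|\sigma|^d = \max(1,|\sigma|)^d$. A secondary obstacle is the extreme regime $q(1-q)n \lesssim d^2$ where Lemma~\ref{lem:norms}'s approximation degenerates; since the target already exceeds $n^{d/2}$ there, the pointwise Cauchy--Schwarz bound $|f(x)|^2 \leq \|f\|_{\mu_p}^2 \sum_{|S|\leq d}\chi_S(x)^2$ in the $\mu_p$-orthonormal Fourier basis $\chi_S = \prod_{i\in S}(x_i-p)/\sqrt{p(1-p)}$ — with the right side computed explicitly per slice — comfortably suffices.
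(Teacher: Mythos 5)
Your skeleton is the paper's: Blekherman decomposition, Vandermonde interpolation at $d+1$ nodes of separation $\eta=\Theta(1)$ and magnitude $M\leq d$ to recover the coefficients $\fc{f}{e}$, then transfer from $\mu_p$ to $\nu_k$ via Corollary~\ref{cor:approximate-norm} and Lemma~\ref{lem:norms} with the factor $(q(1-q)/p(1-p))^{d/2}\leq e^{d|\sigma|/\sqrt{np(1-p)}}$. But the crux step you flag as an obstacle is a genuine gap, and the fix you propose (hypercontractivity) is not the right one and would not close it: bounding $\EE_{\mu_p}[f^2\charf{\xsum=k_i}]$ by H\"older against $\|f\|_{2r}^2\Pr[\xsum=k_i]^{1-1/r}$ only shrinks the stray polynomial factor from $(np(1-p))^{1/4}$ to $(np(1-p))^{(r-1)/(4r)}$ for any fixed $r$, and the hypothesis $np(1-p)\geq 30e^{d^2}$ is a lower bound on $n$, so no power of $np(1-p)$ can be absorbed into $e^{O(d^2)}$.

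The missing idea is that the interpolation nodes need not be prescribed in advance; they are chosen by an averaging argument. For each $0\leq i\leq d$ apply Lemma~\ref{lem:binomial-lb} to the \emph{interval} $\sigma\in(i-\tfrac12,i)$ rather than to a single point: its $\mu_p$-probability is at least $\frac{1}{2\sqrt{2\pi}}e^{-d^2/2}-\frac{1}{\sqrt{np(1-p)}}$, and the hypothesis $d\leq\sqrt{\log[np(1-p)/30]}$ is used precisely here to make the Gaussian term dominate the Berry--Esseen error, giving $\Pr[\xstd\in(i-\tfrac12,i)]=\Omega(e^{-d^2/2})$ with no factor of $\sqrt{np(1-p)}$. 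Since
\[
 \sum_{k\colon \frac{k-np}{\sqrt{np(1-p)}}\in(i-\frac12,i)}\Pr[\xsum=k]\,\|f\|_{\nu_k}^2\;\leq\;\|f\|_{\mu_p}^2\;\leq\;1,
\]
some slice $k_i$ in the window satisfies $\|f\|_{\nu_{k_i}}^2=O(e^{d^2/2})$. The resulting $k_0,\ldots,k_d$ form a $(1/2,d)$-system, so Theorem~\ref{thm:norm-invariance-1} (applied to $f/O(e^{d^2/2})$) yields $\|\fc{f}{e}\|_{\mu_p}\leq O(e^{d^2/2})\cdot O(d)^d=e^{O(d^2)}$, and the rest of your argument goes through as written. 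Your remark on the degenerate regime $q(1-q)n\lesssim d^2$, where Lemma~\ref{lem:norms} loses control, is a legitimate point that the paper glosses over, and your pointwise Cauchy--Schwarz patch there is sound for fixed $p$.
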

\begin{proof}
 If $d = 0$ then $f$ is constant, so we can assume that $d \geq 1$.
 
 For $0 \leq i \leq d$, let $a_i = i-1/2$ and $b_i = i$. Lemma~\ref{lem:binomial-lb} shows that
\[
 \Pr[\xstd \in (a_i,b_i)] \geq \frac{1}{2\sqrt{2\pi}} e^{-d^2/2} - \frac{1}{\sqrt{np(1-p)}}.
\]
 The first summand is at least
\[
 \frac{1}{2\sqrt{2\pi}} e^{-\log[np(1-p)/30]/2} = \frac{1}{2\sqrt{2\pi}} \cdot \frac{1}{\sqrt{np(1-p)/30}} > \frac{1.09}{\sqrt{np(1-p)}},
\]
 and so
\[
 \Pr[\xstd \in (a_i,b_i)] = \Omega(e^{-d^2/2}).
\]
 In particular, the norm of $f$ restricted to $\xstd \in (a_i,b_i)$ is $O(e^{d^2/2})$, and so there must exist $k_i = np + \sqrt{np(1-p)} \cdot \sigma_i$ such that $\sigma_i \in (a_i,b_i)$ and $\|f\|_{\nu_{k_i}} = O(e^{d^2/2})$.
 
 The resulting system $k_0,\ldots,k_d$ is a $(1/2,d)$-system, and so Theorem~\ref{thm:norm-invariance-1} shows that the Blekherman coefficients $\fc{f}{0},\ldots,\fc{f}{d}$ satisfy
\[
 \|\fc{f}{e}\|_{\mu_p} \leq O(e^{d^2/2}) \cdot O(d)^d = e^{O(d^2)}.
\]
 Substituting a given value of $\sigma$, we deduce that
\[
 \|\fs{f}{\sigma}\|_{\mu_p} \leq \sum_{e=0}^d |\sigma|^e \|\fc{f}{e}\|_{\mu_p} \leq (1+|\sigma|)^d e^{O(d^2)}.
\]
 
 Corollary~\ref{cor:approximate-norm} and Lemma~\ref{lem:norms} show that for $q = k/n$,
\[
 \|\fs{f}{\sigma}\|_{\nu_k} \leq (1+|\sigma|)^d e^{O(d^2)} \frac{(2q(1-q))^{d/2}}{(2p(1-p))^{d/2}} \left(1 + O\left(\frac{d^2}{p(1-p)n}\right)\right)^{1/2} = (1+|\sigma|)^d e^{O(d^2)} \cdot \left(\frac{q(1-q)}{p(1-p)}\right)^{d/2}.
\]
 In order to estimate the final factor, note that $q = p + \sqrt{\frac{p(1-p)}{n}} \sigma$ and so
\[
 \frac{q}{p} = 1 + \sqrt{\frac{1-p}{p}} \frac{\sigma}{\sqrt{n}} \leq e^{\sqrt{\frac{1-p}{p}} \frac{\sigma}{\sqrt{n}}} \leq e^{\sigma/\sqrt{np(1-p)}}. 
\]
 Similarly $(1-q)/(1-p) \leq e^{-\sigma/\sqrt{np(1-p)}}$.
 Therefore
\[
 \left(\frac{q(1-q)}{p(1-p)}\right)^{d/2} \leq e^{d|\sigma|/\sqrt{np(1-p)}},
\]
 and the theorem follows.
\end{proof}

We can combine both results to obtain the following clean corollary.

\begin{corollary} \label{cor:norm-invariance}
 Fix $p \in (0,1)$ and $d \geq 1$, and suppose that $\cF$ is a collection of multilinear polynomials of degree at most $d$ on at least $\frac{30}{p(1-p)} e^{d^2}$ variables (different functions could depend on a different number of variables). The following three conditions are equivalent:
\begin{enumerate}[(a)]
 \item There exists a constant $C_1$ such that $\|f\|_{\mu_p} \leq C_1$ for all $f \in \cF$. \label{cor:norm-invariance:1}
 \item There exists a constant $C_2$ such that $\|f\|_{\nu_k} \leq \bigl(C_2(1+\frac{|k-np|}{\sqrt{n}})\bigr)^d$ for all $f \in \cF$ over $x_1,\ldots,x_n$ and for all $0 \leq k \leq n$. \label{cor:norm-invariance:2}
 \item There exists a constant $C_3$ such that $\|f\|_{\nu_k} \leq C_3$ for all $f \in \cF$ over $x_1,\ldots,x_n$ and for $k = \lfloor np + \sqrt{np(1-p)} \cdot \sigma \rfloor$ for $\sigma \in \{0,\ldots,d\}$. \label{cor:norm-invariance:3}
\end{enumerate}
\end{corollary}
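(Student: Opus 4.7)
The plan is to prove the cyclic chain of implications (c) $\Rightarrow$ (a) $\Rightarrow$ (b) $\Rightarrow$ (c). The two main engines are Theorem~\ref{thm:norm-invariance-1} and Theorem~\ref{thm:norm-invariance-2}: the first converts norm bounds on a $(d+1)$-point well-separated system of slices into a bound on the $\mu_p$-norm, while the second goes the other way, bounding the $\nu_k$-norm in terms of the $\mu_p$-norm. The assumption $n \geq \tfrac{30}{p(1-p)} e^{d^2}$ is exactly what is required to verify the auxiliary hypotheses of both theorems, and it simultaneously ensures that rounding errors in the choice of slice indices are negligible.

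For the implication (c) $\Rightarrow$ (a), I would set $k_\sigma = \lfloor np + \sqrt{np(1-p)}\,\sigma \rfloor$ for $\sigma \in \{0,1,\ldots,d\}$---these are exactly the slices appearing in (c)---and verify that they form an $(\eta,M)$-system in the sense of Definition~\ref{def:system}. Write $\tilde\sigma_i = (k_{\sigma_i}-np)/\sqrt{np(1-p)}$; integer rounding perturbs each $\tilde\sigma_i$ by at most $1/\sqrt{np(1-p)}$, which is negligible by the hypothesis on $n$, so consecutive $\tilde\sigma_i$ are separated by at least $\eta = 1/2$ and all satisfy $|\tilde\sigma_i| \leq d+1 =: M$. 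The hypothesis also guarantees $M \leq \tfrac{1}{2}\sqrt{np(1-p)}$, so Theorem~\ref{thm:norm-invariance-1} applies to $f/C_3$ and yields $\|f\|_{\mu_p} \leq C_3 \cdot O(dM/(p(1-p)\eta))^{O(d)} = O_{p,d}(C_3)$, which is the constant $C_1$.

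For (a) $\Rightarrow$ (b), I would apply Theorem~\ref{thm:norm-invariance-2} to $f/C_1$; its hypothesis $d \leq \sqrt{\log[np(1-p)/30]}$ is equivalent to the given lower bound on $n$. With $\sigma = (k-np)/\sqrt{np(1-p)}$ the theorem gives
\[
 \|f\|_{\nu_k} \leq C_1 \cdot e^{O(d^2)} |\sigma|^d e^{d|\sigma|/\sqrt{np(1-p)}}.
\]
Since $|k - np| \leq n$, the tail factor is bounded by $e^{d/(p(1-p))} = O_{p,d}(1)$, and $|\sigma|^d = (|k-np|/\sqrt{n})^d / (p(1-p))^{d/2}$; absorbing all fixed factors into a single $C_2$ yields the form in (b). For (b) $\Rightarrow$ (c), I would just specialize (b) at the slice indices $k_\sigma$ described above: each satisfies $|k_\sigma - np|/\sqrt{n} = O(d\sqrt{p(1-p)})$, so $\|f\|_{\nu_{k_\sigma}} \leq (C_2 \cdot O(d\sqrt{p(1-p)}))^d = C_3$.

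The main bookkeeping concern lies in the (c) $\Rightarrow$ (a) direction, where one must verify that integer rounding does not collapse the separation between the $\tilde\sigma_i$ and that the resulting $M$ fits inside the range allowed by Theorem~\ref{thm:norm-invariance-1}; both use the lower bound on $n$ in an essential but routine way. A more subtle point is the correct interpretation of (b) when $|k-np|$ is very small compared to $\sqrt{n}$---there the stated bound is only meaningful up to constants implicitly absorbed into $C_2$, but this does not affect the equivalence. Beyond these points, each step reduces to a direct substitution into Theorem~\ref{thm:norm-invariance-1} or Theorem~\ref{thm:norm-invariance-2}.
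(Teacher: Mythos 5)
Your proposal is correct and follows essentially the same route as the paper: the same cyclic chain of implications proved by specializing Theorem~\ref{thm:norm-invariance-2} for the cube-to-slice direction and feeding the $(\eta,M)$-system $\sigma \in \{0,\ldots,d\}$ into Theorem~\ref{thm:norm-invariance-1} for the slice-to-cube direction. If anything you are slightly more careful than the paper about the effect of the floor function on the separation $\eta$ and about the degeneracy of condition (b) near $k = np$, but these refinements do not change the argument.
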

\begin{proof}
 Suppose first that condition~\eqref{cor:norm-invariance:1} holds, and let $\sigma = \frac{k-np}{\sqrt{np(1-p)}}$, so that $|\sigma| \leq \sqrt{n/p(1-p)}$. Then Theorem~\ref{thm:norm-invariance-2} shows that
\[
 \|f\|_{\nu_k} \leq C_1 \cdot e^{O(d^2)} \cdot (1+|\sigma|)^d \cdot e^{d/p(1-p)} \leq (C_1 e^{O(d^2)} (1+|\sigma|))^d,
\]
 which implies condition~\eqref{cor:norm-invariance:2}.
 
 Suppose next that condition~\eqref{cor:norm-invariance:2} holds. In particular, for $\sigma_1,\ldots,\sigma_{d+1}=0,\ldots,d$ and $k_i = np+\sqrt{np(1-p)} \cdot \sigma_i$ it is the case that $\|f\|_{\nu_{k_i}} \leq \bigl(C_2(1+d\sqrt{p(1-p)})\bigr)^d$, which implies condition~\eqref{cor:norm-invariance:3} with $C_3 = \bigl(C_2(1+d\sqrt{p(1-p)})\bigr)^d$.
 
 Finally, suppose that condition~\eqref{cor:norm-invariance:3} holds.
 Since $0,1,\ldots,d$ is a $(1,d)$-system, Theorem~\ref{thm:norm-invariance-1} shows that
\[
 \|f\|_{\mu_p} = O\left(\frac{d^2}{p(1-p)}\right)^{O(d)} C_3,
\]
 which depends only on $p$ and $d$, implying condition~\eqref{cor:norm-invariance:1}.
\end{proof}


Here is a different interpretation of these results.

\begin{corollary} \label{cor:norm-invariance-2}
 Fix $p \in (0,1)$ and $d \geq 1$. There is a constant $C>0$ such that the following implications hold for any two multilinear polynomials of degree at most $d$ on $n \geq \frac{30}{p(1-p)} e^{d^2}$ variables:
\begin{enumerate}[(a)]
 \item If $\|f-g\|_{\mu_p} = \epsilon$ then for all $0 \leq k \leq n$, $\|f-g\|_{\nu_k} \leq \bigl(C(1+\frac{|k-np|}{\sqrt{n}})\bigr)^d \epsilon$.
 \item If $k_1,\ldots,k_{d+1}$ is an $(\eta,M)$-system and $\|f-g\|_{\nu_{k_i}} \leq \epsilon$ for $1 \leq i \leq d+1$ then $\|f-g\|_{\mu_p} \leq (CM/\eta)^{O(d)} \epsilon$.
\end{enumerate}
\end{corollary}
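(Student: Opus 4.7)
The plan is to apply Theorems~\ref{thm:norm-invariance-1} and~\ref{thm:norm-invariance-2} to the multilinear difference $h \eqdef f - g$, which has degree at most $d$. After rescaling by $\epsilon$ (assuming $\epsilon > 0$; the case $\epsilon = 0$ follows by a limiting argument), each hypothesis reduces to the unit-norm condition required by one of the preceding theorems.

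For part~(a), assume $\|h\|_{\mu_p} = \epsilon$. The hypothesis $n \geq \frac{30}{p(1-p)} e^{d^2}$ is equivalent to $d \leq \sqrt{\log[np(1-p)/30]}$, which is precisely the condition of Theorem~\ref{thm:norm-invariance-2}. Applying that theorem to $h/\epsilon$ with $\sigma = (k-np)/\sqrt{np(1-p)}$ gives
\[
 \|h\|_{\nu_k} \leq \epsilon \cdot e^{O(d^2)} |\sigma|^d e^{d|\sigma|/\sqrt{np(1-p)}}.
\]
Since $|\sigma| \leq \sqrt{n/p(1-p)}$, the final exponential factor is bounded by $e^{d/(p(1-p))}$, a constant depending only on $p$ and $d$. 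Writing $|\sigma|^d = |k-np|^d/(np(1-p))^{d/2}$ and absorbing all $p,d$-dependent prefactors into a single constant $C = C(p,d)$ yields $\|f-g\|_{\nu_k} \leq (C|k-np|/\sqrt{n})^d \epsilon$.

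For part~(b), apply Theorem~\ref{thm:norm-invariance-1} to $h/\epsilon$ over the $(\eta,M)$-system $k_1,\ldots,k_{d+1}$. The hypothesis on $n$ implies $d \leq \sqrt{np(1-p)}$, and the theorem concludes
\[
 \|h\|_{\mu_p} \leq \epsilon \cdot O\!\left(\frac{d}{p(1-p)} \cdot \frac{M}{\eta}\right)^{O(d)}.
\]
Absorbing the $p,d$-dependent prefactor into $C$ produces the claimed $\|f-g\|_{\mu_p} \leq (CM/\eta)^{O(d)} \epsilon$.

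There is no substantive obstacle: both statements are direct corollaries of the two preceding theorems applied to the multilinear difference $f-g$, and the work reduces to bookkeeping of $p$- and $d$-dependent constants. One minor technical caveat is that Theorem~\ref{thm:norm-invariance-1} additionally assumes $M \leq \sqrt{np(1-p)}/2$; for extreme systems exceeding this bound one has $M = \Theta_p(\sqrt{n})$, in which case the target bound $(CM/\eta)^{O(d)}\epsilon$ already absorbs any additional $p$-dependent slack coming from a direct triangle-inequality estimate on $\|f\|_{\mu_p} + \|g\|_{\mu_p}$ combined with Theorem~\ref{thm:norm-invariance-2} applied to the individual slice norms of $f$ and $g$.
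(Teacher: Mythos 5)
Your proposal is correct and follows exactly the paper's route: apply Theorem~\ref{thm:norm-invariance-2} (for part~(a), using $|\sigma|\leq\sqrt{n/p(1-p)}$) and Theorem~\ref{thm:norm-invariance-1} (for part~(b)) to the difference $f-g$, rescaled to unit norm. The hypothesis $M\leq\sqrt{np(1-p)}/2$ that you flag is left implicit in the paper's statement as well, so your main argument is the intended one; note only that your fallback for large $M$ would need to be applied to $f-g$ rather than to $f$ and $g$ separately, since the hypotheses control no individual norms.
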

\begin{proof}
 The first statement follows directly from Theorem~\ref{thm:norm-invariance-2} (using $|\sigma| \leq \sqrt{n/p(1-p)}$), and the second statement follows directly from Theorem~\ref{thm:norm-invariance-1}. In both cases, we apply the theorems to $f-g$.
\end{proof}

\subsection{Interpolation invariance} \label{sec:multilinear-interpolation}

We move on to the interpolation invariance principle. Definition~\ref{def:setup} describes a coupling $\rX(0),\ldots,\rX(n)$ of the distributions $\nu_0,\ldots,\nu_n$, which we will use to define the \emph{profile} of a function.

\begin{definition} \label{def:profile}
 Fix parameters $p \in (0,1)$, $d,n \geq 0$, and let $k_1,\ldots,k_{d+1}$ be an $(\eta,M)$-system. The \emph{profile} of a multilinear function $f$ of degree at most $d$ with respect to the system $k_1,\ldots,k_{d+1}$ is the joint distribution of $f(\rX(k_1)),\ldots,f(\rX(k_{d+1}))$, which we denote by $\rf_1,\ldots,\rf_{d+1}$.
\end{definition}

The profile of a function $f$ allows us to recover its distribution on arbitrary slices, as reflected by Lipschitz functions.

\begin{theorem} \label{thm:interpolation-1}
 There exists a constant $K > 0$ such that the following holds.
 Let $p \in (0,1)$, let $f$ be a multilinear polynomial on $x_1,\ldots,x_n$ of degree $1 \leq d \leq \sqrt{Knp(1-p)}$, let $k_1,\ldots,k_{d+1}$ be an $(\eta,M)$-system for $M \leq \sqrt{np(1-p)}/2$, and let $\rf_1,\ldots,\rf_{d+1}$ be the profile of $f$ with respect to this system. Suppose that $\|f\|_{\nu_{k_i}} \leq 1$ for $1 \leq i \leq d+1$. For every slice $k = np + \sqrt{np(1-p)} \cdot \sigma$ such that $L := \max_i |\sigma-\sigma_i| \leq \frac{\sqrt{np(1-p)}}{8d}$, and for any Lipschitz function $\varphi$,
\[
 |\EE_{\nu_k}[\varphi(f)] - \EE[\varphi(\fs{\rf}{k})]| = O\left((1+|\sigma|)\frac{M}{\eta}\right)^d \sqrt{\frac{L}{\sqrt{np(1-p)}}}, \quad \text{where } \fs{\rf}{k} = \sum_{i=1}^{d+1} \gamma_{ki} \rf_i,
\]
 for some constants $\gamma_{ki}$ depending on $p,n,d$ satisfying
\[
 |\gamma_{ki}| = O\left((1+|\sigma|)\frac{M}{\eta}\right)^d.
\]
\end{theorem}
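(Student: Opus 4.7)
The plan is to combine the Blekherman expansion (Corollary \ref{cor:blekherman}) with the Vandermonde interpolation of Theorem \ref{thm:vandermonde} to define the coefficients $\gamma_{ki}$ explicitly, and then to control the interpolation error slice-by-slice using the coupling lemma (Lemma \ref{lem:coupling}). Concretely, write $f = \sum_{e=0}^d \fc{f}{e}\xstd^e$, so that on each slice $\binom{[n]}{k_i}$ we have $\fs{f}{\sigma_i} = \sum_e \sigma_i^e \fc{f}{e}$, and the target $\fs{f}{\sigma} = \sum_e \sigma^e \fc{f}{e}$ can be obtained by Vandermonde inversion on the data $\fs{f}{\sigma_1},\ldots,\fs{f}{\sigma_{d+1}}$. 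Theorem \ref{thm:vandermonde} yields scalars $c_{ei}$ with $|c_{ei}| \leq (2M/\eta)^d$ such that $\fc{f}{e} = \sum_i c_{ei} \fs{f}{\sigma_i}$, and setting $\gamma_{ki} := \sum_{e=0}^d \sigma^e c_{ei}$ gives $\fs{f}{\sigma} = \sum_i \gamma_{ki} \fs{f}{\sigma_i}$ along with
\[
|\gamma_{ki}| \leq (d+1)(1+|\sigma|)^d (2M/\eta)^d = O\!\left((1+|\sigma|) M/\eta\right)^d,
\]
which is the coefficient bound claimed in the theorem.

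Next, I would rewrite both expectations in terms of the harmonic polynomials $\fs{f}{\sigma_i}$. Since $\fs{f}{\sigma}$ agrees with $f$ on $\binom{[n]}{k}$ and $\fs{f}{\sigma_i}$ agrees with $f$ on $\binom{[n]}{k_i}$, we have $f(\rX(k)) = \sum_i \gamma_{ki}\fs{f}{\sigma_i}(\rX(k))$ while $\rf_i = \fs{f}{\sigma_i}(\rX(k_i))$, hence $\fs{\rf}{k} = \sum_i \gamma_{ki}\fs{f}{\sigma_i}(\rX(k_i))$. The Lipschitz property of $\varphi$ and Jensen give
\[
|\EE_{\nu_k}[\varphi(f)] - \EE[\varphi(\fs{\rf}{k})]| \leq \sum_{i=1}^{d+1} |\gamma_{ki}| \cdot \|\fs{f}{\sigma_i}(\rX(k)) - \fs{f}{\sigma_i}(\rX(k_i))\|_2.
\]
Each $\fs{f}{\sigma_i}$ is a harmonic multilinear polynomial of degree at most $d$ satisfying $\VV[\fs{f}{\sigma_i}]_{\nu_{k_i}} \leq \|f\|_{\nu_{k_i}}^2 \leq 1$, so I can apply Lemma \ref{lem:coupling} with reference slice $\binom{[n]}{k_i}$ and target slice $\binom{[n]}{k}$ (after scaling by the variance bound). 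The bounds $M \leq \sqrt{np(1-p)}/2$ and $L \leq \sqrt{np(1-p)}/(8d)$ guarantee both that $p_i(1-p_i) = \Theta(p(1-p))$ and that $|k/n - k_i/n| \leq L\sqrt{p(1-p)/n} \leq p_i(1-p_i)/(2d)$, so the hypothesis of Lemma \ref{lem:coupling} is met. The conclusion is
\[
\|\fs{f}{\sigma_i}(\rX(k)) - \fs{f}{\sigma_i}(\rX(k_i))\|_2 = O\!\left(\sqrt{\frac{Ld}{\sqrt{np(1-p)}}}\right).
\]

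Summing over the $d+1$ indices and absorbing the factors $(d+1)$ and $\sqrt{d}$ into the $O(\cdot)^d$ exponent (using $(d+1)\sqrt{d} \leq 2^d$ for $d \geq 1$) yields the stated bound $O((1+|\sigma|)M/\eta)^d \sqrt{L/\sqrt{np(1-p)}}$. The main obstacle will be the bookkeeping in Step 3: verifying uniformly in $i$ that after passing from $f$ to its slice projection $\fs{f}{\sigma_i}$, both the degree bound and the variance bound used in Lemma \ref{lem:coupling} are preserved, and that the resulting quantitative comparison $p_i(1-p_i) = \Theta(p(1-p))$ holds for all the reference slices in the system. Given $M \leq \sqrt{np(1-p)}/2$, this is a short algebraic check; everything else is controlled by the hypotheses and by invoking the previously established results.
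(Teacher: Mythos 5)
Your proposal is correct and follows essentially the same route as the paper: Vandermonde inversion of the Blekherman expansion to define $\gamma_{ki}=\sum_e \sigma^e c_{ei}$, followed by Lemma~\ref{lem:coupling} applied slice-by-slice and the Lipschitz/Cauchy--Schwarz chain. If anything you are slightly more careful than the paper's write-up, which states the coupling bound for $f$ itself rather than for the harmonic projections $\fs{f}{\sigma_i}$ to which Lemma~\ref{lem:coupling} actually applies; your identification of $\fs{f}{\sigma_i}$ as the object being coupled between $\rX(k_i)$ and $\rX(k)$ is the right reading.
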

\begin{proof}
 Let $p_i = k_i/n = p + \sqrt{\frac{p(1-p)}{n}} \sigma_i$ (where $\sigma_i = \frac{k_i-np}{\sqrt{np(1-p)}}$), and let $q = k/n = p + \sqrt{\frac{p(1-p)}{n}} \sigma$. The condition on $L$ guarantees that $|p_i-q| = \sqrt{\frac{p(1-p)}{n}} |\sigma_i-\sigma| \leq \frac{p(1-p)}{8d}$. The condition on $M$ guarantees that $|p_i-p| \leq \sqrt{\frac{p(1-p)}{n}} M \leq p(1-p)/2$, and so $\tfrac{1}{4} p(1-p) \leq p_i(1-p_i) \leq \tfrac{9}{4} p(1-p)$. In particular, $|p_i-q| \leq \frac{p(1-p)}{8d} \leq \frac{p_i(1-p_i)}{2d}$.
 Lemma~\ref{lem:coupling} (applied with $p:=p_i$ and $q:=q$) shows that
\[
 \EE[|f(\rX(k_i)) - f(\rX(k))|]^2 \leq \EE[(f(\rX(k_i)) - f(\rX(k)))^2] = O\left(\frac{dL}{\sqrt{np(1-p)}}\right).
\]

 Let $\fc{f}{0},\ldots,\fc{f}{d}$ be the Blekherman coefficients of $f$, and let $c_{ei}$ be the coefficients given by Theorem~\ref{thm:vandermonde}, so that $|c_{ei}| \leq (4M/\eta)^d$. Theorem~\ref{thm:vandermonde} shows that for $0 \leq e \leq d$,
\[
 \EE[|\fc{f}{e}(\rX(k)) - \sum_{i=1}^{d+1} c_{ei} f(\rX(k_i))|] \leq \sum_{i=1}^{d+1} |c_{ei}| \EE[|f(\rX(k_i)) - f(\rX(k))|] \leq (d+1) \cdot \left(\frac{4M}{\eta}\right)^d \cdot O\left(\sqrt{\frac{dL}{\sqrt{np(1-p)}}}\right).
\]
 Since $\fs{f}{\sigma} = \sum_{e=0}^d \sigma^e \fc{f}{e}$, we conclude that
\[
 \EE[|f(\rX(k)) - \sum_{e=0}^d \sigma^e \sum_{i=1}^{d+1} c_{ei} f(\rX(k_i))|] \leq (d+1)^2 \cdot (1+|\sigma|^d) \cdot \left(\frac{4M}{\eta}\right)^d \cdot O\left(\sqrt{\frac{dL}{\sqrt{np(1-p)}}}\right).
\]
 Rearrangement yields the statement of the theorem, with $\gamma_{ki} = \sum_{e=0}^d \sigma^e c_{ei}$.
\end{proof}

We immediately obtain a corollary for the L\'evy distance.

\begin{corollary} \label{cor:interpolation-1-levy-1}
 Under the setting of Theorem~\ref{thm:interpolation-1}, the L\'evy distance between $f(\nu_k)$ and $\fs{\rf}{k}$ is at most
\[
 O\left((1+|\sigma|)\frac{M}{\eta}\right)^{d/2} \sqrt[4]{\frac{L}{\sqrt{np(1-p)}}}.
\]
\end{corollary}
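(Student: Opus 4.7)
The plan is to mirror exactly the argument used in the proof of Corollary~\ref{cor:levy}, which converts the Lipschitz test-function estimate of Theorem~\ref{thm:invariance} into a L\'evy distance bound. Here the analogous input is Theorem~\ref{thm:interpolation-1}, which gives, for every Lipschitz $\varphi$,
\[
 |\EE_{\nu_k}[\varphi(f)] - \EE[\varphi(\fs{\rf}{k})]| \leq B, \qquad B := O\!\left((1+|\sigma|)\tfrac{M}{\eta}\right)^d \sqrt{\tfrac{L}{\sqrt{p(1-p)n}}}.
\]
Let $\epsilon > 0$ be a parameter to be chosen at the end.

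The first step is to fix an arbitrary threshold $y \in \RR$ and introduce the $(1/\epsilon)$-Lipschitz cutoff
\[
 \varphi(t) = \begin{cases} 1 & t \leq y, \\ \dfrac{y+\epsilon - t}{\epsilon} & y \leq t \leq y+\epsilon, \\ 0 & t \geq y+\epsilon. \end{cases}
\]
Since this $\varphi$ satisfies $\charf{t \leq y} \leq \varphi(t) \leq \charf{t \leq y+\epsilon}$, applying Theorem~\ref{thm:interpolation-1} to $\epsilon\varphi$ (which is genuinely Lipschitz) gives
\[
 \Pr_{\nu_k}[f \leq y] - \Pr[\fs{\rf}{k} \leq y+\epsilon] \leq \EE_{\nu_k}[\varphi(f)] - \EE[\varphi(\fs{\rf}{k})] \leq \tfrac{1}{\epsilon} B,
\]
and running the same argument with the symmetrically-shifted cutoff (or with $1-\varphi$) yields the matching inequality in the other direction.

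The second step is to balance $\epsilon$ against $B/\epsilon$. Choosing $\epsilon^2 = B$, i.e.
\[
 \epsilon = O\!\left((1+|\sigma|)\tfrac{M}{\eta}\right)^{d/2} \sqrt[4]{\tfrac{L}{\sqrt{p(1-p)n}}},
\]
gives $\tfrac{1}{\epsilon}B = \epsilon$, so for every $y$,
\[
 \Pr_{\nu_k}[f \leq y-\epsilon] - \epsilon \leq \Pr[\fs{\rf}{k} \leq y] \leq \Pr_{\nu_k}[f \leq y+\epsilon] + \epsilon,
\]
which is the definition of the L\'evy distance between $f(\nu_k)$ and $\fs{\rf}{k}$ being at most $\epsilon$. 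Since the argument is a direct transcription of the proof of Corollary~\ref{cor:levy} with $B$ in place of the Theorem~\ref{thm:invariance} bound, there is no real obstacle; the only point requiring attention is tracking the implicit constants absorbed into the big-$O$, since they depend on $d$ and $\sigma$ but not on $y$, so the same $\epsilon$ works uniformly in $y$.
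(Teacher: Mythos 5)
Your proposal is correct and is exactly the paper's argument: the paper's proof of this corollary simply states that it is identical to the proof of Corollary~\ref{cor:levy}, which is the Lipschitz-cutoff-plus-balancing argument you transcribe, with the bound from Theorem~\ref{thm:interpolation-1} playing the role of the bound from Theorem~\ref{thm:invariance} and the choice $\epsilon = \sqrt{B}$ giving the stated exponent $d/2$ and fourth root.
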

\begin{proof}
 The proof is identical to the proof of Corollary~\ref{cor:levy}.
\end{proof}

A striking form of this corollary compares two different profiles.

\begin{definition} \label{def:levy-metric}
 Let $X = (X_1,\ldots,X_r), Y = (Y_1,\ldots,Y_r)$ be two $r$-dimensional real distributions. The \emph{L\'evy--Prokhorov distance} between these distributions is the infimum value of $\epsilon$ such that for all Borel sets $A$,
\begin{enumerate}
\item $\Pr[X \in A] \leq \Pr[Y \in A^\epsilon] + \epsilon$, and
\item $\Pr[Y \in A] \leq \Pr[X \in A^\epsilon] + \epsilon$,
\end{enumerate}
where $A^\epsilon$ consists of all points at distance at most $\epsilon$ from $A$ in the $L^\infty$ metric.
\end{definition}

In fact, for all our results it suffices to consider \emph{halfspaces} for $A$, that is, sets of the form $\{ (x_1,\ldots,x_r) : \sum_{i=1}^r a_i x_i \leq b \}$. If we restrict the definition of L\'evy--Prokhorov distance in this way, then it coincides with the usual L\'evy distance in the one-dimensional case.

\begin{corollary} \label{cor:interpolation-1-levy-2}
 Under the setting of Theorem~\ref{thm:interpolation-1} for \emph{two} functions $f,g$, if the L\'evy--Prokhorov distance between the profiles of $f$ and $g$ is $\epsilon$, then the L\'evy distance between $f(\nu_k)$ and $g(\nu_k)$ is at most
\[
 O\left((1+|\sigma|)\frac{M}{\eta}\right)^{d/2} \sqrt[4]{\frac{L}{\sqrt{np(1-p)}}} + O\left((1+|\sigma|)\frac{M}{\eta}\right)^d \epsilon.
\]
\end{corollary}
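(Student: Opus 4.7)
My plan is to reduce the problem, via the triangle inequality for the L\'evy metric, to two steps: (i) Corollary~\ref{cor:interpolation-1-levy-1} applied separately to $f$ and to $g$, which controls $d_L(f(\nu_k),\fs{\rf}{k})$ and $d_L(g(\nu_k),\fs{\rg}{k})$; and (ii) a new estimate bounding $d_L(\fs{\rf}{k},\fs{\rg}{k})$ in terms of the L\'evy--Prokhorov distance between the joint profiles $(\rf_1,\ldots,\rf_{d+1})$ and $(\rg_1,\ldots,\rg_{d+1})$.

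For step (i), Corollary~\ref{cor:interpolation-1-levy-1} immediately yields
\[
 d_L(f(\nu_k),\fs{\rf}{k}), \; d_L(g(\nu_k),\fs{\rg}{k}) \;=\; O\!\left((1+|\sigma|)\tfrac{M}{\eta}\right)^{d/2} \sqrt[4]{\tfrac{L}{\sqrt{np(1-p)}}}.
\]
For step (ii), the crucial observation is that Theorem~\ref{thm:interpolation-1} constructs $\fs{\rf}{k}$ and $\fs{\rg}{k}$ using the \emph{same} interpolation coefficients $\gamma_{k1},\ldots,\gamma_{k,d+1}$, namely $\fs{\rf}{k}=\sum_i \gamma_{ki}\rf_i$ and $\fs{\rg}{k}=\sum_i\gamma_{ki}\rg_i$, and that these coefficients satisfy $|\gamma_{ki}|\le C$ with $C=O\bigl((1+|\sigma|)M/\eta\bigr)^d$. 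Hence for any $y\in\RR$ the halfspace $A_y=\{x\in\RR^{d+1}:\sum_i\gamma_{ki}x_i\le y\}$ has the property that its $\epsilon$-neighborhood in the $L^\infty$ metric is contained in $A_{y+(d+1)C\epsilon}$, because $|\sum_i \gamma_{ki}(x_i+\delta_i)-\sum_i\gamma_{ki}x_i|\le (d+1)C\epsilon$ whenever $\|\delta\|_\infty\le\epsilon$.

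Applying the halfspace form of the L\'evy--Prokhorov definition (it suffices to use halfspaces for the present purpose), we obtain
\[
 \Pr[\fs{\rf}{k}\le y]=\Pr[\rf\in A_y]\le \Pr[\rg\in A_y^{\epsilon}]+\epsilon\le \Pr[\fs{\rg}{k}\le y+(d+1)C\epsilon]+\epsilon,
\]
and symmetrically. Taking $\delta:=(d+1)C\epsilon=O\bigl((1+|\sigma|)M/\eta\bigr)^d\,\epsilon$, which dominates $\epsilon$ since $C\ge 1$, these inequalities say precisely that $d_L(\fs{\rf}{k},\fs{\rg}{k})\le \delta$. Combining (i) and (ii) by the triangle inequality for the L\'evy metric yields the claimed bound.

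I expect no serious obstacle here; the only conceptual point is that the interpolation recipe from Theorem~\ref{thm:interpolation-1} uses coefficients $\gamma_{ki}$ that depend only on $p,n,d,\sigma$ and not on $f$, so the same linear functional is applied to both profiles, which is exactly what converts coordinatewise closeness (L\'evy--Prokhorov) into closeness of the one-dimensional projections (L\'evy). The only place where a bit of care is needed is the mild subtlety that one-dimensional L\'evy distance coincides with the halfspace restriction of L\'evy--Prokhorov distance, so the bound for halfspaces is exactly what one needs.
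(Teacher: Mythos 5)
Your proposal is correct and follows essentially the same route as the paper: both decompose the bound via the triangle inequality into Corollary~\ref{cor:interpolation-1-levy-1} applied to each function plus a halfspace argument showing $A_t^\epsilon \subseteq A_{t+B\epsilon}$ with $B$ controlled by the $|\gamma_{ki}|$, which converts the L\'evy--Prokhorov closeness of the profiles into L\'evy closeness of $\fs{\rf}{k}$ and $\fs{\rg}{k}$. The only cosmetic difference is that you bound the shift by $(d+1)\max_i|\gamma_{ki}|$ where the paper uses $\sum_i|\gamma_{ki}|$; both are absorbed into the same $O\bigl((1+|\sigma|)M/\eta\bigr)^d$ factor.
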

\begin{proof}
 We start by bounding the L\'evy distance between $\rf[k]$ and $\rg[k]$. Given $t$, define the halfspace $A_t$ by
\[
 A_t = \{ (x_1,\ldots,x_{d+1}) : \sum_{i=1}^{d+1} \gamma_{ki} x_i \leq t \},
\]
 and notice that $A_t^\epsilon \subseteq A_{t + B\epsilon}$, where
\[
 B = \sum_{i=1}^{d+1} |\gamma_{ki}|= O\left((1+|\sigma|)\frac{M}{\eta}\right)^d.
\]
 Since the L\'evy--Prokhorov distance between the profiles of $f$ and $g$ is at most $\epsilon$, it follows that
\begin{multline*}	
 \Pr[\rf[k] \leq t] = \Pr[(\rf_1,\ldots,\rf_{d+1}) \in A_t] \leq \\
 \Pr[(\rg_1,\ldots,\rg_{d+1}) \in A_t^\epsilon] + \epsilon \leq
 \Pr[(\rg_1,\ldots,\rg_{d+1}) \in A_{t+B\epsilon}] + \epsilon =
 \Pr[\rg[k] \leq t+B\epsilon] + \epsilon.
\end{multline*}
 This shows that the L\'evy distance between $\rf[k]$ and $\rg[k]$ is at most $B\epsilon$. 

The result now follows from Corollary~\ref{cor:interpolation-1-levy-1} and the triangle inequality for the L\'evy distance.
\end{proof}

By combining different slices, we can obtain a similar result for $\mu_p$.

\begin{theorem} \label{thm:interpolation-2}
 There exists a constant $K > 0$ such that the following holds.
 Let $p \in (0,1)$, let $f$ be a multilinear polynomial on $x_1,\ldots,x_n$ of degree $1 \leq d \leq K\sqrt{np(1-p)/\log[np(1-p)]}$, let $k_1,\ldots,k_{d+1}$ be an $(\eta,M)$-system for $M \leq \sqrt{np(1-p)}/(9d)$, and let $\rf_1,\ldots,\rf_{d+1}$ be the profile of $f$ with respect to this system. Suppose that $\|f\|_{\nu_{k_i}} \leq 1$ for $1 \leq i \leq d+1$.
 
 For $s := \sqrt{3\log[np(1-p)]}$, define a distribution $\rf$ as follows: choose $\sigma \sim \frac{\bin(n,p)-np}{\sqrt{np(1-p)}}$ conditioned on $|\sigma| \leq s$, and let $\rf = \fs{\rf}{\sigma}$, as in Theorem~\ref{thm:interpolation-1}. Then for any Lipschitz function $\varphi$,
\[
 |\EE_{\mu_p}[\varphi(f)] - \EE[\varphi(\rf)]| = O\left(\frac{d}{p(1-p)} \cdot \frac{M}{\eta}\right)^{O(d)} \frac{1}{\sqrt[4]{np(1-p)}}.
\]
\end{theorem}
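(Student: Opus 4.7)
The plan is to express $\mu_p$ as a mixture of slice distributions $\nu_\ell$, apply Theorem~\ref{thm:interpolation-1} on each slice close to the bulk ($|\sigma_\ell| \leq s$), and control the contribution of far-away slices via Chernoff-type arguments.

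Without loss of generality $\varphi(0) = 0$, so $|\varphi(t)| \leq |t|$. Writing
\[
 \EE_{\mu_p}[\varphi(f)] = \sum_{\ell=0}^n \Pr[\bin(n,p) = \ell] \, \EE_{\nu_\ell}[\varphi(f)],
\]
and letting $p_s := \Pr[|\sigma| \leq s]$, the definition of $\rf$ gives $\EE[\varphi(\rf)] = p_s^{-1} \sum_{|\sigma_\ell| \leq s} \Pr[\bin(n,p) = \ell] \, \EE[\varphi(\fs{\rf}{k_\ell})]$. Subtracting, I would decompose $\EE_{\mu_p}[\varphi(f)] - \EE[\varphi(\rf)]$ into three pieces: a \emph{main term} summing per-slice errors $\EE_{\nu_\ell}[\varphi(f)] - \EE[\varphi(\fs{\rf}{k_\ell})]$ over $|\sigma_\ell| \leq s$, an \emph{$f$-tail} $\sum_{|\sigma_\ell| > s} \Pr[\bin(n,p) = \ell] \EE_{\nu_\ell}[\varphi(f)]$, and a \emph{renormalization error} $-(1-p_s)\EE[\varphi(\rf)]$.

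For the main term I would apply Theorem~\ref{thm:interpolation-1} at each $\nu_\ell$ with $|\sigma_\ell| \leq s$. The quantity $L_\ell = \max_i |\sigma_\ell - \sigma_i|$ is at most $s + M$, and the hypotheses $M \leq \sqrt{np(1-p)}/(9d)$ together with $d \leq K\sqrt{np(1-p)/\log[np(1-p)]}$ combine to ensure $L_\ell \leq \sqrt{np(1-p)}/(8d)$, so Theorem~\ref{thm:interpolation-1} applies uniformly. Averaging the per-slice bound $O((1+|\sigma_\ell|)M/\eta)^d \sqrt{L_\ell/\sqrt{np(1-p)}}$ against $\Pr[\bin(n,p) = \ell]$, and using Cauchy--Schwarz together with Lemma~\ref{lem:binomial-moments} to estimate $\EE[(1+|\sigma|)^{2d}]$, yields a bound of the stated form in which the $(np(1-p))^{-1/4}$ factor is produced by $\sqrt{L_\ell/\sqrt{np(1-p)}}$.

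For the two tail terms, Proposition~\ref{pro:chernoff} gives $1 - p_s \leq 2(np(1-p))^{-1/2}$ because $s^2 = 3\log[np(1-p)]$. The $f$-tail is bounded via Cauchy--Schwarz over the mixture by $\sqrt{1-p_s} \cdot \|f\|_{\mu_p}$, and $\|f\|_{\mu_p}$ is controlled by Theorem~\ref{thm:norm-invariance-1} applied to the $(\eta,M)$-system. The renormalization error is bounded by $(1-p_s) \|\rf\|_2$; expanding $\fs{\rf}{\sigma} = \sum_i \gamma_{ki} \rf_i$ with $|\gamma_{ki}| = O((1+|\sigma|) M/\eta)^d$ from Theorem~\ref{thm:interpolation-1} and using $\|\rf_i\|_2 = \|f\|_{\nu_{k_i}} \leq 1$ gives the required estimate. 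Both tail contributions fit into the target form.

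The main obstacle is bookkeeping: verifying that the $(1+s)^d$ factors arising from slices with $|\sigma_\ell|$ close to $s$ are absorbed into the $(d/(p(1-p)) \cdot M/\eta)^{O(d)}$ prefactor, and that the constraint $L_\ell \leq \sqrt{np(1-p)}/(8d)$ of Theorem~\ref{thm:interpolation-1} holds uniformly on the main-term slices---this is precisely the calibration performed by the upper bound on $d$ in the hypothesis.
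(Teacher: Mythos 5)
Your proposal is correct and follows essentially the same route as the paper: decompose $\mu_p$ into slices, apply Theorem~\ref{thm:interpolation-1} on the bulk slices after checking $M+s\leq\sqrt{np(1-p)}/(8d)$, average the per-slice errors against the binomial via Lemma~\ref{lem:binomial-moments}, and kill the tails with Proposition~\ref{pro:chernoff} and Theorem~\ref{thm:norm-invariance-1} (your Cauchy--Schwarz bound $\sqrt{1-p_s}\,\|f\|_{\mu_p}$ on the $f$-tail is a slightly cleaner variant of the paper's truncation argument and gives the same $(np(1-p))^{-1/4}$). One caution on your stated ``main obstacle'': the worst-case factor $(1+s)^d=(\log np(1-p))^{\Theta(d)}$ is \emph{not} absorbed by $(d/(p(1-p))\cdot M/\eta)^{O(d)}$ alone (take $d=1$ and $n\to\infty$), so both in the main term and in the renormalization term $\EE[\varphi(\rf)]$ one must work with the moment average $\EE[(1+|\sigma|)^{2d}]=O(d/p(1-p))^{O(d)}$ from Lemma~\ref{lem:binomial-moments}, exactly as you do for the main term and as the paper does throughout.
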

\begin{proof}
 We can assume, without loss of generality, that $\varphi(0) = 0$.

 Since $M+s \leq \frac{\sqrt{np(1-p)}}{8d}$ due to the bound on $M$ and our choice of $s$, Theorem~\ref{thm:interpolation-1} implies that
\[
 \sum_{\substack{k = np+\sqrt{np(1-p)}\cdot\sigma\colon\\ |\sigma| \leq s}} \Pr[\bin(n,p)=k] |\EE_{\nu_k}[\varphi(f)] - \EE[\varphi(\fs{\rf}{\sigma})]| =
 O\left(\frac{M}{\eta}\right)^d \frac{1}{\sqrt[4]{np(1-p)}} \EE[\underbrace{(1+|\sigma|)^d \sqrt{M+|\sigma|}}_C],
\]
 where $\sigma \sim \frac{\bin(n,p)-np}{\sqrt{np(1-p)}}$. When $|\sigma| \leq 1$, $C \leq 2^{d+1} \sqrt{M}$, and when $|\sigma| \geq 1$, $C \leq 2^d |\sigma|^d (\sqrt{M} + \sqrt{|\sigma|}) \leq 2^{d+1} |\sigma|^{2d} \sqrt{M}$. Therefore Lemma~\ref{lem:binomial-moments} implies that
\[
 \EE[C] \leq 2^{d+1} \sqrt{M} (1 + \EE[|\sigma|^{2d}]) = O\left(\frac{d}{p(1-p)}\right)^d \sqrt{M}.
\]
 We conclude that
\[
 \biggl|\sum_{\substack{k = np+\sqrt{np(1-p)}\cdot\sigma\colon\\ |\sigma| \leq s}} \Pr[\bin(n,p)=k] \EE_{\nu_k}[\varphi(f)] - (1-\epsilon_1) \EE[\varphi(\rf)]\biggr| =
 O\left(\frac{d}{p(1-p)} \cdot \frac{M}{\eta}\right)^d \frac{1}{\sqrt[4]{np(1-p)}},
\]
 where $\epsilon_1 := \Pr[|\bin(n,p)-np| > \sqrt{np(1-p)} \cdot s]$. Dividing this bound by $1-\epsilon_1$ and using $1-\frac{1}{1-\epsilon_1} = O(\epsilon_1)$, we deduce that
\[
 |\EE_{\mu_p}[\varphi(f)] - \EE[\varphi(\rf)]| \leq O(\epsilon_1) \EE[\varphi(f)] + \underbrace{\EE_{x \sim \mu_p}[\varphi(f)\charf{|\sum_i x_i - np| > s\sqrt{np(1-p)}}]}_{\epsilon_2} + O\left(\frac{d}{p(1-p)} \cdot \frac{M}{\eta}\right)^d \frac{1}{\sqrt[4]{np(1-p)}}.
\]
 
 As in the proof of Theorem~\ref{thm:invariance}, Proposition~\ref{pro:chernoff} implies that $\epsilon_1 \leq 2e^{-s^2/6}$.
 Theorem~\ref{thm:norm-invariance-1} shows that
\[
 \|f\|_{\mu_p} = O\left(\frac{d}{p(1-p)} \cdot \frac{M}{\eta}\right)^{O(d)}.
\]
 This allows us to bound the first error term above, since $\EE[\varphi(f)] \leq \EE[|f|] \leq \|f\|$.
 The other error term is bounded in the proof of Theorem~\ref{thm:invariance} by $O(e^{-s^2/12})$. Altogether, we obtain
\[
 |\EE_{\nu_k}[\varphi(f)] - \EE[\varphi(\rf)]| \leq O\left(\frac{d}{p(1-p)} \cdot \frac{M}{\eta}\right)^{O(d)} \left[\frac{1}{\sqrt[4]{np(1-p)}} + e^{-s^2/12}\right].
\]
 Substituting the value for $s$, we deduce that
\[
 |\EE_{\nu_k}[\varphi(f)] - \EE[\varphi(\rf)]| \leq O\left(\frac{d}{p(1-p)} \cdot \frac{M}{\eta}\right)^{O(d)} \frac{1}{\sqrt[4]{np(1-p)}}. \qedhere
\]
\end{proof}

Just as before, we can obtain corollaries for the L\'evy distance.

\begin{corollary} \label{cor:interpolation-2-levy-1}
 Under the setting of Theorem~\ref{thm:interpolation-2}, the L\'evy distance between $f(\mu_p)$ and $\rf$ is
\[
 O\left(\frac{d}{p(1-p)} \cdot \frac{M}{\eta}\right)^{O(d)} \frac{1}{\sqrt[8]{np(1-p)}}.
\]
\end{corollary}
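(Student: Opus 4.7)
The plan is to mimic the derivation of Corollary~\ref{cor:levy} from Theorem~\ref{thm:invariance} (and of Corollary~\ref{cor:interpolation-1-levy-1} from Theorem~\ref{thm:interpolation-1}), using Theorem~\ref{thm:interpolation-2} as the Lipschitz-testing black box. The core observation is that the L\'evy distance between two real distributions is controlled by how well one can separate them with $C$-Lipschitz ramp functions, and the penalty for a $C$-Lipschitz test function in Theorem~\ref{thm:interpolation-2} scales linearly in $C$.

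Concretely, fix a threshold $y \in \RR$ and a parameter $\epsilon > 0$ to be optimized. Define the $(1/\epsilon)$-Lipschitz ramp
\[
 \varphi(t) = \begin{cases} 1 & t \leq y, \\ (y+\epsilon-t)/\epsilon & y \leq t \leq y+\epsilon, \\ 0 & t \geq y+\epsilon, \end{cases}
\]
so that $\charf{t \leq y} \leq \varphi(t) \leq \charf{t \leq y+\epsilon}$. Since $\varphi/\epsilon$ is $1$-Lipschitz (up to the scaling $1/\epsilon$ on the error bound), Theorem~\ref{thm:interpolation-2} applied to $\varphi$ yields
\[
 |\EE_{\mu_p}[\varphi(f)] - \EE[\varphi(\rf)]| \leq \frac{1}{\epsilon} \cdot O\!\left(\frac{d}{p(1-p)} \cdot \frac{M}{\eta}\right)^{O(d)} \frac{1}{\sqrt[4]{np(1-p)}}.
\]
Sandwiching the indicator by $\varphi$ then gives, on one side,
\[
 \Pr_{\mu_p}[f \leq y] \leq \EE_{\mu_p}[\varphi(f)] \leq \EE[\varphi(\rf)] + \delta \leq \Pr[\rf \leq y+\epsilon] + \delta,
\]
where $\delta$ is the above error, and symmetrically in the other direction by shifting $y$.

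To optimize, choose $\epsilon$ so that $\delta = \epsilon$, i.e.\ set
\[
 \epsilon^2 = O\!\left(\frac{d}{p(1-p)} \cdot \frac{M}{\eta}\right)^{O(d)} \frac{1}{\sqrt[4]{np(1-p)}},
\]
which gives
\[
 \epsilon = O\!\left(\frac{d}{p(1-p)} \cdot \frac{M}{\eta}\right)^{O(d)} \frac{1}{\sqrt[8]{np(1-p)}},
\]
matching the stated bound. No step here is hard: the argument is a direct transcription of the Corollary~\ref{cor:levy} proof, and the only thing to verify is that the $O(d)$ exponent in the leading factor absorbs the extra factor introduced when taking the square root, which is immediate since $(\cdot)^{O(d)/2}$ is still $(\cdot)^{O(d)}$. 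The only potential nuisance is keeping the constants consistent between the two directions of the L\'evy inequality, but shifting $y$ to $y-\epsilon$ in the symmetric case handles it verbatim as in Corollary~\ref{cor:levy}.
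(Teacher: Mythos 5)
Your proposal is correct and is exactly the paper's argument: the paper proves this corollary by noting the proof is identical to that of Corollary~\ref{cor:levy} (via Corollary~\ref{cor:interpolation-1-levy-1}), i.e., the same $(1/\epsilon)$-Lipschitz ramp, sandwiching, and optimization $\epsilon^2 = \delta\epsilon$ that you carry out. The bookkeeping with the $O(d)$ exponent absorbing the square root is also as you describe.
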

\begin{proof}
 The proof is identical to the proof of Corollary~\ref{cor:interpolation-1-levy-1}.
\end{proof}


\begin{corollary} \label{cor:interpolation-2-levy-2}
 Under the setting of Theorem~\ref{thm:interpolation-2} for \emph{two} functions $f,g$, if the L\'evy distance between the profiles of $f$ and $g$ is $\epsilon \leq 1/2$, then the L\'evy distance between $f(\mu_p)$ and $g(\mu_p)$ is at most
\[
  O\left(\frac{d}{p(1-p)} \cdot \frac{M}{\eta}\right)^{O(d)} \frac{1}{\sqrt[8]{np(1-p)}} + O\left(\sqrt{\frac{\log (1/\epsilon)}{p(1-p)}}\frac{M}{\eta}\right)^d \epsilon.
\]
\end{corollary}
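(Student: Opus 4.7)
The plan is to pass through the auxiliary distributions $\rf,\rg$ constructed in Theorem~\ref{thm:interpolation-2} applied separately to $f$ and $g$. Invoking Corollary~\ref{cor:interpolation-2-levy-1} for each of $f,g$ yields $d_L(f(\mu_p),\rf),\ d_L(g(\mu_p),\rg)\le E_1$, where $E_1$ denotes the first term of the claimed bound. By the triangle inequality for the L\'evy distance it then suffices to prove $d_L(\rf,\rg)\le E_2$, where $E_2$ denotes the second term.

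For this, recall from Theorem~\ref{thm:interpolation-2} that $\rf=\sum_{i=1}^{d+1}\gamma_{ki}\rf_i$ and $\rg=\sum_{i=1}^{d+1}\gamma_{ki}\rg_i$, where $\sigma\sim(\bin(n,p)-np)/\sqrt{np(1-p)}$ conditioned on $|\sigma|\le s$ is drawn \emph{independently} of the profiles, $k=np+\sqrt{np(1-p)}\sigma$, and Theorem~\ref{thm:interpolation-1} provides $|\gamma_{ki}|=O((1+|\sigma|)M/\eta)^d$. I will reproduce, conditionally on $\sigma$, the halfspace argument used in the proof of Corollary~\ref{cor:interpolation-1-levy-2}: the halfspace $\{y\le t\}\subset\RR$ pulls back under the linear map $(x_1,\ldots,x_{d+1})\mapsto\sum_i\gamma_{ki}x_i$ to a halfspace $A_t\subset\RR^{d+1}$, and its $L^\infty$-$\epsilon$-neighborhood is contained in $A_{t+B(\sigma)\epsilon}$ where $B(\sigma):=\sum_i|\gamma_{ki}|=O((1+|\sigma|)M/\eta)^d$. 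Combined with the hypothesis that the profiles have L\'evy--Prokhorov distance at most $\epsilon$, this yields, for every $\sigma$,
\[
 \Pr[\fs{\rf}{\sigma}\le t\mid\sigma]\ \le\ \Pr[\fs{\rg}{\sigma}\le t+B(\sigma)\epsilon\mid\sigma]+\epsilon.
\]

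To conclude, choose a secondary truncation $T=\Theta(\sqrt{\log(1/\epsilon)/p(1-p)})$ so that Hoeffding's inequality gives $\Pr[|\bin(n,p)-np|>T\sqrt{np(1-p)}]\le 2e^{-2T^2p(1-p)}\le\epsilon$ (hence at most $2\epsilon$ under the conditioning $|\sigma|\le s$, since $\Pr[|\sigma|\le s]\ge 1/2$ for the regime at hand). Integrating the conditional display over $\sigma$ and splitting the expectation at $|\sigma|\le T$ versus $|\sigma|>T$, the first part inherits the uniform bound $B(\sigma)\le B(T)=O(\sqrt{\log(1/\epsilon)/p(1-p)}\cdot M/\eta)^d$ and the second contributes $O(\epsilon)$ by the tail estimate, producing $d_L(\rf,\rg)\le B(T)\epsilon+O(\epsilon)$, which is $E_2$. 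The main obstacle is the choice of this secondary truncation: relying on the built-in truncation $|\sigma|\le s=\sqrt{3\log[np(1-p)]}$ alone would replace $\sqrt{\log(1/\epsilon)/p(1-p)}$ by the weaker $\sqrt{\log[np(1-p)]}$, which is loose whenever $\epsilon$ is not exponentially small in $n$; using Hoeffding (rather than a sharper Bernstein/Chernoff bound) is what naturally produces the factor $1/p(1-p)$ under the square root. In the degenerate regime $T>s$ no secondary truncation is needed and $B(s)\le B(T)$ yields a strictly smaller bound that is still absorbed by $E_2$.
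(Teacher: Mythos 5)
Your proposal is correct and follows essentially the same route as the paper's proof: pass through $\rf,\rg$ via Corollary~\ref{cor:interpolation-2-levy-1} and the triangle inequality, apply the halfspace argument of Corollary~\ref{cor:interpolation-1-levy-2} conditionally on $\sigma$, and introduce a secondary truncation at $\tau=\Theta(\sqrt{\log(1/\epsilon)/p(1-p)})$ controlled by Hoeffding's inequality so that the tail contributes only $O(\epsilon)$ while the bulk inherits the uniform bound $B(\tau)$. The paper's proof is the same argument, with the conditioning on $|\sigma|\le s$ absorbed into the $O(\cdot)$ of the tail estimate rather than handled explicitly as you do.
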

\begin{proof}
 As in the proof of Corollary~\ref{cor:interpolation-1-levy-2}, the idea is to bound the L\'evy distance between $\rf$ and $\rg$. Let $\sigma$ have the distribution in Theorem~\ref{thm:interpolation-2}, and let $\tau$ be a threshold to be determined. The argument of Corollary~\ref{cor:interpolation-1-levy-2} shows that
\[
 \Pr[\rf[\sigma] \leq t] \leq \Pr[\rg[\sigma] \leq t + B_\sigma \epsilon] + \epsilon, \text{ where } B_\sigma = O\left((1+|\sigma|) \frac{M}{\eta}\right)^d.
\]
 Therefore
\begin{align*}
 \Pr[\rf \leq t] = \EE_{\sigma}[\Pr[\rf[\sigma] \leq t]] &\leq 
 \EE_{\sigma}[\Pr[\rg[\sigma] \leq t + B_\sigma \epsilon] \charf{|\sigma| \leq \tau}] + \Pr[|\sigma| > \tau] + \epsilon \\ &\leq
 \Pr[\rg \leq t + B_\tau \epsilon] + \Pr[|\sigma| > \tau] + \epsilon.
\end{align*}
 As in the proof of Lemma~\ref{lem:binomial-moments}, Hoeffding's inequality shows that
\[
 \Pr[|\sigma| > \tau] = O(\Pr[|\xstd| > \tau]) = O(e^{-2p(1-p)\tau^2}),
\]
 showing that the L\'evy distance between $\rf$ and $\rg$ is at most $\max(B_\tau\epsilon, O(e^{-2p(1-p)\tau^2}) + \epsilon)$. Choosing $\tau = \sqrt{\frac{\log(1/\epsilon)}{2p(1-p)}}$, we conclude that the L\'evy distance between $\rf$ and $\rg$ is at most
\[
 O\left(\sqrt{\frac{\log (1/\epsilon)}{p(1-p)}}\frac{M}{\eta}\right)^d \epsilon.
\]
 The result now follows from Corollary~\ref{cor:interpolation-2-levy-1} and the triangle inequality for the L\'evy distance.
\end{proof}

Theorem~\ref{thm:interpolation-2} states that we can recover the distribution of a low-degree multilinear polynomial on the Boolean cube from its distribution on a few coupled slices. Interpolation in the other direction is not possible: the distribution of a low-degree polynomial function on the Boolean cube doesn't determine its distribution on the various slices. For example, consider the following two functions, for even $n$:
\[
 f_1 = \frac{\sum_{i=1}^n x_i - np}{\sqrt{np(1-p)}}, \qquad f_2 = \frac{\sum_{i=1}^{n/2} (x_{2i-1} - x_{2i})}{\sqrt{np(1-p)}}.
\]
The central limit theorem shows that with respect to $\mu_p$, the distribution of both functions is close to $\nor(0,1)$. However, on the slice $\nu_{pn}$, the first function vanishes, while the second function also has a distribution close to $\nor(0,1)$, due to Theorem~\ref{thm:invariance}.

\section{Functions depending on few coordinates} \label{sec:few-coordinates}

In this section we prove an invariance principle for bounded functions depending on $o(n)$ coordinates. In contrast to our work so far, the functions in question need not be harmonic multilinear polynomials.
The invariance principle immediately follows from the following bound on total variation distance.

\begin{lemma} \label{lem:tv-few-coordinates}
 Let $p(1-p)n \to \infty$ and $m = o(p(1-p)n)$. Denote the projection of $\mu_p$ and $\nu_{pn}$ into the first $m$ coordinates by $\mu'_p$ and $\nu'_{pn}$.
 The total variation distance between $\mu'_p$ and $\nu'_{pn}$ is $o(1)$.
\end{lemma}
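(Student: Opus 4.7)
The plan is to reduce the statement to a one-dimensional total variation bound and then invoke the local central limit theorem.

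First, since both $\mu'_p$ and $\nu'_{pn}$ are invariant under permutations of the $m$ coordinates, their total variation distance equals the total variation distance between the laws of $S := \sum_{i=1}^m x_i$. Under $\mu_p$, $S$ is distributed as $\bin(m,p)$, while under $\nu_{pn}$ it follows the hypergeometric law of $m$ draws without replacement from a pool of $pn$ ones and $(1-p)n$ zeros.

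Second, I would exploit the fact that $\nu_{pn}$ is $\mu_p$ conditioned on $\sum_{i=1}^n x_i = pn$. Summing over the last $n-m$ coordinates immediately gives the identity
\[
 \frac{\Pr_{\nu_{pn}}[S=s]}{\Pr_{\mu_p}[S=s]} = \frac{\Pr[\bin(n-m,p)=pn-s]}{\Pr[\bin(n,p)=pn]}.
\]
Gnedenko's local CLT states that if $Np(1-p)\to\infty$ then $\sqrt{2\pi N p(1-p)}\,\Pr[\bin(N,p)=k] = \exp(-(k-Np)^2/(2Np(1-p))) + o(1)$, uniformly in $k$. The hypotheses $p(1-p)n\to\infty$ and $m=o(p(1-p)n)$ force both $np(1-p)$ and $(n-m)p(1-p)$ to infinity, so the local CLT applies to numerator and denominator, yielding
\[
 \frac{\Pr_{\nu_{pn}}[S=s]}{\Pr_{\mu_p}[S=s]} = \sqrt{\frac{n}{n-m}}\exp\left(-\frac{(s-mp)^2}{2(n-m)p(1-p)}\right)(1+o(1)),
\]
uniformly in $s$.

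Finally, I would split the TV sum at a threshold $t$ satisfying $\sqrt{mp(1-p)}\ll t\ll \sqrt{(n-m)p(1-p)}$; such a choice exists because $m=o(n)$ (for instance $t = (m(n-m))^{1/4}\sqrt{p(1-p)}$). For $|s-mp|\le t$ the exponent is $o(1)$ and $\sqrt{n/(n-m)}=1+o(1)$, so the ratio equals $1+o(1)$, contributing $o(1)$ to the TV. For $|s-mp|>t$, Chebyshev's inequality gives $\Pr_{\mu_p}[|S-mp|>t]\le mp(1-p)/t^2 = o(1)$, and since the hypergeometric distribution of $S$ under $\nu_{pn}$ has strictly smaller variance than $\bin(m,p)$, the same tail bound holds under $\nu_{pn}$. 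Combining the two contributions yields TV$=o(1)$. The main technical obstacle is obtaining the uniform multiplicative local CLT estimate in the typical range, but this is a classical result for Bernoulli sums whenever the variance diverges, which is exactly what the hypotheses provide.
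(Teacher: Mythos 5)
Your argument is correct, but it takes a genuinely different route from the paper's. Both proofs rest on the same likelihood-ratio identity: by exchangeability the pointwise ratio $\nu'_{pn}(x)/\mu'_p(x)$ depends only on $\ell = |x|$, and equals $\Pr[\bin(n-m,p)=pn-\ell]/\Pr[\bin(n,p)=pn]$, which is exactly your ratio for the law of $S$. From there the paper avoids the local CLT entirely: it computes the ratio of consecutive values $\rho(\ell+1)/\rho(\ell)$ to see that $\rho$ is unimodal with peak at $\ell_0 = pm$, evaluates $\rho(\ell_0) = 1 + O(m/(p(1-p)n))$ by a single Stirling computation, and then uses the one-sided formula $\mathrm{TV} = \sum_{\ell\colon\rho(\ell)>1}\mu'_p(\cdot)(\rho(\ell)-1) \leq \max_\ell \rho(\ell) - 1$, with no splitting into bulk and tail. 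This yields the explicit rate $O(m/(p(1-p)n))$ and is self-contained. Your proof instead estimates the ratio in the bulk via the uniform local limit theorem for Bernoulli sums with diverging variance and kills the tails by Chebyshev; this is valid (the paper itself invokes the same local limit theorem elsewhere, in Section~\ref{sec:low-influence}), but it gives only a qualitative $o(1)$ since the local CLT error is unquantified, plus a $\sqrt{m/(n-m)}$ tail contribution. One small imprecision: your displayed multiplicative $(1+o(1))$ estimate for the ratio is not actually uniform over all $s$ --- in the far tail the additive $o(1)$ error of the local CLT swamps the exponentially small Gaussian factor --- but since you only use the multiplicative form for $|s-mp|\leq t$, where the exponential is itself $1+o(1)$ and hence bounded away from zero, the argument is unaffected.
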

\begin{proof}
 Let $k = pn$, and consider the ratio $\rho(\ell)$ between the probability of a set of size $\ell$ under $\nu'_k$ and under $\mu'_p$:
 \[
  \rho(\ell) = \frac{\nu'_k([\ell])}{\mu'_p([\ell])} = \frac{\binom{n-m}{k-\ell}/\binom{n}{k}}{p^\ell (1-p)^{m-\ell}}.
 \]
 To understand the behavior of $\rho(\ell)$, we compute the ratio $\rho(\ell+1)/\rho(\ell)$:
 \[
  \frac{\rho(\ell+1)}{\rho(\ell)} = \frac{1-p}{p} \frac{k-\ell}{n-m-k+\ell+1}.
 \]
 Thus $\rho(\ell+1) > \rho(\ell)$ iff $(1-p)(k-\ell) > p(n-m-k+\ell+1)$ iff $\ell < p(m-1)$. We deduce that the largest value of $\rho(\ell)$ is obtained for $\ell_0 = pm$ (assuming for simplicity that this is indeed an integer).

 At the point $\ell_0$ we can estimate, using Stirling's approximation,
 \begin{align*}
  \rho(\ell_0) &= \frac{\binom{n-m}{p(n-m)}}{\binom{n}{pn}} p^{-pm} (1-p)^{-(1-p)m} \\ &=
  \frac{\frac{p^{-p(n-m)}(1-p)^{-(1-p)(n-m)}}{\sqrt{2\pi p(1-p) (n-m)}}e^{O(1/p(1-p)(n-m))}}{\frac{p^{-pn}(1-p)^{-(1-p)n}}{\sqrt{2\pi p(1-p) n}}e^{O(1/p(1-p)n)}} p^{-pm} (1-p)^{-(1-p)m} \\ &=
  \sqrt{\frac{n}{n-m}} e^{O(1/p(1-p)(n-m) - O(1/p(1-p)n)} = 1 + O\left(\frac{m}{p(1-p)n}\right).
 \end{align*}
 Altogether, we deduce that for all $\ell$, $\rho(\ell) \leq 1 + O(m/p(1-p)n)$. Therefore the total variation distance is
 \[
  \sum_{\ell\colon \rho(\ell)>1} \binom{m}{\ell} (\nu'_k([\ell]) - \mu'_p([\ell])) =
  \sum_{\ell\colon \rho(\ell)>1} \binom{m}{\ell} \mu'_p([\ell]) (\rho(\ell) - 1) = O\left(\frac{m}{p(1-p)n}\right). \qedhere
 \] 
\end{proof}

As an immediate corollary, we obtain an invariance principle for bounded functions depending on $o(n)$ coordinates.

\begin{theorem} \label{thm:invariance-few-coordinates}
 Let $f$ be a function on $\{0,1\}^n$ depending on $o(p(1-p)n)$ coordinates and satisfying $\|f\|_\infty \leq 1$. As $p(1-p)n \to \infty$,
 \[
  |\EE_{\nu_{pn}}[f] - \EE_{\mu_p}[f]| = o(1).
 \]
\end{theorem}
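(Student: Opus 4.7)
The plan is to derive this theorem as a direct packaging of Lemma~\ref{lem:tv-few-coordinates}, which already contains all the technical content. Both $\mu_p$ and $\nu_{pn}$ are exchangeable measures: the former because it is a product measure with identical one-dimensional marginals, and the latter because it depends only on the Hamming weight. Hence, if $f$ depends on some set $S$ of coordinates with $|S| = m = o(p(1-p)n)$, then by relabeling via a permutation sending $S$ to $\{1,\ldots,m\}$ I may assume without loss of generality that $f$ depends only on the first $m$ coordinates. Both expectations $\EE_{\nu_{pn}}[f]$ and $\EE_{\mu_p}[f]$ are unchanged by this relabeling, because both measures are invariant under it.

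Once this reduction is in place, both $\EE_{\nu_{pn}}[f]$ and $\EE_{\mu_p}[f]$ are determined purely by the marginal distributions $\nu'_{pn}$ and $\mu'_p$ on the first $m$ coordinates, in the notation of Lemma~\ref{lem:tv-few-coordinates}. The elementary inequality
\[
 |\EE_{\nu'_{pn}}[f] - \EE_{\mu'_p}[f]| \leq 2\|f\|_\infty \cdot d_{TV}(\nu'_{pn}, \mu'_p)
\]
together with the hypothesis $\|f\|_\infty \leq 1$ reduces the theorem to showing that the total variation distance tends to $0$, which is exactly what Lemma~\ref{lem:tv-few-coordinates} provides under the assumption $m = o(p(1-p)n)$.

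Since the real work, namely the Stirling-based estimate that the density ratio $\nu'_{pn}/\mu'_p$ is uniformly $1 + O(m/(p(1-p)n))$, is already carried out in the proof of Lemma~\ref{lem:tv-few-coordinates}, there is no remaining obstacle in this final step; the proof of the theorem will just be a couple of lines invoking exchangeability and the standard variational characterization of total variation. If anything, the only point that warrants a brief comment is the exchangeability reduction, to make clear that the constant hidden in $o(\cdot)$ does not depend on \emph{which} $m$ coordinates $f$ happens to depend on.
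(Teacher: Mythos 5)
Your proposal is correct and follows essentially the same route as the paper: reduce to the projected measures $\nu'_{pn},\mu'_p$ on the relevant $m$ coordinates, bound the difference of expectations by $\|f\|_\infty$ times the total variation distance via the triangle inequality, and invoke Lemma~\ref{lem:tv-few-coordinates}. The only difference is that you make the exchangeability relabeling explicit, which the paper leaves implicit; this is a harmless (and slightly cleaner) addition.
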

\begin{proof}
 Suppose that $f$ depends on $m = o(p(1-p)n)$ coordinates. Applying the triangle inequality shows that
\[
 |\EE_{\nu_{pn}}[f] - \EE_{\mu_p}[f]| \leq \sum_{x \in \{0,1\}^m} |\nu'_{pn}(x) - \mu'_p(x)| |f(x)| \leq \sum_{x \in \{0,1\}^m} |\nu'_{pn}(x) - \mu'_p(x)|,
\]
 the last expression being exactly the total variation distance between $\nu'_{pn}$ and $\mu'_p$.
\end{proof}

\begin{corollary} \label{cor:invariance-few-coordinates}
 Let $f$ be a function on $\{0,1\}^n$ depending on $o(p(1-p)n)$ coordinates. As $p(1-p)n \to \infty$, the CDF distance between $\nu_{pn}(f)$ and $\mu_p(f)$ tends to zero, that is,
\[
 \sup_{t \in \RR} |\Pr_{\nu_{pn}}[f < t] - \Pr_{\mu_p}[f < t]| = o(1).
\]
\end{corollary}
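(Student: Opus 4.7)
The plan is to reduce the CDF distance question to Lemma~\ref{lem:tv-few-coordinates}, which already gives the required uniform control. Observe that for any threshold $t \in \RR$, the indicator function $g_t(x) = \charf{f(x) < t}$ depends on the same set of $m = o(p(1-p)n)$ coordinates as $f$ does. Hence $g_t$ is a bounded function on $\{0,1\}^m$ (after trivial projection), and both $\Pr_{\nu_{pn}}[f < t] = \EE_{\nu_{pn}}[g_t]$ and $\Pr_{\mu_p}[f < t] = \EE_{\mu_p}[g_t]$ are determined by the marginal distributions of $\nu_{pn}$ and $\mu_p$ on those $m$ coordinates, namely $\nu'_{pn}$ and $\mu'_p$.

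Concretely, I would write
\[
 |\Pr_{\nu_{pn}}[f < t] - \Pr_{\mu_p}[f < t]| = \bigl|\EE_{\nu'_{pn}}[g_t] - \EE_{\mu'_p}[g_t]\bigr| \leq \|g_t\|_\infty \cdot d_{\mathrm{TV}}(\nu'_{pn},\mu'_p),
\]
and apply Lemma~\ref{lem:tv-few-coordinates}, which bounds the total variation distance by $O(m/p(1-p)n) = o(1)$. Because $\|g_t\|_\infty \leq 1$ and the total variation bound is completely independent of $t$, taking the supremum over $t \in \RR$ yields the claim.

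The main, and really only, point requiring care is that the bound in Lemma~\ref{lem:tv-few-coordinates} is uniform across all choices of bounded test function on the $m$ relevant coordinates, which is automatic from the definition of total variation distance. There is no obstacle here analogous to the Lipschitz-to-CDF conversion that was problematic for Corollary~\ref{cor:levy} (and motivated Question~\ref{q:CDF}): the strength of Lemma~\ref{lem:tv-few-coordinates} is precisely that it controls \emph{all} bounded test functions on the few coordinates at once, so passing from bounded functions to indicators of arbitrary half-lines is free. Thus the whole corollary is a one-line deduction from Lemma~\ref{lem:tv-few-coordinates}, with no additional ideas required.
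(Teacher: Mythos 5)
Your proposal is correct and is essentially the paper's own argument: the paper likewise applies the bounded-function invariance (Theorem~\ref{thm:invariance-few-coordinates}, itself a direct consequence of Lemma~\ref{lem:tv-few-coordinates}) to the indicators $f_t = \charf{f < t}$, noting that the total variation bound is uniform in $t$. No further comment is needed.
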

\begin{proof}
 Consider the functions $f_t = \charf{f < t}$, which satisfy $\|f_t\|_\infty = 1$ for all $t \in \RR$.
\end{proof}

\section{Functions with low total influence} \label{sec:low-influence}

In this section we prove an invariance principle for Boolean functions whose total influence is $o(\sqrt{p(1-p)n})$. In other words, we show that Boolean functions with total influence $o(\sqrt{p(1-p)n})$ cannot distinguish the slice from the cube.

For us a Boolean function is a function $f\colon \{0,1\}^n \to \{0,1\}$, and total influence is defined as follows: $\Inf[f] = p(1-p)n\Pr[f(x) \neq f(y)]$, where $x \sim \mu_p$ and $y$ is a random neighbor of $x$, chosen uniformly from all $n$ neighbors (this differs from the normalization in~\cite{FKMW} by a factor of $p(1-p)$). As in Section~\ref{sec:few-coordinates}, the functions we consider in this section need not be harmonic multilinear polynomials.

The upper bound $\sqrt{p(1-p)n}$ is necessary. Indeed, consider a threshold function with threshold $pn$. This function has total influence $\Theta(\sqrt{p(1-p)n})$, is constant on the slice, and is roughly balanced on the cube. The condition that the function is Boolean is also necessary (total influence can be extended to arbitrary functions, see for example~\cite[Section 8.4]{ODonnell}). The function $(x_1+\cdots+x_n)/\sqrt{p(1-p)n}$ has unit variance and unit total influence on the cube, is constant on the slice, and has a non-trivial distribution on the cube. Invariance does hold, however, for non-Boolean harmonic multilinear polynomials of degree $o(\sqrt{p(1-p)n})$, as Theorem~\ref{thm:invariance} shows.

We will use part of the setup of Definition~\ref{def:setup}, namely the random variables $\rX(s) \in \binom{[n]}{s}$ for $0 \leq s \leq n$. We also define two new random variables: $\rs \sim \bin(n,p)$ and $\rt \sim \bin(n-1,p)$, where $\bin(n,p)$ is the binomial distribution with $n$ trials and success probability $p$. The basic observation is that $\rX(\rs)$ is distributed according to the measure $\mu_p$ on the cube $\{0,1\}^n$ whereas $\rX(np)$ is distributed uniformly on the slice $\binom{[n]}{pn}$, and so our goal would be to bound the probability $\Pr[f(\rX(\rs)) \neq f(\rX(np))]$.

The following lemma bounds $\Pr[f(\rX(\rs)) \neq f(\rX(np))]$ using a hybrid argument.

\begin{lemma} \label{lem:hybrid}
 For every Boolean function $f$,
\begin{multline*}
 \Pr[f(\rX(\rs)) \neq f(\rX(np))] \leq \\ \sum_{s=np}^{n-1} \Pr[\rs \geq s+1] \Pr[f(\rX(s)) \neq f(\rX(s+1))] + \sum_{s=0}^{np-1} \Pr[\rs \leq s] \Pr[f(\rX(s)) \neq f(\rX(s+1))].
\end{multline*}
\end{lemma}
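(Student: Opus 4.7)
The plan is a standard hybrid/telescoping argument that exploits the coupled chain $\rX(0) \subset \rX(1) \subset \cdots \subset \rX(n)$ from Definition~\ref{def:setup} together with the fact that the binomial random variable $\rs \sim \bin(n,p)$ is drawn independently of the random permutation $\rpi$ (and hence of the whole chain). Note that with $\rs$ sampled in this way, $\rX(\rs)$ is distributed according to $\mu_p$ while $\rX(np)$ is distributed according to $\nu_{pn}$, so under this coupling the left-hand side is the disagreement probability of $f$ on the two target distributions.

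The first step is a pointwise bound. Whenever $f(\rX(\rs)) \neq f(\rX(np))$, the sequence of values $f(\rX(s))$ for $s$ ranging from $\min(\rs,np)$ up to $\max(\rs,np)$ cannot be constant, so $f$ must switch value between some consecutive $\rX(s)$ and $\rX(s+1)$ lying in that range (the case $\rs=np$ contributes nothing). Accounting for which increments sit between $np$ and $\rs$ according to the sign of $\rs-np$ yields
\[
 \charf{f(\rX(\rs)) \neq f(\rX(np))} \leq \sum_{s=np}^{n-1} \charf{\rs \geq s+1}\,\charf{f(\rX(s)) \neq f(\rX(s+1))} + \sum_{s=0}^{np-1} \charf{\rs \leq s}\,\charf{f(\rX(s)) \neq f(\rX(s+1))},
\]
since an upward step at index $s \geq np$ lies between $np$ and $\rs$ exactly when $\rs \geq s+1$, while a downward step at index $s < np$ lies between $\rs$ and $np$ exactly when $\rs \leq s$.

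The second step is to take expectations. By the independence of $\rs$ from the chain, each product of indicators on the right-hand side factors as
\[
 \EE[\charf{\rs \geq s+1}\,\charf{f(\rX(s)) \neq f(\rX(s+1))}] = \Pr[\rs \geq s+1]\,\Pr[f(\rX(s)) \neq f(\rX(s+1))],
\]
and analogously for the downward sum. Linearity of expectation then produces exactly the inequality stated in the lemma. There is essentially no obstacle: the whole content is the pointwise hybrid bound, and the only thing to get right is the bookkeeping of which chain-increments lie strictly between $\min(\rs,np)$ and $\max(\rs,np)$ in each of the two cases $\rs>np$ and $\rs<np$.
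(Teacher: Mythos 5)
Your proof is correct and follows essentially the same route as the paper's: a pointwise telescoping bound showing that a disagreement between $f(\rX(\rs))$ and $f(\rX(np))$ forces a switch on some consecutive pair $\{s,s+1\}$ in the interval between $\rs$ and $np$, followed by factoring the expectation using the independence of $\rs$ from the chain. The bookkeeping of which indices $s$ satisfy the containment condition in each of the two cases matches the paper exactly.
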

\begin{proof}
 For an event $E$, let $\ib{E}$ denote the corresponding indicator. Let $\rI$ denote the interval whose endpoints are $\rs$ and $np$ (inclusive). Then
\begin{multline*}
 \ib{f(\rX(\rs)) \neq f(\rX(np))} \leq \sum_{\substack{s\colon\\ \{s,s+1\} \subseteq \rI}} \ib{f(\rX(s)) \neq f(\rX(s+1))} = \\ \sum_{s=0}^{n-1} \ib{\{s,s+1\} \subseteq \rI} \cdot \ib{f(\rX(s)) \neq f(\rX(s+1))}.
\end{multline*}
 Since $\rI$ is independent of $\rX$, taking expectations we get
\begin{align*}
 \Pr[f(\rX(\rs)) \neq f(\rX(np))] &\leq \sum_{s=0}^{n-1} \Pr[\{s,s+1\} \subseteq \rI] \Pr[f(\rX(s)) \neq f(\rX(s+1))] \\ &=
 \sum_{s=np}^{n-1} \Pr[\rs \geq s+1] \Pr[f(\rX(s)) \neq f(\rX(s+1))] \\ &+ \sum_{s=0}^{np-1} \Pr[\rs \leq s] \Pr[f(\rX(s)) \neq f(\rX(s+1))]. \qedhere
\end{align*}
\end{proof}

We can write the total influence in a very similar form.

\begin{lemma} \label{lem:total-influence}
 For every Boolean function $f$,
\[
 \Inf[f] = \sum_{s=0}^{n-1} p(1-p)n\Pr[\rt = s] \Pr[f(\rX(s)) \neq f(\rX(s+1))].
\]
\end{lemma}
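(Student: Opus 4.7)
The plan is to open up the definition $\Inf[f] = p(1-p)n \cdot \Pr_{x \sim \mu_p, i}[f(x) \neq f(x \oplus e_i)]$ and rewrite the right-hand side by classifying the edge $(x, x \oplus e_i)$ according to which slices it lies between. Conditioning on $|x| = k$, the pair $(x, x \oplus e_i)$ lives on an edge between $\binom{[n]}{k}$ and $\binom{[n]}{k+1}$ (when $x_i = 0$) or between $\binom{[n]}{k-1}$ and $\binom{[n]}{k}$ (when $x_i = 1$). Let $N_s$ denote the number of edges between levels $s$ and $s{+}1$ on which $f$ differs. Counting $(x,i)$ pairs with $x \in \binom{[n]}{k}$ and $f(x) \neq f(x \oplus e_i)$ gives $N_k + N_{k-1}$, and the denominator of the conditional probability is $n \binom{n}{k}$.

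Combining this with the binomial weight $\binom{n}{k} p^k (1-p)^{n-k}$ from $\mu_p$ and summing over $k$ causes a clean telescoping in which each $N_s$ gets weight $\tfrac{1}{n}(p^s(1-p)^{n-s} + p^{s+1}(1-p)^{n-s-1}) = \tfrac{1}{n} p^s (1-p)^{n-s-1}$. So I obtain
\[
\Pr_{x,i}[f(x) \neq f(x \oplus e_i)] = \frac{1}{n} \sum_{s=0}^{n-1} p^s(1-p)^{n-s-1} N_s.
\]

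Next I translate $N_s$ into the coupling language of Definition~\ref{def:setup}. In that coupling, the pair $(\rX(s), \rX(s+1))$ is uniformly distributed over the $(n-s)\binom{n}{s}$ covering pairs between $\binom{[n]}{s}$ and $\binom{[n]}{s+1}$, so $N_s = (n-s)\binom{n}{s} \Pr[f(\rX(s)) \neq f(\rX(s+1))]$. The only algebraic point is the identity $(n-s)\binom{n}{s} = n\binom{n-1}{s}$, which cancels the $1/n$ and recombines the weights into $\binom{n-1}{s} p^s (1-p)^{n-1-s} = \Pr[\rt = s]$. Multiplying through by $p(1-p)n$ yields the claimed formula.

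There is no real obstacle here; the only step requiring a bit of care is the double counting that produces $N_k + N_{k-1}$ from the fixed-level contribution, and checking that the two adjacent weights combine to $p^s(1-p)^{n-s-1}$. Everything else is bookkeeping with binomial coefficients.
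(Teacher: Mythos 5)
Your proof is correct and is essentially the paper's argument in expanded form: the paper simply asserts that a random edge $(x,y)$ oriented so that $|x|<|y|$ has the same distribution as $(\rX(\rt),\rX(\rt+1))$, and your level-by-level edge count with the weight combination $p^k(1-p)^{n-k}+p^{k+1}(1-p)^{n-k-1}=p^k(1-p)^{n-k-1}$ is exactly the verification of that claim. All the bookkeeping checks out.
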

\begin{proof}
 Let $(x,y)$ be a random edge of the cube oriented so that $|x| < |y|$, where $|x|$ is the Hamming weight of $x$. We can pick $(x,y)$ in the following way. Pick $z \sim \mu_p$ and a random coordinate $i$, and let $x = z|_{i=0}$ and $y = z|_{i=1}$ (that is, $x$ is obtained from $z$ by setting the $i$th coordinate to zero, and $y$ by setting it to one). The Hamming weight of $x$ thus has the same distribution of $\rt$. Therefore $(x,y) \sim (\rX(\rt),\rX(\rt+1))$, which directly implies the formula given above.
\end{proof}

In order to bound $\Pr[f(\rX(\rs)) \neq f(\rX(np))]$, it remains to analyze the ratio between the coefficients in the two lemmas.

\begin{lemma} \label{lem:low-influence}
 If $p(1-p)n \to \infty$ then for every Boolean function $f$,
\[
 \Pr[f(\rX(\rs)) \neq f(\rX(np))] = O\left(\frac{1}{\sqrt{p(1-p)n}}\Inf[f]\right).
\]
\end{lemma}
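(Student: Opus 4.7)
The plan is to compare the coefficients in Lemma~\ref{lem:hybrid} and Lemma~\ref{lem:total-influence}. Both express a quantity of interest as a nonnegative linear combination of the edge-disagreement probabilities $P_s := \Pr[f(\rX(s)) \neq f(\rX(s+1))]$: Lemma~\ref{lem:hybrid} bounds $\Pr[f(\rX(\rs)) \neq f(\rX(np))]$ by $\sum_s \alpha_s P_s$ with $\alpha_s = \Pr[\rs \geq s+1]$ for $s \geq np$ and $\alpha_s = \Pr[\rs \leq s]$ for $s < np$, while Lemma~\ref{lem:total-influence} writes $\Inf[f] = \sum_s \beta_s P_s$ with $\beta_s = p(1-p)n\,\Pr[\rt = s]$. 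The lemma therefore follows from the termwise comparison $\alpha_s \leq O(\sigma)\,\Pr[\rt=s]$ where $\sigma := \sqrt{p(1-p)n}$, and the involution $x_i \mapsto 1-x_i$---which interchanges $p$ with $1-p$, sends $\binom{[n]}{k}$ to $\binom{[n]}{n-k}$, and preserves both $\sigma$ and $\Inf[f]$---interchanges the two cases and lets me restrict attention to $s \geq np$.

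For $s \geq np$, the elementary identity $\binom{n}{s+1+j} = \frac{n}{s+1+j}\binom{n-1}{s+j}$ gives $\Pr[\rs = s+1+j] = \frac{np}{s+1+j}\Pr[\rt = s+j]$, and summing while using $np/(s+1+j) \leq 1$ yields
\[
 \Pr[\rs \geq s+1] = \sum_{j=0}^{n-1-s} \frac{np}{s+1+j}\Pr[\rt = s+j] \leq \Pr[\rt \geq s].
\]
Thus it suffices to prove the standard binomial tail estimate $\Pr[\rt \geq s] \leq O(\sigma)\,\Pr[\rt = s]$ for $s \geq (n-1)p$, and I will do this by splitting the tail at $s^\ast := s + \lceil\sigma\rceil$. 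The head $\sum_{j=0}^{\lceil\sigma\rceil-1}\Pr[\rt = s+j]$ is at most $\lceil\sigma\rceil\,\Pr[\rt=s]$ by log-concavity of the binomial (the forward ratios $\rho_t = (n-1-t)p/((t+1)(1-p))$ are decreasing in $t$, with $\rho_s \leq 1$ for $s \geq (n-1)p$). The tail $\Pr[\rt \geq s^\ast]$ is handled by the geometric bound $\Pr[\rt \geq s^\ast] \leq \Pr[\rt = s^\ast]/(1-\rho_{s^\ast})$ combined with the explicit computation $1-\rho_t = (t+1-np)/((t+1)(1-p))$: since $s^\ast + 1 - np \geq \sigma$, a short case split (whether $s^\ast + 1 \leq 3np$ or $s^\ast + 1 > 3np$) yields $1/(1-\rho_{s^\ast}) \leq O(\sigma)$ uniformly, and $\Pr[\rt = s^\ast] \leq \Pr[\rt = s]$ closes the argument.

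The main obstacle is the behavior of the binomial near its mean: there $\rho_s = 1 - \Theta(1/\sigma^2)$, so a single application of the geometric bound to the whole tail yields only $O(\sigma^2)\,\Pr[\rt=s]$, which is off by a factor of $\sigma$. The two-step head/tail split above is exactly what shaves this factor, and it is what delivers the $\sqrt{p(1-p)n}$ scaling asserted by the lemma---matching the threshold at which total influence ceases to distinguish the cube from the slice.
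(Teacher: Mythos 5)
Your proof is correct, and its skeleton is the same as the paper's: combine Lemma~\ref{lem:hybrid} with Lemma~\ref{lem:total-influence} and compare the two sets of coefficients termwise, reducing (after the identity $\Pr[\rs \geq s+1] \leq \Pr[\rt \geq s]$ and the $p \leftrightarrow 1-p$ symmetry) to the tail-to-point estimate $\Pr[\rt \geq s] \leq O(\sqrt{p(1-p)n})\,\Pr[\rt = s]$ for $s \geq np$. Where you genuinely diverge is in how that estimate is proved. The paper observes that the ratio $\Pr[\rt \geq s]/\Pr[\rt = s]$ is monotone in $s$ (again by log-concavity), so it suffices to evaluate it at the single worst point $s = np$; there the numerator is $1/2 \pm o(1)$ by the median characterization of the binomial, and the denominator is handled by the local limit theorem, yielding the sharp constant $\sqrt{\pi/2}$. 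You instead prove a uniform bound at every $s$ by splitting the tail at $s + \lceil \sigma \rceil$: the head contributes at most $\lceil\sigma\rceil$ copies of $\Pr[\rt=s]$ by monotonicity of the probabilities past the mode, and the remaining tail is geometric with ratio $\rho_{s^*}$ satisfying $1/(1-\rho_{s^*}) = O(\sigma)$ via the explicit formula $1-\rho_t = (t+1-np)/((t+1)(1-p))$. Your route is entirely self-contained and elementary---it needs neither the local limit theorem nor the median fact---at the cost of a worse implied constant; the paper's route is shorter but imports those two classical inputs. Both are valid, and your two-step split is exactly the right device for beating the naive geometric bound near the mean, where $\rho_s = 1 - \Theta(1/\sigma^2)$ would otherwise cost an extra factor of $\sigma$.
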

\begin{proof}
 In view of Lemma~\ref{lem:hybrid} and Lemma~\ref{lem:total-influence}, it remains to bound from above the following two ratios:
\begin{align*}
 \rho_1(s) &= \frac{\Pr[\rs \geq s+1]}{p(1-p)n\Pr[\rt = s]} \;\; (pn \leq s \leq n-1), \\
 \rho_2(s) &= \frac{\Pr[\rs \leq s]}{p(1-p)n\Pr[\rt = s]} \;\; (0 \leq s \leq pn-1).
\end{align*}
 It is not hard to check that
\[
 \rho_1(s) \leq \frac{\Pr[\rt \geq s]}{p(1-p)n\Pr[\rt = s]}, \quad
 \rho_2(s) \leq \frac{\Pr[\rt \leq s]}{p(1-p)n\Pr[\rt = s]}.
\]
 Log-concavity of the binomial coefficients shows that $\rho_1$ is decreasing while $\rho_2$ is increasing, and so in the given ranges, $\rho_1(s) \leq \rho_1(np)$ and $\rho_2(s) \leq \rho_2(np)$. It is known that the median of the binomial distribution $\bin(n-1,p)$ is one of $np-1,np$, and so the numerator of both $\rho_1(np)$ and $\rho_2(np)$ is $1/2 \pm o(1)$. The local limit theorem shows that the common denominator is $(1\pm o(1))\sqrt{p(1-p)n/2\pi}$. Therefore
\[
 \rho_1(s), \rho_2(s) \leq \sqrt{\frac{\pi}{2} \frac{1}{p(1-p)n}}. \qedhere
\]
\end{proof}

Our main result in this section immediately follows.

\begin{theorem} \label{thm:invariance-low-influence}
 If $p(1-p)n \to \infty$ and $f$ is a Boolean function satisfying $\Inf[f] = o(\sqrt{p(1-p)n})$ then $|\EE_{\mu_p}[f] - \EE_{\nu_{pn}}[f]| = o(1)$.
\end{theorem}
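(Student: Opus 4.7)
The statement follows almost immediately from the machinery already set up in the section, and the proof plan is essentially a one-liner built on top of Lemma~\ref{lem:low-influence}. The two key observations are: (i) in Definition~\ref{def:setup}'s coupling, $\rX(np)$ is distributed as $\nu_{pn}$ since $\rpi$ is a uniformly random permutation; and (ii) if $\rs \sim \bin(n,p)$ is drawn independently of $\rpi$, then $\rX(\rs)$ is distributed as $\mu_p$, because conditional on $\rs = s$ the vector $\rX(s)$ is uniform on $\binom{[n]}{s}$, and a $\mu_p$-sample is obtained by first drawing the Hamming weight from $\bin(n,p)$ and then a uniform slice member.

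Given this, the plan is: First, express the quantity of interest through the coupling:
\[
 |\EE_{\mu_p}[f] - \EE_{\nu_{pn}}[f]| = |\EE[f(\rX(\rs))] - \EE[f(\rX(np))]| \leq \EE[|f(\rX(\rs)) - f(\rX(np))|].
\]
Second, use the fact that $f$ takes values in $\{0,1\}$ to rewrite $|f(\rX(\rs)) - f(\rX(np))| = \ib{f(\rX(\rs)) \neq f(\rX(np))}$, so the expectation equals $\Pr[f(\rX(\rs)) \neq f(\rX(np))]$. Third, apply Lemma~\ref{lem:low-influence} directly to conclude
\[
 |\EE_{\mu_p}[f] - \EE_{\nu_{pn}}[f]| = O\!\left(\frac{\Inf[f]}{\sqrt{p(1-p)n}}\right) = o(1),
\]
using the hypothesis $\Inf[f] = o(\sqrt{p(1-p)n})$.

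There is essentially no obstacle here, since the hybrid argument (Lemma~\ref{lem:hybrid}) and the bound on the coefficient ratios (Lemma~\ref{lem:low-influence}) have already done the hard work of comparing the weights in the telescoping decomposition along the maximal chain to the weights appearing in $\Inf[f]$ (the latter comparison being the one place where log-concavity of binomial coefficients and the local limit theorem are invoked). The only thing left to verify carefully is that the distributional identification $\rX(\rs) \sim \mu_p$ is clean, which is a routine check from the definition of $\rX$ in terms of a uniformly random permutation. Accordingly, the proof of Theorem~\ref{thm:invariance-low-influence} reduces to a single line combining the coupling with Lemma~\ref{lem:low-influence}.
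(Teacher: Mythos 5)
Your proposal is correct and is exactly the argument the paper intends: the section already establishes that $\rX(\rs) \sim \mu_p$ and $\rX(np) \sim \nu_{pn}$, so for Boolean $f$ the difference of expectations is bounded by $\Pr[f(\rX(\rs)) \neq f(\rX(np))]$, and Lemma~\ref{lem:low-influence} finishes the job (the paper simply states that the theorem ``immediately follows''). No gaps.
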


As a corollary, we prove that balanced symmetric Boolean functions cannot be approximated by Boolean functions whose influence is $o(\sqrt{p(1-p)n})$. 

\begin{proposition} \label{prop:balanced-symmetric}
 There exists a constant $\delta > 0$ such that for all $p \in (0,1)$ and large enough $n$, if $f$ is a symmetric Boolean function such that $\frac{1}{3} \leq \EE_{\mu_p}[f] \leq \frac{2}{3}$ then $\Pr_{\mu_p}[f \neq g] > \delta$ for every Boolean function $g$ satisfying $\Inf[g] = o(\sqrt{p(1-p)n})$.
\end{proposition}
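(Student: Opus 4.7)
My plan is to exploit the probabilistic setup of Section~\ref{sec:low-influence}: let $\rpi \in S_n$ be uniformly random, let $\rX(0),\ldots,\rX(n)$ be the associated chain from Definition~\ref{def:setup}, and let $\rs \sim \bin(n,p)$ be drawn independently of $\rpi$, so that $\rX(\rs) \sim \mu_p$ while $\rX(pn) \sim \nu_{pn}$. Because $f$ is symmetric, $f(\rX(s))$ depends only on~$s$; write it as $F(s)$ where $F\colon \{0,\ldots,n\}\to\{0,1\}$. The crucial structural observation is that $F(\rs)$ is a function of $\rs$ alone, whereas $g(\rX(pn))$ is a function of $\rpi$ alone, so these two random bits are \emph{independent}.

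Assume toward contradiction that $\Pr_{\mu_p}[f \neq g] = \delta$ is small. Applying Lemma~\ref{lem:low-influence} to the Boolean function $g$ yields
\[
 \Pr[g(\rX(\rs)) \neq g(\rX(pn))] = O\!\left(\Inf[g]/\sqrt{p(1-p)n}\right) = o(1).
\]
Since $\rX(\rs) \sim \mu_p$ we also have $\Pr[f(\rX(\rs)) \neq g(\rX(\rs))] = \delta$, and the triangle inequality for the indicator metric on $\{0,1\}$ gives
\[
 \Pr[F(\rs) \neq g(\rX(pn))] = \Pr[f(\rX(\rs)) \neq g(\rX(pn))] \leq \delta + o(1).
\]

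To finish, I evaluate the left-hand side using the noted independence. Set $\alpha := \Pr[F(\rs)=1] = \EE_{\mu_p}[f] \in [1/3,2/3]$ and $q := \Pr[g(\rX(pn))=1] \in [0,1]$. Then
\[
 \Pr[F(\rs) \neq g(\rX(pn))] = \alpha(1-q) + (1-\alpha)q,
\]
which is linear in $q$ and thus minimized at an endpoint, giving the lower bound $\min(\alpha,1-\alpha) \geq 1/3$. Combining with the previous display forces $\delta \geq 1/3 - o(1)$, so taking $\delta = 1/4$ (say) works for all sufficiently large $n$, uniformly in~$p$.

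I do not foresee a serious obstacle. The only point requiring care is the independence of $\rs$ and $\rpi$, which is the natural reading of the Section~\ref{sec:low-influence} setup and is implicitly used in the proofs of Lemma~\ref{lem:hybrid} and Lemma~\ref{lem:total-influence}. The conceptual content is that symmetry of $f$ decouples the two sources of randomness: $f$ only ``sees'' the random weight~$\rs$, while $g$ evaluated on the middle slice only ``sees'' the random permutation~$\rpi$. This turns the low-influence invariance statement for $g$ into an independence-based disagreement lower bound forced by the balance condition on~$f$.
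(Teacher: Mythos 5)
Your proof is correct, and it takes a genuinely different route from the paper's. The paper argues by contradiction via expectations: it first uses the bound $O(|p-q|\sqrt{n/p(1-p)})$ on the total variation distance between $\mu_p$ and $\mu_q$ to find a nearby ``good'' slice $\binom{[n]}{qn}$ on which $f$ and $g$ agree except with probability $O(\delta)$; since $f$ is constant there, $\EE_{\nu_{qn}}[g]$ is near $0$ or $1$, and Theorem~\ref{thm:invariance-low-influence} transfers this to $\EE_{\mu_q}[g]$, hence (via the TV bound again) to $\EE_{\mu_p}[f]$, contradicting $\frac13 \le \EE_{\mu_p}[f] \le \frac23$. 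You instead stay inside the coupling of Section~\ref{sec:low-influence} and use the stronger \emph{pointwise} statement of Lemma~\ref{lem:low-influence} (that $g(\rX(\rs))$ and $g(\rX(pn))$ rarely differ), combined with the observation that symmetry makes $f(\rX(\rs))=F(\rs)$ a function of $\rs$ alone while $g(\rX(pn))$ is a function of $\rpi$ alone, so the two bits are independent and must disagree with probability at least $\min(\alpha,1-\alpha)\ge\frac13$. This buys you a cleaner argument (no detour through nearby product measures or slice averaging) and an essentially optimal explicit constant, $\delta$ arbitrarily close to $\frac13$ (sharp, as $g\equiv 0$ shows when $\EE_{\mu_p}[f]=\frac13$), whereas the paper's argument only yields an unspecified small $\delta$. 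The only technicality to flag is that $pn$ need not be an integer; replacing it by $\lfloor pn\rfloor$ throughout changes nothing (the paper sidesteps this by its choice of a good $q$), and as in the paper the asymptotics require $p(1-p)n\to\infty$, which is implicit in the hypothesis $\Inf[g]=o(\sqrt{p(1-p)n})$.
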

Our result can be considered a generalization of two recent results (with much weaker bounds):
\begin{itemize} 
\item O'Donnell and Wimmer~\cite{ODonnellWimmer} proved much tighter results (in terms of $\delta$) in the special case of the majority function.
\item Tal~\cite{Tal} proved tighter bounds (in terms of $\delta$) for general symmetric functions assuming $g$ is an $\mathsf{AC}^0$ circuit whose influence sum is $o(\sqrt{p(1-p)n})$.
\end{itemize} 

\begin{proof}
 It is known that the total variation distance between $\mu_p$ and $\mu_q$ in the regime $|p-q| = o(1)$ is $O(|p-q| \sqrt{\frac{n}{p(1-p)}})$. Choose $C > 0$ so that $|p-q| \leq C\sqrt{p(1-p)/n}$ implies that the variation distance between $\mu_p$ and $\mu_q$ is at most $1/4$. Call a $q$ satisfying this bound \emph{good}.

 Let $g$ be a Boolean function satisfying $\Inf[g] = o(\sqrt{p(1-p)n})$.
 Suppose that $\Pr_{\mu_p}[f \neq g] \leq \delta$. If $x \sim \mu_p$ then $|x|/n$ is good with constant probability, and so there exists a slice $\binom{[n]}{qn}$ with $q$ good such that $\Pr_{\nu_{qn}}[f \neq g] = O(\delta)$.

 Theorem~\ref{thm:invariance-low-influence} implies that $|\EE_{\mu_q}[g] - \EE_{\nu_{qn}}[g]| = o(1)$. On the one hand, $|\EE_{\nu_{qn}}[g] - \EE_{\nu_{qn}}[f]| = O(\delta)$, and so the symmetry of $f$ implies that either $\EE_{\nu_{qn}}[g] = O(\delta)$ or $\EE_{\nu_{qn}}[g] = 1-O(\delta)$. On the other hand,
$|\EE_{\mu_q}[g] - \EE_{\mu_p}[f]| \leq |\EE_{\mu_q}[g] - \EE_{\mu_p}[g]| + |\EE_{\mu_p}[g] - \EE_{\mu_p}[f]| \leq 1/4 + \delta$. We conclude that either $\EE_{\mu_p}[f] \leq 1/4 + O(\delta) + o(1)$ or $\EE_{\mu_p}[f] \geq 3/4 - O(\delta) - o(1)$. For an appropriate choice of $\delta$, this contradicts the assumption $1/3 \leq \EE_{\mu_p}[f] \leq 2/3$ for large enough $n$.
\end{proof}

\section{Decomposing the slice} \label{sec:decomposition}

In this section we describe two ways of decomposing the space of functions over the slice. One decomposition arises naturally from the harmonic representation: $f = \sum_{d \leq n/2} f^{=d}$. The corresponding decomposition of the space of functions is into the subspaces of homogeneous harmonic multilinear polynomials of degree $d$, for $0 \leq d \leq n/2$. We call this decomposition the \emph{coarse decomposition}, and discuss it further in Section~\ref{sec:coarse-decomposition}. This decomposition is the underlying reason for results such as Theorem~\ref{thm:spherical}, Lemma~\ref{lem:transpositions}, and Lemma~\ref{lem:spherical-derivative}

The coarse decomposition can be refined into a distinguished basis of the space of functions over the slice, known as the \emph{Gelfand--Tsetlin basis}, which is described in Section~\ref{sec:gelfand-tsetlin}. This decomposition has been used by the first author~\cite{F} to simplify the proof of Friedgut's junta theorem for the slice, due to Wimmer~\cite{Wimmer}.

This section assumes basic knowledge of the representation theory of the symmetric group, such as the one provided by Chapter~2 and Chapter~8 of Diaconis~\cite{Diaconis}.

\subsection{Coarse decomposition} \label{sec:coarse-decomposition}

Every harmonic function $f$ can be decomposed as a sum of homogeneous parts:
\[
 f = \sum_{d \leq n/2} f^{=d}.
\]
This decomposition naturally arises from the representation theory of the slice. In this subsection we discuss this connection, and obtain a representation-theoretic proof of Theorem~\ref{thm:spherical} as a result.

So far we have viewed the slice as a subset of the Boolean cube. However, it is also possible to view the slice $\binom{[n]}{k}$ as the set of cosets of $S_k \times S_{n-k}$ inside $S_n$. In other words, we identify a subset $A \in \binom{[n]}{k}$ with the set of permutations $\pi$ such that $\pi(\{1,\ldots,k\}) = A$. From this point of view it is natural to consider the action of $S_n$ on $\binom{[n]}{k}$ by permutation of the coordinates. The resulting module $\RR[\binom{[n]}{k}]$ is isomorphic to the permutation module $M^{(n-k,k)}$ (assuming $k \leq n/2$) whose decomposition into irreducible modules (Specht modules) can easily be computed to be
\[
 \RR\left[\binom{[n]}{k}\right] \approx M^{(n-k,k)} \approx S^{(n)} \oplus S^{(n-1,1)} \oplus \cdots \oplus S^{(n-k,k)}.
\]
Since each irreducible representation appears once, $(S_n,S_k \times S_{n-k})$ forms a \emph{Gelfand pair}. Classical representation theory of the symmetric group (for example,~\cite[\S 2.9--2.10]{Sagan}) allows us to identify $S^{(n-d,d)}$ with the space of all homogeneous harmonic multilinear polynomials of degree $d$ over the slice.

Consider now the middle slice $\binom{[n]}{\lfloor n/2 \rfloor}$. We can uniquely identify every harmonic multilinear polynomial on $x_1,\ldots,x_n$ with a function over the middle slice. If $f,g$ are harmonic multilinear polynomials and $\alpha$ is an exchangeable measure then $\EE_\alpha[fg] = f^TAg$, where $A$ commutes with the action of $S_n$ (since $\alpha$ is exchangeable). Since $A$ commutes with the action, Schur's lemma implies that it acts on each irreducible $S^{(n-d,d)}$ by scalar multiplication (since each irreducible representation appears only once). This implies that for some constants $\lambda_0,\ldots,\lambda_{\lfloor n/2 \rfloor}$ depending only on $\alpha$,
\[
 f^TAg = \sum_{d \leq n/2} \lambda_d \langle f^{=d}, g^{=d} \rangle,
\]
where $f^{=d}$ is the component of $f$ in $S^{(n-d,d)}$ (defined uniquely since there is only one copy of this irreducible representation) and the inner product is given by $\langle x,y \rangle = x^Ty$; there are no mixed summands $\langle f^{=d}, g^{=e} \rangle$ since the decomposition into irreducible representations is orthogonal. Theorem~\ref{thm:spherical} follows by taking, for each $d \leq n/2$,
\[
 C_{f,g} = \frac{\langle f^{=d}, g^{=d} \rangle}{\| (x_1-x_2) \ldots (x_{2d-1}-x_{2d}) \|^2}.
\]

A similar argument explains why the decomposition into homogeneous parts appears in some of our other results such as Lemma~\ref{lem:transpositions} and Lemma~\ref{lem:spherical-derivative}. More generally, if $A$ is any operator on $\RR[\binom{[n]}{k}]$ which commutes with the action of $S_n$ then
\[
 A f = \sum_{d \leq \min(k,n-k)} \lambda_d f^{=d},
\]
where $\lambda_0,\ldots,\lambda_d$ are the eigenvalues of $A$.

\medskip

The decomposition into homogeneous parts appears in several other guises:

\begin{description}
 \item[Association schemes] The operators on $\RR[\binom{[n]}{k}]$ which commute with the action of $S_n$ form a commutative algebra knows as the \emph{Bose--Mesner algebra} of the \emph{Johnson association scheme}. This algebra has two important bases: the spatial basis corresponds to the characterization of the algebra as the set of matrices $A$ in which $A(S,T)$ depends only on $|S \cap T|$, and the spectral basis corresponds to the decomposition into homogeneous parts. For more on association schemes, see Bannai and Ito~\cite{BannaiIto}.
 \item[Regular semilattices and spherical posets] For each $k$, we can consider the subset of the $n$-dimensional Boolean cube consisting of all sets of cardinality at most $k$ as a lattice under the set inclusion order. The truncated Boolean lattice is an example of a \emph{regular semilattice} and, when $k \leq n/2$, of a \emph{spherical lattice}. Delsarte~\cite{Delsarte} and Stanton~\cite{Stanton} gave general constructions which recover the decomposition of $\RR[\binom{[n]}{k}]$ into its irreducible components. For more, see Ceccherini-Silberstein et al.~\cite[\S8.2--8.4]{CSST}.
 \item[Differential posets] We can also consider the entire Boolean cube as a lattice. The Boolean lattice is an example of a \emph{$\mu$-differential poset}. Stanley~\cite{Stanley90} gave a general construction, which in the special case of the Boolean lattice decomposes it into the various slices, and each slice into the irreducible components. This decomposition corresponds to the \emph{Terwilliger algebra} of the \emph{binary Hamming association scheme}. For more on this area, see Engel~\cite[\S6.2]{Engel}.
\end{description}

\subsection{Gelfand--Tsetlin basis} \label{sec:gelfand-tsetlin}

So far we have discussed the decomposition of each function on the slice into its homogeneous parts. This decomposition has a similar counterpart for functions on the Boolean cube, the so-called \emph{levels} of the Fourier expansion. For functions on the Boolean cube, the decomposition can be refined to the \emph{Fourier basis}. The Fourier basis is uniquely defined (up to scalar multiplication) as the set of characters of the group $\mathbb{Z}_2^n$. In the case of the slice, we can obtain such a canonical basis by specifying an order on the coordinates, say the standard order $x_1,\ldots,x_n$.

Recall the decomposition into irreducible $S_n$-modules of $\RR[\binom{[n]}{k}]$ discussed in the preceding subsection:
\[
 \RR\left[\binom{[n]}{k}\right] \approx M^{(n-k,k)} \approx S^{(n)} \oplus \cdots \oplus S^{(n-k,k)}.
\]
If we consider $S^{(n-d,d)}$ (the space of homogeneous harmonic multilinear polynomials of degree $d$) as a module over $S_{n-1}$ rather than over $S_n$, then it is no longer irreducible. Instead, it decomposes into two irreducibles: $S^{(n-1-d,d)} \oplus S^{(n-d,d-1)}$. If we then consider each of these irreducibles as modules over $S_{n-2}$ they decompose even further, and continuing in this way, eventually we get a decomposition into irreducibles of dimension~$1$. The corresponding basis (defined up to scalar multiplication) is known as the \emph{Gelfand--Tsetlin basis}.

Srinivasan~\cite{Srinivasan} described an inductive construction of this basis using ideas from Sperner theory, which are related to the theory of differential posets mentioned above. Ambainis et al.~\cite{ABRW} gave an inductive construction which closely follows the definition, in the context of quantum computation. Filmus~\cite{F} gave an explicit formula for the basis elements, which we now describe.

A sequence $(a_1,b_1),\ldots,(a_d,b_d)$ is \emph{admissible} if:
\begin{enumerate}
 \item All $2d$ numbers are distinct and belong to $[n]$.
 \item $b_1 < \cdots < b_d$.
 \item $a_i < b_i$ for all $i$.
\end{enumerate}
A set $B = \{b_1,\ldots,b_d\}$ (where $b_1 < \cdots < b_d$) is \emph{admissible} if it can be completed to an admissible sequence (equivalently, it is the bottom row of a standard Young tableau of shape $(n-d,d)$).
There are $\binom{n}{d} - \binom{n}{d-1}$ admissible sets of size $d$. For each admissible set $B$, we define a basis element
\[
 \chi_B = \sum_{\substack{a_1,\ldots,a_d\colon \\ (a_1,b_1),\ldots,(a_d,b_d) \text{ admissible}}} (x_{a_1} - x_{b_1}) \cdots (x_{a_d} - x_{b_d}).
\]
In total there are $\binom{n}{k}$ admissible sets of size at most $k$, and the corresponding basis elements constitute the Gelfand--Tsetlin basis for $\binom{[n]}{k}$. Furthermore, if $\alpha$ is any exchangeable measure then
\[
 \|\chi_B\|_\alpha^2 = \binom{b_1}{2} \binom{b_2-2}{2} \cdots \binom{b_d-2(d-1)}{2} \|(x_1-x_2)\cdots(x_{2d-1}-x_{2d})\|_\alpha^2.
\]
For proofs, see Filmus~\cite{F}.

The Gelfand--Tseltlin basis can also be characterized (up to scalar multiplication) as the common eigenvectors of the Gelfand--Tsetlin algebra, which is generated by the Young--Jucys--Murphy elements
\[
 X_m = \sum_{i < m} (i\;m).
\]
Filmus~\cite[Lemma 24]{F} gives the following explicit formula:
\[
 \left[\sum_{\ell=1}^m X_\ell\right] \chi_B = \left[\binom{m}{2} - |B \cap [m]| (m + 1 - |B \cap [m]|)\right] \chi_B.
\]

Finally, this basis can be constructed in the same way as Young's orthogonal basis. For each $d$, consider all standard Young tableaux of shape $(n-d,d)$, arranged in lexicographic order of the second row. With each tableau having first row $a_1,\ldots,a_{n-d}$ and second row $b_1,\ldots,b_d$, associate the function $(x_{a_1}-x_{b_1})\cdots(x_{a_d}-x_{b_d})$. Running the Gram--Schmidt orthogonalization process on these functions yields the degree~$d$ part of the Gelfand--Tsetlin basis described above.

\section{Why harmonic functions?} \label{sec:harmonic-use}

Our invariance principle compares the distribution of harmonic multilinear polynomials on the slice and on the Boolean cube (with respect to an appropriate measure). This empirically validates the following informal claim:
\emph{
  The correct way to lift a function from the slice to the Boolean cube is through its unique representation as a bounded degree harmonic multilinear polynomial.
}

In the same way, the classical invariance principle of Mossel et al.~\cite{MOO} suggests that the correct way to lift a function from the Boolean cube to Gaussian space it through its unique representation as a multilinear polynomial. Alternatively, given a function on the Boolean cube, we lift it to a function on Gaussian space by linear interpolation in all axis-parallel directions.

In this section we discuss two other justifications for using harmonic multilinear polynomials, from commutative algebra and from representation theory. We briefly mention that James' intersecting kernel theorem~\cite{James} (see also~\cite[\S10.9]{CSST} and~\cite[\S6.2]{CSST2}) draws together both points of view, and generalizes them from the slice to the \emph{multislice}, in which the object of study is an arbitrary permutation module of the symmetric group. We hope to explore this connection in future work.

\subsection{Commutative algebra} \label{sec:harmonic-algebra}

Let us consider first the case of the Boolean cube $\{-1,1\}^n$. The classical invariance principle~\cite{MOO} compares the distribution of multilinear polynomials over the Boolean cube and over Gaussian space. Multilinear polynomials arise naturally when we consider $\{-1,1\}^n$ as a variety defined by the ideal $I = \langle x_1^2-1, \ldots, x_n^2-1 \rangle$. The coordinate ring over the variety consists of polynomials over $x_1,\ldots,x_n$ modulo the ideal $I$. It is immediately clear that every polynomial is equivalent to a multilinear polynomial modulo $I$, and a dimension computation shows that this representation is unique.

We can treat the case of the slice $\binom{[n]}{k}$ in a similar way. The slice is a variety defined by the ideal
\[
 I_k = \bigl\langle x_1^2 - x_1, \ldots, x_n^2 - x_n, \sum_{i=1}^n x_i - k \bigr\rangle.
\]
Theorem~\ref{thm:harmonic-representation} shows that the coordinate ring is isomorphic to the space of harmonic multilinear polynomials of degree at most $\min(k,n-k)$.
Intuitively speaking, the existence of $x_i^2 - x_i$ in the ideal allows us to reduce every polynomial to a multilinear polynomial; the existence of $\sum_{i=1}^n x_i - k$ corresponds to the harmonicity constraint; and the degree constraint follows from the fact that $x_{i_1} \ldots x_{i_{k+1}} = 0$ (when $k \leq n/2$).

We can formalize this intuition to give another proof of Theorem~\ref{thm:harmonic-representation}, which uses arguments due to Blekherman~\cite{Blekherman} quoted in Lee et al.~\cite{LPWY}. Similar arguments can be found in Engel~\cite[\S 6.2]{Engel}.
\begin{theorem}[Reformulation of Theorem~\ref{thm:harmonic-representation}] \label{thm:harmonic-representation-2}
 Let $0 \leq k \leq n$. For every polynomial $P$ over $x_1,\ldots,x_n$ there exists a unique harmonic multilinear polynomial $Q$ of degree at most $\min(k,n-k)$ such that $P \equiv Q \pmod{I_k}$.
\end{theorem}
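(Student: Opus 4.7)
The plan is to deduce Theorem~\ref{thm:harmonic-representation-2} from Blekherman's theorem (Theorem~\ref{thm:blekherman}) together with a dimension count, following Blekherman's original framework. First, using the involution $x_i \mapsto 1 - x_i$, which preserves multilinearity and harmonicity (since $\Delta$ is only negated) and sends $I_k$ to $I_{n-k}$, we may assume $k \leq n/2$, so that $\min(k,n-k) = k$.

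The core technical step is a lemma asserting that every multilinear monomial $\prod_{i \in J} x_i$ with $|J| > k$ lies in $I_k$. Modulo $\langle x_i^2 - x_i \rangle$, direct computation using $x_j \cdot \prod_{i \in J} x_i \equiv \prod_{i \in J} x_i$ for $j \in J$ and $x_j \cdot \prod_{i \in J} x_i = \prod_{i \in J \cup \{j\}} x_i$ for $j \in J^c$ gives
\[
 \prod_{i \in J} x_i \cdot (S - k) \equiv (|J| - k) \prod_{i \in J} x_i + \sum_{j \in J^c} \prod_{i \in J \cup \{j\}} x_i.
\]
The left-hand side lies in $I_k$, so descending induction on $|J|$, starting from $|J| = n$ (where the sum over $J^c$ is empty), combined with division by $|J| - k \neq 0$ (valid in characteristic zero), forces the monomial into $I_k$.

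For existence, first reduce an arbitrary polynomial $P$ modulo $\langle x_i^2 - x_i \rangle$ to a multilinear polynomial, then discard all monomials of degree $> k$ via the lemma, obtaining a multilinear polynomial $\bar{P}$ of degree at most $k \leq n/2$. Apply Blekherman's theorem to write $\bar{P} \equiv \sum_{i=0}^k \fc{\bar{P}}{i} S^i \pmod{\langle x_j^2 - x_j \rangle}$ with each $\fc{\bar{P}}{i}$ harmonic multilinear of degree at most $k - i$; substituting $S \equiv k \pmod{I_k}$ yields the required harmonic multilinear representative $Q = \sum_{i=0}^k k^i \fc{\bar{P}}{i}$ of degree at most $k$.

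For uniqueness, I compare dimensions. Lagrange interpolation over the $\binom{n}{k}$ points of the slice shows that evaluation $\FF[x]/I_k \to \FF^{\binom{n}{k}}$ is surjective, so $\dim \FF[x]/I_k \geq \binom{n}{k}$. The uniqueness clause of Blekherman's theorem gives a direct sum decomposition of the space of multilinear polynomials of degree at most $d$ as $\bigoplus_{i=0}^d S^i \cdot H_{\leq d-i}$ for $d \leq n/2$; extracting dimensions telescopically yields $\dim H_{\leq k} = \binom{n}{k}$. The existence argument provides a surjection $H_{\leq k} \twoheadrightarrow \FF[x]/I_k$, and matching dimensions from both sides forces this to be an isomorphism, in particular injective, which is exactly uniqueness. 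The main obstacle is the reduction lemma: the identity raises rather than lowers degree, so the induction must run downward from $|J| = n$, and the step depends on invertibility of $|J| - k$, hence on characteristic zero; everything else is bookkeeping that leverages Blekherman's theorem as a black box.
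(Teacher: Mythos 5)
Your proof is correct, and it takes a genuinely different route from the paper's. The paper proves existence directly, by peeling off harmonic components degree by degree via the decomposition $P_d = \ker \Delta \oplus \im \Delta^T$ together with the computation $\Delta^T x_S \equiv (k-d+1)x_S \pmod{I_k}$, and it obtains uniqueness from the dimension count $\dim H_d = \binom{n}{d} - \binom{n}{d-1}$, which it reads off from the full rank of $\Delta^T\colon P_{d-1} \to P_d$; Blekherman's theorem is never invoked. You instead outsource the peeling to Blekherman's theorem and then substitute $S \equiv k$, which is a perfectly good shortcut. One genuine improvement in your version: your descending-induction identity shows that every squarefree monomial of degree greater than $k$ lies in $I_k$ \emph{itself}, whereas the paper only argues that such monomials vanish on the slice and relegates the ideal-membership issue to a footnote appealing to $I_k$ being radical; your lemma makes that step self-contained. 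One caveat about circularity relative to this paper's internal development: the paper's own proof of Blekherman's theorem (Theorem~\ref{thm:blekherman-2}) derives the \emph{uniqueness} clause from $\dim H_{\leq e} = \binom{n}{e}$, quoting Theorem~\ref{thm:harmonic-representation} --- i.e., the statement you are proving --- and you use exactly that uniqueness clause to extract $\dim H_{\leq k} = \binom{n}{k}$. This is harmless if Blekherman's theorem is taken as the external result it is, and in any case the fix is immediate: $\dim H_{\leq k} = \binom{n}{k}$ follows from the injectivity of $\Delta^T \colon P_{d-1} \to P_d$ for $d \leq n/2$ alone, with no reference to the slice or to Blekherman. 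With that one adjustment the argument is complete.
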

\begin{proof}
 We start by showing that every polynomial is equivalent modulo $I_k$ to a harmonic multilinear polynomial of degree at most $k$. We will use the following definitions:
\begin{itemize}
 \item Given $S \subseteq [n]$, $x_S = \prod_{i \in S} x_i$.
 \item We denote by $P_d$ the linear space of all homogeneous multilinear polynomials of degree $d$, and by $H_d \subset P_d$ its subspace consisting of harmonic polynomials.
\end{itemize}

 The first step is replacing $x_i^r$ (for $r > 1$) by $x_i$ using the equation $x_i^2 = x_i$, which holds modulo $I_k$. We are left with a multilinear polynomial.

 The second step is removing all monomials of degree larger than $\min(k,n-k)$. If $k \leq n/2$, then monomials of degree larger than $k$ always evaluate to zero on the slice\footnote{Formally speaking, this step requires us to work with $\sqrt{I_k}$. However, the ideal $I_k$ is radical, see for example~\cite[Lemma 6.1]{RW}.}. If $k \geq n/2$, then we get a similar simplification by replacing $x_i$ with $1-(1-x_i)$, and using the fact that the product of more than $n-k$ different factors $1-x_i$ always evaluates to zero on the slice. We are left with a multilinear polynomial of degree at most $\min(k,n-k)$.
 
 Definition~\ref{def:harmonic} defines a differential operator $\Delta = \sum_{i=1}^n \frac{\partial}{\partial x_i}$. When applied to multilinear polynomials, we can also think of it as a formal operator which maps $x_S$ to $\sum_{i \in S} x_{S \setminus \{i\}}$, and extends linearly to multilinear polynomials (this is the Lefschetz lowering operator in the Boolean lattice). 
 When restricted to $P_d$, the range of $\Delta$ is $P_{d-1}$. Linear algebra shows that $P_d$ decomposes into $\ker \Delta$ and $\im \Delta^T$, where $\Delta^T$ (the Lefschetz raising operator) is the operator on $P_{d-1}$ which maps $x_S$ to $\sum_{i \in [n] \setminus S} x_{S \cup \{i\}}$. Modulo $I_k$,
 \[
  \Delta^T x_S = \sum_{i \in [n] \setminus S} x_{S \cup \{i\}} = x_S \sum_{i \in [n] \setminus S} x_i = (k-|S|) x_S = (k-d+1) x_S,
 \]
 the third equality following logically since $x_S \neq 0$ only if $x_i = 1$ for all $i \in S$, and formally using $x_S x_i = x_S$ for all $i \in S$.
 Thus $\im \Delta^T$ can be identified with $P_{d-1}$. We conclude that any polynomial in $P_d$ can be written (modulo $I_k$) as the sum of a polynomial in $H_d$ (belonging to $\ker \Delta$) and a polynomial in $P_{d-1}$ (belonging to $\im \Delta^T$).
 
 Applying this construction recursively, we see that every homogeneous multilinear polynomial is equivalent modulo $I_k$ to a harmonic multilinear polynomial, and moreover this operation doesn't increase the degree. This completes the proof that every polynomial is equivalent modulo $I_k$ to a harmonic multilinear polynomial of degree at most $\min(k,n-k)$.
 
 It remains to show that this representation is unique. We first show that $\Delta^T$, considered as an operator from $P_{d-1}$ to $P_d$ for $d \leq n/2$, has full rank $\dim P_{d-1}$. Notice that for $S \subseteq [n]$,
\begin{align*}
 \Delta \Delta^T x_S &= 	\Delta \sum_{i \in [n] \setminus S} x_{S \cup \{i\}} = (n-|S|) x_S + \sum_{i \in [n] \setminus S} \sum_{j \in S} x_{S \cup \{i\} \setminus \{j\}}, \\
 \Delta^T \Delta x_S &= \Delta^T \sum_{j \in S} x_{S \setminus \{j\}} = |S| x_S + \sum_{i \in [n] \setminus S} \sum_{j \in S} x_{S \cup \{i\} \setminus \{j\}}.
\end{align*}
 We conclude that on $P_{d-1}$, the following holds:
\[
 \Delta \Delta^T = \Delta^T \Delta + (2n-d+1) I,
\]
 where $I$ is the identity operator. Note that $\Delta^T \Delta$ is positive semidefinite. Since $d \leq n/2$, we see that $2n-d+1 > 0$, and so $\Delta \Delta^T$ is positive definite, and in particular regular. It follows that $\Delta^T$ is also regular, and so has full rank.
 
 The decomposition $\ker \Delta \oplus \im \Delta^T$ of $P_d$ shows that when $d > 0$, $H_d$ has dimension $\dim \ker \Delta = \binom{n}{d} - \dim \im \Delta^T = \binom{n}{d} - \binom{n}{d-1}$. When $d=0$, the dimension is clearly~$1$. It follows that the space of all harmonic multilinear polynomials of degree at most $\min(k,n-k)$ has dimension $1 + \sum_{d=1}^{\min(k,n-k)} \left[\binom{n}{d} - \binom{n}{d-1}\right] = \binom{n}{k}$, matching the dimension of the space of functions on the slice.
\end{proof}

Using very similar ideas, we can prove Blekherman's theorem stated in Section~\ref{sec:blekherman}.

\begin{theorem}[Reformulation of Theorem~\ref{thm:blekherman}] \label{thm:blekherman-2}
 Every multilinear polynomial $f$ over $x_1,\ldots,x_n$ of degree at most $d \leq n/2$ can be represented uniquely in the form
\[
 f(x_1,\ldots,x_n) \equiv \sum_{i=0}^d f_i(x_1,\ldots,x_n) S^i \pmod{I},
\]
 where $I = \langle x_1^2-x_1,\ldots,x_n^2-x_n \rangle$, $S = x_1+\cdots+x_n$, and $f_i$ is a harmonic multilinear polynomial of degree at most $d-i$.
\end{theorem}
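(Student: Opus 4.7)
The plan is to mirror the commutative-algebra argument used for Theorem~\ref{thm:harmonic-representation-2}: prove existence by induction on $d$, and then derive uniqueness from a dimension count. The key ingredient is a formal identity tying multiplication by $S$ to the Lefschetz raising operator $\Delta^T$ of Theorem~\ref{thm:harmonic-representation-2}. For any $T \subseteq [n]$ with $|T| = d-1$ we have $x_T \cdot x_i \equiv x_T \pmod{I}$ when $i \in T$ (since $x_i^2 \equiv x_i$), while $x_T \cdot x_i = x_{T \cup \{i\}}$ when $i \notin T$; summing over $i$ and extending by linearity gives, for any homogeneous $g \in P_{d-1}$,
\[
 S \cdot g \;\equiv\; (d-1)\,g + \Delta^T g \pmod{I}, \qquad \text{equivalently} \qquad \Delta^T g \;\equiv\; S\,g - (d-1)\,g \pmod{I}.
\]

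For existence I would induct on $d$, with the trivial base case $d = 0$. In the inductive step, split $f = f^{=d} + f^{<d}$ and use the decomposition $P_d = H_d \oplus \im \Delta^T$ (established in the proof of Theorem~\ref{thm:harmonic-representation-2}) to write $f^{=d} = h_d + \Delta^T g$ with $h_d \in H_d$ and $g \in P_{d-1}$. The key identity then gives
\[
 f \;\equiv\; h_d + S \cdot g + \bigl[\,f^{<d} - (d-1)\,g\,\bigr] \pmod{I}.
\]
Both $g$ and $f^{<d} - (d-1)g$ are multilinear of degree at most $d-1$, so by the inductive hypothesis they admit Blekherman representations $g \equiv \sum_{j=0}^{d-1} g_j S^j$ and $f^{<d} - (d-1)g \equiv \sum_{j=0}^{d-1} r_j S^j$ with $g_j,r_j$ harmonic of degree at most $(d-1)-j$. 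Substituting, re-indexing the shifted sum $S \sum_j g_j S^j = \sum_{i=1}^{d} g_{i-1} S^i$, and collecting coefficients yields the required $f \equiv \sum_{i=0}^d f_i S^i \pmod{I}$, with $f_0 := h_d + r_0$, $f_i := g_{i-1} + r_i$ for $1 \leq i \leq d-1$, and $f_d := g_{d-1}$; each $f_i$ is harmonic (by Lemma~\ref{lem:harmonic-algebra}) and has degree at most $d-i$ by construction.

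For uniqueness I would use a dimension count. By Corollary~\ref{cor:dimension}(a), valid since $d \leq n/2$, the space of tuples $(f_0,\ldots,f_d)$ with $f_i$ harmonic multilinear of degree at most $d-i$ has dimension $\sum_{i=0}^d \binom{n}{d-i} = \sum_{j=0}^d \binom{n}{j}$, which matches the dimension of the target space of multilinear polynomials in $x_1,\ldots,x_n$ of degree at most $d$. Since each $f_i S^i$ has formal degree $\leq d$ and reducing modulo $I$ never raises the degree, the map $(f_0,\ldots,f_d) \mapsto \sum_i f_i S^i \pmod{I}$ lands in that target; existence shows it is surjective, and equality of dimensions then forces it to be bijective. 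The main obstacle I foresee is the bookkeeping in the inductive step, since multiplying a harmonic polynomial by $S$ destroys harmonicity and introduces a lower-order correction term $-(d-1)g$, so contributions from several degree levels must be tracked simultaneously when re-indexing. I prefer the dimension-counting route to uniqueness over a more direct ``evaluate on slices'' argument: the latter only yields enough Vandermonde constraints when $n \geq 3d$, because the harmonic representation on $\binom{[n]}{k}$ is unique only for $d \leq k \leq n - d$, whereas the statement assumes just $d \leq n/2$.
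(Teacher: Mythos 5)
Your proposal is correct and follows essentially the same route as the paper's proof: the same identity relating multiplication by $S$ to the Lefschetz raising operator $\Delta^T$ modulo $I$, the same decomposition $P_d = H_d \oplus \im \Delta^T$ for the inductive existence step, and the same dimension count $\sum_{i=0}^d \dim H_{\leq d-i} = \sum_{j=0}^d \binom{n}{j} = \dim P_{\leq d}$ for uniqueness. The only (cosmetic) difference is that you run one induction on the degree of the full polynomial, carrying the lower-order part $f^{<d}$ along, whereas the paper first treats homogeneous polynomials and then sums the representations of $f^{=0},\ldots,f^{=d}$.
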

\begin{proof}
 Throughout the proof we use the definitions of $x_S,P_d,H_d,\Delta,\Delta^T$ in the proof of Theorem~\ref{thm:harmonic-representation-2}. We also define $P_{\leq d}$ to be the space of all multilinear polynomials of degree at most $d$, and $H_{\leq d}$ to be the space of all harmonic multilinear polynomials of degree at most $d$.
 
 We start by considering the case in which $f$ is a homogeneous multilinear polynomial. We prove by induction on $e$ that if $\deg f = e$ then for some $\fc{f}{0},\ldots,\fc{f}{e}$,
\[
 f \equiv \sum_{i=0}^e \fc{f}{i} \xsum^i \pmod{I}, \text{ where } \fc{f}{i} \in H_{\leq e-i}.
\]
 When $e = 0$ this is clear, since $f$ is already harmonic, so suppose that $e > 0$.
 Recall that $\Delta$ maps $P_e$ to $P_{e-1}$. The conjugate operator $\Delta^T$ maps $x_A \in P_{e-1}$ to
\[
 \sum_{i \notin A} x_{A \cup \{i\}} = x_A \cdot \sum_{i \notin A} x_i = x_A \cdot \left(\xsum - \sum_{i \in A} x_i\right) \equiv (\xsum - (e-1)) x_A \pmod{I}.
\]
 Since $P_e = \ker \Delta \oplus \im \Delta^T = H_d \oplus \im \Delta^T$, it follows that $f \equiv h + (\xsum - (e-1)) g \pmod{I}$ for some $h \in H_e$ and $g \in P_{e-1}$. By the induction hypothesis, for some $\fc{g}{0},\ldots,\fc{g}{e-1}$,
\[
 g \equiv \sum_{i=0}^{e-1} \fc{g}{i} \xsum^i \pmod{I}, \text{ where } \fc{g}{i} \in H_{\leq e-1-i}.
\]
 Substituting this in the equation for $f$, we obtain
\[
 f \equiv h + \sum_{i=0}^{e-1} (\xsum - (e-1)) \fc{g}{i} \xsum^i \equiv [h - (e-1) \fc{g}{0}] + \sum_{i=1}^e [\fc{g}{i-1} - (e-1) \fc{g}{i}] \xsum^i \pmod{I},
\]
 where $g_e = 0$.
 Since $\deg [h - (e-1)\fc{g}{0}] \leq \max(e,e-1) = e$ and $\deg [\fc{g}{i-1} - (e-1)\fc{g}{i}] \leq \max((e-1)-(i-1),e-1-i) = e-i$, we have obtained the required representation.
 
 When $f$ is an arbitrary multilinear polynomial of degree $d$, we get the required representation by summing the representations of $f^{=0},\ldots,f^{=d}$.
 
 Finally, we prove that the representation is unique by comparing dimensions:
\[
 \dim P_{\leq d} = \sum_{i=0}^d \binom{n}{i} = \sum_{i=0}^d \dim H_{\leq i} = \sum_{i=0}^d \dim H_{\leq d-i},
\]
 since $H_{\leq e}$ has dimension $\binom{n}{e}$ for $e \leq n/2$ by Theorem~\ref{thm:harmonic-representation}.
\end{proof}

This implies a surprising corollary for \emph{harmonic projections}.

\begin{definition} \label{def:harmonic-projection}
 Let $f$ be a function over $x_1,\ldots,x_n$. Its \emph{harmonic projection} on the slice $\binom{[n]}{k}$ is the unique harmonic multilinear polynomial of degree at most $\min(k,n-k)$ which agrees with $f$ on the slice.
\end{definition}

\begin{corollary} \label{cor:harmonic-projection}
 Let $f$ be a multilinear polynomial over $x_1,\ldots,x_n$ of degree $d$, and let $F_k$ be its harmonic projection on the slice $\binom{[n]}{k}$, where $d \leq k \leq n-d$. For $k$ in this range, $F_k^{=d}$ does not depend on $k$.
\end{corollary}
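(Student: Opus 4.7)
The plan is to apply Blekherman's theorem (Theorem~\ref{thm:blekherman-2}, already proved in the excerpt) and then track which Blekherman coefficient can contribute to the top homogeneous part.

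First I would write the Blekherman decomposition of $f$: since $\deg f \leq d$ and $d \leq n/2$ (from $d \leq k \leq n-d$), Theorem~\ref{thm:blekherman-2} gives harmonic multilinear polynomials $\fc{f}{0},\ldots,\fc{f}{d}$ with $\deg \fc{f}{i} \leq d-i$ such that
\[
f \equiv \sum_{i=0}^d \fc{f}{i}\, \xsum^{\,i} \pmod{I}.
\]
Next I would restrict this identity to the slice $\binom{[n]}{k}$. On that slice $\xsum = k$, so $f$ agrees pointwise with the polynomial
\[
G_k \eqdef \sum_{i=0}^d k^i\, \fc{f}{i}.
\]
Since harmonic multilinear polynomials form a linear space (Lemma~\ref{lem:harmonic-algebra}), $G_k$ is a harmonic multilinear polynomial; and its degree is at most $\max_i \deg \fc{f}{i} \leq d \leq \min(k,n-k)$.

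By the uniqueness part of Theorem~\ref{thm:harmonic-representation}, this forces $F_k = G_k$. Taking the $d$th homogeneous part and using the fact that $\fc{f}{i}$ has degree at most $d-i$, only the $i=0$ term can contribute, so
\[
F_k^{=d} = \fc{f}{0}^{=d},
\]
which manifestly does not depend on $k$.

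There is no real obstacle here — the only thing to verify carefully is the degree bookkeeping, namely that $d \leq \min(k,n-k)$ so that Theorem~\ref{thm:harmonic-representation} actually applies with the degree bound we need, and that the higher coefficients $\fc{f}{i}$ for $i \geq 1$ have degree strictly less than $d$ so they cannot contribute to the top homogeneous part. Both are immediate from the hypotheses $d \leq k \leq n-d$ and from Blekherman's theorem itself.
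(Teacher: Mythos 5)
Your proof is correct and follows essentially the same route as the paper: apply Blekherman's decomposition, substitute $\xsum = k$ to obtain the harmonic multilinear polynomial $\sum_{i=0}^d k^i \fc{f}{i}$ of degree at most $d \leq \min(k,n-k)$, identify it with $F_k$ by uniqueness, and observe that only $\fc{f}{0}$ contributes to the degree-$d$ homogeneous part. The degree bookkeeping you flag is exactly the point the paper also checks, so there is nothing to add.
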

\begin{proof}
 Let $f_0,\ldots,f_d$ be the Blekherman decomopsition of $f$, where $f_i$ is a harmonic multilinear polynomial of degree at most $d-i$. The function $\sum_{i=0}^d f_i k^i$ is thus a harmonic multilinear polynomial which agrees with $f$ on $\binom{[n]}{k}$.
 When $d \leq k \leq n-d$, this polynomial has degree at most $d \leq \min(k,n-k)$, and so it is the harmonic projection $F_k$ of $f$ on the slice $\binom{[n]}{k}$. Notice that $F_k^{=d} = f_0^{=d}$ does not depend on $k$.
\end{proof}

\subsection{Representation theory} \label{sec:harmonic-rep}

As we have explained in Subsection~\ref{sec:coarse-decomposition}, representation theory naturally leads to the decomposition of each function on the slice to its homogeneous parts. From the point of view of the representation theory of the symmetric group, it is natural to order the irredicuble representations in increasing order of complexity: $S^{(n)},S^{(n-1,1)},\ldots,S^{(n-k,k)}$. This suggests defining the \emph{degree} of a function $f$ on the slice as the maximal $d$ such that $f^{=d} \neq 0$; note that this definition doesn't make use of the harmonic multilinear representation of $f$.

Another definition which leads to the same parameter is the \emph{junta degree}, which is the minimal $d$ such that $f$ can be written as a linear combination of functions depending on at most $d$ coordinates. The cometric property (Corollary~\ref{cor:cometric}), which (apart from a few degenerate cases) only holds for this ordering of the irreducible representations, gives further credence to the notion of degree.

A similar notion of degree exists also for functions on the Boolean cube: the degree of a function on $\{0,1\}^n$ is both its degree as a multilinear polynomial and its junta degree. 

The following result shows that the lifting defined by the harmonic multilinear representation is the unique way to couple both notions of degree.

\begin{theorem} \label{thm:harmonic-lifting}
 Let $0 \leq k \leq n/2$. Suppose $L$ is a linear operator from $\RR[\binom{[n]}{k}]$ to $\RR[\{0,1\}^n]$ such that
\begin{enumerate}[(a)]
 \item For every function $f$ over $\binom{[n]}{k}$, $Lf$ is an extension of $f$, that is, $(Lf)(x) = f(x)$ for every $x \in \binom{[n]}{k}$.
 \item For every function $f$ over $\binom{[n]}{k}$, $\deg Lf \leq \deg f$.
 \item For every function $f$ over $\binom{[n]}{k}$ and for every permutation $\pi \in S_n$, $L(f^\pi) = (Lf)^\pi$. In other words, $L$ commutes with the action of $S_n$.
\end{enumerate}
 Then $Lf$ is given by the unique harmonic multilinear polynomial of degree at most $k$ which agrees with $f$ on $\binom{[n]}{k}$.
\end{theorem}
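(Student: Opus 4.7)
The plan is to first verify that the harmonic lifting $L^*$, which sends $f \in \RR[\binom{[n]}{k}]$ to its unique harmonic multilinear representative of degree at most $k$ viewed as a function on $\{0,1\}^n$, satisfies all three hypotheses. Property~(a) is the defining property from Theorem~\ref{thm:harmonic-representation}; property~(c) is the $S_n$-naturality of the harmonic representative (again from the uniqueness in that theorem); and property~(b) holds because a harmonic multilinear polynomial of degree $d$ is in particular a degree-$d$ multilinear polynomial, and that \emph{is} its own unique multilinear representation on the cube. It remains to establish uniqueness.

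For uniqueness, set $M \eqdef L - L^*$. Then $M$ is equivariant, $(Mf)|_{\binom{[n]}{k}}=0$ identically, and $\deg Mf \le \deg f$. Using the decomposition of Subsection~\ref{sec:coarse-decomposition}, write $\RR[\binom{[n]}{k}] = V_0 \oplus \cdots \oplus V_k$, where $V_d \cong S^{(n-d,d)}$ is the space of homogeneous degree-$d$ harmonic multilinear polynomials on the slice. By linearity it suffices to prove $M|_{V_d} = 0$ for each $d \le k$.

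Fix such a $d$ and let $P_{\le d}$ denote the space of multilinear polynomials on $\{0,1\}^n$ of degree at most $d$. Since $d \le k \le n/2$, each Fourier level $P_{=e}$ is isomorphic to the permutation module on $\binom{[n]}{e}$, which decomposes as $S^{(n)} \oplus \cdots \oplus S^{(n-e,e)}$. Therefore $S^{(n-d,d)}$ appears inside $P_{\le d}$ with multiplicity exactly one (only in the top level $P_{=d}$); call this isotypic copy $W_d$. The map $L^*|_{V_d}$ is an equivariant map between two copies of $S^{(n-d,d)}$, and it is injective because $L^*f|_{\binom{[n]}{k}}=f$, so by Schur's lemma it is an isomorphism $V_d \to W_d$. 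Conditions~(b) and~(c) force $M|_{V_d}$ to map into $W_d$ as well, and a second application of Schur's lemma yields $M|_{V_d} = c_d \cdot L^*|_{V_d}$ for some scalar $c_d$. Restricting to $\binom{[n]}{k}$ gives $0 = (Mf)|_{\binom{[n]}{k}} = c_d f$ for every $f \in V_d$, so $c_d = 0$ and $M$ vanishes on $V_d$.

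The main obstacle is the multiplicity-one claim for $S^{(n-d,d)}$ inside $P_{\le d}$, and more conceptually the fact that hypothesis~(b) is essential: the same irreducible appears in all of $\RR[\{0,1\}^n]$ with multiplicity $n-2d+1$ (one copy per slice $\ell \in [d, n-d]$), so without the degree restriction the space of $S_n$-equivariant extensions satisfying~(a) would be an $(n-2d)$-parameter family in each isotypic block, rather than uniquely determined.
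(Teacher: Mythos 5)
Your proof is correct, and it takes a genuinely different route from the paper's. The paper argues elementarily: by Lemma~\ref{lem:harmonic-elementary} every function on the slice is a linear combination of basic functions, so by linearity and hypothesis~(c) it suffices to determine $Lf$ for $f=(x_1-x_2)\cdots(x_{2d-1}-x_{2d})$; transferring the antisymmetry $f^{(1\;2)}=-f$ through~(c) shows that $Lf$ is divisible by $x_1-x_2$ (and likewise by each $x_{2i-1}-x_{2i}$), the degree bound~(b) together with unique factorization then forces $Lf=Cf$, and~(a) gives $C=1$. You instead argue isotypic component by isotypic component: the crucial input is that $S^{(n-d,d)}$ occurs with multiplicity one in the space of multilinear polynomials of degree at most $d$ on the cube (only in the top homogeneous level, which is the permutation module $M^{(n-d,d)}$), after which Schur's lemma leaves a single scalar per component for any equivariant, degree-non-increasing extension operator, and~(a) kills each scalar. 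Both arguments are complete and both use all three hypotheses essentially. Yours buys a conceptual explanation of why~(b) cannot be dropped (the $(n-2d)$-parameter family you point out at the end) and would adapt to settings, such as other Gelfand pairs or the multislice mentioned in Section~\ref{sec:harmonic-use}, where no explicit spanning set of basic functions is available; the paper's is self-contained modulo Lemma~\ref{lem:harmonic-elementary} and needs no representation theory. The one step worth flagging in your write-up is the use of Schur's lemma over $\RR$: one-dimensionality of $\mathrm{Hom}_{S_n}(V_d,W_d)$ relies on the irreducible representations of $S_n$ being of real type (realizable over $\mathbb{Q}$); this is standard and is already implicitly used in Section~\ref{sec:coarse-decomposition}, so it is a remark rather than a gap.
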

\begin{proof}
 In view of Lemma~\ref{lem:harmonic-elementary}, it suffices to show that $Lf = f$ for $f = (x_1 - x_2) \cdots (x_{2d-1} - x_{2d})$.
 Suppose that $Lf = g$, where $g$ is a multilinear polynomial of degree at most $d$. Since $f^{(1\;2)} = -f$, also $g^{(1\;2)} = -g$. If we write
\[
 g = x_1x_2 r + x_1 s + x_2 t + u,
\]
 where $r,s,t,u$ don't involve $x_1,x_2$, then $g^{(1\;2)} = x_1x_2 r + x_1 t + x_2 s + u$, showing that $r = u = 0$ and $s = -t$. In other words, $g = (x_1 - x_2) s$ is a multiple of $x_1 - x_2$. Similarly $g$ is a multiple of $x_3-x_4,\ldots,x_{2d-1}-x_{2d}$. Unique factorization forces $g = C(x_1-x_2) \cdots (x_{2d-1}-x_{2d}) = Cf$ for some constant $C$. Since $g$ extends $f$, the constant must be $C = 1$.
\end{proof}

\bibliographystyle{plain}
\bibliography{harmonicity}

\end{document}